\RequirePackage{ifpdf} %
\ifpdf
\documentclass[10pt,pdftex]{amsart}%
\else
\documentclass[10pt,dvips]{amsart}%
\fi

\ifpdf
\fi
\usepackage[bookmarks, %
colorlinks=false, backref=false,plainpages=false]{hyperref}
\usepackage[english]{babel}

\providecommand{\ran}{\operatorname{ran}}

\usepackage{geometry}
\usepackage{amsmath, amssymb}

\usepackage{graphicx}
\usepackage{amsmath,amssymb,amsthm}

\usepackage{booktabs}

\newtheorem{thm}{Theorem}[section]
\newtheorem{cor}[thm]{Corollary}
\newtheorem{lem}[thm]{Lemma}
\newtheorem{prop}[thm]{Proposition}

\theoremstyle{definition}
\newtheorem{defn}[thm]{Definition}
\newtheorem{assum}[thm]{Assumption}
\theoremstyle{remark}
\newtheorem{rem}[thm]{Remark}

\theoremstyle{plain}

\usepackage{amsaddr}

\def\XXint#1#2#3{{\setbox0=\hbox{$#1{#2#3}{\int}$}
\vcenter{\hbox{$#2#3$}}\kern-.5\wd0}}

\expandafter\let\expandafter\ltxt\csname OT1\string\t\endcsname

\renewcommand\t{\theta}

\newcommand{\bPsi}{\mbox{\boldmath$\Psi$}}
\providecommand{\cZ}{\mathcal{Z}}
\providecommand{\cX}{\mathcal{X}}
\providecommand{\dd}{\mathrm{d}}
\providecommand{\cJ}{\mathcal{J}}
\newcommand{\bsigma}{\mbox{\boldmath$\sigma$}}
\newcommand{\btau}{\mbox{\boldmath$\tau$}}

\def\bb{{\bf b}}
\def\be{{\bf e}}
\def\bff{{\bf f}}

\def\bn{{\bf n}}

\def\bt{{\bf t}}
\def\bu{{\bf u}}
\def\bv{{\bf v}}
\def\bw{{\bf w}}
\def\bzz{{\bf z}}

\def\bD{{\bf D}}
\def\bE{{\bf E}}
\def\bF{{\bf F}}
\def\bH{{\bf H}}
\def\bW{{\bf W}}

\def\cA{{\mathcal{A}}}

\def\cE{{\mathcal{E}}}

\def\cJ{{\mathcal{J}}}

\def\cL{{\mathcal{L}}}
\def\cP{{\mathcal{P}}}
\def\cQ{{\mathcal{Q}}}

\def\cU{{\mathcal{U}}}

\def\p{\partial}

\def\divv{\operatorname{div}}   %

\newcommand{\grad}{\operatorname{grad}}

\DeclareMathOperator{\Span}{Span}
\def\divvr{\operatorname{div}}
\def\curll{\operatorname{curl}}
\def\curl{\operatorname{\mathbf{curl}}}

\newcommand{\tri}{|\!|\!|}

\newcommand{\la}{\langle}
\newcommand{\ra}{\rangle}

\newcommand{\beq}{\begin{equation}}
\newcommand{\eeq}{\end{equation}}

\newcommand{\Hdiv}{H(\divvr;\,\Omega)}

\def\grad{\operatorname{grad}}   %

\def\cA{{\mathcal A}}

\def\cH{{\mathcal{H}}}

\def\cN{{\mathcal{N}}}

\def\cE{{\mathcal{E}}}
\def\cP{{\mathcal{P}}}

\def\cL{{\mathcal{L}}}
\def\cJ{{\mathcal{J}}}

\def\R{\mathbb{R}}

\newcommand{\jump}[1]{[\![ #1]\!]}

\newcommand{\LS}{{\mathtt{LS}}}

\numberwithin{equation}{section}

\newcommand{\hdn}[1]{\|#1\|_{H(\divvr)}}
\newcommand{\bbet}{\boldsymbol{\beta}}
\newcommand{\one}{\mathbf{1}}
\newcommand{\bh}{\mathbf{h}}
\newcommand{\Rop}{\mathcal{R}}

\title[Native de Rham neural classes and FOSLS-\deRhaNN]{FOSLS-\deRhaNN: native de Rham neural classes for \texorpdfstring{$H(\divvr)$}{H(div)} and \texorpdfstring{$H(\curll)$}{H(curl)} with applications to first-order system least-squares neural network methods for partial differential equations}
\author[S. Zhang]{Shun Zhang}
\address{Department of Mathematics, City University of Hong Kong, Kowloon Tong, Hong Kong, China}
\email{shun.zhang@cityu.edu.hk}
\thanks{This work was supported in part by
Research Grants Council of the Hong Kong SAR, China, under the GRF Grant Project No. CityU 11305025}
\date{\today}

\hypersetup{pdftitle={FOSLS-deRhaNN: native de Rham neural classes for H(div) and H(curl) with applications to first-order system least-squares neural network methods for partial differential equations}, pdfauthor={Shun Zhang}}
\DeclareRobustCommand{\deRhaNN}{deRhaNN}   %
\begin{document}

\begin{abstract}
We construct neural network approximation classes that are native to the graph spaces $H(\operatorname{div})$ and $H(\operatorname{curl})$, in two and three dimensions and, for
$H(\operatorname{div})$, in arbitrary dimension. Every realization lies in the space for every value of the
trainable parameters, and with kinked potentials, such as ReLU networks, and with the interface modules, the admissible jumps of the space are
present at finite width. The classes are images of ordinary scalar
and componentwise networks under fixed operators of the continuous de Rham complex, a Poincar\'e right inverse of
the divergence and a regular decomposition, and they do not involve a mesh, a
projection, an interpolation or a finite element emulation. For $H(\operatorname{div})$ in $\mathbb{R}^n$ two
native classes are given on an equal footing, with a skew-symmetric matrix potential $A$: $\operatorname{Div}A+\mathcal{R}_nq+\mathbf{h}$, in which
the divergence $q$ is an explicit unknown, and $\operatorname{Div}A+\mathbf{z}$ with an $H^1$ field $\mathbf{z}$, from a regular decomposition, which is evaluated
pointwise; for $H(\operatorname{curl})$ the analogous classes are $\operatorname{grad}\phi+\mathcal{S}r+\mathbf{h}$ in two dimensions and $\operatorname{grad}\phi+\mathbf{z}$ in two and three dimensions. In all of them every interface jump
of the field is carried by the potential term, $\operatorname{Div}A$ or $\operatorname{grad}\phi$, while the remaining part has
no interface jump (in the regular-decomposition classes it is an $H^1$ field). The classes with $\mathbf{z}$ are
the componentwise approach enriched by this term. Known or learned interface geometry enters the potential through factors with
trainable amplitudes, and, when the divergence also jumps, the remaining part as well. These native graph-space
classes lead to the FOSLS-deRhaNN method, first-order system least squares with de Rham neural networks, in which
the graph-space unknowns are represented by the native neural classes and the loss is the first-order system least-squares functional posed
in the natural spaces of the weak formulation; for second-order elliptic equations the formulation works
at the level of the weak problem, including $H^{-1}$ right-hand sides and $H^{1/2}$ Dirichlet data. Second-order elliptic equations with
discontinuous coefficients and curl--curl problems are treated as instances, in which the functional is equivalent to the error; linear
transport with discontinuous solutions and scalar conservation laws with shocks are treated with the same native flux classes.
\end{abstract}

\maketitle

\section{Introduction}\label{sec:intro}

Many physical problems are naturally posed as first-order systems in $\grad$, $\divv$ and
$\curll$: a constitutive law relates a flux to the gradient of a
potential, as in Darcy's law, Fourier's law of heat conduction and
Ohm's law, and a conservation law prescribes the divergence of the flux; Maxwell's equations and
linear elasticity, with the stress as flux, admit analogous first-order
formulations in compatible graph spaces. The second-order equation arises by eliminating
the flux, which differentiates the coefficient. With discontinuous coefficients the first-order system is the natural
form: it holds in the whole domain, with the flux in $H(\divvr)$ and the
gradient in $H(\curll)$, and the interface conditions, continuity of the normal flux and of
the tangential gradient, are the intrinsic continuity of these spaces, whereas
the classical strong second-order equation holds on the coefficient subdomains, together with the
transmission conditions. Neural network methods for partial differential equations,
going back to \cite{LLF:98}, were transformed by physics-informed neural networks (PINNs) \cite{PINN}: the
residual of the equation at collocation points is the loss, the
method is mesh free, and it treats forward and
inverse problems, nonlinear equations and the assimilation of data in one framework; it has become
one of the most widely used tools of scientific machine learning \cite{KKLPWY:21}.
In its vanilla form the residual is that of the strong form, which
for a second-order divergence-form equation requires second derivatives of the trial function
and pointwise differentiation of the coefficient; for nonsmooth activations
these derivatives exist only piecewise, and discontinuous coefficients require an explicit treatment of
the interface conditions. The flux is not an unknown, and the
residual is defined only for data more regular than those
of the weak formulation. The deep Ritz method \cite{EY:18} minimizes the variational energy
instead and is therefore tied to problems with a coercive energy, in particular symmetric elliptic problems.

First-order system least squares (FOSLS) \cite{CLMM:94,CMM:97,Jiang:98,BG:09} keeps the structure of the physical laws: the potential and the flux are both unknowns, each in its natural space, and the constitutive law and the conservation law are separate residuals. It works at the level of the weak theory of the equation. For general second-order elliptic equations the coefficients need only be bounded, with $\mathbf A$ uniformly positive definite, the right-hand side is any element of $H^{-1}$, written as
$f_1+\divv\bff_2$ with $f_1\in L^2$ and $\bff_2\in L^2(\Omega)^n$ (for instance $\bff_2$ discontinuous, or a source on a curve), the Dirichlet data are in $H^{1/2}$, and the only assumption is that the weak problem is uniquely solvable; under exactly these conditions the functional is equivalent to the error in the natural norms, also for nonsymmetric operators \cite{Zhang:23}, and it is an a posteriori error estimator \cite{LiZhang:25,WuZhang:26}. The datum $\bff_2$
enters through the constitutive law, pointwise and never differentiated (Remark~\ref{rem:Hminus1}), so the functional is defined for all data of the weak theory, including those for which no strong form exists. This makes it a natural loss for neural network methods: minimizing it is a minimization at the level of the weak formulation, with the flux and its conservation law as part of the problem.

\subsection{Natural graph spaces and intrinsic continuity}\label{subsec:native}

In a FOSLS formulation the unknowns lie in
$H^1$, $H(\divvr)$ and $H(\curll)$, and the graph space of each
unknown, determined by which distributional derivatives are required
to lie in $L^2$, determines its intrinsic continuity. For
a field that is smooth on the elements of a
partition, and an interior face $F$ with unit
normal $\bn_F$, the singular parts of the derivatives across $F$ are, schematically,
\beq\label{distributional_jumps}
 (\grad u)_{\rm sing}=\jump{u}\,\bn_F\,\delta_F,\qquad
 (\curll\bE)_{\rm sing}=\bn_F\times\jump{\bE}\,\delta_F\ \ (n=3),\qquad
 (\divv\btau)_{\rm sing}=\jump{\btau\cdot\bn_F}\,\delta_F .
\eeq
An $L^2$ derivative excludes exactly the corresponding surface
measure, so the intrinsic continuity across $F$ is $\jump{u}=0$
in $H^1(\Omega)$, $\bn_F\times\jump{\bE}=0$ in $H(\curll;\Omega)$ and $\jump{\btau\cdot\bn_F}=0$ in
$H(\divvr;\Omega)$. In two dimensions the $H(\curll)$ condition is continuity
of the scalar tangential component; in the language
of forms, $H\Lambda^k(\Omega)$ requires a single-valued tangential trace
of the $k$-form. For the second-order equation \eqref{pde}
with discontinuous $\mathbf A$ and the weak data
of Section~\ref{sec:problem}, the flux variable $\bsigma=-\mathbf A\grad u+\bbet u-\bff_2$
is in $H(\divvr)$, while the physical constitutive flux
$-\mathbf A\grad u+\bbet u=\bsigma+\bff_2$ need not be when
$\bff_2$ jumps; $\bsigma$ is in general neither in $H(\curll)$
nor in $H^1(\Omega)^n$, and $\grad u$ is in $H(\curll)$
but in general neither in $H(\divvr)$ nor in
$H^1(\Omega)^n$ \cite{CZ:09,CZ:10a,CHZ:17zz}: the normal component of $\bsigma$ and
the tangential component of $\grad u$ are continuous
across interfaces, the other components jump. This is the
reason for the classical finite element choices, defined
through face and edge degrees of freedom on
a mesh: continuous Lagrange elements reproduce the continuity of
$H^1$, Raviart--Thomas and BDM elements the normal continuity of $H(\divvr)$
while allowing tangential jumps, and N\'ed\'elec elements the
tangential continuity of $H(\curll)$ while allowing normal jumps \cite{BBF:13,Gatica:2014,Hip:02,AFW:2006,AFW:10,Arnold:2018}.

Standard networks represent vector fields componentwise. A scalar network with a Lipschitz
activation is Lipschitz on a bounded domain and hence in
$H^1(\Omega)$, so a componentwise network lies in $H^1(\Omega)^n\subset H(\divvr;\Omega)\cap H(\curll;\Omega)$: it is
conforming, but it imposes more continuity than either graph space requires. We use
\emph{native} in this structural sense: the approximation respects the intrinsic continuity of the graph
space rather than entering it only through a stronger componentwise $H^1$ regularity. The
distinction is visible at finite complexity. A target $\btau\in H(\divvr)$ may have
a genuine tangential jump with continuous normal trace, and $\bE\in H(\curll)$ a genuine normal
jump with continuous tangential trace. A componentwise continuous approximation contains no
such field at any finite size, since a network with a continuous activation
is continuous: it can only replace the jump by a continuous transition layer
whose width tends to zero as the approximation is sharpened. Least-squares ReLU networks
approximate discontinuous scalar functions well in this way \cite{CCL:21,CCL:24}; the discontinuity is then
obtained by training, not by the construction of the class. The classes of this
paper supply the missing jumps by a potential term from the de Rham complex.

\subsection{de Rham representations}\label{subsec:realizations}

We therefore start from the graph space of the
unknown, not from the network: ordinary networks approximate $H^1$
or $L^2$ functions, and fixed differential and right-inverse
operators associated with the de Rham complex assemble them into
the graph-space field. For $0\le k\le n$ let
$H\Lambda^k(\Omega):=\{\omega\in L^2\Lambda^k(\Omega):\dd\omega\in L^2\Lambda^{k+1}(\Omega)\}$. The de Rham complex is
\beq\label{derham_principle}
 H\Lambda^0(\Omega)\xrightarrow{\ \dd\ }H\Lambda^1(\Omega)\xrightarrow{\ \dd\ }\cdots\xrightarrow{\ \dd\ }H\Lambda^{n-1}(\Omega)\xrightarrow{\ \dd\ }H\Lambda^n(\Omega),
\eeq
with $H\Lambda^0=H^1$ and $H\Lambda^n\simeq L^2$; in vector proxies $H\Lambda^{n-1}\simeq H(\divvr)$ in any dimension, and in three dimensions $H\Lambda^1\simeq H(\curll)$. The identity
$\dd^2=0$ gives $\operatorname{ran}\dd\subset\ker\dd$, the finite-dimensional cohomology measures the failure of exactness, and regular-potential results allow potentials with $H^1$ coefficients.

The $H(\divvr)$ construction uses the two maps adjacent to $H\Lambda^{n-1}$, and it leads to two representations, which we treat on an equal footing. The map on the
right is $\divv:H(\divvr;\Omega)\to L^2(\Omega)$, so $q:=\divv\btau$ is a natural unknown, and a bounded right inverse gives $\Rop_nq\in H(\divvr)$ with $\divv\Rop_nq=q$. What remains is divergence free, and the map
on the left represents this kernel, modulo cohomology, by a potential. Alternatively, the divergence can be left to an $H^1$ field, by a regular decomposition. Hence
\beq\label{hdiv_principle}
 \btau=\operatorname{Div}A+\Rop_nq+\bh,\quad q=\divv\btau,\qquad\text{or}\qquad \btau=\operatorname{Div}A+\bzz,\quad \bzz\in H^1(\Omega)^n,\ \ \divv\bzz=\divv\btau ,
\eeq
with $A$ a skew-symmetric matrix field with $H^1$ entries and $\bh$ in a finite-dimensional cohomology space; in two dimensions $\operatorname{Div}A=\curl\psi$, and $\btau=\curl\psi+\Rop_2 q+\bh$
or $\btau=\curl\psi+\bzz$. The first keeps the divergence as an explicit $L^2$ unknown: an equation for it can be built into the class, and
a jump of the divergence is integrated by $\Rop_n$ into a continuous field (under the segment condition of Lemma~\ref{lem:Rcont}). The second needs no line integrals and no harmonic fields:
it is the componentwise approach of \cite{CCLL:20,LC:26p2nn}, $\bzz$ a componentwise network, enriched by the divergence-free field $\operatorname{Div}A$, which carries the tangential jumps that
a componentwise $H^1$ field cannot; a jump of the divergence is carried by a kink
of $\bzz$. We call such a formula a \emph{de Rham representation} of the field, and the
auxiliary functions in it its \emph{representing functions}. For the first they are the potential $A$, the divergence $q=\divv\btau$ and the cohomology part $\bh=\Pi_{\cH_n}\btau$ (Sections~\ref{subsec:hdivd}
and~\ref{subsec:hdiv_cohom}); $q$ and $\bh$ are determined by $\btau$, and $A$ modulo $\ker\operatorname{Div}$, or uniquely once the bounded
linear lifting of Section~\ref{subsec:hdiv_split} is fixed. For the second they are $A$ and $\bzz$ (Section~\ref{subsec:hdiv_regular}).

For $H(\curll)$ the left side is the same, $H^1(\Omega)\xrightarrow{\grad}H(\curll;\Omega)\xrightarrow{\curll}H(\divvr;\Omega)$ in
three dimensions, with the gradient generating the exact curl-free part;
but the curl cannot be an unconstrained $L^2$ function, since
$\divv(\curll\bE)=0$. In general, the exterior derivative $\dd\omega$ in an intermediate degree is constrained by
$\dd(\dd\omega)=0$, which a generic network does not satisfy. For $H(\curll)$
in three dimensions we therefore use the regular decomposition
\beq\label{regular_principle}
 \bE=\grad\phi+\bzz,\qquad \phi\in H^1(\Omega),\quad \bzz\in H^1(\Omega)^3 .
\eeq
No separate harmonic part is needed in this form, since any topological component is absorbed into
the $H^1$ remainder $\bzz$. In two dimensions $H(\curll)$ is a rotation of $H(\divvr)$, and both realizations are
available, $\bE=\grad\phi+\mathcal Sr+\bh$ with a rotated right inverse $\mathcal S$ of the curl, and $\bE=\grad\phi+\bzz$. The
second $H(\divvr)$ representation and \eqref{regular_principle} are the same construction in two degrees, $\btau=\operatorname{Div}A+\bzz$ in degree $n-1$
and $\bE=\grad\phi+\bzz$ in degree $1$. Only in degree $n-1$, where the exterior derivative lands in top
degree, can the exterior derivative itself be an unconstrained trainable function; this gives the first $H(\divvr)$ representation.

The trainable functions are the representing functions in \eqref{hdiv_principle} and \eqref{regular_principle}. A scalar network with a Lipschitz activation lies in $W^{1,\infty}(\Omega)\subset H^1(\Omega)$; continuous piecewise affine finite element functions are
ReLU networks of a size fixed by the mesh \cite{HLXZ:20}, so every finite element approximation of a representing function is available at
a corresponding network size, and for smooth activations classical Sobolev approximation results apply \cite{Hornik1991}. For $H(\divvr)$ we approximate $A$ in $H^1$ and $q$ in $L^2$ and assemble $\btau_{\theta,\eta}=\operatorname{Div}A_\theta+\Rop_nq_\eta+\bh$, or $A$ and $\bzz$ in
$H^1$ and assemble $\btau_{\theta,\eta}=\operatorname{Div}A_\theta+\bzz_\eta$; for $H(\curll)$ we approximate $\phi$ and $\bzz$ in $H^1$ and assemble $\bE_{\theta,\eta}=\grad\phi_\theta+\bzz_\eta$ for $n=2,3$, or, for $n=2$, $\phi$ in $H^1$ and $r$ in $L^2$ and assemble
$\bE_{\theta,\eta}=\grad\phi_\theta+\mathcal Sr_\eta+\bh$. The identities $\divv\btau_{\theta,\eta}=q_\eta$, $\divv\btau_{\theta,\eta}=\divv\bzz_\eta$ and $\curll\bE_{\theta,\eta}=\curll\bzz_\eta$ hold for every value of the trainable parameters, so conformity is produced by the continuous de Rham
structure and not enforced by training. The classes do not involve a mesh, a projection or a finite element interpolation. It remains to show, for each
graph space, that the representation map is onto, with representing functions bounded by the field, and that approximation of
the representing functions transfers to the graph norm; this is done for $H(\divvr)$ in Sections~\ref{sec:hdiv} and~\ref{sec:hdiv_reg} and for $H(\curll)$ in Section~\ref{sec:hcurl}.

In all these representations every interface jump of the field is carried by
its first part, the potential term $\operatorname{Div}A$ or $\grad\phi$ given by the map
on the left of the complex: $\Rop_nq+\bh$ and $\mathcal Sr+\bh$ are continuous for bounded piecewise
continuous $q$ and $r$ under the segment condition of Lemma~\ref{lem:Rcont}, and the $H^1$ remainder $\bzz$ has a single-valued
trace across an interior interface. The continuity of the field is
weaker than that of its representing functions only through the potential. In two
dimensions a continuous piecewise affine potential $\psi_\theta$ has a piecewise constant $\curl\psi_\theta$ with continuous
normal trace and possibly a tangential jump; in any dimension a continuous
piecewise affine $\phi_\theta$ has a piecewise constant $\grad\phi_\theta$ with continuous tangential trace and
possibly a normal jump. This is how the finite-parameter realizations reproduce the intrinsic continuity of
$H(\divvr)$ and $H(\curll)$ rather than the stronger continuity of a
componentwise $H^1$ network, and it is why the
classes with $\bzz$ are the componentwise approach enriched by the
potential term. The interface geometry of the jump of the field, known
or learned, is accordingly carried by the potential, through a factor $|d_\Gamma|$ or $\operatorname{ReLU}(g_\eta)$
with trainable amplitudes (Sections~\ref{subsec:interface} and~\ref{subsec:learned_interface}); when the divergence also jumps, the same interface information
is used in the remaining part, in $q$ or in a kinked $\bzz$ (Section~\ref{subsec:div_jumps}).

\subsection{Related work and outline}\label{subsec:related}

Least-squares neural network methods were developed systematically by Cai
and coauthors. The deep least-squares method \cite{CCLL:20} uses
first-order system least-squares functionals as the loss for
elliptic problems, with the flux $\bsigma\in H(\divvr)$ approximated
by a componentwise network and its divergence obtained
by differentiating the components. In the physics-preserving neural
network (P$^2$NN) methods for elliptic equations \cite{LC:26p2nn} the flux
is again a componentwise network, and its divergence
is replaced by the discrete divergence operator of \cite{CCL:23_divergence}, which is
based on the definition of the divergence through the Gauss
theorem, $\divv\btau(x)=\lim_{\epsilon\to0}|B_\epsilon(x)|^{-1}\int_{\partial B_\epsilon(x)}\btau\cdot\bn\,dS$: at each integration point the outward flux
through the boundary of a control volume, computed
by quadrature, is divided by its measure. The second
class of this paper keeps such a componentwise
flux and enriches it by a potential term.
The least-squares ReLU neural network (LSNN) method for
linear advection--reaction equations \cite{CCL:21} showed that ReLU networks approximate discontinuous
solutions along an interface without oscillation and without
artificial diffusion; the network structure required for discontinuity interfaces
is analyzed in \cite{CCL:24}, and non-constant jumps along curved
interfaces in \cite{CCL:24jumps}. The method was extended to scalar
nonlinear hyperbolic conservation laws \cite{CCL:22nonlinear}, where the discrete divergence
operator of \cite{CCL:23_divergence} computes the divergence of the
flux from its values in the conservation form; the
LSNN method for hyperbolic problems is surveyed in
\cite{LC:26lsnnchapter}. The computational tools of this line of work
are used throughout the present paper: adaptive network
enhancement \cite{LCC:22ane1,LC:22ane2,CL:24adaptiveNN}, which adds neurons where a local
error indicator is large; the separation of the linear
output parameters from the nonlinear hidden parameters, with structure-guided Gauss--Newton
and block Newton iterations \cite{CDLLX:25sggn,Cai_shallowRitz1D:24,Cai_shallowRitzRD1D:24,CDFH:26bn}; derivatives of networks
through moments computed from values, as in the
discrete divergence operator; and adaptive quadrature \cite{LCR:23}. Dual neural
network formulations are developed in \cite{LCR:25}. The present
paper adds to this framework native representations of
graph-space fields, in particular $H(\divvr)$ fluxes and $H(\curll)$
fields: each field is represented so that its intrinsic
continuity holds for every parameter value, and its jumps
are carried by the kinks of its potentials.

Compatible structure has also been built into neural networks in
other ways. Networks with ReLU and binary step
activations represent the lowest-order Raviart--Thomas and N\'ed\'elec spaces exactly
\cite{DeRhamDNN:23}. First-order system least squares with deep neural
networks is developed in \cite{OPS:24}, with networks that emulate these de Rham
compatible finite element spaces on a mesh; there the least-squares
residual is the loss and a computable error
estimator. Finite element interpolated networks combine networks with compatible
finite element spaces \cite{BLML24,BWM:25comp}, and exact discrete de
Rham complexes of fixed-neuron ReLU$^k$ networks are constructed in
\cite{HuWangXu2026}; these obtain the de Rham structure at the discrete level. In
all of them the de Rham structure is constructed,
not trained: it is that of a given mesh
\cite{DeRhamDNN:23,OPS:24,BLML24,BWM:25comp} or of a given set of neurons
\cite{HuWangXu2026}, so the partition on which jumps can occur
is fixed in advance. In the present classes
the structure comes from the continuous de Rham operators, and
conformity holds for every value of all trainable
parameters, including those that place the kinks and
the learned interfaces. In the deep Ritz setting, scalar,
vector and antisymmetric tensor potentials from the de Rham
complex are used in \cite{ChenJiYuZhang2026} for Poisson problems
in arbitrary dimension without boundary penalties, and divergence-free fields
whose potentials have scalar networks as entries are
used in \cite{RichterPowellLipmanChen2022}, with approximation rates for linearized ReLU$^k$
networks in \cite{HeLiuTian2026}. The representation of divergence-free fields by
potentials (stream function, vector potential, and in general
dimension a skew-symmetric matrix of potentials) is classical, also for
finite elements \cite{BGHRR:25,AFW:2006,AFW:10}. In the present classes these
potentials form the divergence-free part of the $H(\divvr)$
representation, together with the Poincar\'e right inverse of
the divergence \cite{Cartan1967,CostabelMcIntosh2010} and the cohomology part; for Poincar\'e operators,
regular decompositions and potentials in the language of differential complexes see
the survey \cite{Hu:26review}. In one dimension $H(\divvr;I)=H^1(I)$, and
the first-order system least-squares shallow neural network method
for general second-order elliptic equations is developed in \cite{LZhang:25}.

This paper constructs neural approximation classes native to $H(\divvr)$ and $H(\curll)$, in two and three dimensions and, for $H(\divvr)$, in arbitrary dimension: every realization lies in the space for every value of the trainable parameters, and, with kinked potentials
such as ReLU networks and with the interface modules, the admissible jump structure of the space is present already at finite width (Propositions~\ref{prop:jump} and~\ref{prop:hcurl_jump}). The classes are images of scalar and componentwise networks under fixed operators,
\[
 \btau=\curl\psi+\Rop_2 q+\bh\ \ \text{or}\ \ \btau=\curl\psi+\bzz\ \ (n=2),\qquad
 \btau=\operatorname{Div}A+\Rop_nq+\bh\ \ \text{or}\ \ \btau=\operatorname{Div}A+\bzz\ \ (\R^n),
\]
\[
 \bE=\grad\phi+\mathcal Sr+\bh_{\curll}\ \ \text{or}\ \ \bE=\grad\phi+\bzz\ \ (n=2),\qquad
 \bE=\grad\phi+\bzz\ \ (n=3),
\]
with scalar networks, a skew-symmetric matrix of scalar networks $A$, a
componentwise network $\bzz$, and finite-dimensional harmonic parts; the field
being approximated need not lie in $H^1(\Omega)^n$, only its representing functions
do. The native $H(\divvr)$ and $H(\curll)$ classes, which we call \emph{deRhaNN classes} (de Rham
neural network classes), are combined with first-order system least squares to obtain the \emph{FOSLS-deRhaNN} method, first-order
system least squares with de Rham neural networks.\footnote{In the name deRhaNN the
final m of de Rham is replaced by NN, for neural
networks.} The approximation classes carry the graph-space conformity by construction, while the
FOSLS functional is posed at the natural regularity level of
the weak problem; the least-squares part is standard: the
error is bounded by the best approximation error of the
classes (Proposition~\ref{prop:best_approx}), with the regularity of the weak solution and no more.

Sections~\ref{sec:hdiv}--\ref{sec:hdiv2} carry out the construction for $H(\divvr)$ in arbitrary
dimension: Section~\ref{sec:hdiv} the stable de Rham representation with a right inverse
of the divergence and the first neural class, Section~\ref{sec:hdiv_reg} the regular decomposition and the
second class, both with two and three dimensions as special cases, and
Section~\ref{sec:hdiv2} the jumps at finite width, in the flux and in its divergence, for both classes,
and the three levels of interface information. Section~\ref{sec:hcurl} does the same for $H(\curll)$ in
two and three dimensions. Section~\ref{sec:framework} states the FOSLS-deRhaNN method with the standard best-approximation
estimate; Sections~\ref{sec:method} and~\ref{sec:curlcurl} are instances (second-order elliptic equations, posed at the level
of the weak formulation, and curl--curl problems), each with numerical examples (Sections~\ref{subsec:ell_numerics} and~\ref{subsec:curl_numerics}). Section~\ref{sec:transport} treats
linear transport, whose functional is the error in a norm weaker than that of the natural spaces, and Section~\ref{sec:conslaw}
uses the native space-time flux classes in the flux least-squares formulation of
scalar conservation laws, where the interface is an unknown shock.

\section{Native neural approximation in \texorpdfstring{$H(\divvr)$}{H(div)} I: right inverse and stable de Rham representation}\label{sec:hdiv}

\subsection{Motivation: the de Rham sequence}\label{subsec:hdiv_derham}

In two and three dimensions the end of the de Rham sequence is
\[
 H^1(\Omega)\xrightarrow{\ \curl\ }H(\divvr;\Omega)\xrightarrow{\ \divv\ }L^2(\Omega)\longrightarrow0\quad(n=2),
\]
\[
 H^1(\Omega)^3\xrightarrow{\ \curll\ }H(\divvr;\Omega)\xrightarrow{\ \divv\ }L^2(\Omega)\longrightarrow0\quad(n=3),
\]
with $\curl\psi=(\p_2\psi,-\p_1\psi)$ in two dimensions. Two facts give the representation of a flux $\btau\in H(\divvr;\Omega)$. First, $\divv$ is onto
$L^2(\Omega)$ and has a bounded right inverse $\Rop$, $\divv\Rop q=q$; so $q:=\divv\btau$ can be taken as an independent $L^2$ function, and
$\btau-\Rop q$ is divergence free. Second, $\divv\curl=0$, and conversely a divergence-free field is the curl of a potential, up to finitely many fields determined
by the topology of $\Omega$ (none if $\Omega$ is simply connected for $n=2$, or if $\partial\Omega$ is connected for $n=3$). Hence
\[
 \btau=\curl\psi+\Rop q+\bh,\qquad q=\divv\btau ,
\]
with a potential $\psi$ in $H^1$, scalar for $n=2$ and vector valued
for $n=3$, the divergence $q$ in $L^2$, and $\bh$ in
a finite-dimensional space. The two maps next to $H(\divvr)$ enter in
different roles: the curl on the left describes the divergence-free part
by a potential, and the divergence on the right contributes its range through
$\Rop$. Networks for $\psi$ and $q$ then give a flux that lies
in $H(\divvr;\Omega)$ for every value of the parameters, with $\divv\btau=q$.
The potential, $q$ and $\bh$ are the representing functions of $\btau$
(Section~\ref{subsec:realizations}), from which the field is assembled by fixed
differential and right-inverse operators associated with the de Rham complex.
The functions $q=\divv\btau$ and $\bh$ are determined by $\btau$;
the potential is determined up to the kernel of the curl,
and uniquely once a bounded linear lifting is fixed; \emph{stable}
refers to this bounded reconstruction and to the equivalence of
the representation norm with the graph norm (Section~\ref{subsec:hdiv_split}). In this
representation $\Rop q$ carries the divergence of $\btau$, $\bh$ its
topological part, and the potential the divergence-free remainder; in the piecewise regular interface
settings considered below, where $\Rop q+\bh$ is continuous across the
interface, the tangential jump of $\btau$ is carried entirely
by the potential. Section~\ref{subsec:hdivd} gives the potential in $\R^n$,
where the curl becomes the row-wise divergence of a skew-symmetric matrix;
this part is used by both representations. Section~\ref{sec:R} constructs $\Rop$, Section~\ref{subsec:hdiv_cohom} the cohomology part,
Section~\ref{subsec:hdiv_split} the stable splitting, and Section~\ref{subsec:hdiv_classes} defines the neural classes.

\subsection{Skew-symmetric potentials}\label{subsec:hdivd}

Let $\mathbb K_n:=\{A\in\R^{n\times n}:A^T=-A\}$. A field $A\in H^1(\Omega;\mathbb K_n)$ is determined by its $n(n-1)/2$ entries $a_{ij}:=A_{ij}$, $i<j$; these are
the scalar functions that networks approximate, and the norms of $A$ count each of them once:
\beq\label{skew_norm}
 |A|_1^2:=\sum_{i<j}|a_{ij}|_1^2,\qquad \|A\|_1^2:=\sum_{i<j}\|a_{ij}\|_1^2 .
\eeq
With $\mathcal E_{ij}:=\be_i\be_j^T-\be_j\be_i^T$, $A=\sum_{i<j}a_{ij}\mathcal E_{ij}$. The row-wise divergence of $A$,
\beq\label{Div_def}
 (\operatorname{Div}A)_i:=\sum_{j=1}^n\p_jA_{ij},\qquad i=1,\ldots,n ,
\eeq
is the dimension-independent curl-potential operator. The entry $a_{ij}$ contributes $\operatorname{Div}(a_{ij}\mathcal E_{ij})=(\p_ja_{ij})\be_i-(\p_ia_{ij})\be_j$, and in two and three dimensions $\operatorname{Div}$
reduces to the scalar and the vector curl: for $n=2$, $A=\psi\mathcal E_{12}$ gives $\operatorname{Div}A=(\p_2\psi,-\p_1\psi)^T=\curl\psi$, and for $n=3$, $A_{ij}=\sum_k\varepsilon_{ijk}\Psi_k$ gives $\operatorname{Div}A=\curll\bPsi$.

The potential part $\operatorname{Div}A$ is common to both representations
of $H(\divvr)$ used in this paper. The following
facts are standard; we record them for completeness.

\begin{lem}[skew potentials]\label{lem:skew}
Let $A\in H^1(\Omega;\mathbb K_n)$.
\begin{enumerate}
\item[(a)] The field $\operatorname{Div}A\in L^2(\Omega)^n$ satisfies
\beq\label{divDiv_zero}
 \divv(\operatorname{Div}A)=0,\qquad \|\operatorname{Div}A\|_0\le\sqrt{n-1}\,|A|_1 ,
\eeq
where the first identity holds in the sense of distributions.
\item[(b)] Let $S\subset\Omega$ be a smooth hypersurface with unit normal $\bn$, and let $A$ be piecewise $H^2$ on the two
sides of $S$ in a neighborhood of $S$. Then
on $S$, with one-sided traces $v^\pm$ and $\jump{v}:=v^+-v^-$, where $\bn$
points into the $+$ side (this convention is used throughout),
\beq\label{Div_jump}
 \jump{\grad A_{ij}}=\jump{\p_{\bn}A_{ij}}\,\bn,\qquad \jump{\operatorname{Div}A}=\jump{\p_{\bn}A}\,\bn,\qquad \bn\cdot\jump{\operatorname{Div}A}=0 .
\eeq
\item[(c)] For every $\mathbf j\in\R^n$ with $\mathbf j\cdot\bn=0$, $K:=\mathbf j\bn^T-\bn\mathbf j^T\in\mathbb K_n$ satisfies $K\bn=\mathbf j$; thus
$K'\mapsto K'\bn$ maps $\mathbb K_n$ onto $\bn^\perp:=\{\mathbf v\in\R^n:\mathbf v\cdot\bn=0\}$. Its kernel
consists of the skew-symmetric maps of $\bn^\perp$ (extended by
$K'\bn=0$) and has dimension $(n-1)(n-2)/2$; it is trivial for $n=2$.
\end{enumerate}
\end{lem}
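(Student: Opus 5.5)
For (a) I will work entrywise, using the decomposition $A=\sum_{i<j}a_{ij}\mathcal E_{ij}$ and linearity. For a single entry, $\operatorname{Div}(a_{ij}\mathcal E_{ij})=(\p_ja_{ij})\be_i-(\p_ia_{ij})\be_j$. Its distributional divergence is $\p_i\p_ja_{ij}-\p_j\p_ia_{ij}$. Tested against $\varphi\in C_c^\infty(\Omega)$, this equals $\int a_{ij}(\p_j\p_i\varphi-\p_i\p_j\varphi)=0$, since mixed derivatives of $\varphi$ commute; in fact one integration by parts suffices, because $a_{ij}\in H^1$.

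For the norm bound, I will write component $k$ of $\operatorname{Div}A$ as $\sum_{j\ne k}\p_jA_{kj}$, a sum of $n-1$ terms. Cauchy--Schwarz then gives $|(\operatorname{Div}A)_k|^2\le(n-1)\sum_{j\ne k}|\p_jA_{kj}|^2$. Summing over $k$, each unordered pair $\{i,j\}$ contributes $|\p_ja_{ij}|^2+|\p_ia_{ij}|^2\le|\grad a_{ij}|^2$. Integrating yields $\|\operatorname{Div}A\|_0^2\le(n-1)|A|_1^2$.

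For (b), I will use that $A_{ij}$ is in $H^1$ near $S$, so its traces from both sides agree and $\jump{A_{ij}}=0$ on $S$. Piecewise $H^2$ regularity ensures the traces of $\grad A_{ij}$ exist. Differentiating $\jump{A_{ij}}=0$ along tangential directions of the smooth surface $S$ shows that the tangential part of $\jump{\grad A_{ij}}$ vanishes. Hence $\jump{\grad A_{ij}}=\jump{\p_{\bn}A_{ij}}\bn$.

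The step I expect to need care is making this tangential differentiation of traces rigorous. I would do it by flattening $S$ locally and using the fact that the trace of an $H^2$ function has an $H^{1/2}$ trace of its tangential gradient equal to the tangential gradient of its trace.

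Then $\jump{(\operatorname{Div}A)_i}=\sum_j\jump{\p_jA_{ij}}=\sum_j\jump{\p_{\bn}A_{ij}}n_j$, which is $(\jump{\p_{\bn}A}\bn)_i$. Since $\jump{\p_{\bn}A}$ is skew-symmetric, $\bn\cdot\jump{\p_{\bn}A}\bn=0$, giving the last identity.

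For (c), I will compute $K\bn=\mathbf j(\bn\cdot\bn)-\bn(\mathbf j\cdot\bn)=\mathbf j$, and $K$ is skew by construction. For any $K'\in\mathbb K_n$, $\bn\cdot K'\bn=0$, so the map $K'\mapsto K'\bn$ lands in $\bn^\perp$; together with the above, it is onto $\bn^\perp$. Rank--nullity gives a kernel of dimension $n(n-1)/2-(n-1)=(n-1)(n-2)/2$, which is $0$ for $n=2$.

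To identify the kernel, take $K'$ with $K'\bn=0$. Skewness gives $\bn^TK'=0$, so $K'$ maps $\bn^\perp$ into $\bn^\perp$ and its restriction there is skew. Conversely, every skew map of $\bn^\perp$, extended by $K'\bn=0$, is a skew matrix. Its dimension, $\dim\mathbb K_{n-1}$, matches the count from rank--nullity.
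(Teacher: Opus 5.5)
Your proposal is correct and follows the paper's proof essentially step for step. For (a) you use the distributional identity together with the Cauchy--Schwarz count over the $n-1$ off-diagonal terms; for (b), equality of the one-sided traces of the $H^1$ entries forces a normal gradient jump, and skewness of $\jump{\p_{\bn}A}$ gives the last identity; for (c) you identify the kernel with the skew maps of $\bn^\perp$. Your rank--nullity cross-check and the explicit remark that $\bn\cdot K'\bn=0$ places the image in $\bn^\perp$ are small additions that make (c) slightly more self-contained than the paper's version.
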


\begin{proof}
(a) For $\phi\in C_c^\infty(\Omega)$, $\langle\divv\operatorname{Div}A,\phi\rangle=\sum_{i,j}\int_\Omega A_{ij}\,\p_i\p_j\phi\,dx=0$ by antisymmetry. Since $A_{ii}=0$, $(\operatorname{Div}A)_i$ has $n-1$ terms, and pointwise $|\operatorname{Div}A|^2\le(n-1)\sum_i\sum_{j\ne i}|\p_jA_{ij}|^2=(n-1)\sum_{i<j}(|\p_ja_{ij}|^2+|\p_ia_{ij}|^2)\le(n-1)\sum_{i<j}|\grad a_{ij}|^2$. (b) Each $A_{ij}\in H^1(\Omega)$ has a single-valued
trace on $S$, so its tangential derivatives agree from the two sides and the jump of its gradient is normal. Hence $\jump{(\operatorname{Div}A)_i}=\sum_j\jump{\p_{\bn}A_{ij}}n_j$, and $\bn\cdot\jump{\p_{\bn}A}\bn=0$ because $\jump{\p_{\bn}A}$ is
skew. (c) $K\bn=\mathbf j\,(\bn\cdot\bn)-\bn\,(\mathbf j\cdot\bn)=\mathbf j$. If $K'\in\mathbb K_n$ and $K'\bn=0$, then $\bn^TK'=-(K'\bn)^T=0$, so $K'$ maps $\bn^\perp$ into itself and vanishes on $\bn$; conversely, every skew-symmetric
map of $\bn^\perp$, extended by $0$ on $\bn$, is in $\mathbb K_n$. The skew-symmetric maps of the $(n-1)$-dimensional space $\bn^\perp$ form a space of dimension $(n-1)(n-2)/2$.
\end{proof}

For $n=2$ the bound in \eqref{divDiv_zero} is an equality ($|\curl\psi|=|\grad\psi|$), for $n=3$ it is $\|\curll\bPsi\|_0\le\sqrt2|\bPsi|_1$.

\begin{rem}[differential forms and notation]\label{rem:forms}
Identify $A$ with the $2$-form $\sum_{i<j}A_{ij}\,dx_i\wedge dx_j$. Then $d(\star A)=(\operatorname{Div}A)\lrcorner\,\mathrm{vol}$: the $(n-1)$-form $d(\star A)$ corresponds to $\operatorname{Div}A$ in the identification $H(\divvr;\Omega)\simeq H\Lambda^{n-1}(\Omega)$, $\bv\mapsto\bv\lrcorner\,\mathrm{vol}$. Equivalently, $\operatorname{Div}A$ is the vector field of the $1$-form $d^*A$, where $d^*$ is the formal $L^2$-adjoint of $d$ (the codifferential): for every compactly supported $1$-form $\alpha=\sum_i\alpha_i\,dx_i$, $\int_\Omega\sum_{i<j}(\p_i\alpha_j-\p_j\alpha_i)A_{ij}\,dx=\int_\Omega\boldsymbol\alpha\cdot\operatorname{Div}A\,dx$ after integration by parts. The same operator is called $\mathrm{curl}$ in \cite[Section~2.4]{ChenJiYuZhang2026}. We keep the notation $\operatorname{Div}$, which shows that the operator is the row-wise divergence of a skew-symmetric matrix and that $\divv\operatorname{Div}A=0$.
\end{rem}

\subsection{A right inverse of the divergence}\label{sec:R}

In the classes below a flux is written as $\btau=\btau_0+\Rop_nq$ with $\btau_0$ divergence free and $\Rop_n$
a fixed right inverse of the divergence, so that $\divv\btau=q$. We construct $\Rop_n$ explicitly as an integral of
$q$ along the segments from a fixed center; for piecewise continuous $q$ the field $\Rop_nq$ is continuous
under a mild condition on the segments, so that it carries no jump.

Let $\Omega\subset\R^n$, $n\ge2$, be a bounded Lipschitz domain. Fix a
center $x_0\in\R^n$; the operator below depends on it, and this dependence is
suppressed in the notation $\Rop_n$. The center enters the constants through $C_{x_0}:=\sup_{x\in\Omega}|x-x_0|$, the radius
of the smallest ball centered at $x_0$ that contains $\Omega$. In the interface geometries considered
below, $x_0$ is chosen so that the segment hypothesis of Lemma~\ref{lem:Rcont} holds; in particular, no
segment from $x_0$ lies along an interface or along a flat piece of
$\partial\Omega$. For a function $g$ on $\R^n$ let $G_g(x):=\int_0^1t^{n-1}g(x_0+t(x-x_0))\,dt$, $x\in\R^n$, and for
$q\in L^2(\Omega)$, with $\widetilde q$ its extension by zero to $\R^n$, define
\beq\label{Rd}
 (\Rop_nq)(x):=(x-x_0)\,G_{\widetilde q}(x)=(x-x_0)\int_0^1 t^{n-1}\,\widetilde q\bigl(x_0+t(x-x_0)\bigr)\,dt,\qquad x\in\R^n ;
\eeq
as an element of $H(\divvr;\Omega)$, $\Rop_nq$ is the restriction to
$\Omega$. The domain $\Omega$ need not be star-shaped with
respect to $x_0$: $\widetilde q$ is used along the whole
segment from $x_0$ to $x$, which may leave and
re-enter $\Omega$, and $x_0$ need not lie in $\Omega$.

\begin{lem}[radial right inverse]\label{lem:Rd}
For $q\in L^2(\Omega)$, $\Rop_nq\in H(\divvr;\Omega)$ and
\beq\label{Rd_bound}
 \divv\Rop_nq=q,\qquad \|\Rop_nq\|_0\le\frac{2C_{x_0}}{n}\|q\|_0,\qquad
 \|\Rop_nq\|_{H(\divvr)}\le\Bigl(1+\frac{4C_{x_0}^2}{n^2}\Bigr)^{1/2}\|q\|_0 .
\eeq
Moreover, \eqref{Rd} defines $\Rop_nq$ on all of $\R^n$, with $\Rop_nq\in L^2_{\rm loc}(\R^n)^n$ and $\divv\Rop_nq=\widetilde q$ in $\R^n$.
\end{lem}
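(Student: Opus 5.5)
The plan is to prove the divergence identity first for smooth compactly supported data by a direct computation, extend it to $L^2$ by density, and obtain the $L^2$ bound from Minkowski's integral inequality combined with a change of variables along the rays from $x_0$.

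\emph{Step 1: smooth data.} Let $g\in C_c^\infty(\R^n)$ and set $\mathbf{v}(x):=(x-x_0)G_g(x)$. Differentiating under the integral gives
\[
 \divv\mathbf{v}(x)=n\,G_g(x)+(x-x_0)\cdot\grad G_g(x)=\int_0^1\Bigl(n t^{n-1}g(y_t)+t^{n}(x-x_0)\cdot\grad g(y_t)\Bigr)\,dt,
\]
with $y_t:=x_0+t(x-x_0)$. The integrand is exactly $\frac{d}{dt}\bigl[t^n g(y_t)\bigr]$. Hence $\divv\mathbf{v}(x)=g(x)$ pointwise on $\R^n$.

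\emph{Step 2: the $L^2$ bound.} For $q\in L^2(\Omega)$ and any bounded measurable set $D\subset\R^n$, Minkowski's integral inequality gives
\[
 \|\Rop_nq\|_{L^2(D)}\le\sup_{x\in D}|x-x_0|\int_0^1t^{n-1}\,\bigl\|\widetilde q(x_0+t(\cdot-x_0))\bigr\|_{L^2(D)}\,dt .
\]
The substitution $y=x_0+t(x-x_0)$ has $dx=t^{-n}dy$, so the inner norm is at most $t^{-n/2}\|\widetilde q\|_{L^2(\R^n)}=t^{-n/2}\|q\|_0$. Since $\int_0^1t^{n/2-1}\,dt=2/n$, taking $D=\Omega$ yields $\|\Rop_nq\|_0\le\frac{2C_{x_0}}{n}\|q\|_0$. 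Taking $D=B(x_0,\rho)$ yields $\|\Rop_nq\|_{L^2(B(x_0,\rho))}\le\frac{2\rho}{n}\|q\|_0$ for every $\rho$, which gives $\Rop_nq\in L^2_{\rm loc}(\R^n)^n$. The same computation applies to any $g\in L^2(\R^n)$, so the map $g\mapsto(x-x_0)G_g$ is bounded from $L^2(\R^n)$ into $L^2(B(x_0,\rho))^n$ for each $\rho$.

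\emph{Step 3: density and conclusion.} Choose $g_k\in C_c^\infty(\R^n)$ with $g_k\to\widetilde q$ in $L^2(\R^n)$. By Step 2, $(x-x_0)G_{g_k}\to\Rop_nq$ in $L^2_{\rm loc}(\R^n)^n$. For $\varphi\in C_c^\infty(\R^n)$, Step 1 gives
\[
 -\int(x-x_0)G_{g_k}\cdot\grad\varphi\,dx=\int g_k\varphi\,dx ,
\]
and passing to the limit yields $\divv\Rop_nq=\widetilde q$ in $\mathcal D'(\R^n)$. Restricting to test functions supported in $\Omega$ gives $\divv\Rop_nq=q\in L^2(\Omega)$, so $\Rop_nq\in H(\divvr;\Omega)$. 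The graph-norm estimate then follows from $\|\Rop_nq\|_{H(\divvr)}^2=\|\Rop_nq\|_0^2+\|q\|_0^2$.

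The main point requiring care is the uniform local $L^2$ control in Step 2, since the segment from $x_0$ to $x$ may leave and re-enter $\Omega$. Using the zero extension $\widetilde q$ on all of $\R^n$ and the global change of variables sidesteps this: no star-shapedness of $\Omega$ is needed, because the integral is bounded by the full norm $\|\widetilde q\|_{L^2(\R^n)}=\|q\|_0$.
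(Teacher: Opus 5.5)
Your proposal is correct and is essentially the paper's proof: the $L^2$ bound comes from Minkowski's integral inequality with the substitution $y=x_0+t(x-x_0)$, the identity $\divv((x-x_0)G_g)=g$ for smooth $g$ comes from $\frac{d}{dt}[t^n g(y_t)]$ (which the paper phrases as an integration by parts in $t$), and the general case follows by density. The only cosmetic difference is the order: you pass to the limit directly in $\mathcal D'(\R^n)$ using balls about $x_0$ and then restrict to $\Omega$, whereas the paper argues in $\Omega$ first and then notes that the same bound and limit hold on any ball about $x_0$.
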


\begin{proof}
\emph{Bound.} For $g\in L^2(\R^n)$ and $t\in(0,1]$ the substitution $y=x_0+t(x-x_0)$, $dx=t^{-n}dy$, gives $\|g(x_0+t(\cdot-x_0))\|_{0,\Omega}\le t^{-n/2}\|g\|_{0,\R^n}$, and Minkowski's integral inequality gives
\beq\label{G_bound}
 \|G_g\|_{0,\Omega}\le\int_0^1t^{n-1}\|g(x_0+t(\cdot-x_0))\|_{0,\Omega}\,dt\le\|g\|_{0,\R^n}\int_0^1t^{\,n-1-n/2}\,dt
 =\frac2n\|g\|_{0,\R^n}.
\eeq
Since $|x-x_0|\le C_{x_0}$ on $\Omega$, $\|(x-x_0)G_g\|_{0,\Omega}\le\frac{2C_{x_0}}{n}\|g\|_{0,\R^n}$; for $g=\widetilde
q$ this is the second bound, since $\|\widetilde q\|_{0,\R^n}=\|q\|_{0,\Omega}$.

\emph{Identity.} For $g\in C^1(\R^n)$, since $(x-x_0)\cdot\grad_x\bigl[g(x_0+t(x-x_0))\bigr]=t\,\frac{\mathrm d}{\mathrm dt}g(x_0+t(x-x_0))$, integration by parts in $t$ gives
\[
 (x-x_0)\cdot\grad G_g(x)=\int_0^1t^{n}\frac{\mathrm d}{\mathrm dt}g\bigl(x_0+t(x-x_0)\bigr)\,dt=g(x)-n\,G_g(x),
\]
hence $\divv\bigl((x-x_0)G_g\bigr)=n\,G_g+(x-x_0)\cdot\grad G_g=g$. For $q\in L^2(\Omega)$ take $g_k\in
C_c^\infty(\R^n)$ with $g_k\to\widetilde q$ in $L^2(\R^n)$. The bound,
applied to $g_k-\widetilde q$, gives $(x-x_0)G_{g_k}\to\Rop_nq$ in $L^2(\Omega)$, so for $\phi\in
C_c^\infty(\Omega)$ $-\int_\Omega\Rop_nq\cdot\grad\phi=\lim_{k\to\infty}\int_\Omega g_k\,\phi=\int_\Omega q\,\phi$, that is, $\divv\Rop_nq=q$ in
$\Omega$. The third bound follows from the first
two. The bound and the limit hold with
$\Omega$ replaced by any ball $B$ about $x_0$
(and $C_{x_0}$ by its radius), and $\phi\in C_c^\infty(B)$ gives
$-\int_B\Rop_nq\cdot\grad\phi=\int_B\widetilde q\,\phi$, that is, $\divv\Rop_nq=\widetilde q$ in $\R^n$.
\end{proof}

\begin{rem}[Poincar\'e homotopy]\label{rem:poincare}
Formula \eqref{Rd} is the Poincar\'e homotopy operator with a fixed center in top degree \cite{Cartan1967}, written for vector
fields: for the $n$-form $u=\widetilde q\,dx_1\wedge\cdots\wedge dx_n$ it is the vector proxy of $\int_0^1t^{n-1}(x-x_0)\lrcorner\,u(x_0+t(x-x_0))\,dt$, and $\divv\Rop_nq=q$ is the top-degree homotopy relation.
Costabel and McIntosh \cite{CostabelMcIntosh2010} average such fixed-center path integrals over the center with a
smooth weight \cite[(3.9)]{CostabelMcIntosh2010} to obtain operators with Sobolev mapping properties. Their \cite[(1.4)]{CostabelMcIntosh2010} is the familiar three-dimensional analogue for $\curll$.
\end{rem}

We now show that $\Rop_nq$ is continuous even when $q$ is only piecewise continuous, which is the situation in interface problems. Since $(\Rop_nq)(x)$ integrates $\widetilde q$ along the segment from $x_0$ to $x$, moving $x$ moves the segment, and the values of $\widetilde q$ on it change continuously except where the segment crosses $\partial\Omega$ or a surface across which $q$ jumps. If on every segment these crossings form a set of measure zero, they do not affect the integral, and $\Rop_nq$ is continuous.

\begin{lem}[continuity for piecewise continuous data]\label{lem:Rcont}
Let $S_q\subset\overline\Omega$ be closed, let $q\in L^\infty(\Omega)$ be continuous on
$\Omega\setminus S_q$, and let $S$ be a closed
set containing $\partial\Omega\cup S_q$, for instance $S=\partial\Omega\cup S_q$; then
$\widetilde q$ is continuous on $\R^n\setminus S$. Assume that
for every $x\in\overline\Omega\setminus\{x_0\}$ the open segment $\{x_0+t(x-x_0):0<t<1\}$ meets $S$
in a set of one-dimensional measure zero, for
instance in finitely many points. Then $(x-x_0)G_{\widetilde q}$
is continuous on $\overline\Omega$; in particular $\Rop_nq$ has no jumps.
\end{lem}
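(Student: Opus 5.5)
The plan is to prove continuity of $G_{\widetilde q}$ on $\overline\Omega\setminus\{x_0\}$ by dominated convergence along the moving segment. The factor $x-x_0$ is continuous, so it suffices to handle $G_{\widetilde q}$ there and then treat the point $x_0$ separately.

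\emph{Away from the center.} Fix $x\in\overline\Omega\setminus\{x_0\}$ and a sequence $x_k\to x$ in $\overline\Omega$. Write
\[
G_{\widetilde q}(x_k)=\int_0^1t^{n-1}\widetilde q\bigl(x_0+t(x_k-x_0)\bigr)\,dt .
\]
The integrands are bounded by $\|q\|_{L^\infty}$, since $\widetilde q$ vanishes outside $\Omega$. This gives the integrable majorant $\|q\|_{L^\infty}$ on $(0,1)$. For pointwise convergence, take $t\in(0,1)$ with $y_t:=x_0+t(x-x_0)\notin S$. Because $S$ is closed, a neighbourhood of $y_t$ misses $S$, and $\widetilde q$ is continuous there. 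Indeed, $\widetilde q=q$ is continuous on the open set $\Omega\setminus S_q$, and $\widetilde q=0$ on the open set $\R^n\setminus\overline\Omega$. These two open sets cover $\R^n\setminus S$, since $\partial\Omega\subset S$; this is the claim in the statement that $\widetilde q$ is continuous on $\R^n\setminus S$. Since $x_0+t(x_k-x_0)\to y_t$, the integrands converge for every such $t$. By the segment hypothesis the exceptional set $\{t\in(0,1):y_t\in S\}$ has measure zero; it is the preimage of the segment's intersection with $S$ under an affine bijection. Dominated convergence then gives $G_{\widetilde q}(x_k)\to G_{\widetilde q}(x)$. Multiplying by $x_k-x_0$ gives continuity of $(x-x_0)G_{\widetilde q}$ at $x$.

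\emph{At the center.} If $x_0\in\overline\Omega$, there is no hypothesis on a segment from $x_0$ to itself. Here I would instead use the bound $|G_{\widetilde q}(y)|\le\|q\|_{L^\infty}/n$, valid for all $y$. It gives $|(y-x_0)G_{\widetilde q}(y)|\le|y-x_0|\,\|q\|_{L^\infty}/n\to0$ as $y\to x_0$, so the field is continuous at $x_0$ with value $0$. This case is the only subtle point of the argument, and the factor $x-x_0$ handles it.

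\emph{Consequence and consistency.} Since $\Rop_nq$ is defined by the same formula \eqref{Rd}, its restriction to $\Omega$ coincides with this continuous function everywhere, so it has no jumps. The $L^2$ element of Lemma~\ref{lem:Rd} is this pointwise formula, so no identification of representatives is needed. The main obstacle I expect is only the bookkeeping that the null set is taken in the parameter $t$ for the limiting segment, not for the approximating segments. Dominated convergence needs a.e. convergence in $t$ only at the limit point $x$, which is exactly what the hypothesis provides.
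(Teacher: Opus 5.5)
Your proof is correct and follows the paper's argument: for $x\ne x_0$, dominated convergence in $t$ along the limiting segment, using the continuity of $\widetilde q$ off the closed set $S$ and the measure-zero hypothesis, and at the center the bound $|(x_k-x_0)G_{\widetilde q}(x_k)|\le\frac1n|x_k-x_0|\,\|q\|_{L^\infty}$. Your additional checks (that $\Omega\setminus S_q$ and $\R^n\setminus\overline\Omega$ cover $\R^n\setminus S$, and that the exceptional set of $t$ only needs to be null for the limiting segment) are details the paper leaves implicit.
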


\begin{proof}
Let $x_k\to x$ in $\overline\Omega$. If $x=x_0$, then $|(x_k-x_0)G_{\widetilde q}(x_k)|\le\frac1n|x_k-x_0|\,\|q\|_{L^\infty}\to0$. If $x\ne x_0$, then $x_0+t(x-x_0)\notin S$ for almost every $t\in(0,1)$, and for such $t$, $\widetilde
q(x_0+t(x_k-x_0))\to\widetilde q(x_0+t(x-x_0))$ by the continuity of $\widetilde q$ on $\R^n\setminus S$; the integrand of $G_{\widetilde q}$ is bounded by $t^{n-1}\|q\|_{L^\infty}$, and dominated convergence gives $G_{\widetilde q}(x_k)\to G_{\widetilde q}(x)$.
\end{proof}

For a convex polygon or polyhedron with $x_0$ in its interior the condition on $\partial\Omega$ holds. When the divergence $q^*=\divv\bsigma^*$ of an exact flux is bounded and
piecewise continuous, as in the interface examples below, the lemma applies even if $q^*$ jumps across the interfaces; it also applies to a bounded
continuous network $q$. In both cases $\Rop_nq$ has no jump, so a tangential jump of a flux is carried by its potential (Section~\ref{subsec:hdiv_jumps}).

\subsection{Cohomology part}\label{subsec:hdiv_cohom}

We use the standard structure of the boundary of a bounded Lipschitz domain \cite[Section~2.1]{BGHRR:25}. Let $O_1,\ldots,O_b$ be the bounded components of $\R^n\setminus\overline\Omega$ (the holes
of $\Omega$ for $n=2$, the cavities for $n=3$); $\p\Omega$ is the disjoint union of the closed sets $\Gamma_0$, the boundary of the unbounded component, and $\Gamma_i:=\p O_i$, $i=1,\ldots,b$; $b=\dim\cH_n$
below is the number of these components. With the normal $\bn$ exterior to $\Omega$, which on $\Gamma_i$ points into $O_i$, the cavity fluxes
\[
 F_i(\bv):=\langle\bv\cdot\bn,1\rangle_{\Gamma_i},\qquad i=1,\ldots,b,
\]
are bounded on $H(\divvr;\Omega)$: the function equal to $1$ on $\Gamma_i$
and to $0$ on the other components lies in $H^{1/2}(\p\Omega)$, since the finitely many
compact boundary components are mutually separated, and the normal trace
is bounded from $H(\divvr;\Omega)$ to $H^{-1/2}(\p\Omega)$. Choose $z_i\in O_i$ and let
\beq\label{harmonic_fields}
 \bh_i(x):=\frac{x-z_i}{|x-z_i|^n},\qquad \cH_n:=\Span\{\bh_1,\ldots,\bh_b\},\qquad
 \Pi_{\cH_n}\bv:=-\frac1{|S^{n-1}|}\sum_{i=1}^bF_i(\bv)\,\bh_i
\eeq
($\cH_n=\{0\}$ if there are none). The $\bh_i$ are smooth
and divergence free on $\overline\Omega$, and $F_i(\bh_j)=-|S^{n-1}|\delta_{ij}$, since
$\bn$ is the inner normal of $O_i$; hence $F_j(\Pi_{\cH_n}\bv)=F_j(\bv)$:
$\Pi_{\cH_n}\bv$ is the element of $\cH_n$ with the cavity
fluxes of $\bv$, and the coefficients of the cohomology
component are determined explicitly by the cavity fluxes, $c_i=-F_i(\bv)/|S^{n-1}|$.
These coefficients represent the $(n-1)$st de Rham cohomology
of $\Omega$; in three dimensions the obstruction to a global
skew potential is therefore given by cavities, not by
handles (handles carry the first cohomology, which concerns curl-free
fields). No canonical choice of harmonic representatives is needed.

\begin{cor}[fluxes of the radial part]\label{cor:R_fluxes}
For $q\in L^2(\Omega)$, $F_i(\Rop_nq)=0$ for $i=1,\ldots,b$, and $\langle\Rop_nq\cdot\bn,1\rangle_{\Gamma_0}=\int_\Omega q\,dx$. In particular $\Pi_{\cH_n}\Rop_nq=0$.
\end{cor}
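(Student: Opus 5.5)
The proof rests on Lemma~\ref{lem:Rd}: $\Rop_nq$ is defined on all of $\R^n$, lies in $L^2_{\rm loc}$, and satisfies $\divv\Rop_nq=\widetilde q$ in $\R^n$, where $\widetilde q$ vanishes outside $\Omega$. The flux of $\Rop_nq$ through $\Gamma_i$ will be identified with the integral of $\divv\Rop_nq$ over the cavity $O_i$, and that integral is zero. Because the normal traces are only in $H^{-1/2}$, I would work with suitable test functions rather than with pointwise traces.

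\textbf{Cavity fluxes.} Fix $i$ and choose $\chi\in C_c^\infty(\R^n)$ with $\chi=1$ on a neighbourhood of $\overline{O_i}$ and $\chi=0$ near the other components $\Gamma_j$, $j\ne i$. This is possible because the compact boundary components are mutually separated. The trace of $\chi|_\Omega$ on $\partial\Omega$ is then the indicator of $\Gamma_i$, so by the definition of the normal trace (Green's formula on $\Omega$),
\[
 F_i(\Rop_nq)=\int_\Omega\Rop_nq\cdot\grad\chi\,dx+\int_\Omega q\,\chi\,dx .
\]
Next apply the distributional identity $\divv\Rop_nq=\widetilde q$ on $\R^n$ with the test function $\chi$; the lemma allows $C_c^\infty(\R^n)$ test functions, since it holds on every ball. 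This gives $-\int_{\R^n}\Rop_nq\cdot\grad\chi=\int_{\R^n}\widetilde q\chi=\int_\Omega q\chi$. Split the left-hand integral into $\Omega$, $O_i$, the other components of $\R^n\setminus\overline\Omega$, and the null set $\partial\Omega$. On $O_i$ we have $\grad\chi=0$ because $\chi\equiv1$ there, and on the other components the integrand also contributes nothing. For the bounded components $O_j$, $j\ne i$, this is because $\chi$ vanishes near them. For the unbounded component the situation needs more care, and I would adjust the choice of $\chi$ there, as explained in the next paragraph. Once only the $\Omega$ part survives, $\int_\Omega\Rop_nq\cdot\grad\chi=-\int_\Omega q\chi$, and therefore $F_i(\Rop_nq)=0$.

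\textbf{The main obstacle: the sign of the normal.} The exterior normal of $\Omega$ on $\Gamma_i$ points into $O_i$, and the unbounded component has to be handled cleanly. I would choose $\chi$ supported in a neighbourhood $U$ of $\overline{O_i}$ that is disjoint from all the other components of $\R^n\setminus\Omega$. Then $\grad\chi$ is supported in $U\cap\Omega$, together with $O_i$ where it vanishes, so the whole-space integral reduces exactly to $\int_\Omega$, and the sign in Green's formula is the one used above. This is the step where I expect the bookkeeping to be delicate: one must check that $\int_\Omega\bv\cdot\grad\chi+\int_\Omega(\divv\bv)\chi=\langle\bv\cdot\bn,\chi\rangle_{\partial\Omega}$ holds with $\bn$ exterior to $\Omega$.

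\textbf{Outer boundary and conclusion.} For $\Gamma_0$ I would take $\chi\equiv1$ on a ball containing $\overline\Omega$. Green's formula then gives $\sum_{i=0}^b\langle\Rop_nq\cdot\bn,1\rangle_{\Gamma_i}=\int_\Omega q\,dx$. Since $F_i(\Rop_nq)=0$ for $i\ge1$, this yields $\langle\Rop_nq\cdot\bn,1\rangle_{\Gamma_0}=\int_\Omega q\,dx$. Finally, $\Pi_{\cH_n}\Rop_nq=0$ follows directly from \eqref{harmonic_fields}, because every coefficient $F_i(\Rop_nq)$ vanishes.
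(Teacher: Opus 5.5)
Your proof is correct and is essentially the paper's argument: both rest on $\divv\Rop_nq=\widetilde q$ in all of $\R^n$ (Lemma~\ref{lem:Rd}). Your cutoff $\chi$, equal to $1$ near $\overline{O_i}$ and supported away from the other components, packs into one weak computation the paper's two steps, namely that the normal traces from both sides of $\Gamma_i$ coincide and that $\int_{O_i}\divv\Rop_nq\,dx=0$; the outer-boundary identity then comes from the divergence theorem in $\Omega$ exactly as in the paper. The ``delicate'' sign you flag is just the standard definition of the normal trace with $\bn$ exterior to $\Omega$, and such a $\chi$ exists because $\overline{O_i}$ is compact and at positive distance from the other components.
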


\begin{proof}
By Lemma~\ref{lem:Rd}, $\Rop_nq$ has a divergence in $L^2_{\rm loc}(\R^n)$, so its normal
traces from the two sides of $\Gamma_i$ coincide, and $\divv\Rop_nq=\widetilde q=0$
in $O_i$; since the outer normal of $O_i$ on $\Gamma_i$ is $-\bn$,
$F_i(\Rop_nq)=-\int_{O_i}\divv\Rop_nq\,dx=0$. The divergence theorem in $\Omega$ gives $\sum_{i=0}^b\langle\Rop_nq\cdot\bn,1\rangle_{\Gamma_i}=\int_\Omega q\,dx$.
\end{proof}

The radial part thus carries the source balance through the outer boundary and no cavity flux.

\subsection{Stable splitting and regular potentials}\label{subsec:hdiv_split}

The following splitting is classical in substance: for $n=2,3$ it
is the stream-function and vector-potential description of \cite[Lemmas~2.1.2 and~2.1.9]{BGHRR:25} together
with a right inverse of the divergence, and in general
it is the exactness of the de Rham complex
up to its cohomology \cite{CostabelMcIntosh2010}. We include a proof
because of the specific right inverse $\Rop_n$ and the linear lifting.

\begin{prop}[classical $H(\divvr)$ splitting]\label{prop:hdiv_split}
$P:=\Rop_n\divv$ is the bounded projection of $H(\divvr;\Omega)$ onto $\Rop_nL^2(\Omega)$ along $\ker\divv$, and
\beq\label{hdiv_split}
 H(\divvr;\Omega)=\bigl(\operatorname{Div}\,H^1(\Omega;\mathbb K_n)\oplus\cH_n\bigr)\oplus\Rop_nL^2(\Omega),
\eeq
where $\ker\divv=\operatorname{Div}\,H^1(\Omega;\mathbb K_n)\oplus\cH_n$ and
\beq\label{Div_range}
 \operatorname{Div}\,H^1(\Omega;\mathbb K_n)=\{\bv\in H(\divvr;\Omega):\ \divv\bv=0,\ F_i(\bv)=0,\ i=1,\ldots,b\}.
\eeq
Equivalently, every $\btau\in H(\divvr;\Omega)$ can be written
\beq\label{hdiv_d_representation}
 \btau=\operatorname{Div}A+\Pi_{\cH_n}\btau+\Rop_n\divv\btau ,
\eeq
that is, with $q=\divv\btau$ and $\bh=\Pi_{\cH_n}\btau$ (Corollary~\ref{cor:R_fluxes}), where $q$, $\bh$ and $\operatorname{Div}A=\btau-\Rop_nq-\bh$
are unique and the potential $A\in H^1(\Omega;\mathbb K_n)$ is unique up to
an element of the kernel of $\operatorname{Div}$. The potential can be fixed by a bounded
linear map: there is a bounded linear $\mathsf L$ from the space \eqref{Div_range}
to $H^1(\Omega;\mathbb K_n)$ with $\operatorname{Div}\,\mathsf L\bv=\bv$, and $A=\mathsf L(\btau-\Rop_nq-\bh)$ satisfies
\[
 \|A\|_1+\|\bh\|_0+\|q\|_0\le C\|\btau\|_{H(\divvr)} .
\]
\end{prop}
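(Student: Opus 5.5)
The proof splits into an algebraic part and an analytic part. The algebraic part is assembled from Lemma~\ref{lem:Rd}, Corollary~\ref{cor:R_fluxes} and the cavity-flux identities of Section~\ref{subsec:hdiv_cohom}. The analytic part is the characterization \eqref{Div_range} together with a bounded linear lifting $\mathsf L$.

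\emph{Projection.} By Lemma~\ref{lem:Rd}, $\divv\Rop_n q=q$, so $P=\Rop_n\divv$ satisfies $P^2=\Rop_n(\divv\Rop_n)\divv=P$. Its norm is bounded by $(1+4C_{x_0}^2/n^2)^{1/2}$, since $\|\divv\btau\|_0\le\|\btau\|_{H(\divvr)}$. Its range is $\Rop_nL^2(\Omega)$ and its kernel is $\ker\divv$, because $\Rop_n$ is injective ($\divv\Rop_nq=q$). This gives $H(\divvr;\Omega)=\ker\divv\oplus\Rop_nL^2(\Omega)$, with $q=\divv\btau$ uniquely determined.

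\emph{Splitting of the kernel.} On $\ker\divv$, consider $\Pi_{\cH_n}$. The identity $F_j(\Pi_{\cH_n}\bv)=F_j(\bv)$ shows that $\Pi_{\cH_n}$ is a bounded projection onto $\cH_n$. The $\bh_i$ are divergence free on $\overline\Omega$, so $\cH_n\subset\ker\divv$. Its kernel within $\ker\divv$ is the set of divergence-free fields with vanishing cavity fluxes. Hence $\ker\divv=\{\bv:\divv\bv=0,\ F_i(\bv)=0\}\oplus\cH_n$, and it remains to identify the first summand with $\operatorname{Div}H^1(\Omega;\mathbb K_n)$.

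The inclusion $\subset$ of $\operatorname{Div}H^1$ into that set is easy. Lemma~\ref{lem:skew}(a) gives $\divv\operatorname{Div}A=0$. For the fluxes, extend $A$ to $\widetilde A\in H^1(\R^n;\mathbb K_n)$ by a Stein extension applied entrywise, which preserves skew-symmetry. Then $\operatorname{Div}\widetilde A$ is divergence free in $\R^n$, so its normal traces match across $\Gamma_i$, and $F_i(\operatorname{Div}A)=-\int_{O_i}\divv\operatorname{Div}\widetilde A=0$.

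In \eqref{hdiv_d_representation}, Corollary~\ref{cor:R_fluxes} gives $\Pi_{\cH_n}\Rop_nq=0$. Therefore $\bv:=\btau-\Rop_nq-\Pi_{\cH_n}\btau$ has zero divergence and zero cavity fluxes, and uniqueness of $\operatorname{Div}A$ follows from the direct sums.

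\emph{Main obstacle: the inclusion $\supset$ with a bounded linear lifting.} Given $\bv$ with $\divv\bv=0$ and $F_i(\bv)=0$, I would proceed in three steps.

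First, extend $\bv$ to a divergence-free field on a large ball $B\supset\overline\Omega$. In each cavity $O_i$, solve the Neumann problem $\Delta w_i=0$ with $\p_\bn w_i=\bv\cdot\bn$ on $\Gamma_i$; this is solvable exactly because $F_i(\bv)=0$, and it is bounded in $H^{-1/2}\to H^1$. Set the extension to be $\grad w_i$ in $O_i$. On the unbounded side, take the region $B\setminus\overline{\Omega\cup\bigcup_iO_i}$ and solve a Neumann problem with data $\bv\cdot\bn$ on $\Gamma_0$ and zero on $\partial B$. Its compatibility condition is $\langle\bv\cdot\bn,1\rangle_{\Gamma_0}=0$, which follows from the divergence theorem in $\Omega$ together with $F_i(\bv)=0$ for $i\ge1$. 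Since normal traces match across every interface, the glued field $\bar\bv\in H(\divvr;B)$ is divergence free, has zero normal trace on $\partial B$, and depends linearly and boundedly on $\bv$.

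Second, extend $\bar\bv$ by zero to $\R^n$; it remains divergence free and now has compact support. Define $A:=$ the solution of $-\Delta A_{ij}=\p_j\bar v_i-\p_i\bar v_j$ via the Newton potential, which yields skew $A$ with $\grad A\in L^2$ locally. The computation $\operatorname{Div}A=\bar\bv$ follows from $\divv\bar\bv=0$ via $\Delta\bar\bv=\sum_j\p_j(\p_j\bar v_i-\p_i\bar v_j)$; this is the standard Biot--Savart argument, done in Fourier variables: $\hat A_{ij}=(i\xi_j\hat v_i-i\xi_i\hat v_j)/|\xi|^2$ and $\xi\cdot\hat v=0$.

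Third, restrict to $\Omega$ and subtract the mean to get $H^1$ bounds. Alternatively, cite the regular potential operators of \cite{CostabelMcIntosh2010}, which give the lifting directly.

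Setting $\mathsf L\bv:=A|_\Omega$ gives $\operatorname{Div}\mathsf L\bv=\bv$, and the final estimate follows by combining this bound with the bounds for $q$ and $\Pi_{\cH_n}$.
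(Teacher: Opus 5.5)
Your proof is correct, and most of it matches the paper. The projection $P=\Rop_n\divv$ comes from Lemma~\ref{lem:Rd}, the kernel is split via the cavity fluxes and $\Pi_{\cH_n}$, and $F_i(\operatorname{Div}A)=0$ follows from a skew-symmetric Sobolev extension. The Neumann problems in the $O_i$ and in $B\setminus\overline{\Omega\cup O_1\cup\cdots\cup O_b}$ glue a divergence-free field on a ball.

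The genuine difference is the final lifting step. The paper applies the regularized Poincar\'e operator of \cite[Proposition~4.1(i)]{CostabelMcIntosh2010} on the ball $B$, which is starlike with respect to a ball, to the closed form $\widetilde\bw\lrcorner\,\mathrm{vol}$, and converts back through the Hodge star. It imposes $\p_\nu\phi_0=0$ on $\partial B$ but never uses the resulting zero normal trace. You do use it: the glued field extends by zero to a compactly supported divergence-free field on $\R^n$, and a Biot--Savart Fourier multiplier then produces a skew potential. Plancherel gives $\|\grad A\|_{L^2(\R^n)}\le C\|\bar\bv\|_0$. The $L^2(\Omega)$ bound follows either from $|\hat A|\le C|\hat v|/|\xi|$, which is locally integrable for $n\ge2$, or from subtracting the mean and applying Poincar\'e on the connected domain $\Omega$, as you propose.

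Your route is more elementary and self-contained, needing no differential-forms machinery. The paper's route keeps the argument inside the framework of \cite{CostabelMcIntosh2010}, which it reuses for Corollary~\ref{cor:regular_potential} and Proposition~\ref{prop:hdiv_regular}.

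There is one sign slip. Your definition $-\Delta A_{ij}=\p_j\bar v_i-\p_i\bar v_j$, that is $\hat A_{ij}=i(\xi_j\hat v_i-\xi_i\hat v_j)/|\xi|^2$, gives $\sum_j i\xi_j\hat A_{ij}=-\hat v_i+\xi_i(\xi\cdot\hat v)/|\xi|^2=-\hat v_i$. So $\operatorname{Div}A=-\bar\bv$. Take $\hat A_{ij}=i(\xi_i\hat v_j-\xi_j\hat v_i)/|\xi|^2$ instead, and then $\operatorname{Div}A=\bar\bv$ as required.
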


\begin{proof}
By Lemma~\ref{lem:Rd}, $P$ is bounded, $P^2=P$, $\ran P=\Rop_nL^2(\Omega)$ and $\ker P=\ker\divv$, so $H(\divvr;\Omega)=\ker\divv\oplus\Rop_nL^2(\Omega)$. It remains to show $\ker\divv=\operatorname{Div}\,H^1(\Omega;\mathbb K_n)\oplus\cH_n$ and \eqref{Div_range}, with $\|A\|_1+\|\bh\|_0\le C\|\bv\|_{H(\divvr)}$ for $\bv\in\ker\divv$. For
a general $\btau$ this is applied to $\bv:=\btau-\Rop_nq$, $q=\divv\btau$, for which $\Pi_{\cH_n}\bv=\Pi_{\cH_n}\btau$ by Corollary~\ref{cor:R_fluxes}; then $\|q\|_0\le\|\btau\|_{H(\divvr)}$ and $\|\bv\|_{H(\divvr)}\le C\|\btau\|_{H(\divvr)}$ by \eqref{Rd_bound}. Let $O_0:=B\setminus\overline{\Omega\cup O_1\cup\cdots\cup O_b}$ for an open ball $B\supset\overline\Omega$.

\emph{The sum is direct.} For $A\in H^1(\Omega;\mathbb K_n)$ let
$\widetilde A\in H^1(\R^n;\mathbb K_n)$ be a skew-symmetric extension:
each $a_{ij}$, $i<j$, is extended by a bounded extension
operator of the Lipschitz domain $\Omega$, and $\widetilde A_{ji}:=-\widetilde A_{ij}$.
By \eqref{divDiv_zero}, $\operatorname{Div}\widetilde A\in L^2(\R^n)^n$ is divergence free, so
its normal traces from the two sides of $\Gamma_i$
coincide, and $F_i(\operatorname{Div}A)=-\int_{O_i}\divv\operatorname{Div}\widetilde A\,dx=0$, $i=1,\ldots,b$. An $\bh\in\cH_n$ with zero cavity fluxes
vanishes, since $F_i(\bh_j)=-|S^{n-1}|\delta_{ij}$; hence $\operatorname{Div}\,H^1(\Omega;\mathbb K_n)\cap\cH_n=\{0\}$, and $\operatorname{Div}\,H^1(\Omega;\mathbb K_n)$
is contained in the right-hand side of \eqref{Div_range}.

\emph{Every divergence-free field is represented.} Let $\bv\in H(\divvr;\Omega)$ with $\divv\bv=0$, and let $\bh:=\Pi_{\cH_n}\bv$; then $\|\bh\|_0\le C\|\bv\|_{H(\divvr)}$ by the boundedness of the $F_i$. The field $\bw:=\bv-\bh$ is divergence free with zero flux through $\Gamma_1,\ldots,\Gamma_b$, and through $\Gamma_0$ as well, because the fluxes through all components sum to $\int_\Omega\divv\bw\,dx=0$. For $i=0,\ldots,b$ let $\phi_i\in H^1(O_i)$, unique up to a constant, solve the Neumann problem
\[
 \int_{O_i}\grad\phi_i\cdot\grad\chi\,dx=-\langle\bw\cdot\bn,\chi\rangle_{\Gamma_i}\qquad\text{for all }\chi\in H^1(O_i),
\]
that is, $\Delta\phi_i=0$ in $O_i$, $\grad\phi_i\cdot\bn=\bw\cdot\bn$ on $\Gamma_i$ (the outer
normal of $O_i$ there is $-\bn$) and, for $i=0$,
$\p_\nu\phi_0=0$ on $\p B$. The right-hand side vanishes
for $\chi=1$ because the flux of $\bw$ through $\Gamma_i$ is
zero, so $\phi_i$ exists, and $\|\grad\phi_i\|_{0,O_i}\le C\|\bw\cdot\bn\|_{H^{-1/2}(\Gamma_i)}\le C\|\bw\|_{H(\divvr)}$. The field
$\widetilde\bw$ equal to $\bw$ in $\Omega$ and to $\grad\phi_i$ in
$O_i$ has the same normal trace from both sides
of every $\Gamma_i$, so $\widetilde\bw\in L^2(B)^n$ is divergence free
in $B$, with $\|\widetilde\bw\|_{0,B}\le C\|\bv\|_{H(\divvr)}$. The $(n-1)$-form $\widetilde\bw\lrcorner\,\mathrm{vol}$ is
closed in $B$, and $B$ is starlike with respect
to a ball, so \cite[Proposition~4.1(i)]{CostabelMcIntosh2010} gives an $(n-2)$-form $\mu\in H^1(B,\Lambda^{n-2})$ with $d\mu=\widetilde\bw\lrcorner\,\mathrm{vol}$
and $\|\mu\|_{1,B}\le C\|\widetilde\bw\|_{0,B}$. Let $\widetilde A\in H^1(B;\mathbb K_n)$
be the skew-symmetric matrix of the $2$-form with $\star\widetilde A=\mu$.
By Remark~\ref{rem:forms}, $(\operatorname{Div}\widetilde A)\lrcorner\,\mathrm{vol}=d(\star\widetilde A)=\widetilde\bw\lrcorner\,\mathrm{vol}$, so $\operatorname{Div}\widetilde A=\widetilde\bw$, and
$\|\widetilde A\|_{1,B}\le C\|\widetilde\bw\|_{0,B}$. Then $A:=\widetilde A|_\Omega$ satisfies $\operatorname{Div}A=\bv-\bh$ and $\|A\|_1\le C\|\bv\|_{H(\divvr)}$. If
$F_i(\bv)=0$ for all $i$, then $\bh=0$ and $\bv=\operatorname{Div}A$, which gives the
other inclusion in \eqref{Div_range}. For $n=2$ and $n=3$ this
is the classical argument for the stream function
and the vector potential \cite[Lemmas~2.1.2 and~2.1.9]{BGHRR:25}. Every step is linear in $\bv$:
the Neumann problems with $\phi_i$ normalized by zero mean, the extension
$\widetilde\bw$, and the integral operator of \cite[Proposition~4.1(i)]{CostabelMcIntosh2010}; restricted to the space
\eqref{Div_range}, where $\bh=0$, this is the map $\mathsf L$.
\end{proof}

The splitting is a bounded direct sum: $P_R:=\Rop_n\divv$ and $P_H:=\Pi_{\cH_n}$ are bounded projections ($P_H\bh_i=\bh_i$ since $F_j(\bh_i)=-|S^{n-1}|\delta_{ij}$), with $P_HP_R=0$ because $\Rop_nq$ has no cavity fluxes (Corollary~\ref{cor:R_fluxes}) and $P_RP_H=0$ because every $\bh\in\cH_n$ is divergence free. Hence $P_0:=I-P_R-P_H$ satisfies $P_0^2=P_0$, and the three projections have the ranges $\Rop_nL^2(\Omega)$, $\cH_n$ and $\operatorname{Div}\,H^1(\Omega;\mathbb K_n)$.

\begin{cor}[representation norm]\label{cor:representation_norm}
For $\btau=\operatorname{Div}A+\Rop_nq+\bh$ as in Proposition~\ref{prop:hdiv_split}, with any potential $A$,
\[
 c\,\|\btau\|_{H(\divvr)}\le\operatorname{dist}_{H^1}(A,\ker\operatorname{Div})+\|\bh\|_0+\|q\|_0\le C\|\btau\|_{H(\divvr)} ,
\]
and the same holds for $\btau-\btau_m$ with $A-A_m$, $\bh-\bh_m$, $q-q_m$ in place of $A$, $\bh$, $q$.
\end{cor}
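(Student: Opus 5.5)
The plan is to prove the two inequalities separately, using the boundedness of $\operatorname{Div}$ from Lemma~\ref{lem:skew}(a), of $\Rop_n$ from Lemma~\ref{lem:Rd}, and the bounded lifting $\mathsf L$ together with the uniqueness statements of Proposition~\ref{prop:hdiv_split}. Throughout, $\btau=\operatorname{Div}A+\Rop_nq+\bh$ with $A\in H^1(\Omega;\mathbb K_n)$, $q\in L^2(\Omega)$ and $\bh\in\cH_n$.

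\emph{Lower bound.} For any $K\in\ker\operatorname{Div}$ we have $\btau=\operatorname{Div}(A-K)+\Rop_nq+\bh$. By \eqref{divDiv_zero}, $\operatorname{Div}(A-K)$ is divergence free with $\|\operatorname{Div}(A-K)\|_{H(\divvr)}\le\sqrt{n-1}\,\|A-K\|_1$. By \eqref{Rd_bound}, $\|\Rop_nq\|_{H(\divvr)}\le C\|q\|_0$. Each $\bh_i$ is smooth and divergence free on $\overline\Omega$, so $\|\bh\|_{H(\divvr)}=\|\bh\|_0$. Taking the infimum over $K$ then gives $\|\btau\|_{H(\divvr)}\le C\bigl(\operatorname{dist}_{H^1}(A,\ker\operatorname{Div})+\|\bh\|_0+\|q\|_0\bigr)$.

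\emph{Upper bound.} Proposition~\ref{prop:hdiv_split} says that $q$ and $\bh$ are determined by $\btau$. Indeed, $\divv\btau=q$ by \eqref{divDiv_zero} and Lemma~\ref{lem:Rd}, and $\Pi_{\cH_n}\btau=\bh$ because $\operatorname{Div}A$ and $\Rop_nq$ have no cavity fluxes (the proof of the proposition and Corollary~\ref{cor:R_fluxes}) while $\Pi_{\cH_n}$ fixes $\cH_n$. Hence $\|q\|_0\le\|\btau\|_{H(\divvr)}$ and $\|\bh\|_0\le C\|\btau\|_{H(\divvr)}$, the latter by the boundedness of the $F_i$.

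Next set $\bv:=\operatorname{Div}A=\btau-\Rop_nq-\bh$. It lies in the space \eqref{Div_range}, and $\|\bv\|_{H(\divvr)}\le C\|\btau\|_{H(\divvr)}$. Let $A':=\mathsf L\bv$; then $\operatorname{Div}A'=\operatorname{Div}A$, so $A-A'\in\ker\operatorname{Div}$. It follows that $\operatorname{dist}_{H^1}(A,\ker\operatorname{Div})\le\|A-(A-A')\|_1=\|A'\|_1\le C\|\btau\|_{H(\divvr)}$. This step is the only nontrivial one, and it is exactly where the existence of the bounded lifting $\mathsf L$ is needed. Without it, $\ker\operatorname{Div}$ could fail to allow a bounded distance.

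\emph{Differences.} The last claim follows from linearity. The field $\btau-\btau_m=\operatorname{Div}(A-A_m)+\Rop_n(q-q_m)+(\bh-\bh_m)$ has the same form with $A-A_m\in H^1(\Omega;\mathbb K_n)$, $q-q_m\in L^2(\Omega)$ and $\bh-\bh_m\in\cH_n$, so the argument above applies verbatim.
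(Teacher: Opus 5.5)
Your proof is correct and follows the paper's argument step for step. The lower bound subtracts an arbitrary $K\in\ker\operatorname{Div}$ and uses \eqref{divDiv_zero} and \eqref{Rd_bound}; the upper bound uses $q=\divv\btau$, $\bh=\Pi_{\cH_n}\btau$ and the lifting $\mathsf L$ applied to $\operatorname{Div}A$; and the difference statement follows by linearity. One small correction to your aside: the quotient bound would not fail without an explicit $\mathsf L$, because $\operatorname{Div}$ maps $H^1(\Omega;\mathbb K_n)$ boundedly onto the closed subspace \eqref{Div_range}, so the open mapping theorem already gives it.
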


\begin{proof}
For $K\in\ker\operatorname{Div}$, $\|\btau\|_{H(\divvr)}\le\|\operatorname{Div}(A-K)\|_0+\|\bh\|_0+\|\Rop_nq\|_{H(\divvr)}\le\sqrt{n-1}\,|A-K|_1+\|\bh\|_0+C\|q\|_0$ by \eqref{divDiv_zero} and \eqref{Rd_bound}. Conversely, $\|q\|_0\le\|\btau\|_{H(\divvr)}$, $\|\bh\|_0\le C\|\btau\|_{H(\divvr)}$ by the boundedness of the $F_i$, and $A-\mathsf L\,\operatorname{Div}A\in\ker\operatorname{Div}$, so $\operatorname{dist}_{H^1}(A,\ker\operatorname{Div})\le\|\mathsf L(\btau-\Rop_nq-\bh)\|_1\le C\|\btau\|_{H(\divvr)}$. The statement for $\btau-\btau_m$ follows by linearity.
\end{proof}

Thus, after quotienting out the gauge of the potential, the potential, the cohomology part and
the divergence give a norm-equivalent representation of $H(\divvr;\Omega)$: with $H^1(\Omega;\mathbb K_n)/\ker\operatorname{Div}$ equipped with its quotient norm $\operatorname{dist}_{H^1}(A,\ker\operatorname{Div})$,
\[
 \bigl(H^1(\Omega;\mathbb K_n)/\ker\operatorname{Div}\bigr)\times\cH_n\times L^2(\Omega)\to H(\divvr;\Omega),\qquad([A],\bh,q)\mapsto\operatorname{Div}A+\bh+\Rop_nq,
\]
is an isomorphism of Banach spaces: it is onto by Proposition~\ref{prop:hdiv_split}; it is injective, since $\operatorname{Div}A+\bh+\Rop_nq=0$ gives $q=0$ by taking the divergence, then $\bh=0$ by
applying $\Pi_{\cH_n}$, and then $\operatorname{Div}A=0$; and both it and its inverse are bounded by Corollary~\ref{cor:representation_norm}.

\begin{cor}[regular potentials]\label{cor:regular_potential}
Let $r\ge0$, $\bu\in H^r(\Omega)^n$ with $\divv\bu=0$ and $F_i(\bu)=0$, $i=1,\ldots,b$. Then $\bu=\operatorname{Div}A$ with $A\in H^{r+1}(\Omega;\mathbb K_n)$ and $\|A\|_{r+1}\le C_r\|\bu\|_r$.
\end{cor}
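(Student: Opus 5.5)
We follow the proof of Proposition~\ref{prop:hdiv_split} and check that each linear step preserves $H^r$ regularity, with the gain of one derivative coming from the Costabel--McIntosh potential operator. The plan is as follows.

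First, set $\bw:=\bu$. No cohomology part has to be subtracted, since $F_i(\bu)=0$ for all $i$, and the flux through $\Gamma_0$ is also zero because $\divv\bu=0$.

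Second, extend $\bu$ to a divergence-free field $\widetilde\bw\in H^r(B)^n$ on a ball $B\supset\overline\Omega$ with $\|\widetilde\bw\|_{r,B}\le C\|\bu\|_r$. Here lies the only real change from the proof of Proposition~\ref{prop:hdiv_split}. That proof glues in $\grad\phi_i$, where $\phi_i$ is a Neumann solution in the complement components $O_i$. This construction is $L^2$-stable only: the glued field has matching normal traces, but its tangential traces jump, so for $r\ge 1/2$ it is not in $H^r(B)$. We would instead do the following.
\begin{itemize}
\item[(i)] Take a Stein-type bounded extension $E\bu\in H^r(\R^n)^n$, which is simultaneously bounded for all $r$ on the Lipschitz domain $\Omega$. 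Then $g:=\divv E\bu\in H^{r-1}(B)$ is supported in $\overline{B\setminus\Omega}$.
\item[(ii)] On each $O_i$ the flux of $E\bu$ through $\Gamma_i$ vanishes, so $\int_{O_i}g\,dx=0$, with the obvious modification on $O_0$ using a cutoff near $\partial B$.
\item[(iii)] Correct by a Bogovski\u{\i}-type operator on each $O_i$: $\bv_i\in H^r_0(O_i)^n$ with $\divv\bv_i=g|_{O_i}$ and $\|\bv_i\|_r\le C\|g\|_{r-1}$. Such operators exist on bounded Lipschitz domains for all real $r$, and Costabel--McIntosh \cite{CostabelMcIntosh2010} give them as compactly supported regularized Poincar\'e operators.
\end{itemize}
Extending the $\bv_i$ by zero, $\widetilde\bw:=E\bu-\sum_i\bv_i$ is divergence free on $B$, lies in $H^r(B)$ (zero extension of $H^r_0$ fields is bounded, with the usual care at half-integer $r$; we would handle those by interpolation or by taking the $\bv_i$ in the smoother range), and agrees with $\bu$ on $\Omega$.

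Third, apply the regularized Poincar\'e operator of \cite[Proposition~4.1]{CostabelMcIntosh2010} on $B$, which is starlike with respect to a ball. This operator is a pseudodifferential operator of order $-1$ and maps $H^r(B,\Lambda^{n-1})$ into $H^{r+1}(B,\Lambda^{n-2})$ for every $r$. It produces $\mu$ with $d\mu=\widetilde\bw\lrcorner\,\mathrm{vol}$, since the form is closed and $B$ is contractible, and $\|\mu\|_{r+1,B}\le C_r\|\widetilde\bw\|_{r,B}$. Define $A$ by $\star\widetilde A=\mu$ as before and restrict to $\Omega$. Remark~\ref{rem:forms} then gives $\operatorname{Div}A=\bu$ and $\|A\|_{r+1}\le C_r\|\bu\|_r$.

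The main obstacle is the second step: a regularity-preserving divergence-free extension across the boundary components. The Neumann gluing of the earlier proof fails as soon as $r\ge1/2$. The fix is the Bogovski\u{\i} correction on the complement, which needs the vanishing fluxes $F_i(\bu)=0$ exactly as compatibility conditions, together with some care at half-integer $r$.
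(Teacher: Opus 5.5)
Your argument is correct in outline, but it takes a different route from the paper. The paper never builds an extension at the $H^r$ level. It takes the $H^1$ potential $A_0$ supplied by Proposition~\ref{prop:hdiv_split}, where the cavity obstruction has already been removed, so that $\bu\lrcorner\,\mathrm{vol}=d\mu_0$ with $\mu_0=\star A_0\in H^1$. It then applies \cite[Corollary~4.7(a)]{CostabelMcIntosh2010} with $s=r$, $t=1$. This result upgrades an $H^1$ potential of an $H^r$ form to an $H^{r+1}$ potential directly on the Lipschitz domain $\Omega$, with $\|\mu\|_{r+1}\le C_r(\|\bu\|_r+\|\mu_0\|_1)$.

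You instead redo the extension step of Proposition~\ref{prop:hdiv_split} in an $H^r$-stable way. You replace the Neumann gluing, whose failure for $r\ge1/2$ you diagnose correctly, by a Stein extension with Bogovski\u{\i} corrections on the components $O_i$. After that you need only the ball lifting of \cite[Proposition~4.1]{CostabelMcIntosh2010}. The paper's proof is two lines because the topology is handled once, at the $H^1$ level, and the regularity gain is a black box on $\Omega$. Yours is longer and needs care with function spaces on the complement. In exchange it is an explicit construction, in which the conditions $F_i(\bu)=0$ appear visibly as the solvability conditions of the extension problem.

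One step must be stated differently for $0<r<1/2$, and as written it does not work there.
\begin{itemize}
\item[(a)] For such $r$, $g=\divv E\bu\in H^{r-1}(\R^n)$ with $r-1<-1/2$. Distributions of this order may contain layers $f\,\delta_{\Gamma_i}$ on the boundary components, which the restriction $g|_{O_i}$ does not see.
\item[(b)] Consequently, a field $\bv_i$ with $\divv\bv_i=g|_{O_i}$ only in the open set $O_i$ may, after zero extension, have a divergence in $\R^n$ that differs from $g$ near $\overline{O_i}$ by such a layer. Then $\widetilde\bw$ is not divergence free in $B$.
\item[(c)] For example, zero extension is itself a bounded $H^r$ extension for $r<1/2$. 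It makes $g=-(\bu\cdot\bn)\,\delta_{\partial\Omega}$ a pure layer, so $g|_{O_i}=0$ and your correction does nothing.
\item[(d)] Similarly, the flux of $E\bu$ through $\Gamma_i$ from outside is not defined for $r<1$.
\end{itemize}

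The repair is as follows.
\begin{itemize}
\item[(a)] For $i\ge1$, take $\chi_i\in C_c^\infty(\R^n)$ equal to $1$ near $\overline{O_i}$ and to $0$ near $\R^n\setminus(\Omega\cup\overline{O_i})$. Let $g_i:=\chi_ig$, a distribution in $H^{r-1}(\R^n)$ supported in $\overline{O_i}$.
\item[(b)] Since $\grad\chi_i$ has compact support in $\Omega$, Green's formula gives $\langle g_i,1\rangle=-(\bu,\grad\chi_i)_\Omega=-F_i(\bu)=0$. This is the correct form of your compatibility condition.
\item[(c)] Apply the Bogovski\u{\i}-type operators of \cite[Theorem~4.6]{CostabelMcIntosh2010}, which act on distributions supported in $\overline{O_i}$. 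For this zero-mean datum they give $\bv_i\in H^r(\R^n)^n$ supported in $\overline{O_i}$ with $\divv\bv_i=g_i$ in $\mathcal D'(\R^n)$ and $\|\bv_i\|_r\le C\|g_i\|_{r-1}$. The truncated outer component is handled in the same way after your cutoff.
\end{itemize}
No zero extension is then involved, which also disposes of the half-integer issue, and the argument covers all $r\ge0$.
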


\begin{proof}
By \eqref{Div_range} and Proposition~\ref{prop:hdiv_split}, $\bu=\operatorname{Div}A_0$ with $A_0\in H^1(\Omega;\mathbb
K_n)$ and $\|A_0\|_1\le C\|\bu\|_{H(\divvr)}=C\|\bu\|_0\le C\|\bu\|_r$. By Remark~\ref{rem:forms}, $\bu\lrcorner\,\mathrm{vol}=d\mu_0$ with $\mu_0:=\star A_0\in
H^1(\Omega,\Lambda^{n-2})$. By \cite[Corollary~4.7(a)]{CostabelMcIntosh2010} with $s=r$ and $t=1$ there is
$\mu\in H^{r+1}(\Omega,\Lambda^{n-2})$ with $d\mu=\bu\lrcorner\,\mathrm{vol}$ and $\|\mu\|_{r+1}\le C_r(\|\bu\|_r+\|\mu_0\|_1)\le C_r\|\bu\|_r$.
The skew-symmetric matrix $A$ of the $2$-form with $\star
A=\mu$ satisfies $\operatorname{Div}A=\bu$ by Remark~\ref{rem:forms}, and $\|A\|_{r+1}\le C_r\|\bu\|_r$. The
splitting removes the topological obstruction, and \cite[Corollary~4.7(a)]{CostabelMcIntosh2010} raises the regularity.
\end{proof}

On domains starlike with respect to a ball this is the lifting of \cite[Proposition~4.1(i)]{CostabelMcIntosh2010} used in \cite{HeLiuTian2026}; on a general bounded Lipschitz domain it holds after the cohomology component has been removed.

For finite elements the representation by potentials holds at the discrete level: the divergence-free fields of the $H(\divvr)$ elements
are, up to the discrete cohomology, $\operatorname{Div}$ of skew-symmetric polynomial potentials on each simplex
\cite{AFW:2006,AFW:10}. The neural use of these potentials in the literature is discussed in Section~\ref{subsec:related}.

\subsection{Neural classes}\label{subsec:hdiv_classes}

Let $\{\cA_m\}_{m\ge1}$ and $\{\cQ_m\}_{m\ge1}$ be nested families of sets with
$\cA_m\subset H^1(\Omega;\mathbb K_n)$ and $\cQ_m\subset L^2(\Omega)$; the index $m$ may
stand for a tuple of complexity parameters of the two classes. In
the neural realization $A_m=\sum_{i<j}a_{ij,m}\mathcal E_{ij}$, where the $n(n-1)/2$ entries $a_{ij,m}$ are scalar networks
(independent networks, or one network with $n(n-1)/2$ outputs and shared hidden
layers), so that skew symmetry holds by construction, and $q_m$ is a scalar network. Let
\beq\label{Sigma_hdiv_d}
 \Sigma_{m,n}^{\rm dR}:=\bigl\{\operatorname{Div}A_m+\Rop_nq_m+\bh:\ A_m\in\cA_m,\ q_m\in\cQ_m,\ \bh\in\cH_n\bigr\}.
\eeq
The trainable functions are $A_m$ and $q_m$, which are
assembled into the field by the fixed operators $\operatorname{Div}$
and $\Rop_n$, together with the $b$ coefficients of $\bh$; the $H(\divvr)$
field itself is not taken as an unconstrained componentwise
network (the entries of $A_m$ are), and $\Rop_nq_m$ is
in general not itself a network. The representing
functions of a target are those of Proposition~\ref{prop:hdiv_split}.

\begin{thm}[neural $H(\divvr)$ classes in $\R^n$]\label{thm:hdivd}
Every $\btau_m=\operatorname{Div}A_m+\Rop_nq_m+\bh\in\Sigma_{m,n}^{\rm dR}$ lies in $H(\divvr;\Omega)$, and $\divv\btau_m=q_m$. Let $\btau\in H(\divvr;\Omega)$, $q=\divv\btau$, $\bh=\Pi_{\cH_n}\btau$, and let $A\in H^1(\Omega;\mathbb K_n)$ be any potential with $\operatorname{Div}A=\btau-\Rop_nq-\bh$. Then
\beq\label{hdiv_d_approx}
 \inf_{q_m\in\cQ_m}\|q-q_m\|_0\le\inf_{\btau_m\in\Sigma_{m,n}^{\rm dR}}\|\btau-\btau_m\|_{H(\divvr)}
 \le\sqrt{n-1}\inf_{A_m\in\cA_m}|A-A_m|_1+\Bigl(1+\frac{4C_{x_0}^2}{n^2}\Bigr)^{1/2}\inf_{q_m\in\cQ_m}\|q-q_m\|_0 .
\eeq
\end{thm}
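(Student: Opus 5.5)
The plan has three parts: first conformity, then the lower bound, then the upper bound.

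\emph{Conformity.} Take $\btau_m=\operatorname{Div}A_m+\Rop_nq_m+\bh$. By Lemma~\ref{lem:skew}(a), $\operatorname{Div}A_m\in L^2(\Omega)^n$ and it is divergence free. By Lemma~\ref{lem:Rd}, $\Rop_nq_m\in H(\divvr;\Omega)$ with $\divv\Rop_nq_m=q_m$. The fields $\bh_i$ of \eqref{harmonic_fields} are smooth and divergence free on $\overline\Omega$. Linearity then gives $\btau_m\in H(\divvr;\Omega)$ and $\divv\btau_m=q_m$, for every choice of the parameters.

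\emph{Lower bound.} Fix any $\btau_m\in\Sigma_{m,n}^{\rm dR}$, built from some $q_m\in\cQ_m$. Since $\divv\btau=q$ and $\divv\btau_m=q_m$,
\[
\|q-q_m\|_0=\|\divv(\btau-\btau_m)\|_0\le\|\btau-\btau_m\|_{H(\divvr)} .
\]
The left side is at least $\inf_{q_m'\in\cQ_m}\|q-q_m'\|_0$. Taking the infimum over $\btau_m$ gives the first inequality in \eqref{hdiv_d_approx}.

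\emph{Upper bound.} By the choice of $A$, the target can be written $\btau=\operatorname{Div}A+\Rop_nq+\bh$ with $\bh=\Pi_{\cH_n}\btau$; Proposition~\ref{prop:hdiv_split} guarantees such an $A$ exists. For arbitrary $A_m\in\cA_m$ and $q_m\in\cQ_m$, I take the competitor $\btau_m:=\operatorname{Div}A_m+\Rop_nq_m+\bh$, keeping the same $\bh$; this is allowed because $\bh\in\cH_n$ is a free parameter of the class. Then
\[
\btau-\btau_m=\operatorname{Div}(A-A_m)+\Rop_n(q-q_m).
\]
I will use the triangle inequality in the $H(\divvr)$ norm on this difference.

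For the first term, $\operatorname{Div}(A-A_m)$ is divergence free, so its $H(\divvr)$ norm equals its $L^2$ norm. By \eqref{divDiv_zero} this is at most $\sqrt{n-1}\,|A-A_m|_1$. For the second term, the third bound in \eqref{Rd_bound} gives at most $(1+4C_{x_0}^2/n^2)^{1/2}\|q-q_m\|_0$.

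Since $A_m$ and $q_m$ vary independently, the infimum of the sum splits into the sum of the two infima, and the right inequality follows.

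No step is a genuine obstacle. The only points to check are that $\bh$ can be matched exactly, which the class definition permits, and that the seminorm $|\cdot|_1$ suffices, which \eqref{divDiv_zero} provides. The bound holds for any admissible potential $A$, so no gauge fixing by $\mathsf L$ is needed.
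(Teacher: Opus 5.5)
Your proposal is correct and follows the paper's proof essentially step for step. You get conformity from \eqref{divDiv_zero}, Lemma~\ref{lem:Rd} and $\divv\bh=0$, and the lower bound from the divergence part of the graph norm. For the upper bound you use the competitor with the same $\bh$, the triangle inequality on $\operatorname{Div}(A-A_m)+\Rop_n(q-q_m)$, and independent minimization over $A_m$ and $q_m$.
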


\begin{proof}
Conformity and the divergence identity follow from \eqref{divDiv_zero}, Lemma~\ref{lem:Rd} and $\divv\bh=0$. The lower bound holds because $\|\btau-\btau_m\|_{H(\divvr)}\ge\|\divv(\btau-\btau_m)\|_0=\|q-q_m\|_0$ for every $\btau_m\in\Sigma_{m,n}^{\rm dR}$. For the upper bound let $A_m\in\cA_m$ and $q_m\in\cQ_m$ be arbitrary and $\btau_m:=\operatorname{Div}A_m+\Rop_nq_m+\bh$, with the same $\bh$ as $\btau$. Then $\btau-\btau_m=\operatorname{Div}(A-A_m)+\Rop_n(q-q_m)$. The first term is divergence free, so its $H(\divvr)$ norm is $\|\operatorname{Div}(A-A_m)\|_0\le\sqrt{n-1}|A-A_m|_1$ by \eqref{divDiv_zero}; the second is bounded by $(1+4C_{x_0}^2/n^2)^{1/2}\|q-q_m\|_0$ by \eqref{Rd_bound}. The triangle inequality and independent minimization over $A_m$ and $q_m$ give the upper bound.
\end{proof}

Since the bound holds for every potential, $\inf_{A_m}|A-A_m|_1$ may be replaced by its infimum over all potentials $A$ with $\operatorname{Div}A=\btau-\Rop_nq-\bh$.

\begin{cor}[density]\label{cor:hdiv_density}
If $\bigcup_m\cA_m$ is dense in $H^1(\Omega;\mathbb K_n)$ and $\bigcup_m\cQ_m$ is dense in $L^2(\Omega)$, then $\bigcup_m\Sigma_{m,n}^{\rm dR}$ is dense in $H(\divvr;\Omega)$, and $\inf_{\btau_m\in\Sigma_{m,n}^{\rm dR}}\|\btau-\btau_m\|_{H(\divvr)}\to0$ as $m\to\infty$ for every $\btau\in H(\divvr;\Omega)$.
\end{cor}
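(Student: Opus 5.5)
The density statement follows directly from the upper bound in \eqref{hdiv_d_approx} of Theorem~\ref{thm:hdivd}, together with the existence of a potential from Proposition~\ref{prop:hdiv_split}.

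Fix $\btau\in H(\divvr;\Omega)$ and set $q:=\divv\btau\in L^2(\Omega)$ and $\bh:=\Pi_{\cH_n}\btau\in\cH_n$. By Proposition~\ref{prop:hdiv_split} there is a potential $A\in H^1(\Omega;\mathbb K_n)$ with $\operatorname{Div}A=\btau-\Rop_nq-\bh$; for instance $A=\mathsf L(\btau-\Rop_nq-\bh)$. This potential is exactly the one required by Theorem~\ref{thm:hdivd}, so \eqref{hdiv_d_approx} gives
\[
 \inf_{\btau_m\in\Sigma_{m,n}^{\rm dR}}\|\btau-\btau_m\|_{H(\divvr)}\le\sqrt{n-1}\inf_{A_m\in\cA_m}|A-A_m|_1+\Bigl(1+\frac{4C_{x_0}^2}{n^2}\Bigr)^{1/2}\inf_{q_m\in\cQ_m}\|q-q_m\|_0 .
\]
The cohomology coefficients do not need to be approximated, because $\bh$ is an arbitrary element of the finite-dimensional space $\cH_n$ in the class \eqref{Sigma_hdiv_d}.

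It remains to show that both infima on the right tend to zero. Since the families are nested, each infimum is nonincreasing in $m$. For the potential term, let $\varepsilon>0$. By density of $\bigcup_m\cA_m$ in $H^1(\Omega;\mathbb K_n)$ there is an index $m_1$ and some $A'\in\cA_{m_1}$ with $\|A-A'\|_1<\varepsilon$, and therefore $|A-A'|_1<\varepsilon$. Monotonicity then gives $\inf_{\cA_m}|A-A_m|_1<\varepsilon$ for all $m\ge m_1$. The same argument in $L^2(\Omega)$ yields an index $m_2$ beyond which $\inf_{\cQ_m}\|q-q_m\|_0<\varepsilon$.

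For $m\ge\max(m_1,m_2)$ the right-hand side is therefore at most $C\varepsilon$, with $C$ independent of $\varepsilon$. This proves the convergence to zero, and since $\btau$ was arbitrary, it also proves that $\bigcup_m\Sigma_{m,n}^{\rm dR}$ is dense in $H(\divvr;\Omega)$.

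The only point needing care is the meaning of "$m\to\infty$" when $m$ is a tuple of complexity parameters. In that case the index set is directed, and the argument applies once both components of the tuple are large. There is no genuine obstacle here: the substantive work, surjectivity of the representation and the transfer of approximation to the graph norm, is already contained in Proposition~\ref{prop:hdiv_split} and Theorem~\ref{thm:hdivd}.
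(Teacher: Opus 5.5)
Your proof is correct and follows the paper's argument: the representation from Proposition~\ref{prop:hdiv_split} feeds into the upper bound \eqref{hdiv_d_approx} of Theorem~\ref{thm:hdivd}, and the two infima there are nonincreasing by nestedness and tend to zero by the density assumptions. Your explicit $\varepsilon$-argument and your remark on tuple-valued indices only spell out what the paper leaves implicit.
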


\begin{proof}
By Proposition~\ref{prop:hdiv_split} every $\btau$ has a representation with $A$, $\bh$, $q$; the
two infima in \eqref{hdiv_d_approx} are nonincreasing in $m$ because the families are nested,
and they tend to zero by the density assumptions.
\end{proof}

The radial part $\Rop_nq_m$ requires only one-dimensional integrals of
$q_m$ along segments and no derivative of $q_m$;
its evaluation by quadrature is described in Section~\ref{sec:framework}.

\begin{rem}[relation to Raviart--Thomas elements]\label{rem:RT0}
The splitting mirrors the local polynomial decomposition of the Raviart--Thomas spaces; on a simplex the topological component $\cH_n$ is absent. Let $T\subset\R^n$ be a simplex and $x_T\in T$ a center chosen locally in $T$, independently of the global center $x_0$
of $\Rop_n$; let $\Rop_{n,T}$ denote \eqref{Rd} on $T$ with center $x_T$. Let $\mathbb P_k$ be the polynomials of total degree at most $k$, $\widetilde{\mathbb P}_k$ those homogeneous of degree $k$ in $x-x_T$, and $\mathbb K_n(\mathbb P_{k+1})$ the skew-symmetric matrices with entries in $\mathbb P_{k+1}$. Then $RT_k(T)=\mathbb P_k^n\oplus(x-x_T)\widetilde{\mathbb P}_k$, and
\[
 RT_k(T)=\operatorname{Div}\,\mathbb K_n(\mathbb P_{k+1})\oplus(x-x_T)\mathbb P_k\qquad\text{for every }k\ge0
\]
(the Koszul decomposition, \cite{AFW:2006}): an element $\mathbf p+(x-x_T)\widetilde p$, $\widetilde p\in\widetilde{\mathbb P}_k$, has divergence $\divv\mathbf p+(n+k)\widetilde p$, so
its divergence-free elements lie in $\mathbb P_k^n$, and the divergence-free fields in $\mathbb P_k^n$ are $\operatorname{Div}\,\mathbb K_n(\mathbb P_{k+1})$
by the exactness of the polynomial de Rham complex \cite{AFW:2006}; the second part carries the divergence. For polynomials $\Rop_{n,T}$ produces exactly the second part: $T$
is convex and contains $x_T$, so the segments from $x_T$ to the points of $T$ stay
in $T$, and for $q_j$ homogeneous of degree $j$ in $x-x_T$, $\Rop_{n,T}q_j(x)=(x-x_T)\,q_j(x)\int_0^1t^{n-1+j}\,dt=(x-x_T)\,q_j(x)/(n+j)$. Hence $\operatorname{Div}A+\Rop_{n,T}q$ with $A\in\mathbb K_n(\mathbb P_{k+1})$ and $q\in\mathbb P_k$ is exactly $RT_k(T)$, and the neural
classes replace the polynomials $A$ and $q$ by networks. On a triangulation the corresponding global statement is made in two dimensions only.
\end{rem}
\subsection{Two and three dimensions}\label{subsec:hdiv23}

For $n=2$ every $A\in\mathbb K_2$ is $A=\psi\mathcal E_{12}$, $\mathcal E_{12}=\begin{pmatrix}0&1\\-1&0\end{pmatrix}$, with $\psi=a_{12}$, and
\[
 \operatorname{Div}(\psi\mathcal E_{12})=(\p_2\psi,-\p_1\psi)^T=\curl\psi,\qquad |\psi\mathcal E_{12}|_1=|\psi|_1 .
\]
If $\bn=(n_1,n_2)^T$ and $\bt=(-n_2,n_1)^T$, then $\mathcal E_{12}\bn=-\bt$ and
\beq\label{curl_nt}
 \curl\psi\cdot\bn=\p_{\bt}\psi,\qquad \curl\psi\cdot\bt=-\p_{\bn}\psi .
\eeq
For $n=3$ let $A_{ij}=\sum_k\varepsilon_{ijk}\Psi_k$, that is $a_{12}=\Psi_3$, $a_{13}=-\Psi_2$, $a_{23}=\Psi_1$. Then
\[
 (\operatorname{Div}A)_i=\sum_{j,k}\varepsilon_{ijk}\p_j\Psi_k=(\curll\bPsi)_i,\qquad |A|_1=|\bPsi|_1,\qquad
 \jump{\p_{\bn}A}\,\bn=\bn\times\jump{\p_{\bn}\bPsi},
\]
and for $N_{ij}=\sum_k\varepsilon_{ijk}N_k$, $N\bv=\bv\times\mathbf N$. The fields \eqref{harmonic_fields} are
$\bh_i(x)=(x-z_i)/|x-z_i|^n$ with $z_i$ in the holes ($n=2$) or cavities ($n=3$)
of $\Omega$, with fluxes $-2\pi\delta_{ij}$ and $-4\pi\delta_{ij}$; for $n=2$
we write $\cH:=\cH_2$. Let $\{\cP_m\}_{m\ge1}$, $\{\cN_m\}_{m\ge1}$ and $\{\cQ_m\}_{m\ge1}$ be
nested families with $\cP_m\subset H^1(\Omega)$, $\cN_m\subset H^1(\Omega)^3$ and $\cQ_m\subset
L^2(\Omega)$, for example network classes; the classes \eqref{Sigma_hdiv_d} become
\beq\label{Sigma_hdiv_23}
\begin{aligned}
 \Sigma_{m,2}^{\rm dR}&=\bigl\{\curl\psi_m+\Rop_2q_m+\bh:\ \psi_m\in\cP_m,\ q_m\in\cQ_m,\ \bh\in\cH\bigr\},\\
 \Sigma_{m,3}^{\rm dR}&=\bigl\{\curll\bPsi_m+\Rop_3q_m+\bh:\ \bPsi_m\in\cN_m,\ q_m\in\cQ_m,\ \bh\in\cH_3\bigr\}.
\end{aligned}
\eeq
For $n=2$ this is the classical stream-function form of the two-dimensional
de Rham decomposition, with explicit representatives of the finite-dimensional cohomology.

\begin{cor}[neural $H(\divvr)$ classes for $n=2,3$]\label{cor:hdiv23}
Let $n\in\{2,3\}$. Every $\btau\in H(\divvr;\Omega)$ can be written
\beq\label{hdiv_23_representation}
 \btau=\curl\psi+\Rop_2q+\bh\quad(n=2),\qquad \btau=\curll\bPsi+\Rop_3q+\bh\quad(n=3),\qquad q=\divv\btau,\quad \bh=\Pi_{\cH_n}\btau,
\eeq
where $q$ and $\bh$ are unique ($\bh$ is determined by the hole or cavity fluxes of $\btau$, Corollary~\ref{cor:R_fluxes}), $\psi\in H^1(\Omega)$ is unique
up to a constant and $\bPsi\in H^1(\Omega)^3$ up to an element of the kernel of $\curll$, and they can be chosen with $\|\psi\|_1$, respectively $\|\bPsi\|_1$,
plus $\|\bh\|_0+\|q\|_0$ bounded by $C\|\btau\|_{H(\divvr)}$. Every element of $\Sigma_{m,n}^{\rm dR}$ lies in $H(\divvr;\Omega)$, with divergence $q_m$, and for every such $\psi$ or $\bPsi$
\beq\label{derham_density_bound}
\begin{aligned}
 &\inf_{q_m\in\cQ_m}\|q-q_m\|_0\le\inf_{\btau_m\in\Sigma_{m,n}^{\rm dR}}\|\btau-\btau_m\|_{H(\divvr)}\\
 &\qquad\le\begin{cases}
 \inf_{\psi_m\in\cP_m}|\psi-\psi_m|_1+\sqrt{1+C_{x_0}^2}\,\inf_{q_m\in\cQ_m}\|q-q_m\|_0, & n=2,\\[2pt]
 \sqrt2\inf_{\bPsi_m\in\cN_m}|\bPsi-\bPsi_m|_1+\bigl(1+\tfrac49C_{x_0}^2\bigr)^{1/2}\inf_{q_m\in\cQ_m}\|q-q_m\|_0, & n=3;
 \end{cases}
\end{aligned}
\eeq
for $n=2$ the first term on the right does not depend on the choice of $\psi$.
\end{cor}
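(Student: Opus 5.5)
The plan is to specialize Proposition~\ref{prop:hdiv_split} and Theorem~\ref{thm:hdivd} to $n=2,3$ through the identifications of Section~\ref{subsec:hdiv23}.

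\emph{Representation.} For $n=2$ every $A\in H^1(\Omega;\mathbb K_2)$ has the form $\psi\mathcal E_{12}$ with $\psi=a_{12}\in H^1(\Omega)$, and $\operatorname{Div}A=\curl\psi$. For $n=3$ the map $\bPsi\mapsto A$, $A_{ij}=\sum_k\varepsilon_{ijk}\Psi_k$, is an isometric bijection $H^1(\Omega)^3\to H^1(\Omega;\mathbb K_3)$ for both $|\cdot|_1$ and $\|\cdot\|_1$, and it satisfies $\operatorname{Div}A=\curll\bPsi$. Hence \eqref{hdiv_d_representation} becomes \eqref{hdiv_23_representation}. The uniqueness of $q$ and $\bh$, and the bound $\|A\|_1+\|\bh\|_0+\|q\|_0\le C\|\btau\|_{H(\divvr)}$ with $A=\mathsf L(\cdot)$, carry over unchanged. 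The potential is unique up to $\ker\operatorname{Div}$, which corresponds to $\ker\curll$ for $n=3$. For $n=2$ I would identify this kernel explicitly: $\curl\psi=0$ means $\grad\psi=0$, and since $\Omega$ is connected, $\psi$ is constant. This is the only point where connectedness of $\Omega$ is used.

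\emph{Conformity and the estimate.} Conformity and $\divv\btau_m=q_m$ follow directly from Theorem~\ref{thm:hdivd} once $\psi_m\mathcal E_{12}$ and the matrix built from $\bPsi_m$ are taken as the elements of $\cA_m$. The lower bound in \eqref{derham_density_bound} is the lower bound in \eqref{hdiv_d_approx}. For the upper bound, \eqref{hdiv_d_approx} gives the following constants.
\begin{itemize}
\item For $n=2$: $\sqrt{n-1}=1$, $|\psi\mathcal E_{12}|_1=|\psi|_1$, and $(1+4C_{x_0}^2/4)^{1/2}=\sqrt{1+C_{x_0}^2}$.
\item For $n=3$: $\sqrt2$, $|A|_1=|\bPsi|_1$, and $(1+4C_{x_0}^2/9)^{1/2}$.
\end{itemize}
Since Theorem~\ref{thm:hdivd} holds for any potential, the estimate holds for every admissible $\psi$ or $\bPsi$.

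\emph{Independence of the choice for $n=2$.} Any two admissible $\psi$ differ by a constant $c$. If the classes $\cP_m$ are invariant under adding constants, the substitution $\psi_m\mapsto\psi_m+c$ shows that the infimum does not change. The obstacle is that this invariance is not assumed. I would resolve it by using that the seminorm $|\cdot|_1$ only sees $\grad$: the bias in the output layer of a network realizes the added constant. The alternative is to read the claim as a statement about the seminorm on $H^1/\R$. I expect this small point to be the only part that needs care; everything else is direct substitution into the earlier results.
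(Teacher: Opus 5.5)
Your proposal follows the paper's proof: specialize Proposition~\ref{prop:hdiv_split} and Theorem~\ref{thm:hdivd} through $A=\psi\mathcal E_{12}$ and $A_{ij}=\sum_k\varepsilon_{ijk}\Psi_k$, using the same kernel identification (constants, by connectedness) and the same constants. The one ``obstacle'' you raise for $n=2$ does not exist. For any constant $c$ and each fixed $\psi_m$, $|(\psi+c)-\psi_m|_1=|\psi-\psi_m|_1$, since only gradients enter. The two infima therefore agree term by term over the same set $\cP_m$. No shift $\psi_m\mapsto\psi_m+c$, no invariance of $\cP_m$ under constants, and no output bias is needed; this is exactly the paper's closing remark that $|\cdot|_1$ is invariant under constants.
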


\begin{proof}
Proposition~\ref{prop:hdiv_split} and Theorem~\ref{thm:hdivd} with $A=\psi\mathcal E_{12}$ ($n=2$) and $A_{ij}=\sum_k\varepsilon_{ijk}\Psi_k$ ($n=3$):
$\operatorname{Div}A$ is $\curl\psi$, respectively $\curll\bPsi$, so the kernel of $\operatorname{Div}$ corresponds to that of the curl (the constants for
$n=2$, $\Omega$ being connected), $|A|_1=|\psi|_1$, respectively $|\bPsi|_1$, and $\sqrt{n-1}$ and $(1+4C_{x_0}^2/n^2)^{1/2}$ give the constants; the
seminorm $|\cdot|_1$ is invariant under constants.
\end{proof}

In three dimensions an implementation of this form has three scalar potential outputs for $\bPsi$
(three scalar networks, or one network with three outputs and shared hidden layers), one for $q$, and the $b$ cavity
coefficients.

\begin{rem}[Raviart--Thomas elements on a triangulation]\label{rem:RT0_2d}
On a conforming triangulation $\mathcal T_h$ of a simply connected $\Omega$, $RT_0(\mathcal T_h)\cap\ker\divv=\curl S_1(\mathcal T_h)$,
$S_1(\mathcal T_h)$ the continuous piecewise linear functions, and every function in $S_1(\mathcal T_h)$ is a ReLU network
\cite{HLXZ:20}; hence $\curl S_1(\mathcal T_h)\subset\Sigma_{m,2}^{\rm dR}$ for a network size depending only on $\mathcal T_h$, whereas a
componentwise network with continuous activations is a continuous vector field, so for no network size does it contain a field of
$\curl S_1(\mathcal T_h)$ with a tangential jump; it can only approximate such a field.  For a general $\btau_h\in RT_0(\mathcal T_h)$, $\Rop_2q_h$ with $q_h=\divv\btau_h$ piecewise constant need not be
piecewise polynomial, so exact finite realization is not asserted; Corollary~\ref{cor:hdiv23} gives
$\btau_h=\curl\psi_h+\Rop_2q_h$ with $\psi_h\in H^1(\Omega)$, and by \eqref{derham_density_bound} fitting $\btau_h$ is a scalar
approximation problem for $\psi_h$ and $q_h$.  In three dimensions, on a topologically trivial domain, the divergence-free fields of
$RT_0(\mathcal T_h)$ are the curls of lowest-order N\'ed\'elec fields, which in general are not continuous; on a general domain the
corresponding discrete harmonic fields must be added.
\end{rem}

\section{Native neural approximation in \texorpdfstring{$H(\divvr)$}{H(div)} II: regular decomposition}\label{sec:hdiv_reg}

\subsection{Regular decomposition}\label{subsec:hdiv_regular}

The right inverse of Section~\ref{sec:hdiv} makes the divergence an explicit unknown.  A second stable representation leaves the
divergence to an $H^1$ field, $\btau=\operatorname{Div}A+\bzz$ with $\bzz\in H^1(\Omega)^n$; it comes from a regular decomposition.  Let $H_0(\divvr;\Omega)$ denote the subspace of $H(\divvr;\Omega)$ with $\btau\cdot\bn=0$ on $\partial\Omega$.

\begin{prop}[regular decomposition of $H(\divvr)$]\label{prop:hdiv_regular}
Every $\btau\in H(\divvr;\Omega)$ can be written $\btau=\operatorname{Div}A+\bzz$ with $A\in H^1(\Omega;\mathbb K_n)$ and $\bzz\in H^1(\Omega)^n$ depending linearly
on $\btau$ and $\|A\|_1+\|\bzz\|_1\le C\|\btau\|_{H(\divvr)}$; for $\btau\in H_0(\divvr;\Omega)$ the same holds with $A\in H^1_0(\Omega;\mathbb K_n)$ and
$\bzz\in H^1_0(\Omega)^n$.  Then $\divv\btau=\divv\bzz$, and no cohomology part appears: the harmonic fields are absorbed into $\bzz$.
\end{prop}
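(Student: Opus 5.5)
We reduce to the splitting of Proposition~\ref{prop:hdiv_split} and make its non-potential part $\Rop_nq+\bh$ regular by an extension argument. The plan is as follows.

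\emph{Step 1: regular right inverse for the divergence part.} Let $q=\divv\btau\in L^2(\Omega)$. Choose a ball $B\supset\overline\Omega$ and extend $q$ by zero, but correct the mean: let $\widetilde q:=q$ in $\Omega$ and $\widetilde q:=-|B\setminus\overline\Omega|^{-1}\int_\Omega q\,dx$ on $B\setminus\overline\Omega$, so that $\widetilde q\in L^2(B)$ has zero mean. On $B$, which is starlike with respect to a ball, the Bogovski\u{\i} operator, or the regularized Poincar\'e operator of \cite[Proposition~4.1]{CostabelMcIntosh2010} in top degree, gives $\bzz_1\in H^1_0(B)^n$ with $\divv\bzz_1=\widetilde q$ and $\|\bzz_1\|_{1,B}\le C\|q\|_0$. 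Restricted to $\Omega$, this is a linear $H^1$ field with $\divv\bzz_1=q$.

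\emph{Step 2: the divergence-free remainder.} Set $\bv:=\btau-\bzz_1|_\Omega\in\ker\divv$, with $\|\bv\|_{H(\divvr)}\le C\|\btau\|_{H(\divvr)}$. By Proposition~\ref{prop:hdiv_split}, $\bv=\operatorname{Div}A+\Pi_{\cH_n}\bv$ with $A=\mathsf L(\bv-\Pi_{\cH_n}\bv)$ linear and bounded in $H^1$. The fields $\bh_i$ are smooth on $\overline\Omega$, since $z_i\notin\overline\Omega$. Hence $\bzz_2:=\Pi_{\cH_n}\bv\in H^1(\Omega)^n$, with $\|\bzz_2\|_1\le C\|\bv\|_{H(\divvr)}$ by the boundedness of the $F_i$. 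Put $\bzz:=\bzz_1+\bzz_2$. All maps are linear and bounded, and $\divv\bzz=q$ because $\divv\operatorname{Div}A=0$ by Lemma~\ref{lem:skew}. This settles the unconstrained case, and the harmonic fields indeed sit in $\bzz$.

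\emph{Step 3: the case $\btau\in H_0(\divvr;\Omega)$.} This is the main obstacle, since the above $A$ and $\bzz_2$ carry no boundary conditions. We would first try the extension trick. Extend $\btau$ by zero to $\widetilde\btau$ on the ball $B$; the vanishing normal trace gives $\widetilde\btau\in H_0(\divvr;B)$ with $\divv\widetilde\btau=\widetilde q$ (zero extension), which has zero mean on $B$. On $B$ there is no cohomology, so we apply the regular decomposition for $H_0(\divvr)$ on a starlike domain, namely the compactly supported regularized Poincar\'e/Bogovski\u{\i} operators of \cite{CostabelMcIntosh2010}. These give $\widetilde\btau=\operatorname{Div}\widetilde A+\widetilde\bzz$ with $\widetilde A\in H^1_0(B;\mathbb K_n)$ and $\widetilde\bzz\in H^1_0(B)^n$, obtained from $\widetilde\bzz:=\mathrm{Bog}(\widetilde q)$ followed by a potential of $\widetilde\btau-\widetilde\bzz$ that is compactly supported in $B$. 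This alone does not yet give vanishing traces on $\partial\Omega$. To get them, we would use instead the standard approach of \cite{Hip:02} and of Hiptmair--Pechstein regular decompositions: localize with a partition of unity subordinate to a cover of $\overline\Omega$ by patches starlike with respect to balls, apply the compactly supported Costabel--McIntosh operators on each patch (these preserve zero traces), and collect the commutator terms $\grad\chi_j\times\cdot$, which are $L^2$ fields. These commutator terms are then absorbed by a second application to the smoothed remainder, which lies in $H^1_0$. The expected difficulty is exactly this bookkeeping of commutators. The cohomology is no obstruction here because the $\bh_i$ part is replaced, patchwise, by $H^1_0$ contributions. Finally, $\divv\btau=\divv\bzz$ follows in both cases from Lemma~\ref{lem:skew}(a).
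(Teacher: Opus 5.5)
Your Steps~1 and~2 are correct, and they prove the unconstrained case by a genuinely different route. You lift the mean-corrected divergence by a single Bogovski\u{\i} (or top-degree regularized Poincar\'e) operator on a ball. You then apply Proposition~\ref{prop:hdiv_split} to the divergence-free remainder and move the explicit fields $\bh_i$, which are smooth on $\overline\Omega$, into $\bzz$. This needs only the starlike-ball results already behind Proposition~\ref{prop:hdiv_split}, and it shows concretely where the cohomology goes.

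The paper instead applies the homotopy identity $\dd R_{n-1}\omega+R_n\dd\omega=\omega-K_{n-1}\omega$ of \cite[Theorem~4.6]{CostabelMcIntosh2010} on $\Omega$ itself to $\omega=\btau\lrcorner\,\mathrm{vol}$. It takes $\star A=R_{n-1}\omega$ and $\bzz\lrcorner\,\mathrm{vol}=R_n\dd\omega+K_{n-1}\omega$, so the cohomology sits in the smooth term $K_{n-1}\omega$. Its advantage is that the same line also gives the $H_0(\divvr;\Omega)$ case, using the Bogovski\u{\i}-type pair $T_\ell$, $L_\ell$ applied to the zero extension $\widetilde\omega$.

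That case is where your proposal has a gap: Step~3 is a plan, not a proof. The extension to the ball does not give zero traces on $\partial\Omega$, as you note. Steps~1--2 cannot simply be repeated with boundary conditions either. On a domain with handles there are divergence-free fields in $H_0(\divvr;\Omega)$ that are not $\operatorname{Div}$ of any $H^1_0$ potential. For a solid torus, a field circulating around the hole has nonzero flux through a meridian disk, which $\curll\bPsi$ with $\bPsi\in H^1_0$ cannot have. These fields must be absorbed into an $H^1_0$ field by some mechanism, and you leave that as unspecified ``bookkeeping''.

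The missing step is short in degree $n-1$, and no second iteration is needed. You have two options.
\begin{itemize}
\item[(i)] Cite the support-preserving operators of \cite[Theorem~4.6]{CostabelMcIntosh2010}, as the paper does. The condition $\btau\cdot\bn=0$ is exactly what makes $\dd\widetilde\omega$ the zero extension of $(\divv\btau)\,\mathrm{vol}$, hence in $L^2$.
\item[(ii)] Localize the field, not the potential. Take patches $\Omega_j=\Omega\cap U_j$ starlike with respect to balls and $\sum_j\chi_j=1$ on $\overline\Omega$. Each $\chi_j\widetilde\btau$ is supported in $\overline{\Omega_j}$, and its divergence $\grad\chi_j\cdot\widetilde\btau+\chi_j\widetilde{\divv\btau}$ is in $L^2$ with zero mean. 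The Bogovski\u{\i} homotopy formula on $\Omega_j$ then gives $\chi_j\widetilde\btau=\operatorname{Div}A_j+\bzz_j$ with $A_j,\bzz_j$ in $H^1_0(\Omega_j)$, and summing over $j$ gives the result.
\end{itemize}
In option (ii) the commutator that appears is the scalar $\grad\chi_j\cdot\btau$, not $\grad\chi_j\times\cdot$. It is simply part of an $L^2$ divergence, which the top-degree operator lifts to $H^1_0$.
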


\begin{proof}
By Remark~\ref{rem:forms}, $\btau$ corresponds to the $(n-1)$-form $\omega=\btau\lrcorner\,\mathrm{vol}$, with $\dd\omega=(\divv\btau)\,\mathrm{vol}$, and an
$(n-2)$-form $\mu$ to the skew-symmetric matrix $A$ with $\star A=\mu$, with $\dd\mu=(\operatorname{Div}A)\lrcorner\,\mathrm{vol}$.  By
\cite[Theorem~4.6]{CostabelMcIntosh2010} there are operators $R_\ell$ (regularized Poincar\'e operators, not to be confused with $\Rop_n$),
bounded from $H^s\Lambda^\ell(\Omega)$ to $H^{s+1}\Lambda^{\ell-1}(\Omega)$ for every $s$, and $K_\ell$, bounded from $H^s\Lambda^\ell(\Omega)$ to
$C^\infty\Lambda^\ell(\overline\Omega)$, with $\dd R_\ell u+R_{\ell+1}\dd u=u-K_\ell u$.  For $u=\omega$ take $\mu=R_{n-1}\omega$, which is in $H^1$, and
$\bzz\lrcorner\,\mathrm{vol}=R_n\dd\omega+K_{n-1}\omega$, which is in $H^1$ since $\dd\omega\in L^2$.  For $H_0(\divvr;\Omega)$ the Bogovski\u{\i}-type
operators $T_\ell$, $L_\ell$ of the same theorem, applied to the extension of $\omega$ by zero, give the same identity with supports in
$\overline\Omega$.  Taking the divergence of $\btau=\operatorname{Div}A+\bzz$ gives $\divv\btau=\divv\bzz$ by \eqref{divDiv_zero}.
\end{proof}

Similar ideas can be generalized to the de Rham spaces $H\Lambda^k(\Omega)$, $1\le k\le n$, through the same theorem of
\cite{CostabelMcIntosh2010}; we do not pursue this here.

\subsection{The second neural class}\label{subsec:hdiv_reg_classes}

The second class takes the $H^1$ field of Proposition~\ref{prop:hdiv_regular} as a componentwise network.  Let $\{\cZ_m\}_{m\ge1}$ be nested
families of sets $\cZ_m\subset H^1(\Omega)^n$, in the neural realization vectors of $n$ scalar networks (or one network with $n$ outputs),
and
\beq\label{Sigma_hdiv_reg}
 \Sigma_{m,n}^{\rm reg}:=\bigl\{\operatorname{Div}A_m+\bzz_m:\ A_m\in\cA_m,\ \bzz_m\in\cZ_m\bigr\}.
\eeq
For $H_0(\divvr;\Omega)$ both the potential and the $H^1$ field are taken in the corresponding $H^1_0$ classes, $\cA_m\subset H^1_0(\Omega;\mathbb K_n)$
and $\cZ_m\subset H^1_0(\Omega)^n$, for instance through a boundary factor, in agreement with Proposition~\ref{prop:hdiv_regular}; then
$\btau_m\cdot\bn=0$ on $\partial\Omega$.

\begin{thm}[second neural $H(\divvr)$ class]\label{thm:hdivd_reg}
Every $\btau_m=\operatorname{Div}A_m+\bzz_m\in\Sigma_{m,n}^{\rm reg}$ lies in $H(\divvr;\Omega)$, and $\divv\btau_m=\divv\bzz_m$.  For $\btau\in H(\divvr;\Omega)$ and any
$A$, $\bzz$ as in Proposition~\ref{prop:hdiv_regular},
\beq\label{hdiv_reg_approx}
 \inf_{\btau_m\in\Sigma_{m,n}^{\rm reg}}\|\btau-\btau_m\|_{H(\divvr)}\le\sqrt{n-1}\inf_{A_m\in\cA_m}|A-A_m|_1+\sqrt n\inf_{\bzz_m\in\cZ_m}\|\bzz-\bzz_m\|_1 .
\eeq
\end{thm}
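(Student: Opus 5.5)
The plan mirrors the proof of Theorem~\ref{thm:hdivd}, with the radial part replaced by the $H^1$ field.

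\emph{Conformity.} Let $\btau_m=\operatorname{Div}A_m+\bzz_m$. By \eqref{divDiv_zero}, $\operatorname{Div}A_m\in L^2(\Omega)^n$ and it is divergence free in the sense of distributions. Since $\bzz_m\in H^1(\Omega)^n$, we have $\divv\bzz_m=\sum_i\p_i z_{m,i}\in L^2(\Omega)$. Linearity then gives $\btau_m\in H(\divvr;\Omega)$ with $\divv\btau_m=\divv\bzz_m$. For the $H_0$ variant, the normal trace of $\btau_m$ vanishes by the same density argument as in Proposition~\ref{prop:hdiv_regular}: take $A_m\in H^1_0$ and $\bzz_m\in H^1_0$, approximate by $C_c^\infty$ fields, and note that the normal trace is bounded on $H(\divvr)$.

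\emph{Upper bound.} Fix $\btau$ together with any $A$, $\bzz$ from Proposition~\ref{prop:hdiv_regular}, and take arbitrary $A_m\in\cA_m$, $\bzz_m\in\cZ_m$. Then
\[
 \btau-\btau_m=\operatorname{Div}(A-A_m)+(\bzz-\bzz_m).
\]
The first term is divergence free, so its $H(\divvr)$ norm equals its $L^2$ norm, which is at most $\sqrt{n-1}\,|A-A_m|_1$ by \eqref{divDiv_zero}. For the second term, with $\bw:=\bzz-\bzz_m\in H^1(\Omega)^n$, Cauchy--Schwarz gives $|\divv\bw|^2\le n\sum_i|\p_iw_i|^2\le n|\bw|_1^2$ pointwise. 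Hence
\[
 \|\bw\|_{H(\divvr)}^2=\|\bw\|_0^2+\|\divv\bw\|_0^2\le \|\bw\|_0^2+n|\bw|_1^2\le n\|\bw\|_1^2 ,
\]
using $n\ge1$, so $\|\bw\|_{H(\divvr)}\le\sqrt n\,\|\bw\|_1$.

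\emph{Conclusion.} The triangle inequality gives $\|\btau-\btau_m\|_{H(\divvr)}\le\sqrt{n-1}\,|A-A_m|_1+\sqrt n\,\|\bzz-\bzz_m\|_1$. Since $A_m$ and $\bzz_m$ range independently, taking the infimum over each separately yields \eqref{hdiv_reg_approx}.

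No step is a real obstacle. The only points to check are that \eqref{divDiv_zero} is applied to the difference $A-A_m$, which is skew and in $H^1$, and that the constant $\sqrt n$ comes out of the pointwise Cauchy--Schwarz bound exactly as stated.
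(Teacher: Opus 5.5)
Your proof is correct and matches the paper's argument step for step: conformity from \eqref{divDiv_zero} and $H^1(\Omega)^n\subset H(\divvr;\Omega)$, the splitting $\btau-\btau_m=\operatorname{Div}(A-A_m)+(\bzz-\bzz_m)$ with the bounds $\sqrt{n-1}\,|A-A_m|_1$ and $\|\divv\bw\|_0\le\sqrt n\,|\bw|_1$, and independent minimization over $A_m$ and $\bzz_m$. One small correction: the pointwise inequality should read $|\divv\bw|^2\le n|\grad\bw|^2$, since $|\bw|_1^2$ only appears after integrating. Your $H_0$ remark is extra and not needed for the theorem as stated.
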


\begin{proof}
$\divv\operatorname{Div}A_m=0$ by \eqref{divDiv_zero}, and $\bzz_m\in H^1(\Omega)^n\subset H(\divvr;\Omega)$.  For $\btau_m=\operatorname{Div}A_m+\bzz_m$,
$\btau-\btau_m=\operatorname{Div}(A-A_m)+(\bzz-\bzz_m)$; the first term is divergence free, with $\|\operatorname{Div}(A-A_m)\|_0\le\sqrt{n-1}|A-A_m|_1$
by \eqref{divDiv_zero}, and for $\bw\in H^1(\Omega)^n$, $\|\divv\bw\|_0\le\sqrt n|\bw|_1$, so $\|\bw\|_{H(\divvr)}\le\sqrt n\|\bw\|_1$.
\end{proof}

In two and three dimensions the second class reads $\curl\psi_m+\bzz_m$ for $n=2$ and $\curll\bPsi_m+\bzz_m$ for $n=3$, with the estimate
\eqref{hdiv_reg_approx} and $|\psi-\psi_m|_1$, respectively $\sqrt2|\bPsi-\bPsi_m|_1$, in place of $\sqrt{n-1}|A-A_m|_1$.

\subsection{The two classes}\label{subsec:hdiv_two}

\label{rem:two_classes}The two classes differ in where the divergence sits.  In $\Sigma_{m,n}^{\rm dR}$ the divergence $q_m$ is an independent $L^2$ unknown: an
equation for the divergence can be built into the class, and a divergence that jumps across an interface is integrated by
$\Rop_n$ into a continuous field under the segment condition of Lemma~\ref{lem:Rcont}.  In $\Sigma_{m,n}^{\rm reg}$ the fields are evaluated pointwise, without the line
integrals along segments that $\Rop_nq_m$ requires, and no harmonic fields are needed; the divergence $\divv\bzz_m$ is not an independent
unknown, and a divergence that jumps across an interface requires a kink of $\bzz_m$ there.  In both classes every interface jump of $\btau_m$ is carried by the potential: $\Rop_nq_m$ has no jumps for a bounded, piecewise continuous $q_m$
under the segment condition of Lemma~\ref{lem:Rcont}, and $\bzz_m\in H^1$ has no interface jump, its trace being single valued.  The second class is therefore the
componentwise approach, with $\bzz_m$ a componentwise network of the kind used for fluxes in \cite{CCLL:20,LC:26p2nn}, enriched by $\operatorname{Div}A_m$.  The enrichment is divergence free, so it leaves
$\divv\btau_m$ and conformity unchanged, and it adds exactly the discontinuities that $H(\divvr)$ admits, the tangential jumps, which a
componentwise $H^1$ field cannot carry.  Both classes are used in the examples.

\section{Native neural approximation in \texorpdfstring{$H(\divvr)$}{H(div)} III: finite-width jumps and interface-adapted potentials}\label{sec:hdiv2}

Sections~\ref{sec:hdiv} and~\ref{sec:hdiv_reg} give every $\btau\in H(\divvr;\Omega)$ the representations $\btau=\operatorname{Div}A+\Rop_nq+\bh$ and $\btau=\operatorname{Div}A+\bzz$, and bound the best approximation error of the neural classes in $H(\divvr)$ by the approximation errors of $A$ and $q$, or of $A$ and $\bzz$.  With smooth
activations the potential component $\operatorname{Div}A_m$ is smooth, so a field with a tangential jump can only be approximated; to
represent such a jump at finite width the potential must have kinks, as a ReLU potential does.  Throughout
this section we write
\beq\label{hdiv_common_form}
 \btau=\operatorname{Div}A+\bw ,
\eeq
with the remaining part $\bw=\Rop_nq+\bh$ in the first class and $\bw=\bzz$ in the second.  The section concerns the potential part
$\operatorname{Div}A$, which the two classes share, and its results apply to both in the same form.  It shows that kinks of the potential carry the admissible tangential jumps of
the flux, builds a known or a learned interface into the potential, shows that, under an additional hypothesis on the jump
amplitude, extracting the interface kink restores the regularity of the remaining potential, and defines the three levels of
interface information used in the numerical examples.  Only a jump of the divergence concerns the remaining part, and it is
carried differently in the two classes (Section~\ref{subsec:div_jumps}).

\subsection{Tangential jumps carried by kinks of the potential}\label{subsec:hdiv_jumps}

Across a smooth interface where a piecewise regular field in $H(\divvr;\Omega)$ has one-sided traces, its normal component is
continuous and only its tangential component may jump.  In both classes the remaining part $\bw$ has no jump across an interior interface in the situations of interest, so the
tangential jump is carried by kinks of the potential $A$.

\begin{prop}[jump carried by the potential]\label{prop:jump}
Let $\btau=\operatorname{Div}A+\bw$ as in \eqref{hdiv_common_form} with $A\in H^1(\Omega;\mathbb K_n)$, let $\Gamma\subset\Omega$ be a smooth hypersurface with unit normal $\bn$,
let $A$ be piecewise $H^2$ on the two sides of $\Gamma$ in a neighborhood of $\Gamma$, and let $\bw$ have no jump across $\Gamma$, its one-sided traces coinciding.  (For $\bw=\Rop_nq+\bh$ this holds by Lemma~\ref{lem:Rcont} if $q$ is
bounded and piecewise continuous and the condition on the segments of that lemma is satisfied, $\bh$ being smooth; for $\bw=\bzz\in H^1(\Omega)^n$
the trace on $\Gamma$ is single valued.)  Then, with one-sided traces,
\[
 \jump{\btau\cdot\bn}=0,\qquad \jump{\btau}=\jump{\p_{\bn}A}\,\bn .
\]
Thus the normal continuity required by $H(\divvr)$ is automatic, and the tangential jump of $\btau$ is carried entirely
by the kink of $A$. Conversely, pointwise on $\Gamma$, every admissible jump $\mathbf j$ ($\mathbf j\cdot\bn=0$) is produced by the
kink $\jump{\p_{\bn}A}=\mathbf j\bn^T-\bn\mathbf j^T$; two kinks that produce the same jump differ by a skew-symmetric $K$ with $K\bn=0$, which vanishes for $n=2$ and ranges over a space
of dimension $(n-1)(n-2)/2$ for $n\ge3$. The statement holds for every potential of $\btau$ that is
piecewise $H^2$ near $\Gamma$; since $\jump{\btau}$ does not depend on the potential, the kinks of two such potentials differ by such a $K$.
\end{prop}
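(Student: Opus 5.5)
The argument reduces to Lemma~\ref{lem:skew}(b) and~(c), plus the hypothesis that $\bw$ has no jump. I would first split the jump of the field linearly,
\[
 \jump{\btau}=\jump{\operatorname{Div}A}+\jump{\bw}.
\]
This is legitimate because both terms have one-sided traces on $\Gamma$. For $\operatorname{Div}A$ this follows from the piecewise $H^2$ regularity of $A$, since $\grad A_{ij}$ is then piecewise $H^1$ and has traces from each side. For $\bw$ the traces exist by assumption. Since $\jump{\bw}=0$, Lemma~\ref{lem:skew}(b) gives $\jump{\btau}=\jump{\p_{\bn}A}\,\bn$ and $\bn\cdot\jump{\btau}=0$, which are the two displayed identities. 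The parenthetical sufficient conditions on $\bw$ need only be checked against earlier results:
\begin{itemize}
\item For $\bw=\Rop_nq+\bh$, Lemma~\ref{lem:Rcont} gives continuity of $\Rop_nq$ on $\overline\Omega$, and $\bh\in\cH_n$ is smooth on $\overline\Omega$.
\item For $\bw=\bzz\in H^1(\Omega)^n$, the traces from the two sides of the interior hypersurface $\Gamma$ coincide. This holds because the gluing of the two one-sided $H^1$ pieces is in $H^1$ only if the traces agree.
\end{itemize}

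For the converse, I would fix a point of $\Gamma$ and an admissible $\mathbf j$ with $\mathbf j\cdot\bn=0$. I would then apply Lemma~\ref{lem:skew}(c) to $K=\mathbf j\bn^T-\bn\mathbf j^T$, which satisfies $K\bn=\mathbf j$. Any kink $\jump{\p_{\bn}A}=K$ therefore produces the jump $\mathbf j$. Uniqueness up to gauge is the kernel statement of the same lemma: two kinks $K_1,K_2$ with $K_1\bn=K_2\bn$ differ by $K_1-K_2\in\mathbb K_n$ annihilating $\bn$. This difference is a skew map of $\bn^\perp$, so it ranges over a space of dimension $(n-1)(n-2)/2$, which is trivial for $n=2$.

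For the last sentence, take two potentials $A,A'$ of $\btau$, both piecewise $H^2$ near $\Gamma$, in the same class of representation with its own $\bw,\bw'$ having no jump. The first part gives $\jump{\p_{\bn}A}\bn=\jump{\btau}=\jump{\p_{\bn}A'}\bn$ pointwise on $\Gamma$, so $\jump{\p_{\bn}(A-A')}$ is skew with kernel containing $\bn$, as claimed. Here I would stress that $\jump{\btau}$ depends only on $\btau$: its one-sided traces exist because $\btau=\operatorname{Div}A+\bw$ has them for one, hence every, admissible decomposition.

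The only step needing real care is the justification of the one-sided traces and the claim $\jump{\grad A_{ij}}\parallel\bn$ in Lemma~\ref{lem:skew}(b). This uses the fact that $A_{ij}\in H^1$ has a single-valued trace, so its tangential derivatives agree from both sides; for piecewise $H^2$ functions this holds in the $H^{1/2}(\Gamma)$ trace sense. Beyond citing Lemma~\ref{lem:skew}, the remaining work is bookkeeping.
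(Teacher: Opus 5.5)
Your proposal is correct and follows the paper's proof. You split off the jump-free part $\bw$, apply Lemma~\ref{lem:skew}(b) to $\operatorname{Div}A$, and read off the converse and the gauge freedom from Lemma~\ref{lem:skew}(c); your added remarks on the one-sided traces and on the two sufficient conditions for $\bw$ only spell out what the paper leaves implicit.
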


\begin{proof}
Since $\bw$ has no jump across $\Gamma$, $\jump{\btau}=\jump{\operatorname{Div}A}$, and Lemma~\ref{lem:skew}(b)
gives $\jump{\operatorname{Div}A}=\jump{\p_{\bn}A}\bn$ and $\bn\cdot\jump{\operatorname{Div}A}=0$. The converse and the nonuniqueness are Lemma~\ref{lem:skew}(c).
\end{proof}

The construction of a global potential with a
prescribed jump field along a whole interface is
the role of the interface modules below. At
finite width, ReLU networks give the following instance.

\begin{cor}[continuous piecewise linear potentials]\label{lem:cpwl}
Let the entries $a_{ij}$, $i<j$, of $A$ be continuous and piecewise affine on a finite partition of $\Omega$ into polyhedral cells; for finite ReLU networks this is the common refinement of the activation
partitions of the entries. Then $\operatorname{Div}A$ is piecewise constant, belongs to $H(\divvr;\Omega)$ with $\divv\operatorname{Div}A=0$, and across the relative interior of every interior codimension-one face, with unit normal $\bn$,
\[
 \bn\cdot\jump{\operatorname{Div}A}=0,\qquad \jump{\operatorname{Div}A}=\jump{\p_{\bn}A}\,\bn .
\]
\end{cor}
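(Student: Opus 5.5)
The plan is to reduce everything to Lemma~\ref{lem:skew}, applied cell by cell and face by face. First, continuous piecewise affine entries $a_{ij}$ on a finite polyhedral partition are Lipschitz on $\Omega$. Hence each $a_{ij}\in W^{1,\infty}(\Omega)\subset H^1(\Omega)$, so $A\in H^1(\Omega;\mathbb K_n)$, and Lemma~\ref{lem:skew}(a) gives $\operatorname{Div}A\in L^2(\Omega)^n$ with $\divv\operatorname{Div}A=0$ in the sense of distributions. In particular $\operatorname{Div}A\in H(\divvr;\Omega)$. The weak gradient of a Lipschitz function coincides a.e.\ with its classical gradient on the open cells. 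On each cell the $a_{ij}$ are affine, so their gradients are constant there, and $(\operatorname{Div}A)_i=\sum_j\p_jA_{ij}$ is constant on each cell, that is, piecewise constant.

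For the jump relations, fix an interior codimension-one face $F$ shared by two cells $T^\pm$, with unit normal $\bn$ pointing into $T^+$, and a point $x$ in its relative interior. I would like to invoke Lemma~\ref{lem:skew}(b) directly: near $x$ the face is a piece of a hyperplane, hence a smooth hypersurface, and on each side $A$ is affine, hence $H^2$. To avoid any subtlety about the neighborhood, I would instead redo the short argument of Lemma~\ref{lem:skew}(b) pointwise. The two affine functions $a_{ij}|_{T^\pm}$ agree on $F$ by continuity, so their difference is affine and vanishes on the hyperplane containing $F$. It is therefore a multiple of $(y-x)\cdot\bn$, which gives $\jump{\grad a_{ij}}=\jump{\p_{\bn}a_{ij}}\bn$. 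Summing over $j$ gives $\jump{(\operatorname{Div}A)_i}=\sum_j\jump{\p_{\bn}A_{ij}}n_j$, that is, $\jump{\operatorname{Div}A}=\jump{\p_{\bn}A}\bn$. Skew-symmetry of $\jump{\p_{\bn}A}$ then yields $\bn\cdot\jump{\p_{\bn}A}\bn=0$.

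The main point needing care is the claim about ReLU networks: that the cells of the common refinement of the activation partitions are polyhedral and that each entry is affine on them. A finite ReLU network is continuous piecewise affine, with finitely many pieces, each a finite union of polyhedra. Refining by all the hyperplanes supporting the pieces of all entries gives a finite polyhedral partition on which every entry is affine. This step is standard. The face argument above is purely local and does not depend on the partition being conforming: any relative-interior point of a codimension-one face between two cells has a neighborhood meeting only those two cells. So the ReLU case follows from the general statement.
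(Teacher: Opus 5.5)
Your proposal is correct and follows the paper's proof. The paper likewise observes that $A\in W^{1,\infty}(\Omega;\mathbb K_n)\subset H^1(\Omega;\mathbb K_n)$, so that Lemma~\ref{lem:skew}(a) gives conformity and $\divv\operatorname{Div}A=0$. It then applies Lemma~\ref{lem:skew}(b) on the relative interior of each face, where $A$ is affine on both sides.

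There are two small differences. You re-derive the normal-jump identity directly from two affine functions that agree on a hyperplane, instead of citing part (b). You also justify the ReLU common-refinement remark, which the paper leaves implicit.
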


\begin{proof}
$A\in W^{1,\infty}(\Omega;\mathbb K_n)$ (continuous and piecewise affine on a finite polyhedral partition), hence
$A\in H^1(\Omega;\mathbb K_n)$, and Lemma~\ref{lem:skew}(a) applies; on the relative interior
of a face, $A$ is affine on each side, and Lemma~\ref{lem:skew}(b) applies.
\end{proof}

A generic ReLU potential therefore has the correct
jump geometry automatically: its tangential jumps lie on
the flat faces of its activation partition, which
is determined implicitly by the parameters, and a curved
interface is matched by such faces only approximately.
Sections~\ref{subsec:interface} and~\ref{subsec:learned_interface} make this geometry explicit when an
interface is known or is itself to be learned.

\subsection{Fixed-interface potential modules}\label{subsec:interface}

The classes \eqref{Sigma_hdiv_d} and \eqref{Sigma_hdiv_reg} need no interface enrichment
for conformity or for the estimates above. Without an
explicit interface module, however, an ordinary finite ReLU
potential can place exact kinks only on faces of its
activation partition (Corollary~\ref{lem:cpwl}), and a finite smooth network
has no exact kink. Moreover, if the divergence-free part
has a nonzero tangential jump on a subset
of positive $(n-1)$-dimensional measure of a smooth interface, then
no potential of it belongs to $H^{3/2}(\Omega)$: $A\in
H^{3/2}$ would give $\operatorname{Div}A\in H^{1/2}(\Omega)^n$, whereas a piecewise regular
function with a nonzero jump across a Lipschitz hypersurface on a
set of positive surface measure is not in $H^{1/2}$
(after a bi-Lipschitz flattening, which preserves $H^{1/2}$, and integration over the
tangential variables, the Sobolev--Slobodeckij seminorm contains $\iint_{s<0<t}(t-s)^{-2}\,ds\,dt=\infty$ times the
squared jump). The modules below place a kink
of the $H^1$ potential, and with it the tangential
jump of the flux, on a given hypersurface
(this subsection) or a learned one (Section~\ref{subsec:learned_interface}); they do not
affect $H(\divvr)$ conformity. Both are instances of one mechanism.

\begin{lem}[multiplicative kink]\label{lem:mult_kink}
Let $\rho\in W^{1,\infty}(\Omega)$, $N\in H^1(\Omega;\mathbb K_n)$ and $B:=\rho N$. Then $B\in H^1(\Omega;\mathbb K_n)$, and
$\operatorname{Div}(A_0+B)+\bw\in H(\divvr;\Omega)$ for every $A_0\in H^1(\Omega;\mathbb K_n)$ and every remaining part $\bw$ of either
class. Let further $x^*\in\Omega$, let $\Gamma$ be a hypersurface of class $C^1$
near $x^*$ with $x^*\in\Gamma$ and unit normal $\bn$,
let $\rho$ vanish on $\Gamma$ and be continuously differentiable on each side
of $\Gamma$ near $x^*$, up to $\Gamma$, with one-sided gradients $\grad\rho^\pm$, and let $N$ be
Lipschitz continuous near $x^*$. Then, in the sense of one-sided essential limits at $x^*$,
\[
 \jump{\grad B}=N(x^*)\otimes\jump{\grad\rho},\qquad
 \jump{\operatorname{Div}B}=N(x^*)\jump{\grad\rho}=\jump{\p_{\bn}\rho}\,N(x^*)\bn,\qquad \bn\cdot\jump{\operatorname{Div}B}=0,
\]
with $\jump{\grad\rho}=\jump{\p_{\bn}\rho}\bn$ evaluated at $x^*$, and the same identities
hold for $A_0+B$ if $A_0$ is continuously differentiable near $x^*$.
\end{lem}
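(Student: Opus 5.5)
The proof has two parts: a global conformity statement and a local jump computation at $x^*$.

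\emph{Conformity.} For conformity I use the product rule in $H^1$. Since $\rho\in W^{1,\infty}(\Omega)$ and each entry of $N$ is in $H^1(\Omega)$, the product $\rho N_{ij}$ is in $L^2$. Its weak gradient is $N_{ij}\grad\rho+\rho\grad N_{ij}\in L^2$; this follows by approximating $N_{ij}$ with smooth functions in $H^1$ and passing to the limit, using $\|\rho\|_{L^\infty}$ and $\|\grad\rho\|_{L^\infty}$. Skew symmetry is preserved under multiplication by a scalar, so $B\in H^1(\Omega;\mathbb K_n)$, and hence $A_0+B\in H^1(\Omega;\mathbb K_n)$. Lemma~\ref{lem:skew}(a) then gives $\operatorname{Div}(A_0+B)\in L^2$ with zero divergence. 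The remaining part $\bw$ lies in $H(\divvr;\Omega)$ in both classes, by Lemma~\ref{lem:Rd} and $\divv\bh=0$, or because $\bzz\in H^1(\Omega)^n$. The sum is therefore in $H(\divvr;\Omega)$.

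\emph{Jump identities.} Near $x^*$ I apply the product rule a.e. on each side: $\grad B_{ij}=N_{ij}\grad\rho+\rho\grad N_{ij}$. Here $N$ is Lipschitz, so $\grad N$ is in $L^\infty$ and bounded near $x^*$. Because $\rho(x^*)=0$ and $\rho$ is continuous, the term $\rho\grad N_{ij}$ has essential limit $0$ at $x^*$ from each side; this is the step that needs care, since $\grad N$ need not have one-sided limits but is bounded. The term $N_{ij}\grad\rho$ has one-sided limits $N_{ij}(x^*)\grad\rho^\pm(x^*)$, by continuity of $N$ and the one-sided $C^1$ regularity of $\rho$. Taking the difference gives $\jump{\grad B}=N(x^*)\otimes\jump{\grad\rho}$.

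\emph{Normal direction of the kink.} Since $\rho=0$ on $\Gamma$ and $\rho$ is $C^1$ up to $\Gamma$ from each side, the tangential derivatives of $\rho^\pm$ along the $C^1$ surface vanish at $x^*$. Hence $\grad\rho^\pm(x^*)=\p_{\bn}\rho^\pm\,\bn$, and $\jump{\grad\rho}=\jump{\p_{\bn}\rho}\bn$. Contracting, $(\operatorname{Div}B)_i=\sum_j\p_jB_{ij}$ gives $\jump{\operatorname{Div}B}=N(x^*)\jump{\grad\rho}=\jump{\p_{\bn}\rho}N(x^*)\bn$. Then $\bn\cdot N(x^*)\bn=0$ because $N(x^*)$ is skew, which is the same argument as in Lemma~\ref{lem:skew}(b).

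\emph{Adding $A_0$.} If $A_0$ is $C^1$ near $x^*$, its gradient has no jump, so the identities hold unchanged for $A_0+B$.

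The only delicate points are the vanishing limit of $\rho\grad N$, which uses only boundedness of $\grad N$ together with $\rho(x^*)=0$, and the tangential derivative argument for $\rho$. Both are elementary.
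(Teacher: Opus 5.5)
Your proposal is correct and follows the paper's proof essentially step by step. Conformity comes from the $H^1$ product rule with $\rho\in W^{1,\infty}$ together with Lemma~\ref{lem:skew}(a) and Lemma~\ref{lem:Rd}. Near $x^*$, the bound $|\rho\grad N|\le L\,|\rho|$ with $\rho(x^*)=0$ leaves only $N\grad\rho$ to contribute one-sided limits. Finally, the vanishing tangential derivatives of $\rho$ on $\Gamma$ and the skew symmetry of $N(x^*)$ give the normal form of the kink and the zero normal component of the jump.
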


\begin{proof}
Since $\rho\in W^{1,\infty}(\Omega)$ and $N\in H^1$, $B\in H^1$ with $\grad(\rho N_{ij})=N_{ij}\grad\rho+\rho\grad N_{ij}$ almost everywhere, and conformity follows from \eqref{divDiv_zero} and Lemma~\ref{lem:Rd}. Near $x^*$,
$N$ is continuous and $|\grad N|\le L$ almost everywhere ($L$ a Lipschitz constant), and $\rho$ is continuous with $\rho(x^*)=0$, so
$\operatorname{ess\,sup}|\rho\grad N|\le L\sup|\rho|\to0$ on shrinking neighborhoods of $x^*$; the first term has the one-sided limits $N_{ij}(x^*)\grad\rho^\pm(x^*)$. Since $\rho=0$ on $\Gamma$, its tangential derivatives vanish from both
sides, so $\jump{\grad\rho}=\jump{\p_{\bn}\rho}\bn$, and $\bn\cdot N\bn=0$ because $N$ is skew. Where $A_0$ is continuously differentiable, its gradient adds no jump.
\end{proof}

Only the Lipschitz continuity of $N$ near $x^*$ enters,
so activation interfaces of a ReLU amplitude network
$N$ may cross $\Gamma$ without changing the jump, whereas
a kink of $A_0$ on $\Gamma$ adds its
own jump. Only $N(x^*)\bn$ enters the jump. For
$n\ge3$ the map $N\mapsto N\bn$ has a nontrivial
kernel ($K\in\mathbb K_n$ with $K\bn=0$, Proposition~\ref{prop:jump}), so a
prescribed jump does not determine $N$ on $\Gamma$.
This is a property of the jump map
only: it does not mean that $N$ and
$N+K$ give the same flux in the bulk (in
general they do not), and the remaining degrees
of freedom are available to fit the bulk
residual. Independently, the potential has the usual gauge
nonuniqueness: the functional depends on it only through $\operatorname{Div}A$.

First let $\Gamma=\{x\in\Omega:d_\Gamma(x)=0\}$ be a known interface, where $d_\Gamma\in W^{1,\infty}(\Omega)$ is continuously differentiable near $\Gamma$ with $\grad d_\Gamma\ne0$ on $\Gamma$;
by the implicit function theorem $\Gamma$ is a $C^1$ hypersurface, and on $\Gamma$ let $\bn:=\grad d_\Gamma/|\grad d_\Gamma|$, which points into the $+$ side $\{d_\Gamma>0\}$.

\begin{prop}[fixed interface, trainable amplitude]\label{prop:interface_module}
Let $A=A_0+|d_\Gamma|N$ with $A_0,N\in H^1(\Omega;\mathbb K_n)$. Lemma~\ref{lem:mult_kink} applies with $\rho=|d_\Gamma|$ and $\jump{\grad\rho}=2\grad d_\Gamma$: $A\in H^1(\Omega;\mathbb
K_n)$, $\operatorname{Div}A+\bw\in H(\divvr;\Omega)$ for every remaining part $\bw$ as in Lemma~\ref{lem:mult_kink}, and at every
$x^*\in\Gamma$ near which $N$ is Lipschitz continuous and $A_0$ is continuously differentiable,
\[
 \jump{\p_{\bn}A}=2|\grad d_\Gamma|\,N,\qquad \jump{\operatorname{Div}A}=2N\grad d_\Gamma=2|\grad d_\Gamma|\,N\bn .
\]
Every Lipschitz tangential jump field is attained: let $d_\Gamma$ be in $C^{1,1}$ near $\Gamma$, let $\mathbf j:\Gamma\to\R^n$ be Lipschitz with $\mathbf j\cdot\bn=0$, let $U$ be a neighborhood of $\Gamma$ on which
$|\grad d_\Gamma|$ is bounded away from zero, $\widetilde{\mathbf j}$ a Lipschitz extension of $\mathbf j$ to $U$ (componentwise), $\widehat\bn:=\grad d_\Gamma/|\grad d_\Gamma|$ on $U$, and $\chi$ a Lipschitz cutoff with $\chi=1$ on $\Gamma$ and support in $U$. Then
\beq\label{module_jump_field}
 N:=\frac{\chi}{2|\grad d_\Gamma|}\bigl(\widetilde{\mathbf j}\,\widehat\bn^T-\widehat\bn\,\widetilde{\mathbf j}^T\bigr)
\eeq
(extended by $0$ outside $U$) is skew-symmetric and Lipschitz, and $\jump{\operatorname{Div}(|d_\Gamma|N)}=\mathbf j$ at every point of $\Gamma$. The geometry is fixed and the jump amplitude is trainable.
\end{prop}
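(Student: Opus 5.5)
The plan is to reduce everything to Lemma~\ref{lem:mult_kink} with $\rho=|d_\Gamma|$, and then verify the jump-field construction by a pointwise computation.

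\emph{Step 1: hypotheses of Lemma~\ref{lem:mult_kink}.} Since $d_\Gamma\in W^{1,\infty}(\Omega)$, also $\rho=|d_\Gamma|\in W^{1,\infty}(\Omega)$. The function $\rho$ vanishes on $\Gamma$. On each side of $\Gamma$ near $x^*$ it equals $\pm d_\Gamma$, so it is $C^1$ up to $\Gamma$ with one-sided gradients $\grad\rho^\pm=\pm\grad d_\Gamma$. Because $\bn$ points into $\{d_\Gamma>0\}$, this gives $\jump{\grad\rho}=\grad d_\Gamma-(-\grad d_\Gamma)=2\grad d_\Gamma=2|\grad d_\Gamma|\bn$. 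In particular $\jump{\p_\bn\rho}=2|\grad d_\Gamma|$.

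Lemma~\ref{lem:mult_kink} then yields three things:
\begin{itemize}
\item $A\in H^1$;
\item conformity of $\operatorname{Div}A+\bw$;
\item at $x^*$, the jumps $\jump{\grad A_{ij}}=N_{ij}\jump{\grad\rho}$ and $\jump{\operatorname{Div}A}=2|\grad d_\Gamma|\,N\bn=2N\grad d_\Gamma$.
\end{itemize}
The $A_0$ term adds no jump because it is $C^1$ near $x^*$. Taking the normal component of $\jump{\grad A_{ij}}$ gives $\jump{\p_\bn A}=2|\grad d_\Gamma|N$.

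\emph{Step 2: the construction \eqref{module_jump_field}.} Skew symmetry of $N$ is immediate, since $N$ is a scalar multiple of $\widetilde{\mathbf j}\widehat\bn^T-\widehat\bn\widetilde{\mathbf j}^T$.

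For Lipschitz continuity, $d_\Gamma\in C^{1,1}$ near $\Gamma$ makes $\grad d_\Gamma$ Lipschitz on $U$. With $|\grad d_\Gamma|\ge c>0$ on $U$, both $\widehat\bn$ and $1/|\grad d_\Gamma|$ are Lipschitz and bounded on $U$. The extension $\widetilde{\mathbf j}$ and the cutoff $\chi$ are Lipschitz, and the relevant factors are bounded on $\operatorname{supp}\chi$. A product of bounded Lipschitz functions is Lipschitz, so $N$ is Lipschitz on $U$. Since $\chi$ vanishes near $\partial U$, the extension by $0$ is Lipschitz on $\Omega$; hence $N\in W^{1,\infty}\subset H^1$.

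The delicate point is the boundedness of $\widetilde{\mathbf j}$ on $\operatorname{supp}\chi$. If $\Gamma$ is unbounded within $\Omega$, the plan is to take $U$ bounded, or to truncate $\widetilde{\mathbf j}$ componentwise by $\sup|\mathbf j|$.

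\emph{Step 3: evaluating the jump.} At $x^*\in\Gamma$ we have $\chi=1$, $\widehat\bn=\bn$ and $\widetilde{\mathbf j}=\mathbf j$, so
\[
N\bn=\frac1{2|\grad d_\Gamma|}\bigl(\mathbf j\,(\bn\cdot\bn)-\bn\,(\mathbf j\cdot\bn)\bigr)=\frac{\mathbf j}{2|\grad d_\Gamma|},
\]
using Lemma~\ref{lem:skew}(c). Step~1 with $A_0=0$, valid because $N$ is Lipschitz everywhere, then gives $\jump{\operatorname{Div}(|d_\Gamma|N)}=2|\grad d_\Gamma|N\bn=\mathbf j$ at every point of $\Gamma$.

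The main obstacle is the regularity bookkeeping in Step~2: ensuring $N$ is globally Lipschitz, and matching the sign convention $\bn=\grad d_\Gamma/|\grad d_\Gamma|$ with the $+$ side in Step~1. The jump computation itself is routine.
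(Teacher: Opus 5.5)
Your proposal is correct and follows essentially the same route as the paper. You use that $\rho=|d_\Gamma|=\pm d_\Gamma$ on the two sides, so $\jump{\grad\rho}=2\grad d_\Gamma$ and Lemma~\ref{lem:mult_kink} gives the identities; the Lipschitz bookkeeping for \eqref{module_jump_field} together with $(\mathbf j\bn^T-\bn\mathbf j^T)\bn=\mathbf j$ on $\Gamma$ then gives the attained jump. Your concern about the boundedness of $\widetilde{\mathbf j}$ is moot: $U\subset\Omega$ is bounded and a Lipschitz function on a bounded set is bounded, which is why the paper can simply call all factors bounded and Lipschitz on $U$.
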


\begin{proof}
On the two sides of $\Gamma$, $|d_\Gamma|=\pm d_\Gamma$ is continuously differentiable up to $\Gamma$ with gradients $\pm\grad d_\Gamma$, so $\jump{\grad\rho}=2\grad d_\Gamma$ and $\jump{\p_{\bn}\rho}=2|\grad d_\Gamma|$, and Lemma~\ref{lem:mult_kink} gives the identities. For \eqref{module_jump_field}: $\widetilde{\mathbf j}$, $\widehat\bn$, $1/|\grad d_\Gamma|$ and $\chi$ are bounded and
Lipschitz on $U$, so $N$ is Lipschitz and the identities hold at every point of $\Gamma$; there $\chi=1$ and $\widehat\bn=\bn$, so $2N\grad d_\Gamma=(\mathbf j\bn^T-\bn\mathbf j^T)\bn=\mathbf j-\bn(\mathbf j\cdot\bn)=\mathbf j$.
\end{proof}

The amplitude required for a jump $\mathbf j$
is $|N|\sim|\mathbf j|/|\grad d_\Gamma|$, uniformly controlled because $|\grad d_\Gamma|$ is bounded
away from zero on $\Gamma$. A signed distance, with $|\grad d_\Gamma|=1$ on $\Gamma$,
normalizes the geometric scale: the jump is $2N\bn$, and the effective amplitude satisfies $N\bn=\frac12\mathbf j$.

Proposition~\ref{prop:interface_module} concerns the function-space module: every Lipschitz tangential jump field
is attained exactly by a Lipschitz amplitude $N$. In
the classes $N$ is a network, and the jump of the finite
neural module approximates $\mathbf j$ as the network approximates $N$ in $H^1$.

\begin{cor}[approximation of the jump amplitude]\label{cor:jump_amplitude}
Let $U$ be a bounded Lipschitz domain with $\Gamma\subset\partial U$ (a
one-sided neighborhood of $\Gamma$), let $|\grad d_\Gamma|\le G$ on $\Gamma$, and for $M\in H^1(U;\mathbb K_n)$ let
$\mathcal J_\Gamma(M):=2|\grad d_\Gamma|\,M\bn\in L^2(\Gamma)^n$, with the trace of $M$. Then
\[
 \|\mathcal J_\Gamma(M)-\mathcal J_\Gamma(M')\|_{L^2(\Gamma)}\le 2\sqrt2\,G\,C_{\rm tr}\,\|M-M'\|_{H^1(U)}
\]
for $M,M'\in H^1(U;\mathbb K_n)$, with $C_{\rm tr}$ the constant of the trace inequality $\|w\|_{L^2(\Gamma)}\le C_{\rm tr}\|w\|_{H^1(U)}$. In particular, if $N$ is the amplitude \eqref{module_jump_field} of a jump $\mathbf j$ and $N_m$ are Lipschitz networks, then $\jump{\operatorname{Div}(|d_\Gamma|N_m)}=\mathcal J_\Gamma(N_m)$ and $\|\jump{\operatorname{Div}(|d_\Gamma|N_m)}-\mathbf j\|_{L^2(\Gamma)}\le2\sqrt2\,G\,C_{\rm tr}\|N-N_m\|_{H^1(U)}$.
\end{cor}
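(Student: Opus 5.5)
The plan is to prove the bound pointwise on $\Gamma$, so that linearity reduces it to a single trace estimate, and then to derive the second statement from Proposition~\ref{prop:interface_module}. Since $\mathcal J_\Gamma$ is linear in $M$, it suffices to show $\|\mathcal J_\Gamma(M)\|_{L^2(\Gamma)}\le2\sqrt2\,G\,C_{\rm tr}\|M\|_{H^1(U)}$ for $M\in H^1(U;\mathbb K_n)$ and then apply this to $M-M'$.

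\emph{Pointwise bound on $\Gamma$.} At almost every point of $\Gamma$ we have $|\mathcal J_\Gamma(M)|=2|\grad d_\Gamma|\,|M\bn|\le2G|M\bn|$, with $M$ denoting the trace and $|\bn|=1$. Write $M=\sum_{i<j}m_{ij}\mathcal E_{ij}$. Then $M\bn=\sum_{i<j}m_{ij}(n_j\be_i-n_i\be_j)$. Using the operator bound $|M\bn|\le|M|_F$, where $|M|_F^2=2\sum_{i<j}m_{ij}^2$, we get $|M\bn|^2\le2\sum_{i<j}m_{ij}^2$. This is where the factor $\sqrt2$ comes from: the norm \eqref{skew_norm} counts each independent entry only once, while the Frobenius norm counts it twice.

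\emph{Integration and trace.} Squaring and integrating over $\Gamma$ gives
\[
 \|\mathcal J_\Gamma(M)\|_{L^2(\Gamma)}^2\le8G^2\sum_{i<j}\|m_{ij}\|_{L^2(\Gamma)}^2 .
\]
Applying the scalar trace inequality $\|m_{ij}\|_{L^2(\Gamma)}\le C_{\rm tr}\|m_{ij}\|_{H^1(U)}$ to each entry and summing, using the norm convention \eqref{skew_norm} on $U$, yields
\[
 \|\mathcal J_\Gamma(M)\|_{L^2(\Gamma)}\le2\sqrt2\,G\,C_{\rm tr}\|M\|_{H^1(U)} .
\]
The trace is well defined because $U$ is a bounded Lipschitz domain with $\Gamma\subset\partial U$.

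\emph{Application to the networks.} Since $N_m$ is Lipschitz and $d_\Gamma$ is $C^1$ near $\Gamma$, Proposition~\ref{prop:interface_module}, applied with $A_0=0$, gives $\jump{\operatorname{Div}(|d_\Gamma|N_m)}=2|\grad d_\Gamma|N_m\bn=\mathcal J_\Gamma(N_m)$ at every point of $\Gamma$. The same proposition gives $\mathcal J_\Gamma(N)=\mathbf j$ for the amplitude \eqref{module_jump_field}. Subtracting these two identities and applying the Lipschitz bound with $M=N$ and $M'=N_m$, restricted to $U$, gives the final estimate.

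The only point requiring care is to identify the pointwise one-sided limits of Proposition~\ref{prop:interface_module} with the $H^1(U)$ trace. For Lipschitz $N_m$ and $N$ the trace coincides with the continuous restriction to $\Gamma$, so the two notions agree. The bound itself should be read as an estimate on the trace, which extends the jump map $\mathcal J_\Gamma$ by continuity to all of $H^1$.
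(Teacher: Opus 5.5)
Your proof is correct and follows the paper's argument: linearity of $\mathcal J_\Gamma$, the pointwise bound $|M\bn|\le(\sum_{i,j}M_{ij}^2)^{1/2}=\sqrt2\,(\sum_{i<j}M_{ij}^2)^{1/2}$, the trace inequality applied entry by entry, and Proposition~\ref{prop:interface_module} with $A_0=0$ for the network identity. Your extra remark, that for Lipschitz amplitudes the one-sided pointwise limits coincide with the $H^1(U)$ trace, makes explicit a point the paper leaves implicit.
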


\begin{proof}
$\mathcal J_\Gamma$ is linear and $|M\bn|\le(\sum_{i,j}M_{ij}^2)^{1/2}=\sqrt2\,(\sum_{i<j}M_{ij}^2)^{1/2}$ pointwise; apply the trace inequality to each
entry. The identity for $N_m$ is Proposition~\ref{prop:interface_module} with $A_0=0$.
\end{proof}

\begin{rem}[Kellogg checkerboard]\label{rem:kellogg_ansatz}
Several fixed interfaces are treated by one module each, and where two of them
cross a product term is added. For the interfaces $\{x_1=0\}$ and
$\{x_2=0\}$ of the Kellogg checkerboard problem \cite{Kellogg:74,Zhang:kellogg} the two-dimensional potential is
\beq\label{kellogg_psi}
 \psi_\theta=N_0+|x_1|N_1+|x_2|N_2+|x_1||x_2|N_3 ,
\eeq
and the same form $v_\theta=V_0+|x_1|V_1+|x_2|V_2+|x_1||x_2|V_3$ is used for the scalar solution, which is continuous with coefficient-induced derivative jumps. On $\{x_1=0\}$, $\jump{\p_{x_1}\psi_\theta}=2N_1(0,x_2)+2|x_2|N_3(0,x_2)$ (Lemma~\ref{lem:mult_kink} with $\rho=|x_1|$ and the amplitude $N_1+|x_2|N_3$), and $\jump{\curl\psi_\theta\cdot\bt}$ is minus this (Corollary~\ref{cor:jump2d}); similarly on $\{x_2=0\}$. The two break lines are present for every parameter value. The singular behavior at the cross point, $u\sim r^\alpha\varphi(\vartheta)$ with $0<\alpha<1$, is left to the trainable networks.
\end{rem}

\subsection{Regularity recovery and enriched approximation}\label{subsec:regularity_recovery}

The module also recovers regularity: after the kink
is extracted, the remaining potential is globally smoother.

\begin{prop}[regularity recovery]\label{prop:regularity_recovery}
Let $\Gamma$ divide $\Omega$ into Lipschitz subdomains $\Omega_\pm$, let $d_\Gamma\in W^{1,\infty}(\Omega)$ with $d_\Gamma|_{U_\Gamma}\in C^{1,1}(U_\Gamma)$ on a neighborhood $U_\Gamma$ of
$\Gamma$, $\Gamma\subset\{d_\Gamma=0\}$, $\pm d_\Gamma>0$ on $\Omega_\pm\cap U_\Gamma$, and $|\grad d_\Gamma|$ bounded away from zero on $\Gamma$, and let $\bv\in
H(\divvr;\Omega)$ satisfy $\divv\bv=0$, $F_i(\bv)=0$, $i=1,\ldots,b$, and $\bv|_{\Omega_\pm}\in H^1(\Omega_\pm)^n$, with jump $\mathbf
j:=\jump{\bv}$ on $\Gamma$. If $N\in H^2(\Omega;\mathbb K_n)$ is supported in $U_\Gamma$ and $2|\grad d_\Gamma|N\bn=\mathbf j$ on $\Gamma$, then
\[
 \bv=\operatorname{Div}\bigl(A_0+|d_\Gamma|N\bigr),\qquad A_0\in H^2(\Omega;\mathbb K_n),\qquad \|A_0\|_2\le C\|\bv-\operatorname{Div}(|d_\Gamma|N)\|_1 .
\]
\end{prop}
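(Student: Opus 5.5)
The plan is to show that the corrected field $\bu:=\bv-\operatorname{Div}(|d_\Gamma|N)$ lies in $H^1(\Omega)^n$. It should also be divergence free with vanishing cavity fluxes. Corollary~\ref{cor:regular_potential} with $r=1$ then gives $A_0\in H^2(\Omega;\mathbb K_n)$ with $\operatorname{Div}A_0=\bu$ and $\|A_0\|_2\le C\|\bu\|_1$. This is exactly the claim, since $\bv=\operatorname{Div}(A_0+|d_\Gamma|N)$.

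\emph{Step 1: $\bu$ is divergence free with zero cavity fluxes.} Since $|d_\Gamma|\in W^{1,\infty}(\Omega)$ and $N\in H^2\subset H^1$, Lemma~\ref{lem:mult_kink} gives $B:=|d_\Gamma|N\in H^1(\Omega;\mathbb K_n)$. By \eqref{divDiv_zero}, $\divv\operatorname{Div}B=0$. By the \emph{The sum is direct} part of the proof of Proposition~\ref{prop:hdiv_split}, $F_i(\operatorname{Div}B)=0$. Hence $\divv\bu=0$ and $F_i(\bu)=F_i(\bv)=0$.

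\emph{Step 2: $\bu$ is piecewise $H^1$.} On $\Omega_\pm$ we have $\operatorname{Div}B=\pm N\grad d_\Gamma+|d_\Gamma|\operatorname{Div}N$, using that $B$ is supported in $U_\Gamma$, where $d_\Gamma\in C^{1,1}$ and has a fixed sign on each side. The entries of $N$ are in $H^2$, and $\grad d_\Gamma$ is Lipschitz on $U_\Gamma$, so $N\grad d_\Gamma\in H^1$. Also $|d_\Gamma|$ is Lipschitz and $\operatorname{Div}N\in H^1$, so $|d_\Gamma|\operatorname{Div}N\in H^1(\Omega_\pm)$. Together with $\bv|_{\Omega_\pm}\in H^1$, this gives $\bu|_{\Omega_\pm}\in H^1(\Omega_\pm)^n$, with norm bounded by $\|\bv\|_{H^1(\Omega_\pm)}+C\|N\|_2$.

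\emph{Step 3: the traces of $\bu$ match across $\Gamma$.} A piecewise $H^1$ field on a Lipschitz partition is in $H^1(\Omega)$ exactly when its two traces on $\Gamma$ agree. Integrating by parts on each side, the distributional gradient then has no singular part on $\Gamma$. The trace jump is $\jump{\bu}=\mathbf j-\jump{\operatorname{Div}B}$. The term $|d_\Gamma|\operatorname{Div}N$ has zero trace on $\Gamma$, so $\jump{\operatorname{Div}B}=2N\grad d_\Gamma=2|\grad d_\Gamma|N\bn$ in the trace sense. This is the identity of Proposition~\ref{prop:interface_module}, now read as an $H^{1/2}$ trace identity rather than a pointwise one. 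By hypothesis it equals $\mathbf j$, so $\jump{\bu}=0$ and $\bu\in H^1(\Omega)^n$.

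Step 3 is the main obstacle. Lemma~\ref{lem:mult_kink} and Proposition~\ref{prop:interface_module} give the jump only at points and under Lipschitz assumptions. I would redo that computation directly with one-sided traces of $H^1(\Omega_\pm)$ functions, using the sign of $d_\Gamma$ on each side. I would also note that $\Gamma$ is only Lipschitz, so $\bn$ is defined a.e. This is enough, since the hypothesis $2|\grad d_\Gamma|N\bn=\mathbf j$ is an a.e. identity on $\Gamma$.

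The stated estimate is then immediate, because the constant in Corollary~\ref{cor:regular_potential} depends only on $\Omega$.
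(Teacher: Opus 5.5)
Your proposal is correct and matches the paper's proof essentially step for step. The paper likewise subtracts $\operatorname{Div}(|d_\Gamma|N)$, which lies in the space \eqref{Div_range}. It computes the one-sided traces $\pm N\grad d_\Gamma$ to see that this term removes the jump $\mathbf j$, glues the piecewise $H^1$ remainder into $H^1(\Omega)^n$, and applies Corollary~\ref{cor:regular_potential} with $r=1$. Your caution about $\bn$ existing only a.e.\ is harmless but unnecessary: $\grad d_\Gamma\ne0$ on $\Gamma\subset\{d_\Gamma=0\}$, so $\Gamma$ is locally a $C^1$ level set with $\bn=\grad d_\Gamma/|\grad d_\Gamma|$ at every point.
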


\begin{proof}
Let $\bw:=\operatorname{Div}(|d_\Gamma|N)$. Since $|d_\Gamma|N\in H^1(\Omega;\mathbb K_n)$, $\bw$ lies in the space \eqref{Div_range}: $\divv\bw=0$ and $F_i(\bw)=0$.
On $\Omega_\pm$, $|d_\Gamma|N=\pm d_\Gamma N\in H^2$, so $\bw$ is in $H^1$ on both sides, and $\bw=N\grad|d_\Gamma|+|d_\Gamma|\,\operatorname{Div}N$ has the
traces $\pm N\grad d_\Gamma$ on $\Gamma$, since $|d_\Gamma|=0$ there; hence $\jump{\bw}=2N\grad d_\Gamma=2|\grad d_\Gamma|N\bn=\mathbf j$. The field $\bv_0:=\bv-\bw$ is divergence
free, has zero cavity fluxes, is in $H^1$ on both sides and has no jump across
$\Gamma$, so $\bv_0\in H^1(\Omega)^n$ (a function that is in $H^1$ on both sides of a Lipschitz interface and has equal
traces there is in $H^1$). Corollary~\ref{cor:regular_potential} with $r=1$ gives $\bv_0=\operatorname{Div}A_0$ with $A_0\in H^2(\Omega;\mathbb K_n)$ and $\|A_0\|_2\le C_1\|\bv_0\|_1$.
\end{proof}

For a flux $\btau=\operatorname{Div}A+\bw$, with $\bw=\Rop_nq+\bh$ or $\bw=\bzz$, the proposition is applied to its potential part $\bv=\operatorname{Div}A=\btau-\bw$, which is divergence free and has zero cavity fluxes, whenever that part is piecewise $H^1$; no regularity of $\bw$ is needed. The existence of
$N\in H^2$ is a hypothesis on the jump: piecewise $H^1$ regularity of $\bv$ gives only $\mathbf j\in H^{1/2}(\Gamma)^n$, whereas the trace of $2|\grad d_\Gamma|N\bn$ with $N\in H^2$ is smoother. It holds if $N_\Gamma:=(\mathbf j\bn^T-\bn\mathbf j^T)/(2|\grad d_\Gamma|)$
on $\Gamma$, for which $2|\grad d_\Gamma|N_\Gamma\bn=\mathbf j$, has an $H^2$ extension supported in $U_\Gamma$. Two facts are thus separate: a Lipschitz jump is realized exactly by a Lipschitz amplitude (Proposition~\ref{prop:interface_module}), and an $H^2$ amplitude yields an $H^2$ remainder. With the fixed-interface classes, the approximation problem then splits into approximation problems for smooth functions.

\begin{cor}[fixed-interface best approximation]\label{cor:enriched_hdiv}
Let $\btau=\operatorname{Div}(A_0+|d_\Gamma|N)+\bw$ with $A_0,N\in H^1(\Omega;\mathbb K_n)$, $d_\Gamma\in W^{1,\infty}(\Omega)$ and a remaining part $\bw$ of either
class, and let the class consist of $\operatorname{Div}(A_{0,m}+|d_\Gamma|N_m)+\bw_m$ with $A_{0,m}\in\cA_m$,
$N_m\in\cA_m'$, $\{\cA_m'\}$ a second family as in Section~\ref{subsec:hdiv_classes}, and $\bw_m=\Rop_nq_m+\bh$ with $q_m\in\cQ_m$, respectively $\bw_m=\bzz_m\in\cZ_m$. Then
\[
 \inf_{\btau_m}\|\btau-\btau_m\|_{H(\divvr)}\le\sqrt{n-1}\Bigl(\inf_{A_{0,m}\in\cA_m}|A_0-A_{0,m}|_1+C_d\inf_{N_m\in\cA_m'}\|N-N_m\|_1\Bigr)+E_{\bw},
\]
with $C_d:=\|d_\Gamma\|_{L^\infty(\Omega)}+\|\grad d_\Gamma\|_{L^\infty(\Omega)}$ and $E_{\bw}=(1+4C_{x_0}^2/n^2)^{1/2}\inf_{q_m\in\cQ_m}\|q-q_m\|_0$ in the first class, $E_{\bw}=\sqrt n\inf_{\bzz_m\in\cZ_m}\|\bzz-\bzz_m\|_1$ in the second.
\end{cor}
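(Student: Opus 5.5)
The proof mirrors those of Theorems~\ref{thm:hdivd} and~\ref{thm:hdivd_reg}: a triangle inequality on the difference, split into a divergence-free potential part and the remaining part. Fix arbitrary $A_{0,m}\in\cA_m$, $N_m\in\cA_m'$ and a remaining part $\bw_m$ of the appropriate class. For the first class take $\bh$ equal to the cohomology part of $\btau$, so that $\bw-\bw_m=\Rop_n(q-q_m)$; for the second class $\bw-\bw_m=\bzz-\bzz_m$. Set $\btau_m:=\operatorname{Div}(A_{0,m}+|d_\Gamma|N_m)+\bw_m$. Then
\[
 \btau-\btau_m=\operatorname{Div}\bigl((A_0-A_{0,m})+|d_\Gamma|(N-N_m)\bigr)+(\bw-\bw_m).
\]
By Lemma~\ref{lem:mult_kink}, $|d_\Gamma|(N-N_m)\in H^1(\Omega;\mathbb K_n)$, so the first term is divergence free by \eqref{divDiv_zero}. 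Its $H(\divvr)$ norm is therefore its $L^2$ norm, which \eqref{divDiv_zero} bounds by $\sqrt{n-1}$ times the $|\cdot|_1$ seminorm of the potential.

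The only new estimate is the product bound
\[
 \bigl||d_\Gamma|M\bigr|_1\le C_d\|M\|_1,\qquad M:=N-N_m .
\]
Applying the product rule entrywise, $\grad(|d_\Gamma|M_{ij})=M_{ij}\grad|d_\Gamma|+|d_\Gamma|\grad M_{ij}$ almost everywhere, with $|\grad|d_\Gamma||=|\grad d_\Gamma|$ almost everywhere. Hence
\[
 \|\grad(|d_\Gamma|M_{ij})\|_0\le\|\grad d_\Gamma\|_{L^\infty}\|M_{ij}\|_0+\|d_\Gamma\|_{L^\infty}|M_{ij}|_1\le C_d\|M_{ij}\|_1 .
\]
Squaring and summing over $i<j$ requires care, because the squares of the two terms do not add with constant $C_d$ directly. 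I would pair $(\|M_{ij}\|_0,|M_{ij}|_1)$ with $(\|\grad d_\Gamma\|_{L^\infty},\|d_\Gamma\|_{L^\infty})$ by Cauchy--Schwarz, giving
\[
 \|\grad(|d_\Gamma|M_{ij})\|_0^2\le\bigl(\|d_\Gamma\|_{L^\infty}^2+\|\grad d_\Gamma\|_{L^\infty}^2\bigr)\|M_{ij}\|_1^2\le C_d^2\|M_{ij}\|_1^2 .
\]
Summing over $i<j$ and using \eqref{skew_norm} yields the product bound. Combined with $|A_0-A_{0,m}|_1$ through the seminorm triangle inequality, this produces the bracket in the statement multiplied by $\sqrt{n-1}$.

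The remaining part is bounded exactly as before. In the first class, \eqref{Rd_bound} gives
\[
 \|\Rop_n(q-q_m)\|_{H(\divvr)}\le\bigl(1+4C_{x_0}^2/n^2\bigr)^{1/2}\|q-q_m\|_0 .
\]
In the second class, $\|\bzz-\bzz_m\|_{H(\divvr)}\le\sqrt n\|\bzz-\bzz_m\|_1$, as in the proof of Theorem~\ref{thm:hdivd_reg}. The three error terms depend on independent parameters, so taking the infimum over $A_{0,m}$, $N_m$ and $\bw_m$ separately gives the claimed bound.

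The main obstacle, though a mild one, is justifying the product rule and the $H^1$ membership for $|d_\Gamma|\in W^{1,\infty}$ times an $H^1$ function, which is covered by Lemma~\ref{lem:mult_kink}. A second point to check is that the stated constant $C_d$ survives the summation over entries, which the Cauchy--Schwarz pairing handles. No interface regularity is needed, since the estimate is purely in $H^1$ and no traces are involved.
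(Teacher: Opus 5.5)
Your proof is correct and is the paper's argument: the upper bounds in the proofs of Theorems~\ref{thm:hdivd} and~\ref{thm:hdivd_reg} hold for every potential, in particular for $A_m=A_{0,m}+|d_\Gamma|N_m$, and the only new ingredient is $\bigl||d_\Gamma|M\bigr|_1\le C_d\|M\|_1$, which follows from the almost-everywhere product rule. Your Cauchy--Schwarz pairing bounds each entry with the constant $(\|d_\Gamma\|_{L^\infty}^2+\|\grad d_\Gamma\|_{L^\infty}^2)^{1/2}\le C_d$ before summing over $i<j$, which correctly supplies the detail the paper leaves implicit.
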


\begin{proof}
The upper bounds in the proofs of Theorems~\ref{thm:hdivd}
and~\ref{thm:hdivd_reg} hold for every $A_m$, in particular for
$A_m=A_{0,m}+|d_\Gamma|N_m$. Since $\grad(|d_\Gamma|M)=\operatorname{sign}(d_\Gamma)\,M\otimes\grad d_\Gamma+|d_\Gamma|\grad M$ almost everywhere, $\bigl||d_\Gamma|M\bigr|_1\le
C_d\|M\|_1$, which applied to $M=N-N_m$ gives the estimate.
\end{proof}

Under the $H^2$-amplitude hypothesis of Proposition~\ref{prop:regularity_recovery}, the prescribed amplitude $N$ is in $H^2$, and the recovered remainder $A_0$ is in $H^2$ as well.
The interface singularity is thus isolated in the explicit kink factor, and the remaining potential approximation problems are for
smooth functions; this is the intended purpose of the module. The remaining part, $q$ or $\bzz$, is not affected.

\subsection{Learned-interface potentials}\label{subsec:learned_interface}

The same mechanism works without prescribing the interface. With a trainable scalar function $g_\eta$ let
\beq\label{learned_interface_psi}
 A_{\theta,\eta}=A_{0,\theta}+\operatorname{ReLU}(g_\eta)\,N_\theta ;
\eeq
the zero level set of $g_\eta$ carries the geometry, while the jump is determined by the product $N_\theta\grad g_\eta$.

\begin{prop}[trainable interface]\label{prop:learned_interface_hdiv}
Let $g_\eta\in W^{1,\infty}(\Omega)$, $A_{0,\theta},N_\theta\in H^1(\Omega;\mathbb K_n)$ and $B_{\theta,\eta}:=\operatorname{ReLU}(g_\eta)N_\theta$.
Then $A_{\theta,\eta}\in H^1(\Omega;\mathbb K_n)$ and $\operatorname{Div}A_{\theta,\eta}+\bw\in H(\divvr;\Omega)$ for
every $(\theta,\eta)$ and every remaining part $\bw$ as
in Lemma~\ref{lem:mult_kink}, also where $\Gamma_\eta:=\{g_\eta=0\}$ has singular points,
branches, or points with $\grad g_\eta=0$; moving the learned
interface never destroys $H(\divvr)$ conformity. Let $x^*\in\Gamma_\eta$ be
a regular point: $g_\eta$ is continuously differentiable near
$x^*$ (for a ReLU network $g_\eta$: $x^*$ away from
its own activation interfaces) with $\grad g_\eta(x^*)\ne0$, and
$N_\theta$ is Lipschitz continuous near $x^*$. With the
$+$ side $\{g_\eta>0\}$ and $\bn_\eta=\grad g_\eta/|\grad g_\eta|$, in
the sense of one-sided essential limits at $x^*$,
\beq\label{learned_interface_Div_jump}
 \jump{\grad B_{\theta,\eta}}=N_\theta\otimes\grad g_\eta,\qquad
 \jump{\operatorname{Div}B_{\theta,\eta}}=N_\theta\grad g_\eta=|\grad g_\eta|\,N_\theta\bn_\eta,\qquad
 \bn_\eta\cdot\jump{\operatorname{Div}B_{\theta,\eta}}=0,
\eeq
and the same identities hold for $A_{\theta,\eta}$ if $A_{0,\theta}$ is continuously differentiable near $x^*$. At each regular point the jump map $N\mapsto N\grad g_\eta(x^*)$ is onto $\bn_\eta^\perp$: for
$\mathbf j\cdot\bn_\eta=0$, $N=|\grad g_\eta(x^*)|^{-1}(\mathbf j\bn_\eta^T-\bn_\eta\mathbf j^T)$ gives $N\grad g_\eta(x^*)=\mathbf j$. This is a pointwise statement; it does not assert that a fixed finite network realizes a prescribed jump field exactly along $\Gamma_\eta$.
\end{prop}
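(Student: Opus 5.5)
The plan is to reduce everything to Lemma~\ref{lem:mult_kink} with $\rho:=\operatorname{ReLU}(g_\eta)$ and $N:=N_\theta$, and then add the pointwise surjectivity from Lemma~\ref{lem:skew}(c).

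\emph{Conformity.} The ReLU function is $1$-Lipschitz, so $g_\eta\in W^{1,\infty}(\Omega)$ gives $\rho=\operatorname{ReLU}(g_\eta)\in W^{1,\infty}(\Omega)$, with $\grad\rho=\one_{\{g_\eta>0\}}\grad g_\eta$ almost everywhere by the chain rule for Lipschitz compositions. The first part of Lemma~\ref{lem:mult_kink} then gives $B_{\theta,\eta}=\rho N_\theta\in H^1(\Omega;\mathbb K_n)$, hence $A_{\theta,\eta}\in H^1(\Omega;\mathbb K_n)$. Conformity of $\operatorname{Div}A_{\theta,\eta}+\bw$ then follows from \eqref{divDiv_zero} together with the conformity of $\bw$. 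This argument uses only $\rho\in W^{1,\infty}$ and never the geometry of $\{g_\eta=0\}$. So singular points, branches and critical points of $g_\eta$ are irrelevant, and I will note this explicitly.

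\emph{Jump identities at a regular point.} Let $x^*$ be a regular point. Since $g_\eta\in C^1$ near $x^*$ and $\grad g_\eta(x^*)\ne0$, the implicit function theorem makes $\Gamma_\eta$ a $C^1$ hypersurface near $x^*$ with normal $\bn_\eta$. On the side $\{g_\eta>0\}$ we have $\rho=g_\eta$, and on the side $\{g_\eta<0\}$ we have $\rho=0$. Both are $C^1$ up to $\Gamma_\eta$, with one-sided gradients $\grad g_\eta$ and $0$, and $\rho$ vanishes on $\Gamma_\eta$. Hence $\jump{\grad\rho}=\grad g_\eta(x^*)=|\grad g_\eta|\bn_\eta$, so $\jump{\p_{\bn}\rho}=|\grad g_\eta|$. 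Substituting into Lemma~\ref{lem:mult_kink} gives \eqref{learned_interface_Div_jump} directly, including the statement for $A_{\theta,\eta}$ when $A_{0,\theta}$ is $C^1$ near $x^*$.

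\emph{Surjectivity.} For $\mathbf j\perp\bn_\eta$, the matrix $\mathbf j\bn_\eta^T-\bn_\eta\mathbf j^T$ is skew and maps $\bn_\eta$ to $\mathbf j$ by Lemma~\ref{lem:skew}(c). Dividing by $|\grad g_\eta(x^*)|$ and using $\grad g_\eta=|\grad g_\eta|\bn_\eta$ gives $N\grad g_\eta(x^*)=\mathbf j$. The image of the jump map always lies in $\bn_\eta^\perp$ by skew-symmetry, so the map is onto $\bn_\eta^\perp$.

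The only delicate step is verifying the hypotheses of Lemma~\ref{lem:mult_kink}, specifically the one-sided $C^1$ regularity of $\rho$ up to $\Gamma_\eta$. This is where regularity of $x^*$ is needed: for a ReLU network $g_\eta$, staying away from the network's own activation interfaces makes $g_\eta$ affine, hence $C^1$, near $x^*$. I expect no further obstacle.
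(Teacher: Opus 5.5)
Your proposal is correct and follows the paper's proof. Conformity comes from $\operatorname{ReLU}(g_\eta)\in W^{1,\infty}(\Omega)$ through the first part of Lemma~\ref{lem:mult_kink}, independently of the zero set, and the jump identities come from the one-sided $C^1$ regularity of $\operatorname{ReLU}(g_\eta)$ at a regular point, which gives $\jump{\grad\rho}=\grad g_\eta(x^*)$; your explicit surjectivity argument via Lemma~\ref{lem:skew}(c) spells out a step the paper leaves to the formula in the statement.
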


\begin{proof}
$\operatorname{ReLU}(g_\eta)\in W^{1,\infty}(\Omega)$, so the first part of Lemma~\ref{lem:mult_kink} gives conformity for every $(\theta,\eta)$; it uses neither the zero set nor its regularity. Near a regular point, $\Gamma_\eta$ is a $C^1$ hypersurface, $\operatorname{ReLU}(g_\eta)$ equals $g_\eta$ on $\{g_\eta>0\}$ and $0$ on $\{g_\eta<0\}$, both continuously differentiable up to $\Gamma_\eta$, so $\jump{\grad\rho}=\grad g_\eta(x^*)$, and Lemma~\ref{lem:mult_kink} gives the identities.
\end{proof}

The amplitude required at a regular point is $|N_\theta(x^*)|\sim|\mathbf j|/|\grad g_\eta(x^*)|$; uniform control of it on a part of $\Gamma_\eta$ requires a lower
bound $|\grad g_\eta|\ge c_0>0$ there, which the signed distance of the fixed module provides with $c_0=1$. The one-sided module $\operatorname{ReLU}(g_\eta)N_\theta$ is a convenient choice; $|g_\eta|N_\theta$ gives the two-sided analogue,
with jump $2N_\theta\grad g_\eta$ as in Proposition~\ref{prop:interface_module}. In both modules the jump is the amplitude network times the jump of the gradient of the multiplier.

The parameterization has the positive scaling symmetry
\[
 (g_\eta,N_\theta)\mapsto(\lambda g_\eta,N_\theta/\lambda),\quad\lambda>0,\qquad \operatorname{ReLU}(\lambda g_\eta)\,N_\theta/\lambda=\operatorname{ReLU}(g_\eta)\,N_\theta ,
\]
which leaves the realized potential, the zero set and the jump product $N_\theta\grad g_\eta$ unchanged. There may
be further nonuniqueness; for instance, where $N_\theta=0$ the zero set of $g_\eta$ is
not seen at all. Any unregularized functional that depends on $(g_\eta,N_\theta)$ only through the realized field is invariant
under this scaling, and a normalization of $g_\eta$ can remove this global scaling degeneracy.

For a fixed interface the results above give the exact realization of every Lipschitz admissible jump, the recovery of regularity, and an enriched best-approximation estimate. The learned-interface results establish
conformity for every parameter value and the local jump mechanism; no theorem on the global recovery of the interface or on an improved approximation rate is asserted for Level~II.

As a realizable function class, a class with a module contains the class without it ($N_\theta=0$, the
same $A_{0,\theta}$), so the conformity and approximation upper bounds above, in particular \eqref{hdiv_d_approx} and \eqref{hdiv_reg_approx}, remain valid for it; this does not compare the parameter budgets of
the two architectures. The purpose of a module is to improve the finite-parameter representation
of an interface kink, without forcing the flux into $H^1(\Omega)^n$, whose components have single-valued traces on an
interior interface and therefore no jump there: the potential is in $H^1$, and its kink carries the jump.

\subsection{Two and three dimensions}\label{subsec:hdiv23_jumps}

With $A=\psi\mathcal E_{12}$ for $n=2$ and $A_{ij}=\sum_k\varepsilon_{ijk}\Psi_k$ for $n=3$
(Section~\ref{subsec:hdiv23}), the results of this section read as follows.

\begin{cor}[two-dimensional jumps]\label{cor:jump2d}
\begin{enumerate}
\item[(a)] If $\psi$ is continuous and piecewise affine on a finite polygonal partition (for instance a finite ReLU
network), then $\curl\psi$ is piecewise constant, belongs to $H(\divvr;\Omega)$, and across
the relative interior of every interior edge $\jump{\curl\psi\cdot\bn}=0$ and $\jump{\curl\psi\cdot\bt}=-\jump{\p_{\bn}\psi}$.
\item[(b)] Under the hypotheses of Proposition~\ref{prop:jump}, with $A=\psi\mathcal E_{12}$: $\jump{\p_{\bt}\psi}=0$,
$\jump{\p_{\bn}\psi}=-\jump{\btau\cdot\bt}$ and $\jump{\btau\cdot\bn}=0$; the kink of $\psi$ is determined by the tangential jump of $\btau$.
\item[(c)] At points of $\Gamma$ near which $N_1$ is Lipschitz continuous and $N_0$ is continuously differentiable,
$\psi=N_0+|d_\Gamma|N_1$ has $\jump{\p_{\bn}\psi}=2|\grad d_\Gamma|N_1$ and $\jump{\curl\psi\cdot\bt}=-2|\grad d_\Gamma|N_1$ ($2N_1$ and $-2N_1$ for a signed distance). For
\beq\label{learned_interface_psi_2d}
 \psi_{\theta,\eta}=N_{0,\theta}+\operatorname{ReLU}(g_\eta)\,N_{1,\theta},
\eeq
at a regular point of $\Gamma_\eta$ near which
$N_{1,\theta}$ is Lipschitz continuous and $N_{0,\theta}$ is continuously differentiable,
with $\bt_\eta=(-n_{\eta,2},n_{\eta,1})^T$, $\jump{\grad\psi_{\theta,\eta}}=N_{1,\theta}\grad g_\eta$, $\jump{\curl\psi_{\theta,\eta}\cdot\bn_\eta}=0$ and $\jump{\curl\psi_{\theta,\eta}\cdot\bt_\eta}=-N_{1,\theta}|\grad g_\eta|$.
\end{enumerate}
\end{cor}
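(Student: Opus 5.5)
The plan is to specialize the $n$-dimensional results to $A=\psi\mathcal E_{12}$ and then read off normal and tangential components using \eqref{curl_nt}. Let $\bn=(n_1,n_2)^T$ and $\bt=(-n_2,n_1)^T$. Then $\mathcal E_{12}\bn=(n_2,-n_1)^T=-\bt$. So for any scalar $a$ we have $\jump{\p_{\bn}(a\mathcal E_{12})}\bn=-\jump{\p_{\bn}a}\,\bt$. This one identity turns every vector jump formula of the section into a statement about the tangential component.

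For (a), apply Corollary~\ref{lem:cpwl} with $A=\psi\mathcal E_{12}$. This gives that $\curl\psi=\operatorname{Div}A$ is piecewise constant and lies in $H(\divvr;\Omega)$. It also gives, on each interior edge, $\bn\cdot\jump{\curl\psi}=0$ and $\jump{\curl\psi}=\jump{\p_{\bn}\psi}\mathcal E_{12}\bn=-\jump{\p_{\bn}\psi}\bt$. Dotting with $\bt$ yields $\jump{\curl\psi\cdot\bt}=-\jump{\p_{\bn}\psi}$. Alternatively this follows directly from \eqref{curl_nt}, since $\psi$ is continuous and so $\jump{\p_{\bt}\psi}=0$.

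For (b), start from Proposition~\ref{prop:jump}, which gives $\jump{\btau\cdot\bn}=0$ and $\jump{\btau}=\jump{\p_{\bn}A}\bn=-\jump{\p_{\bn}\psi}\bt$. Taking the tangential component gives $\jump{\p_{\bn}\psi}=-\jump{\btau\cdot\bt}$. The relation $\jump{\p_{\bt}\psi}=0$ is the first identity of \eqref{Div_jump}: $\psi\in H^1$ has a single-valued trace, so its gradient jump is normal. Uniqueness of the kink is the $n=2$ case of Lemma~\ref{lem:skew}(c), where the kernel is trivial.

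For (c), the fixed module is handled by Proposition~\ref{prop:interface_module} with $N=N_1\mathcal E_{12}$ and $A_0=N_0\mathcal E_{12}$. It gives $\jump{\p_{\bn}\psi}=2|\grad d_\Gamma|N_1$ and $\jump{\operatorname{Div}A}=2|\grad d_\Gamma|N_1\mathcal E_{12}\bn=-2|\grad d_\Gamma|N_1\bt$. This is the stated tangential jump, which becomes $\mp2N_1$ when $|\grad d_\Gamma|=1$.

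The learned module is handled in the same way by Proposition~\ref{prop:learned_interface_hdiv}. Identity \eqref{learned_interface_Div_jump} gives $\jump{\grad\psi_{\theta,\eta}}=N_{1,\theta}\grad g_\eta$ and $\jump{\operatorname{Div}A}=|\grad g_\eta|N_{1,\theta}\mathcal E_{12}\bn_\eta=-|\grad g_\eta|N_{1,\theta}\bt_\eta$. Components along $\bn_\eta$ and $\bt_\eta$ then give the claims.

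The only real care needed is sign bookkeeping: checking $\mathcal E_{12}\bn=-\bt$ under the orientation convention $\bt=(-n_2,n_1)^T$ and the $+$ side convention. Everything else is a direct substitution into results already proved.
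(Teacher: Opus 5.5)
Your proposal is correct and is essentially the paper's proof. The paper likewise specializes Corollary~\ref{lem:cpwl}, Proposition~\ref{prop:jump} and Propositions~\ref{prop:interface_module} and~\ref{prop:learned_interface_hdiv} to $A=\psi\mathcal E_{12}$, $N=N_1\mathcal E_{12}$, and reads off the components using $\jump{\p_{\bn}A}=\jump{\p_{\bn}\psi}\,\mathcal E_{12}$ and $\mathcal E_{12}\bn=-\bt$.
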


\begin{proof}
Corollary~\ref{lem:cpwl}, Proposition~\ref{prop:jump} and Propositions~\ref{prop:interface_module} and~\ref{prop:learned_interface_hdiv} with $A=\psi\mathcal E_{12}$, $N=N_1\mathcal E_{12}$: $\jump{\p_{\bn}A}=\jump{\p_{\bn}\psi}\mathcal E_{12}$ and $\mathcal E_{12}\bn=-\bt$.
\end{proof}

\begin{cor}[three-dimensional jumps]\label{cor:jump3d}
\begin{enumerate}
\item[(a)] If $\bPsi\in H^1(\Omega)^3$ is piecewise $H^2$ on the two sides of a smooth interface with unit normal $\bn$ (or continuous and
piecewise affine, across the relative interior of every interior face), then $\jump{\curll\bPsi}=\bn\times\jump{\p_{\bn}\bPsi}$.
\item[(b)] For $\bPsi=\bPsi_0+|d_\Gamma|\mathbf N$ with $d_\Gamma$ as in Section~\ref{subsec:interface}, at points of $\Gamma$ near which $\mathbf N$ is Lipschitz
continuous and $\bPsi_0$ is continuously differentiable, $\jump{\curll\bPsi}=2\grad d_\Gamma\times\mathbf N$.
\item[(c)] For $\bPsi_{\theta,\eta}=\bPsi_{0,\theta}+\operatorname{ReLU}(g_\eta)\mathbf N_\theta$, at regular points of $\Gamma_\eta$ near which $\mathbf N_\theta$ is Lipschitz
continuous and $\bPsi_{0,\theta}$ is continuously differentiable, $\jump{\curll\bPsi_{\theta,\eta}}=\grad g_\eta\times\mathbf N_\theta$.
\end{enumerate}
All these jumps are tangential.
\end{cor}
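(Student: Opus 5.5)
The plan is to specialize the $\R^n$ results of this section to $n=3$ through the correspondence $A_{ij}=\sum_k\varepsilon_{ijk}\Psi_k$ of Section~\ref{subsec:hdiv23}. Under it $\operatorname{Div}A=\curll\bPsi$, and a skew matrix $N_{ij}=\sum_k\varepsilon_{ijk}N_k$ acts by $N\bv=\bv\times\mathbf N$. The map $\bPsi\mapsto A$ is linear with constant coefficients. It therefore preserves the hypotheses: being in $H^1$, piecewise $H^2$, continuous and piecewise affine, Lipschitz, or continuously differentiable near a point. It also commutes with taking one-sided traces and jumps. Each item then follows from the matching $n$-dimensional statement, after translating the matrix–vector product into a cross product.

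For (a), I would invoke Lemma~\ref{lem:skew}(b), respectively Corollary~\ref{lem:cpwl} in the piecewise affine case, to get $\jump{\operatorname{Div}A}=\jump{\p_{\bn}A}\bn$. The matrix $\jump{\p_{\bn}A}$ is the skew matrix built from the vector $\jump{\p_{\bn}\bPsi}$. Hence $\jump{\p_{\bn}A}\bn=\bn\times\jump{\p_{\bn}\bPsi}$, which is exactly the identity already recorded in Section~\ref{subsec:hdiv23}.

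For (b), I would apply Proposition~\ref{prop:interface_module}, that is, Lemma~\ref{lem:mult_kink} with $\rho=|d_\Gamma|$, to $A=A_0+|d_\Gamma|N$, where $A_0$ and $N$ correspond to $\bPsi_0$ and $\mathbf N$. This gives $\jump{\operatorname{Div}A}=2N\grad d_\Gamma=2\grad d_\Gamma\times\mathbf N$. For (c), I would apply Proposition~\ref{prop:learned_interface_hdiv} in the same way: $\jump{\operatorname{Div}A}=N_\theta\grad g_\eta=\grad g_\eta\times\mathbf N_\theta$. In each case I would check that the Lipschitz and $C^1$ assumptions on $\mathbf N$ and $\bPsi_0$ transfer entrywise to $N$ and $A_0$. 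For (c) the point must also be regular in the sense of that proposition.

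Tangentiality follows from the cross-product form. In (a) the jump $\bn\times(\cdot)$ is orthogonal to $\bn$. In (b) and (c) the vectors $\grad d_\Gamma$ and $\grad g_\eta$ are parallel to the respective normals, so the cross products are orthogonal to them. This also agrees with $\bn\cdot\jump{\operatorname{Div}A}=0$ from the cited results.

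The only step needing care is the sign convention in $N\bv=\bv\times\mathbf N$. I would verify it directly: $(N\bv)_i=\sum_{j,k}\varepsilon_{ijk}N_kv_j=(\bv\times\mathbf N)_i$. This makes the orientation in (b) and (c) come out as $\grad d_\Gamma\times\mathbf N$ and not $\mathbf N\times\grad d_\Gamma$.
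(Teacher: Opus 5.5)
Your proposal is correct and is essentially the paper's proof. It uses the same correspondence $A_{ij}=\sum_k\varepsilon_{ijk}\Psi_k$, $N_{ij}=\sum_k\varepsilon_{ijk}N_k$ with the identities $\jump{\p_{\bn}A}\bn=\bn\times\jump{\p_{\bn}\bPsi}$ and $N\bv=\bv\times\mathbf N$, and it obtains (a) from Lemma~\ref{lem:skew}(b) and Corollary~\ref{lem:cpwl}, and (b), (c) from Lemma~\ref{lem:mult_kink} via Propositions~\ref{prop:interface_module} and~\ref{prop:learned_interface_hdiv}, with your explicit sign check and transfer of the local hypotheses spelling out what the paper leaves implicit.
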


\begin{proof}
With $A_{ij}=\sum_k\varepsilon_{ijk}\Psi_k$ and $N_{ij}=\sum_k\varepsilon_{ijk}N_k$, $\jump{\p_{\bn}A}\bn=\bn\times\jump{\p_{\bn}\bPsi}$ and $N\bv=\bv\times\mathbf N$; (a) is Lemma~\ref{lem:skew}(b) and Corollary~\ref{lem:cpwl}, (b) and (c) are Lemma~\ref{lem:mult_kink} with $\jump{\grad\rho}=2\grad d_\Gamma$ and $\jump{\grad\rho}=\grad g_\eta$ (Propositions~\ref{prop:interface_module} and~\ref{prop:learned_interface_hdiv}).
\end{proof}

\subsection{A divergence that jumps}\label{subsec:div_jumps}

The tangential jumps above are carried by the potential in both
classes. A jump of the divergence concerns the remaining part $\bw$. If the
divergence $q^*=\divv\btau^*$ of the target is piecewise continuous and jumps across $\Gamma$,
the two classes carry this jump as follows. In the first class,
$q_m$ itself may jump: a piecewise $q_m$ for a known interface, or
$q_m=Q_0+\chi Q_1$ with $\chi$ the indicator function of $\{g_\eta>0\}$ for a
learned one; $\Rop_nq_m$ remains continuous under the segment condition of Lemma~\ref{lem:Rcont}, and conformity
is not affected, since only $q_m\in L^2$ is required.
In the second class, $\bzz_m\in H^1$ has no interface jump, and the jump of its divergence comes from a kink.

\begin{lem}[divergence of a kinked field]\label{lem:div_kink}
Let $\rho\in W^{1,\infty}(\Omega)$, $\bzz_0,\mathbf{Z}_1\in H^1(\Omega)^n$ and $\bzz:=\bzz_0+\rho\mathbf{Z}_1$. Then $\bzz\in H^1(\Omega)^n$. Let
further $x^*\in\Omega$, $\Gamma$, $\bn$ and $\rho$ be as in Lemma~\ref{lem:mult_kink}, let $\mathbf{Z}_1$
be Lipschitz continuous and $\bzz_0$ continuously differentiable near $x^*$. Then,
in the sense of one-sided essential limits at $x^*$, $\jump{\bzz}=0$ and
\[
 \jump{\divv\bzz}=\mathbf{Z}_1(x^*)\cdot\jump{\grad\rho}=\jump{\p_{\bn}\rho}\,\mathbf{Z}_1(x^*)\cdot\bn .
\]
For $\rho=|d_\Gamma|$ this is $2\mathbf{Z}_1\cdot\grad d_\Gamma$, and for $\rho=\operatorname{ReLU}(g_\eta)$ it is $\mathbf{Z}_1\cdot\grad g_\eta$. If $d_\Gamma\in C^{1,1}$ near $\Gamma$, every Lipschitz jump $j$ on $\Gamma$ is attained with the Lipschitz amplitude $\mathbf{Z}_1=\chi\,\tilde j\,\grad d_\Gamma/(2|\grad d_\Gamma|^2)$, $\tilde j$ a Lipschitz extension of $j$ and $\chi$ a cutoff as in Proposition~\ref{prop:interface_module}.
\end{lem}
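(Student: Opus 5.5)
The plan mirrors the proof of Lemma~\ref{lem:mult_kink}, with the skew matrix $N$ replaced by the vector field $\mathbf Z_1$ and the row-wise divergence $\operatorname{Div}$ replaced by $\divv$.

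\emph{Membership in $H^1$.} Since $\rho\in W^{1,\infty}(\Omega)$ and $\mathbf Z_1\in H^1(\Omega)^n$, each component $\rho Z_{1,k}$ lies in $H^1(\Omega)$ by the product rule for a Lipschitz function times an $H^1$ function, with
\[
 \grad(\rho Z_{1,k})=Z_{1,k}\grad\rho+\rho\grad Z_{1,k}
\]
almost everywhere. Hence $\bzz\in H^1(\Omega)^n$. Taking traces gives
\[
 \divv(\rho\mathbf Z_1)=\mathbf Z_1\cdot\grad\rho+\rho\,\divv\mathbf Z_1 .
\]

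\emph{Jumps at $x^*$.} Near $x^*$, $\mathbf Z_1$ is continuous with $|\grad\mathbf Z_1|\le L$ almost everywhere, and $\rho$ is continuous with $\rho(x^*)=0$. Therefore
\[
 \operatorname{ess\,sup}|\rho\,\divv\mathbf Z_1|\le\sqrt n\,L\sup|\rho|\to0
\]
on shrinking neighborhoods, so this term contributes no one-sided limit. The term $\mathbf Z_1\cdot\grad\rho$ has one-sided limits $\mathbf Z_1(x^*)\cdot\grad\rho^\pm(x^*)$. The term $\divv\bzz_0$ is continuous near $x^*$ and adds no jump. Since $\rho=0$ on $\Gamma$, its tangential derivatives vanish from both sides, so $\jump{\grad\rho}=\jump{\p_{\bn}\rho}\bn$, which gives the stated formula for $\jump{\divv\bzz}$. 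For $\jump{\bzz}=0$, note that $\bzz_0$ and $\mathbf Z_1$ are continuous near $x^*$ and $\rho$ is continuous, so $\bzz$ is continuous there. The two special cases follow from the one-sided gradients already computed: $\jump{\grad|d_\Gamma|}=2\grad d_\Gamma$ in Proposition~\ref{prop:interface_module}, and $\jump{\grad\operatorname{ReLU}(g_\eta)}=\grad g_\eta$ in Proposition~\ref{prop:learned_interface_hdiv}.

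\emph{Attainment of a prescribed jump.} With $\chi$, $U$ and $\tilde j$ as in Proposition~\ref{prop:interface_module}, the factor $\grad d_\Gamma/|\grad d_\Gamma|^2$ is Lipschitz on $U$ because $d_\Gamma\in C^{1,1}$ and $|\grad d_\Gamma|$ is bounded away from zero there. Hence $\mathbf Z_1$, extended by zero outside $U$, is Lipschitz. On $\Gamma$ we have $\chi=1$, so
\[
 2\,\mathbf Z_1\cdot\grad d_\Gamma=2\tilde j\,|\grad d_\Gamma|^2/(2|\grad d_\Gamma|^2)=j .
\]
The only delicate point is the vanishing of the $\rho\,\divv\mathbf Z_1$ contribution in the essential-limit sense. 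This is handled exactly as in Lemma~\ref{lem:mult_kink}, using the Lipschitz bound on $\mathbf Z_1$ together with $\rho(x^*)=0$.
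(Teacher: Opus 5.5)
Your proposal is correct and follows the paper's proof. The shared steps are the product rule for $\rho\mathbf Z_1$, the vanishing of the $\rho\grad\mathbf Z_1$ term near $x^*$ (since $\rho(x^*)=0$ and $\mathbf Z_1$ is Lipschitz), the one-sided limits of $\mathbf Z_1\cdot\grad\rho$ with $\jump{\grad\rho}=\jump{\p_{\bn}\rho}\bn$, and the check $2\mathbf Z_1\cdot\grad d_\Gamma=\chi\tilde j=j$ on $\Gamma$; the only cosmetic difference is that the paper computes $\jump{\grad\bzz}=\mathbf Z_1(x^*)\otimes\jump{\grad\rho}$ and takes its trace, whereas you work with the divergence directly.
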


\begin{proof}
As in Lemma~\ref{lem:mult_kink}, applied to the components of $\bzz$: $\grad(\rho\mathbf{Z}_1)=\mathbf{Z}_1\otimes\grad\rho+\rho\grad\mathbf{Z}_1\in L^2$, and near $x^*$ the term $\rho\grad\mathbf{Z}_1$ tends to zero, so $\jump{\grad\bzz}=\mathbf{Z}_1(x^*)\otimes\jump{\grad\rho}$, whose trace is the jump of $\divv\bzz$; $\bzz$ itself is continuous. For the last statement, $2\mathbf{Z}_1\cdot\grad d_\Gamma=\chi\tilde j=j$ on $\Gamma$.
\end{proof}

In $\Sigma_{m,n}^{\rm reg}$ the module $\bzz_m=\bzz_{0,m}+|d_\Gamma|\mathbf{Z}_{1,m}$ for a known interface, or $\bzz_m=\mathbf{Z}_{0,m}+\operatorname{ReLU}(g_\eta)\mathbf{Z}_{1,m}$ for a learned one, therefore carries the jump of the divergence with the same factor as the potential; in a generic network, a
kink of $\bzz_m$ at the zero set of a unit of a shared last layer does the same. When the divergence of the target is continuous across $\Gamma$, no module in $\bzz_m$ is needed.

\subsection{Three implementation levels}\label{subsubsec:hdiv_levels}

The three levels below describe how the interface geometry enters the skew potential carrying the flux jump, in the same way for both classes. If the divergence also jumps, the same interface
information is used in the remaining part, as described in Section~\ref{subsec:div_jumps}. With the $n(n-1)/2$ entries $a_{ij}$, $i<j$, of the potential as scalar network outputs, the flux is
\beq\label{hdiv_impl_common}
 \btau_{\theta,\eta}=\operatorname{Div}A_{\theta,\eta}+\bw_\theta,\qquad \bw_\theta=\Rop_nq_\theta+\textstyle\sum_rc_r\bh_r\quad\text{or}\quad\bw_\theta=\bzz_\theta ,
\eeq
and the levels are the following choices of $A_{\theta,\eta}$, with $A_{0,\theta}$ and $N_\theta$ skew.\par\noindent \emph{Level~I} (known interface): $A=A_{0,\theta}+|d_\Gamma|N_\theta$ with a fixed level-set function $d_\Gamma$; the geometry is
given; the amplitude network is $N_\theta$, and the module contributes the flux jump $\jump{\operatorname{Div}(|d_\Gamma|N_\theta)}=2N_\theta\grad d_\Gamma$ ($2N_\theta\bn$ for a signed distance), which is the jump of $\operatorname{Div}A$ where $A_{0,\theta}$ has
no kink on $\Gamma$. The $n(n-1)/2$ entries of $A_{0,\theta}$ and of $N_\theta$ are scalar outputs, for instance of one network with a linear output
layer; the module doubles the number of output coefficients of the potential and adds no trainable parameter for the geometry. The function $d_\Gamma$ is
data, evaluated in closed form, for instance $d_\Gamma(x)=x_1-s$ for the plane $\{x_1=s\}$ or $d_\Gamma(x)=|x-x_C|-R$ for a sphere (smooth away from $x_C$); both are signed distances,
but any level-set function as in Proposition~\ref{prop:interface_module} can be used. The module contains an explicit multiplication and need not itself be representable by an ordinary finite ReLU
network; when both factors are continuous and piecewise linear, their product is in general piecewise quadratic; in a ReLU-based implementation $|d_\Gamma|N_\theta=2\operatorname{ReLU}(d_\Gamma)N_\theta-d_\Gamma N_\theta$, and the last term has no additional kink across $\Gamma$. The
derivative is evaluated piecewise from the explicit formula $\grad(|d_\Gamma|N_\theta)=\operatorname{sign}(d_\Gamma)\,N_\theta\otimes\grad d_\Gamma+|d_\Gamma|\,\grad N_\theta$ (almost everywhere), so no numerical differentiation across the kink and no derivative of $\operatorname{sign}(d_\Gamma)$ is
required; since the residual integrand is in general only piecewise smooth and may jump across $\Gamma$, the quadrature is split along
$\Gamma$, which is possible because $\Gamma$ is known. With a smooth activation the identities of Proposition~\ref{prop:interface_module} hold at every point of $\Gamma$; with ReLU layers they hold away from
the points where activation interfaces of $A_{0,\theta}$ meet $\Gamma$.\par\noindent \emph{Level~II} (learned interface): $A=A_{0,\theta}+\operatorname{ReLU}(g_\eta)N_\theta$ with a trainable scalar network $g_\eta$; the geometry is learned through $g_\eta$, whose parameters $\eta$ are the
only trainable parameters of the geometry; the amplitude network is $N_\theta$, and the module contributes the flux jump $\jump{\operatorname{Div}(\operatorname{ReLU}(g_\eta)N_\theta)}=N_\theta\grad g_\eta$ at regular points of
$\Gamma_\eta$, which is the jump of $\operatorname{Div}A$ where $A_{0,\theta}$ has no kink on $\Gamma_\eta$. By Proposition~\ref{prop:learned_interface_hdiv} the field is in $H(\divvr)$ for every $\eta$.
The derivative is evaluated piecewise from $\grad(\operatorname{ReLU}(g_\eta)N_\theta)=\chi_{\{g_\eta>0\}}(N_\theta\otimes\grad g_\eta+g_\eta\grad N_\theta)$ (almost everywhere); for a ReLU network $g_\eta$ the jump formula holds at points of $\Gamma_\eta$ away from the activation interfaces
of $g_\eta$. A finite ReLU network $g_\eta$ is continuous and piecewise linear, so its zero set $\Gamma_\eta$ is piecewise flat on its activation partition; a smooth
geometry network can produce a genuinely curved zero set. Since the kink moves with $\eta$, the numerical integration must account for the moving zero set, either by cutting the quadrature cells along the
current zero set of $g_\eta$ (the derivative with respect to $\eta$ then contains the motion of the cut) or by a cellwise weak treatment of the derivatives; a fixed pointwise
rule with automatic differentiation alone does not in general capture the contribution associated with the motion of the kink.\par\noindent \emph{Level~III}
(generic): $A=A_\theta$, an ordinary skew network without interface variable; the geometry is implicit in the activation partition. With ReLU layers the zero sets
of the hidden preactivations provide the possible finite-width kink sets (a zero set produces no kink in the output if, for instance, its downstream coefficients vanish); with smooth activations
an exact jump is absent at finite width and can only be approximated by a smooth transition layer.\par If the target divergence is
continuous across the interface, the remaining part $\bw_\theta$, given by the networks $q_\theta$ and the harmonic coefficients, or $\bzz_\theta$, is unchanged across the three levels; if the divergence jumps, it
is treated as in Section~\ref{subsec:div_jumps}; in two and three dimensions $\psi$ and $\bPsi$ (Section~\ref{subsec:hdiv23}) replace $A$. The fields are evaluated as described in Section~\ref{sec:framework}.

\section{Native neural approximation in \texorpdfstring{$H(\curll)$}{H(curl)}}\label{sec:hcurl}

\subsection{Motivation: the de Rham sequence}\label{subsec:hcurl_derham}

In two and three dimensions the de Rham sequence at $H(\curll)$ is
\[
 H^1(\Omega)\xrightarrow{\ \grad\ }H(\curll;\Omega)\xrightarrow{\ \curll\ }L^2(\Omega)\longrightarrow0\quad(n=2),
\]
\[
 H^1(\Omega)\xrightarrow{\ \grad\ }H(\curll;\Omega)\xrightarrow{\ \curll\ }H(\divvr;\Omega)\quad(n=3),
\]
with the scalar curl $\curll\bv=\p_1v_2-\p_2v_1$ for $n=2$. The intrinsic continuity
of $H(\curll)$ is tangential continuity: across an interface the tangential trace is single
valued, while the normal component may jump. In two dimensions
the picture is that of Sections~\ref{sec:hdiv} and~\ref{sec:hdiv_reg} rotated, and both representations are available:
$\curll$ is onto $L^2(\Omega)$ and has a bounded right inverse
$\mathcal S$, so $r:=\curll\bE$ is an independent $L^2$ function, $\bE-\mathcal Sr$ is curl free and hence
a gradient up to finitely many fields determined by the topology, and
$\bE=\grad\phi+\mathcal Sr+\bh$; or, by the regular decomposition, $\bE=\grad\phi+\bzz$ with $\phi\in H^1(\Omega)$ and $\bzz\in H^1(\Omega)^2$. In
three dimensions $\curll$ is not onto: its range is divergence free,
so an unconstrained network cannot represent the curl, and only the regular decomposition
$\bE=\grad\phi+\bzz$ with $\bzz\in H^1(\Omega)^3$ is used. In all cases the gradient on the
left carries the normal jumps through the kinks of $\phi$, the remaining part
has no interface jump in the situations of interest ($\mathcal Sr+\bh$ is
continuous for a bounded piecewise continuous $r$ under the segment condition of Lemma~\ref{lem:Rcont},
rotated, and $\bzz\in H^1$ has a single-valued trace), and networks for
the representing functions give a field in $H(\curll;\Omega)$ for every value of the parameters.

\subsection{Two dimensions}\label{subsec:hcurl2d}

For $\bv=(v_1,v_2)^T$ let $J(v_1,v_2)^T:=(-v_2,v_1)^T$ and $K:=-J$. With the scalar curl of Section~\ref{subsec:hcurl_derham},
\beq\label{hcurl_rotation}
 \curll\bv=\divv(K\bv),\qquad (K\bv)\cdot\bn=\bv\cdot\bt,\qquad K\grad\phi=\curl\phi ,
\eeq
so $K$ ($=\mathcal E_{12}$ of Section~\ref{subsec:hdiv23}) is an isometry
from $H(\curll;\Omega)$ onto $H(\divvr;\Omega)$ that maps tangential traces to normal traces.
With $\mathcal S:=K^{-1}\Rop_2=J\Rop_2$, Lemma~\ref{lem:Rd} gives $\curll(\mathcal Sr)=r$ and $\|\mathcal Sr\|_0\le C_{x_0}\|r\|_0$. As for $H(\divvr)$,
there are two representations: the rotation of Corollary~\ref{cor:hdiv23}
and the rotation of Proposition~\ref{prop:hdiv_regular} with $n=2$ give
\beq\label{hcurl_representation}
 \begin{aligned}
 &\bE=\grad\phi+\mathcal Sr+\bh_{\curll},\quad r=\curll\bE,\quad \bh_{\curll}\in J\cH,\qquad\text{or}\\
 &\bE=\grad\phi+\bzz,\quad \bzz\in H^1(\Omega)^2,\quad \curll\bzz=\curll\bE .
 \end{aligned}
\eeq
In the first, $r$ and $\bh_{\curll}$ are unique, the
potential is unique up to a constant, and
\beq\label{hcurl_2d_stability}
 |\phi|_1+\|\bh_{\curll}\|_0+\|r\|_0\le C\|\bE\|_{H(\curll)} ,
\eeq
the rotation of the bound in Corollary~\ref{cor:hdiv23}. The cohomology part is determined by the circulations of $\bE$ around the holes. By \eqref{hcurl_rotation} the circulation $\langle\bv\cdot\bt,1\rangle_{\Gamma_j}$ is the flux of $K\bv$ through $\Gamma_j$: for $J\bh_i$ it is the flux of $KJ\bh_i=\bh_i$, that is $-2\pi\delta_{ij}$; for $\grad\phi$ it is zero, $\phi$ being single valued on the closed curve $\Gamma_j$; and for $\mathcal Sr$ it
is the flux of $\Rop_2r$, which is zero by Corollary~\ref{cor:R_fluxes}. So the coefficients of $\bh_{\curll}$ are the circulations of $\bE$ divided by $-2\pi$. In the second, $\phi$ and $\bzz$ depend linearly on $\bE$ with $\|\phi\|_1+\|\bzz\|_1\le C\|\bE\|_{H(\curll)}$ (Corollary~\ref{cor:hcurl2d_reg}), and no cohomology part appears. Let $\{\cP_m\}$, $\{\cQ_m\}$ and $\{\cZ_m\}$ be nested families with $\cP_m\subset H^1(\Omega)$, $\cQ_m\subset L^2(\Omega)$ and $\cZ_m\subset H^1(\Omega)^2$, and let
\beq\label{X_hcurl_2d}
 \begin{aligned}
 &\cE_m^{\rm dR}:=\bigl\{\grad\phi_m+\mathcal Sr_m+\bh_{\curll}:\ \phi_m\in\cP_m,\ r_m\in\cQ_m,\ \bh_{\curll}\in J\cH\bigr\},\\
 &\cE_m^{\rm reg}:=\bigl\{\grad\phi_m+\bzz_m:\ \phi_m\in\cP_m,\ \bzz_m\in\cZ_m\bigr\}.
 \end{aligned}
\eeq

\begin{cor}[first two-dimensional neural $H(\curll)$ class]\label{cor:hcurl_density}
For every value of the trainable parameters, every element of $\cE_m^{\rm dR}$ lies in $H(\curll;\Omega)$ and $\curll(\grad\phi_m+\mathcal Sr_m+\bh_{\curll})=r_m$. For $\bE$ as in \eqref{hcurl_representation}
\beq\label{hcurl_2d_approx}
 \inf_{r_m\in\cQ_m}\|r-r_m\|_0\le\inf_{\bE_m\in\cE_m^{\rm dR}}\|\bE-\bE_m\|_{H(\curll)}
 \le\inf_{\phi_m\in\cP_m}|\phi-\phi_m|_1+\sqrt{1+C_{x_0}^2}\,\inf_{r_m\in\cQ_m}\|r-r_m\|_0 .
\eeq
\end{cor}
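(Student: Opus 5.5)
The plan is to transport Corollary~\ref{cor:hdiv23} (case $n=2$) through the rotation $K$ of \eqref{hcurl_rotation}, instead of proving anything new. The key fact is that $K$ is an isometry of $H(\curll;\Omega)$ onto $H(\divvr;\Omega)$. Pointwise $|K\bv|=|\bv|$, and $\curll\bv=\divv(K\bv)$ holds in the sense of distributions. Consequently $\|\bv\|_{H(\curll)}=\|K\bv\|_{H(\divvr)}$ for every $\bv\in L^2(\Omega)^2$ whose rotated field has an $L^2$ divergence.

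\emph{Conformity and the curl identity.} Take $\bE_m=\grad\phi_m+\mathcal Sr_m+\bh_{\curll}$ with $\bh_{\curll}=J\bh$ and $\bh\in\cH$. Applying $K$ gives
\[
K\bE_m=\curl\phi_m+\Rop_2r_m+\bh ,
\]
since $K\grad\phi=\curl\phi$, $KJ=I$ and $K\mathcal S=\Rop_2$. This field is an element of $\Sigma_{m,2}^{\rm dR}$, so by Theorem~\ref{thm:hdivd} it lies in $H(\divvr;\Omega)$ and has divergence $r_m$. Because $K$ is an isometry, $\bE_m\in H(\curll;\Omega)$ and $\curll\bE_m=\divv(K\bE_m)=r_m$. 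Alternatively, one can argue directly: $\curll\grad\phi_m=0$ distributionally, $\curll J\bh=\divv\bh=0$, and $\curll\mathcal Sr_m=r_m$ by Lemma~\ref{lem:Rd}.

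\emph{Lower bound.} For any $\bE_m\in\cE_m^{\rm dR}$ we have $\|\bE-\bE_m\|_{H(\curll)}\ge\|\curll(\bE-\bE_m)\|_0=\|r-r_m\|_0$, and $r_m\in\cQ_m$, so the left inequality follows.

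\emph{Upper bound.} Given $\bE$ as in \eqref{hcurl_representation}, choose arbitrary $\phi_m\in\cP_m$ and $r_m\in\cQ_m$, and keep the same $\bh_{\curll}$. Then
\[
\bE-\bE_m=\grad(\phi-\phi_m)+\mathcal S(r-r_m).
\]
The first term is curl free, so its $H(\curll)$ norm equals $|\phi-\phi_m|_1$. For the second term, $\|\mathcal S(r-r_m)\|_{H(\curll)}=\|\Rop_2(r-r_m)\|_{H(\divvr)}\le(1+C_{x_0}^2)^{1/2}\|r-r_m\|_0$ by \eqref{Rd_bound} with $n=2$, since $4C_{x_0}^2/4=C_{x_0}^2$. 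The triangle inequality, followed by independent minimization over $\phi_m$ and $r_m$, gives \eqref{hcurl_2d_approx}.

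No step is a real obstacle. The one point needing care is the sign and orientation bookkeeping for $J$ and $K$: checking that $K\grad\phi=\curl\phi$ with $\curl\phi=(\p_2\phi,-\p_1\phi)$, and that $KJ=I$. Once these are confirmed, the whole statement is a direct rotation of the $H(\divvr)$ estimate.
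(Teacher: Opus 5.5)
Your proposal is correct and follows the paper's own proof. Like the paper, you transport Corollary~\ref{cor:hdiv23} through the isometry $K$, using $K\grad\phi=\curl\phi$, $KJ=I$ and $K\mathcal S=\Rop_2$, and you also give the direct argument: $\bE-\bE_m=\grad(\phi-\phi_m)+\mathcal S(r-r_m)$ with $\|\mathcal S(r-r_m)\|_{H(\curll)}\le(1+C_{x_0}^2)^{1/2}\|r-r_m\|_0$, and the lower bound coming from $\curll\bE_m=r_m$.
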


\begin{proof}
Apply the isometry $K$ and Corollary~\ref{cor:hdiv23}. Directly: $\bE-\bE_m=\grad(\phi-\phi_m)+\mathcal S(r-r_m)$,
the first term is curl free, and $\|\mathcal
S(r-r_m)\|_{H(\curll)}^2=\|\Rop_2(r-r_m)\|_0^2+\|r-r_m\|_0^2\le(1+C_{x_0}^2)\|r-r_m\|_0^2$. The lower bound holds since $\curll\bE_m=r_m$ gives $\|\bE-\bE_m\|_{H(\curll)}\ge\|r-r_m\|_0$.
\end{proof}

\begin{cor}[second two-dimensional neural $H(\curll)$ class]\label{cor:hcurl2d_reg}
Every $\bE\in H(\curll;\Omega)$ can be written $\bE=\grad\phi+\bzz$ with $\phi\in H^1(\Omega)$
and $\bzz\in H^1(\Omega)^2$ depending linearly on $\bE$, $\|\phi\|_1+\|\bzz\|_1\le C\|\bE\|_{H(\curll)}$ and $\curll\bE=\curll\bzz$.
For every value of the trainable parameters, every
element of $\cE_m^{\rm reg}$ lies in $H(\curll;\Omega)$ and $\curll(\grad\phi_m+\bzz_m)=\curll\bzz_m$, and
\beq\label{hcurl_2d_reg_approx}
 \inf_{\bE_m\in\cE_m^{\rm reg}}\|\bE-\bE_m\|_{H(\curll)}\le\inf_{\phi_m\in\cP_m}|\phi-\phi_m|_1+\sqrt2\inf_{\bzz_m\in\cZ_m}\|\bzz-\bzz_m\|_1 .
\eeq
\end{cor}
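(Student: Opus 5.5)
The plan is to obtain everything by rotating Proposition~\ref{prop:hdiv_regular} and Theorem~\ref{thm:hdivd_reg} with $n=2$ through the isometry $K$ of \eqref{hcurl_rotation}. Given $\bE\in H(\curll;\Omega)$, the field $K\bE$ lies in $H(\divvr;\Omega)$ with $\divv(K\bE)=\curll\bE$ and $\|K\bE\|_{H(\divvr)}=\|\bE\|_{H(\curll)}$. Proposition~\ref{prop:hdiv_regular} then gives $K\bE=\curl\psi+\bzz'$, with $\psi\in H^1(\Omega)$ and $\bzz'\in H^1(\Omega)^2$ depending linearly on $K\bE$ and bounded by $C\|K\bE\|_{H(\divvr)}$. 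Here $A=\psi\mathcal E_{12}$ and $\operatorname{Div}A=\curl\psi$, and $\|A\|_1=\|\psi\|_1$.

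Applying $K^{-1}=J$ and using $K\grad\phi=\curl\phi$ gives
\[
\bE=\grad\psi+J\bzz' .
\]
So I set $\phi:=\psi$ and $\bzz:=J\bzz'$. Since $J$ is a constant orthogonal matrix, $\bzz\in H^1(\Omega)^2$ with $\|\bzz\|_1=\|\bzz'\|_1$, and linearity and the bound transfer directly. For the curl identity, $\curll\bzz=\divv(K\bzz)=\divv\bzz'=\divv(K\bE)=\curll\bE$. Alternatively, take the curl of the decomposition and use $\curll\grad\phi=0$ in the sense of distributions.

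Conformity of the class $\cE_m^{\rm reg}$ holds because $\phi_m\in H^1$ gives $\grad\phi_m\in L^2$ with zero distributional curl, and $\bzz_m\in H^1(\Omega)^2\subset H(\curll;\Omega)$. Hence $\curll(\grad\phi_m+\bzz_m)=\curll\bzz_m$.

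For \eqref{hcurl_2d_reg_approx}, I write
\[
\bE-\bE_m=\grad(\phi-\phi_m)+(\bzz-\bzz_m)
\]
and estimate the two terms separately.
\begin{itemize}
\item The gradient term is curl free, so its $H(\curll)$ norm equals $|\phi-\phi_m|_1$.
\item For $\bw\in H^1(\Omega)^2$, pointwise $|\p_1w_2-\p_2w_1|^2\le2|\grad\bw|^2$, so $\|\bw\|_{H(\curll)}^2\le\|\bw\|_0^2+2|\bw|_1^2\le2\|\bw\|_1^2$.
\end{itemize}
The triangle inequality and independent minimization over $\phi_m$ and $\bzz_m$ then finish the estimate.

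No step is a real obstacle, since everything is routine. The only point needing care is checking the sign conventions of $J$ and $K$ so that $K^{-1}\curl\psi=\grad\psi$ exactly. From \eqref{hcurl_rotation}, $K\grad\phi=\curl\phi$, so this holds.
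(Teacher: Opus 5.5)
Your proposal is correct and follows the paper's proof essentially step for step. You rotate by $K$, apply Proposition~\ref{prop:hdiv_regular} with $n=2$ to $K\bE$, and pull back with $J=K^{-1}$ via $J\curl\psi=JK\grad\psi=\grad\psi$; you then identify $\curll\bzz=\divv(KJ\bw)=\curll\bE$ and bound the remainder by $\|\bw\|_{H(\curll)}\le\sqrt2\|\bw\|_1$ together with the curl-free gradient term.
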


\begin{proof}
Apply Proposition~\ref{prop:hdiv_regular} with $n=2$ to $K\bE$: $K\bE=\curl\psi+\bw$ with $\psi\in
H^1(\Omega)$ and $\bw\in H^1(\Omega)^2$. Since $K^{-1}=J$ and $J\curl\psi=JK\grad\psi=\grad\psi$ by
\eqref{hcurl_rotation}, $\bE=\grad\psi+J\bw$; take $\phi=\psi$ and $\bzz=J\bw$, so that $\|\phi\|_1+\|\bzz\|_1\le C\|K\bE\|_{H(\divvr)}
=C\|\bE\|_{H(\curll)}$, and $\curll\bzz=\divv(KJ\bw)=\divv\bw=\divv K\bE=\curll\bE$. For the classes, $\curll\grad\phi_m=0$, and for $\bw\in H^1(\Omega)^2$, $|\curll\bw|^2\le2(|\p_1w_2|^2+|\p_2w_1|^2)$, so $\|\bw\|_{H(\curll)}\le\sqrt2\|\bw\|_1$.
\end{proof}

As for $H(\divvr)$, the second class is the componentwise approach enriched
by the curl-free field $\grad\phi_m$, which carries the interface jumps (Section~\ref{subsec:hcurl_derham}).

The correct finite-parameter continuity is already visible for piecewise affine potentials.

\begin{lem}[piecewise affine scalar potentials]\label{lem:hcurl_cpwl}
If $\phi$ is continuous and piecewise affine on a finite polygonal partition, then $\grad\phi$ is piecewise constant and belongs to $H(\curll;\Omega)$, and across every interior edge with unit tangent $\bt$ and normal $\bn$
\beq\label{hcurl_cpwl_jump}
 \jump{\grad\phi\cdot\bt}=0,\qquad \jump{\grad\phi\cdot\bn}\ \mbox{may be nonzero}.
\eeq
This applies to every finite scalar ReLU network.
\end{lem}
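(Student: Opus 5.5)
The plan is to reduce the lemma to the two-dimensional $H(\divvr)$ statement, Corollary~\ref{cor:jump2d}(a), through the rotation $K$ of \eqref{hcurl_rotation}, and to add a short direct check of the jump identities.

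\emph{Step~1: membership and piecewise constancy.} Since $\phi$ is continuous and piecewise affine on finitely many polygonal cells, it lies in $W^{1,\infty}(\Omega)\subset H^1(\Omega)$. Its distributional gradient equals the cellwise constant gradient, so $\grad\phi\in L^2(\Omega)^2$ is piecewise constant. For $\chi\in C_c^\infty(\Omega)$ we have $\langle\curll\grad\phi,\chi\rangle=\int_\Omega\phi\,(\p_2\p_1\chi-\p_1\p_2\chi)\,dx=0$. Hence $\grad\phi\in H(\curll;\Omega)$ with $\curll\grad\phi=0$.

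\emph{Step~2: tangential continuity.} On the relative interior of an interior edge $e$ with tangent $\bt$, the two affine pieces $\phi^\pm$ coincide on $e$, because $\phi$ is continuous. Differentiating along $e$ gives $\p_{\bt}\phi^+=\p_{\bt}\phi^-$, that is, $\jump{\grad\phi\cdot\bt}=0$. Equivalently, $K\grad\phi=\curl\phi$ by \eqref{hcurl_rotation}. Corollary~\ref{cor:jump2d}(a) then gives $\jump{\curl\phi\cdot\bn}=0$ and $\jump{\curl\phi\cdot\bt}=-\jump{\p_{\bn}\phi}$. The identity $(K\bv)\cdot\bn=\bv\cdot\bt$ transfers the first relation, and the second identifies the normal jump of $\grad\phi$ with $\jump{\p_{\bn}\phi}$.

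\emph{Step~3: normal jump and ReLU networks.} The normal jump $\jump{\p_{\bn}\phi}$ is in general nonzero. An example is $\phi=\operatorname{ReLU}(x_1)$ on $\{x_1=0\}$, which exhibits the admissible jump. For the final sentence we invoke the fact that a finite scalar ReLU network is continuous and piecewise affine on the finite polyhedral activation partition, as already used in Corollary~\ref{lem:cpwl}; intersecting that partition with $\Omega$ gives finitely many polygonal cells.

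The only delicate point is the bookkeeping of what ``interior edge'' means when cells of the partition are not convex or when faces only partially match. I would handle it by working on the relative interior of each common boundary segment of two cells, where both affine pieces are defined and continuity forces equal tangential derivatives. Everything else follows directly from earlier results.
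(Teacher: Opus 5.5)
Your proof is correct and agrees with the paper's on the key point: the single-valued trace of $\phi$ on each interior edge forces equal tangential derivatives from both sides, while the normal derivative is unconstrained (your $\operatorname{ReLU}(x_1)$ example shows this explicitly). The one difference is how you get $\grad\phi\in H(\curll;\Omega)$. You use $\curll\grad\phi=0$ in the sense of distributions, which holds for every $\phi\in H^1(\Omega)$ and needs no partition, whereas the paper argues that the piecewise curl vanishes and that tangential continuity rules out an edge contribution. Your detour through $K$ and Corollary~\ref{cor:jump2d}(a) is consistent but not needed.
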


\begin{proof}
The trace of $\phi$ on the edge is
single valued, so its tangential derivative is the
same from both sides; the piecewise curl of
$\grad\phi$ vanishes, and the continuity of the tangential trace
excludes an edge contribution to the distributional curl.
\end{proof}

A finite ReLU scalar potential therefore generates jumps of exactly the admissible $H(\curll)$ type, in both classes; this is the rotation of Corollary~\ref{cor:jump2d}(a).

\subsection{Three dimensions}\label{subsec:hcurl3d}

In three dimensions the rotation is not available,
and the regular decomposition replaces it. We consider $\Gamma_D=\emptyset$ or $\Gamma_D=\partial\Omega$
only and let $H_{\Gamma_D}(\curll;\Omega):=\{\bu\in H(\curll;\Omega):\bu\times\bn=0\mbox{ on }\Gamma_D\}$, with
the trace in the usual weak sense, and
$H^1_{\Gamma_D}(\Omega):=\{v\in H^1(\Omega):v=0\mbox{ on }\Gamma_D\}$; thus $H_{\emptyset}(\curll;\Omega)=H(\curll;\Omega)$ and $H_{\partial\Omega}(\curll;\Omega)=H_0(\curll;\Omega)$. For
a bounded Lipschitz domain in $\R^3$ the regular decomposition
\beq\label{regular_decomposition}
 \bE=\grad\phi+\bzz,\qquad \phi\in H^1_{\Gamma_D}(\Omega),\quad \bzz\in H^1_{\Gamma_D}(\Omega)^3,
\eeq
holds for every $\bE\in H_{\Gamma_D}(\curll;\Omega)$, with $\phi$ and $\bzz$ depending linearly and boundedly on $\bE$,
\beq\label{regular_decomposition_stability}
 \|\phi\|_1+\|\bzz\|_1\le C\|\bE\|_{H(\curll)} ,
\eeq
so that, once the decomposition operator is fixed, $\bE\mapsto(\phi,\bzz)$ is
a bounded linear map \cite{HiptmairPechstein2019}, \cite[Lemma~2.4]{Hip:02}. For $\Gamma_D=\emptyset$ this follows from the
regularized Poincar\'e operators $R_\ell$ and the smoothing operators $K_\ell$ of \cite[Theorem~4.6]{CostabelMcIntosh2010}, which satisfy
$\grad R_1\bE+R_2\curll\bE=\bE-K_1\bE$ with $R_1,R_2:L^2\to H^1$ bounded and $K_1\bE$ smooth: $\phi=R_1\bE$ and $\bzz=R_2\curll\bE+K_1\bE$.
For $\Gamma_D=\partial\Omega$ the Bogovski\u{\i}-type operators $T_\ell$, $L_\ell$ of the same theorem, applied
to the extension of $\bE$ by zero, give $\phi\in
H^1_0(\Omega)$ and $\bzz\in H^1_0(\Omega)^3$ in the same way. There is no topological
restriction and no separate harmonic part: a topological component is absorbed
into the $H^1$ field $\bzz$. The neural realization is
\beq\label{structured3d}
 \bE_\theta=\grad\phi_\theta+\bzz_\theta
\eeq
with an ordinary scalar network $\phi_\theta$ and an ordinary componentwise vector network $\bzz_\theta$. With the boundary conditions imposed on $\phi_\theta$ and
$\bzz_\theta$, $\bE_\theta\in H_{\Gamma_D}(\curll;\Omega)$ and $\curll\bE_\theta=\curll\bzz_\theta$ for every parameter value; the represented field need not lie in $H^1(\Omega)^3$, since the gradient part may have normal jumps.

\begin{lem}[intrinsic $H(\curll)$ continuity]\label{lem:hcurl_intrinsic}
If $\phi$ is continuous and piecewise affine on a polyhedral partition of
$\Omega\subset\R^3$, then $\grad\phi$ is piecewise constant, $\grad\phi\in H(\curll;\Omega)$, and
across every interior face $F$ with unit normal $\bn_F$
\beq\label{hcurl_intrinsic_jump}
 \bn_F\times\jump{\grad\phi}=0,\qquad \jump{\grad\phi\cdot\bn_F}\ \mbox{may be nonzero}.
\eeq
Consequently $\grad\phi+\bzz$ with $\bzz\in H^1(\Omega)^3$ has the same tangential continuity and may retain the normal jump of $\grad\phi$.
\end{lem}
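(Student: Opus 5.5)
The argument follows the two-dimensional Lemma~\ref{lem:hcurl_cpwl} and Corollary~\ref{lem:cpwl}: $\phi$ is Lipschitz, its gradient has no singular part, and the distributional curl of a gradient vanishes.

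\emph{Piecewise constant gradient and conformity.} A continuous function that is affine on each cell of a finite polyhedral partition is Lipschitz on $\Omega$. Hence $\phi\in W^{1,\infty}(\Omega)\subset H^1(\Omega)$, and its weak gradient equals the cellwise gradient almost everywhere. That gradient is piecewise constant, so $\grad\phi\in L^2(\Omega)^3$. For $\boldsymbol\varphi\in C_c^\infty(\Omega)^3$,
\[
\langle\curll\grad\phi,\boldsymbol\varphi\rangle=\int_\Omega\grad\phi\cdot\curll\boldsymbol\varphi\,dx=-\int_\Omega\phi\,\divv\curll\boldsymbol\varphi\,dx=0.
\]
The integration by parts is legitimate because $\phi\in H^1(\Omega)$ and $\boldsymbol\varphi$ has compact support. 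Therefore $\curll\grad\phi=0\in L^2$, and $\grad\phi\in H(\curll;\Omega)$.

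\emph{The jump identity on an interior face $F$.} On the relative interior of $F$, let the affine pieces from the two sides be $\phi^\pm$. Continuity of $\phi$ gives $\phi^+=\phi^-$ on $F$. Differentiating along any tangent vector $\bt$ of $F$ gives $\jump{\grad\phi}\cdot\bt=0$. Hence $\jump{\grad\phi}$ is parallel to $\bn_F$, namely $\jump{\grad\phi}=\jump{\p_{\bn}\phi}\bn_F$, and so $\bn_F\times\jump{\grad\phi}=0$. This is the scalar analogue of Lemma~\ref{lem:skew}(b).

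The normal component $\jump{\grad\phi\cdot\bn_F}=\jump{\p_{\bn}\phi}$ is unconstrained. To show it can be nonzero, take $\phi=\operatorname{ReLU}(\bn_F\cdot(x-x_F))$ with $x_F\in F$. This function is continuous, piecewise affine, and its normal derivative jumps by $1$ across the plane of $F$.

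\emph{The consequence for $\grad\phi+\bzz$.} For $\bzz\in H^1(\Omega)^3$ the trace on $F$ is single valued, so $\jump{\bzz}=0$ in the trace sense. It follows that $\jump{\grad\phi+\bzz}=\jump{\grad\phi}$ and $\bn_F\times\jump{\grad\phi+\bzz}=0$, while the normal jump of $\grad\phi$ is retained.

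\emph{Main obstacle.} The only delicate point is making ``one-sided traces'' precise when $\bzz$ is merely $H^1$. I would interpret the jump of $\bzz$ as the difference of the $H^{1/2}$ traces from the two cells adjacent to $F$. These traces coincide because $\bzz\in H^1$ across the face, which is the same standard fact used in the proof of Proposition~\ref{prop:regularity_recovery}. The jump of $\grad\phi$, by contrast, is pointwise and classical on the relative interior of $F$.
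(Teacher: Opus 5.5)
Your proof is correct. Its core is exactly the paper's argument: continuity of $\phi$ forces the two affine pieces to have equal tangential derivatives on $F$, hence $\jump{\grad\phi}=\jump{\p_{\bn}\phi}\,\bn_F$.

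The difference lies in how you obtain conformity.

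\begin{itemize}
\item The paper derives $\grad\phi\in H(\curll;\Omega)$ \emph{from} the face identity. The cellwise curl of the piecewise constant field vanishes. By the jump formula \eqref{distributional_jumps}, $(\curll\bE)_{\rm sing}=\bn_F\times\jump{\bE}\,\delta_F$, so tangential continuity removes every surface part of the distributional curl.
\item You prove conformity independently of the faces. You use $\phi\in W^{1,\infty}(\Omega)\subset H^1(\Omega)$ and $\curll\grad=0$ in the sense of distributions. This is how the paper itself handles the $H(\divvr)$ analogue in Corollary~\ref{lem:cpwl}, through Lemma~\ref{lem:skew}(a).
\end{itemize}

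Your route needs no face-by-face Green formula for the curl. The face bookkeeping is absorbed into the standard fact that a continuous piecewise affine function lies in $W^{1,\infty}$, which the paper also uses.

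The paper's route makes the point the section is after: for a piecewise smooth field, $H(\curll)$ conformity is precisely tangential continuity across faces. That argument also applies to piecewise fields that are not gradients.

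Your explicit ReLU example of a nonzero normal jump, and your trace argument for $\bzz$, fill in details the paper leaves implicit.
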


\begin{proof}
The restriction of $\phi$ to $F$ is single valued,
so all tangential derivatives agree from both sides, which is $\bn_F\times\jump{\grad\phi}=0$;
the distributional curl therefore has no surface part on $F$.
\end{proof}

Let $\{\cP_m\}_{m\ge1}$ and $\{\cZ_m\}_{m\ge1}$ be nested families with
$\cP_m\subset H^1_{\Gamma_D}(\Omega)$ and $\cZ_m\subset H^1_{\Gamma_D}(\Omega)^3$, and let $\cE_m:=\{\grad\phi_m+\bzz_m:\ \phi_m\in\cP_m,\ \bzz_m\in\cZ_m\}$.

\begin{prop}[three-dimensional neural $H(\curll)$ class]\label{prop:hcurl3d_density}
Every realization in $\cE_m$ lies in $H_{\Gamma_D}(\curll;\Omega)$, and for a regular decomposition $\bE=\grad\phi+\bzz$
\beq\label{hcurl_3d_approx}
 \inf_{\bE_m\in\cE_m}\|\bE-\bE_m\|_{H(\curll)}\le\inf_{\phi_m\in\cP_m}|\phi-\phi_m|_1+\sqrt2\inf_{\bzz_m\in\cZ_m}\|\bzz-\bzz_m\|_1 .
\eeq
\end{prop}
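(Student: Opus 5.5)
The proof will have two parts, conformity and the approximation estimate, and follows the pattern of Theorem~\ref{thm:hdivd_reg} and Corollary~\ref{cor:hcurl2d_reg}.

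\emph{Conformity.} Take $\phi_m\in\cP_m\subset H^1_{\Gamma_D}(\Omega)$. Then $\grad\phi_m\in L^2(\Omega)^3$ and $\curll\grad\phi_m=0$ in the sense of distributions, since for $\boldsymbol\varphi\in C_c^\infty(\Omega)^3$ we have $\int_\Omega\grad\phi_m\cdot\curll\boldsymbol\varphi\,dx=-\int_\Omega\phi_m\,\divv\curll\boldsymbol\varphi\,dx=0$. So $\grad\phi_m\in H(\curll;\Omega)$.

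For $\Gamma_D=\partial\Omega$ the tangential trace vanishes. The cleanest route is density: approximate $\phi_m$ in $H^1$ by $C_c^\infty(\Omega)$ functions. Their gradients have zero tangential trace and converge to $\grad\phi_m$ in the $H(\curll)$ norm, because the curl is identically zero. Since $H_0(\curll;\Omega)$ is closed, $\grad\phi_m$ lies in it.

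For $\bzz_m\in H^1_{\Gamma_D}(\Omega)^3$ we have $\bzz_m\in H(\curll;\Omega)$. When $\Gamma_D=\partial\Omega$, $\bzz_m\in H^1_0(\Omega)^3$ has zero trace, and in particular $\bzz_m\times\bn=0$. Linearity then gives $\bE_m\in H_{\Gamma_D}(\curll;\Omega)$ and $\curll\bE_m=\curll\bzz_m$.

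\emph{Estimate.} Fix arbitrary $\phi_m\in\cP_m$ and $\bzz_m\in\cZ_m$, and put $\bE_m=\grad\phi_m+\bzz_m$. Then
\[
\bE-\bE_m=\grad(\phi-\phi_m)+(\bzz-\bzz_m).
\]
The first term is curl free, so its $H(\curll)$ norm equals $\|\grad(\phi-\phi_m)\|_0=|\phi-\phi_m|_1$.

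For $\bw\in H^1(\Omega)^3$, each component of $\curll\bw$ is a difference of two partial derivatives, e.g.\ $\p_2w_3-\p_3w_2$. Hence pointwise
\[
|\curll\bw|^2\le2\sum_{i\ne j}|\p_jw_i|^2\le2|\grad\bw|^2 .
\]
This gives $\|\bw\|_{H(\curll)}^2=\|\bw\|_0^2+\|\curll\bw\|_0^2\le\|\bw\|_0^2+2|\bw|_1^2\le2\|\bw\|_1^2$.

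The triangle inequality then yields $\|\bE-\bE_m\|_{H(\curll)}\le|\phi-\phi_m|_1+\sqrt2\|\bzz-\bzz_m\|_1$. Minimizing independently over $\phi_m$ and $\bzz_m$ gives \eqref{hcurl_3d_approx}.

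\emph{Main obstacle.} The only delicate point is the boundary condition in the case $\Gamma_D=\partial\Omega$, namely the weak tangential trace of $\grad\phi_m$. The density argument above handles it, with no regularity of $\phi_m$ beyond $H^1_0$.
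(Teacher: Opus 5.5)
Your proposal is correct and follows the paper's proof. The gradient part is curl free and contributes $|\phi-\phi_m|_1$, the pointwise bound $|\curll\bw|^2\le2|\grad\bw|^2$ gives $\|\bw\|_{H(\curll)}\le\sqrt2\|\bw\|_1$, and the triangle inequality with independent minimization finishes. Your density argument for the tangential trace when $\Gamma_D=\partial\Omega$ only spells out the conformity that the paper takes as immediate from the boundary conditions built into $\cP_m$ and $\cZ_m$.
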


\begin{proof}
For $\bE_m=\grad\phi_m+\bzz_m$ we have $\curll\bE_m=\curll\bzz_m$, hence $\|\bE-\bE_m\|_{H(\curll)}\le|\phi-\phi_m|_1+\|\bzz-\bzz_m\|_{H(\curll)}$, and $\|\bv\|_{H(\curll)}^2=\|\bv\|_0^2+\|\curll\bv\|_0^2\le\|\bv\|_0^2+2|\bv|_1^2\le2\|\bv\|_1^2$ since $|\curll\bv|^2\le2|\grad\bv|^2$ pointwise; the regular decomposition is surjective, so every $\bE$ has such a representation.
\end{proof}

The representing functions inherit the regularity of the field.

\begin{rem}[smoother decompositions]\label{rem:hcurl_regular}
If $\Gamma_D=\emptyset$ and $\bE,\curll\bE\in H^r(\Omega)^3$, $r\ge0$, the regular decomposition can be
chosen with $\phi\in H^{r+1}(\Omega)$, $\bzz\in H^{r+1}(\Omega)^3$ and $\|\phi\|_{r+1}+\|\bzz\|_{r+1}\le C_r(\|\bE\|_r+\|\curll\bE\|_r)$: a field $\bzz_0\in
H^{r+1}(\Omega)^3$ with $\curll\bzz_0=\curll\bE$ exists by \cite[Corollary~4.7(a)]{CostabelMcIntosh2010}, and the curl-free remainder $\bE-\bzz_0$ is
$\grad\phi$ plus a smooth field of a finite-dimensional space
\cite[Theorem~4.9(b)]{CostabelMcIntosh2010}, which is added to $\bzz_0$. This is used in Proposition~\ref{prop:hcurl_recovery}.
\end{rem}

\begin{rem}[regular decomposition, not Helmholtz decomposition]\label{rem:rd_not_helmholtz}
\eqref{regular_decomposition} is not a Helmholtz decomposition: $\bzz$ is neither divergence free nor orthogonal to gradients. This matters for the neural parameterization, since on a nonconvex domain the divergence-free part of a Helmholtz
decomposition may lie in $H(\curll)\cap H(\divvr)$ without belonging to $H^1(\Omega)^3$ \cite{Cos:91}; the regular decomposition is used because both of its parts are in $H^1$.
\end{rem}

If the classes for the representing functions have dense
unions in the corresponding $H^1$ spaces (with their boundary conditions),
and in $L^2$ for the curl $r$ of the two-dimensional class, then the
native $H(\curll)$ classes are dense in the graph space, by \eqref{hcurl_2d_approx} and \eqref{hcurl_3d_approx}.

\subsection{Normal jumps and interface modules}\label{subsec:hcurl_jumps}

The mechanism of Lemmas~\ref{lem:hcurl_cpwl} and~\ref{lem:hcurl_intrinsic} holds near a smooth interface as well, also with the interface modules of Sections~\ref{subsec:interface} and~\ref{subsec:learned_interface} applied
to the scalar potential. With $P_\tau:=I-\bn\otimes\bn$ the tangential projection, $P_\tau\jump{\bE}=0$ is the continuity of the tangential trace; for $n=3$ it is $\bn\times\jump{\bE}=0$.

\begin{prop}[normal jumps carried by the scalar potential]\label{prop:hcurl_jump}
Let $\Omega\subset\R^n$, $n=2,3$, let $\Gamma$ be a smooth interface with unit normal $\bn$, and let $\bE=\grad\phi+\bw$ near $\Gamma$, where $\phi\in H^1$ is piecewise $H^2$ on the two sides of $\Gamma$, and $\bw\in H(\curll)$ has no jump across $\Gamma$ (its one-sided traces coincide; $\bw\in H^1$ is sufficient). Then, with one-sided traces,
\[
 P_\tau\jump{\bE}=0,\qquad \jump{\bE\cdot\bn}=\jump{\p_{\bn}\phi} ,
\]
and a prescribed normal jump $j$ is carried by the kink $\jump{\p_{\bn}\phi}=j$, which
it determines. For the modules the same formulas hold in the sense
of one-sided essential limits, under the weaker local assumptions of Lemma~\ref{lem:mult_kink}: for $\phi=\phi_0+|d_\Gamma|N$
with $\phi_0,N\in H^1$ and $d_\Gamma$ as in Section~\ref{subsec:interface}, $\jump{\p_{\bn}\phi}=2|\grad d_\Gamma|N$
at the points of $\Gamma$ near which $N$ is Lipschitz continuous
and $\phi_0$ is continuously differentiable; for $\phi=\phi_0+\operatorname{ReLU}(g_\eta)N$, $\jump{\grad\phi}=N\grad
g_\eta$, a normal vector, at the regular points of $\Gamma_\eta$ (as in
Proposition~\ref{prop:learned_interface_hdiv}) near which $N$ is Lipschitz continuous and $\phi_0$ is
continuously differentiable. In both cases $\grad\phi+\bw\in H(\curll)$ for every value of the parameters.
\end{prop}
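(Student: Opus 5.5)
The plan is to mirror the $H(\divvr)$ arguments, with the gradient playing the role of $\operatorname{Div}$, and to reduce everything to the scalar statement $\jump{\grad\phi}=\jump{\p_{\bn}\phi}\,\bn$. First, since $\bw$ has no jump across $\Gamma$, $\jump{\bE}=\jump{\grad\phi}$. For $\phi\in H^1$ that is piecewise $H^2$ near $\Gamma$, the argument of Lemma~\ref{lem:skew}(b) applies to a single scalar entry. The trace of $\phi$ on $\Gamma$ is single valued, so tangential derivatives taken from the two sides agree, and the one-sided traces of $\grad\phi$ differ only in the normal direction: $\jump{\grad\phi}=\jump{\p_{\bn}\phi}\,\bn$. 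Applying $P_\tau$ gives $P_\tau\jump{\bE}=0$. Taking the normal component gives $\jump{\bE\cdot\bn}=\jump{\p_{\bn}\phi}$. Conversely, a prescribed normal jump $j$ forces $\jump{\p_{\bn}\phi}=j$; unlike the skew case, the jump map $s\mapsto s\bn$ is injective, so the kink is determined.

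Second, for the modules I would invoke Lemma~\ref{lem:mult_kink} in its scalar form, since its proof is entrywise. Take $B=\rho N$ with $\rho\in W^{1,\infty}$, $\rho=0$ on $\Gamma$, and $N$ Lipschitz near $x^*$. Then $\grad B=N\grad\rho+\rho\grad N$ almost everywhere. The second term tends to $0$ in essential supremum on shrinking neighborhoods, so $\jump{\grad B}=N(x^*)\jump{\grad\rho}=N(x^*)\jump{\p_{\bn}\rho}\bn$. This uses that the tangential derivatives of $\rho$ vanish from both sides because $\rho=0$ on $\Gamma$. Adding $\grad\phi_0$ with $\phi_0$ continuously differentiable near $x^*$ contributes no jump.
\begin{itemize}
\item For $\rho=|d_\Gamma|$, the computation in the proof of Proposition~\ref{prop:interface_module} gives $\jump{\p_{\bn}\rho}=2|\grad d_\Gamma|$.
\item For $\rho=\operatorname{ReLU}(g_\eta)$ at a regular point, the proof of Proposition~\ref{prop:learned_interface_hdiv} gives $\jump{\grad\rho}=\grad g_\eta(x^*)$, which is parallel to $\bn_\eta$. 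Hence $\jump{\grad\phi}=N\grad g_\eta$ is normal.
\end{itemize}

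Third, conformity. Here $\phi_0+\rho N\in H^1$ whenever $\rho\in W^{1,\infty}$ and $\phi_0,N\in H^1$, with no assumption on the zero set. So $\grad\phi\in L^2$ and $\curll\grad\phi=0$ in the sense of distributions, by symmetry of second derivatives tested against $C_c^\infty$ functions. Hence $\grad\phi+\bw\in H(\curll)$ for every parameter value.

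The main obstacle is only bookkeeping: stating the tangential-trace argument uniformly for $n=2$ and $n=3$ through $P_\tau$, where $P_\tau\jump{\bE}=0$ is equivalent to $\bn\times\jump{\bE}=0$ in three dimensions. I would handle it by working with the full gradient jump vector, which is parallel to $\bn$, rather than with cross products.
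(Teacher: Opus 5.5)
Your proposal is correct and follows essentially the same route as the paper. The single-valued trace of $\phi$ gives $\jump{\grad\phi}=\jump{\p_{\bn}\phi}\bn$, and hence both jump identities; the modules are handled by the scalar version of Lemma~\ref{lem:mult_kink}, exactly as in Propositions~\ref{prop:interface_module} and~\ref{prop:learned_interface_hdiv}; and conformity follows from $\phi\in H^1$ together with $\curll\grad\phi=0$, with your remarks on injectivity of $s\mapsto s\bn$ and on $\rho N\in H^1$ only making explicit what the paper's short proof leaves implicit.
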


\begin{proof}
$\phi$ has a single-valued trace on $\Gamma$, so $\jump{\grad\phi}=\jump{\p_{\bn}\phi}\bn$, and
$\jump{\bE}=\jump{\grad\phi}$. The modules are treated as in Propositions~\ref{prop:interface_module} and~\ref{prop:learned_interface_hdiv} with the scalar $N$
in place of the skew matrix. Conformity holds because $\phi\in H^1$ and $\curll\grad\phi=0$.
\end{proof}

In two dimensions \eqref{hcurl_representation} has this form with $\bw=\mathcal Sr+\bh_{\curll}\in H(\curll)$,
which has no jump across $\Gamma$ when $r$ is
bounded and piecewise continuous and the condition on
the segments of Lemma~\ref{lem:Rcont} holds (rotated); in the regular decomposition $\bw=\bzz\in H^1$.

If $\bE$ has a nonzero normal jump on a subset of positive $(n-1)$-dimensional measure of a smooth interface, as can occur across a coefficient interface in a curl--curl problem, then $\phi\notin H^{3/2}(\Omega)$
in every decomposition $\bE=\grad\phi+\bw$ with $\bw$ as in Proposition~\ref{prop:hcurl_jump}: $\bw$ has no jump, so $\grad\phi=\bE-\bw$ carries the jump, which excludes $\grad\phi\in H^{1/2}(\Omega)^n$ by the $H^{1/2}$ jump obstruction of Section~\ref{subsec:interface}. The modules separate the singular part from smoother functions.

If $d_\Gamma\in C^{1,1}$ near $\Gamma$, every prescribed normal jump is attained by the fixed module: for a Lipschitz
function $j$ on $\Gamma$, the amplitude $N:=\chi\widetilde j/(2|\grad d_\Gamma|)$, with a
Lipschitz extension $\widetilde j$ of $j$ and a cutoff $\chi$ as in Proposition~\ref{prop:interface_module},
gives $\jump{\p_{\bn}(|d_\Gamma|N)}=j$ on $\Gamma$. The jump depends continuously on the amplitude.

\begin{cor}[approximation of the normal-jump amplitude]\label{cor:hcurl_jump_amplitude}
With $U$, $G$ and $C_{\rm tr}$ as in Corollary~\ref{cor:jump_amplitude}, $\|2|\grad d_\Gamma|(M-M')\|_{L^2(\Gamma)}\le2G\,C_{\rm tr}\|M-M'\|_{H^1(U)}$ for
$M,M'\in H^1(U)$. In particular, if $N$ is the amplitude of a jump $j$ and $N_m$ are Lipschitz networks, then $\|\jump{\p_{\bn}(|d_\Gamma|N_m)}-j\|_{L^2(\Gamma)}\le2G\,C_{\rm tr}\|N-N_m\|_{H^1(U)}$.
\end{cor}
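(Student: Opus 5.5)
The argument copies the proof of Corollary~\ref{cor:jump_amplitude}, with the scalar amplitude in place of the skew matrix, so the factor $\sqrt2$ coming from $|M\bn|$ disappears. I will define the linear map $\mathcal J_\Gamma(M):=2|\grad d_\Gamma|\,M|_\Gamma$ for $M\in H^1(U)$, where $M|_\Gamma$ is the trace of $M$ from the one-sided neighborhood $U$, which is well defined in $L^2(\Gamma)$ because $\Gamma\subset\partial U$ and $U$ is Lipschitz. Since $|\grad d_\Gamma|\le G$ pointwise on $\Gamma$,
\[
 \|\mathcal J_\Gamma(M)-\mathcal J_\Gamma(M')\|_{L^2(\Gamma)}=\|2|\grad d_\Gamma|\,(M-M')\|_{L^2(\Gamma)}\le2G\,\|M-M'\|_{L^2(\Gamma)}\le2G\,C_{\rm tr}\|M-M'\|_{H^1(U)} ,
\]
by linearity of the trace and the trace inequality applied to the scalar $M-M'$. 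This gives the first claim.

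For the second claim I would identify the jump of the module with $\mathcal J_\Gamma(N_m)$. By Proposition~\ref{prop:hcurl_jump}, with $\phi_0=0$ (which is trivially continuously differentiable) and $N=N_m$, we get $\jump{\p_{\bn}(|d_\Gamma|N_m)}=2|\grad d_\Gamma|N_m$ at every point of $\Gamma$ near which $N_m$ is Lipschitz. A Lipschitz network is Lipschitz everywhere, so this holds on all of $\Gamma$. There the one-sided limit of $N_m$ coincides with its continuous trace, hence with the $H^1(U)$ trace.

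For the target amplitude $N=\chi\widetilde j/(2|\grad d_\Gamma|)$, the remark preceding the corollary gives $\mathcal J_\Gamma(N)=j$ on $\Gamma$, because $\chi=1$ there. Subtracting the two identities and applying the first estimate with $M=N$ and $M'=N_m$ yields the bound.

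The only delicate point is making sure the pointwise jump, defined through one-sided essential limits, agrees with the trace used in the $L^2(\Gamma)$ norm. For Lipschitz $N_m$ this is immediate. Also $N$ is Lipschitz on $U$, hence lies in $H^1(U)$, so the right-hand side is finite.
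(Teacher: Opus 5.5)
Your proposal is correct and follows the paper's proof: apply the trace inequality to the scalar $M-M'$ together with $|\grad d_\Gamma|\le G$ on $\Gamma$, and obtain the identity $\jump{\p_{\bn}(|d_\Gamma|N_m)}=2|\grad d_\Gamma|N_m$ from Proposition~\ref{prop:hcurl_jump} with $\phi_0=0$. Your additional remarks, that the one-sided limits agree with the $H^1(U)$ trace and that $2|\grad d_\Gamma|N=j$ on $\Gamma$, only make explicit what the paper's two-line proof leaves implicit.
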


\begin{proof}
Apply the trace inequality to $M-M'$; the identity $\jump{\p_{\bn}(|d_\Gamma|N_m)}=2|\grad d_\Gamma|N_m$ is Proposition~\ref{prop:hcurl_jump} with $\phi_0=0$.
\end{proof}

The module also extracts the whole normal jump at
the level of the field, as for $H(\divvr)$ (Proposition~\ref{prop:regularity_recovery}).

\begin{prop}[regularity recovery]\label{prop:hcurl_recovery}
Let $n=2,3$, and let $\Gamma$, $\Omega_\pm$, $d_\Gamma$ and $U_\Gamma$ be as in Proposition~\ref{prop:regularity_recovery}. Let $\bE\in H(\curll;\Omega)$ with $\bE|_{\Omega_\pm}\in H^1(\Omega_\pm)^n$ and normal jump $j:=\jump{\bE\cdot\bn}$ on $\Gamma$, and let $N\in H^2(\Omega)$ be supported in $U_\Gamma$ with $2|\grad d_\Gamma|N=j$ on $\Gamma$. Then
\[
 \bE=\grad\bigl(|d_\Gamma|N\bigr)+\bzz,\qquad \bzz\in H^1(\Omega)^n .
\]
This is the form of the second representation;
for $n=2$, in the first representation of \eqref{hcurl_representation}, $\bE=\grad(\phi_0+|d_\Gamma|N)+\mathcal
Sr+\bh_{\curll}$ with $\phi_0\in H^1(\Omega)$, $r=\curll\bE$ and $\bh_{\curll}$ the cohomology part of $\bE$. If
$n=3$ and $\curll\bE\in H^1(\Omega)^3$, then $\bE=\grad(\phi_0+|d_\Gamma|N)+\bzz'$ with $\phi_0\in H^2(\Omega)$ and $\bzz'\in H^2(\Omega)^3$.
\end{prop}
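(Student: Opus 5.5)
The plan mirrors the proof of Proposition~\ref{prop:regularity_recovery}: subtract the module from $\bE$ and show that the remainder has no jump. Set $\bw:=\grad(|d_\Gamma|N)$. Since $|d_\Gamma|N\in H^1(\Omega)$ (as in Lemma~\ref{lem:mult_kink}), $\bw\in H(\curll;\Omega)$ with $\curll\bw=0$. On $\Omega_\pm$ we have $|d_\Gamma|N=\pm d_\Gamma N$. Near $\Gamma$ the factor $d_\Gamma$ is $C^{1,1}$ and $N\in H^2$, so $d_\Gamma N\in H^2(\Omega_\pm\cap U_\Gamma)$, and hence $\bw|_{\Omega_\pm}\in H^1(\Omega_\pm)^n$; outside $U_\Gamma$ the field $\bw$ vanishes. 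The one-sided traces are $\bw^\pm=\pm N\grad d_\Gamma$, because $d_\Gamma=0$ on $\Gamma$. Therefore $\jump{\bw}=2N\grad d_\Gamma=2|\grad d_\Gamma|N\bn$, and $\jump{\bw\cdot\bn}=j$, while $P_\tau\jump{\bw}=0$.

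Now put $\bzz:=\bE-\bw$. Then $\bzz\in H^1(\Omega_\pm)^n$. Its tangential jump vanishes because both $\bE$ and $\bw$ lie in $H(\curll)$, and its normal jump is $j-j=0$. So $\bzz$ has equal traces from the two sides, and the gluing fact already used in Proposition~\ref{prop:regularity_recovery} gives $\bzz\in H^1(\Omega)^n$. This proves the main statement.

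For the first representation when $n=2$, apply Corollary~\ref{cor:hdiv23}, rotated via \eqref{hcurl_representation}, to the field $\bE-\bw$. This gives $\bE-\bw=\grad\phi_0+\mathcal Sr'+\bh'$. Since $\curll\bw=0$, we get $r'=\curll\bE$. For the cohomology part, the circulation of $\bw$ around each hole vanishes because $|d_\Gamma|N$ is single valued, so $\bh'$ equals the cohomology part of $\bE$.

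For $n=3$ with $\curll\bE\in H^1$, we have $\bzz\in H^1(\Omega)^3$ and $\curll\bzz=\curll\bE\in H^1$. Remark~\ref{rem:hcurl_regular} with $r=1$, applied to $\bzz$, gives $\bzz=\grad\phi_0+\bzz'$ with $\phi_0\in H^2$ and $\bzz'\in H^2$. Substituting yields the claim.

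The main obstacle is the piecewise $H^1$ regularity of $\bw$ near $\Gamma$, specifically the term $N\grad d_\Gamma$. It requires $\grad d_\Gamma\in W^{1,\infty}(U_\Gamma)$, which the $C^{1,1}$ hypothesis supplies, together with $N\in H^2$ for the term $d_\Gamma\grad N$. One must also check the gluing across a merely Lipschitz interface, which is treated as in Proposition~\ref{prop:regularity_recovery}.
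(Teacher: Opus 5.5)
Your proposal is correct and follows the paper's proof essentially step by step. You subtract $\grad(|d_\Gamma|N)$, whose one-sided traces $\pm N\grad d_\Gamma$ give the jump $2|\grad d_\Gamma|N\bn=j\bn$, glue the jump-free piecewise $H^1$ remainder into $H^1(\Omega)^n$, and then use the rotated first representation with the zero-circulation argument for $n=2$ and Remark~\ref{rem:hcurl_regular} with $r=1$ for $n=3$. The only cosmetic difference is that you get the vanishing tangential jump of $\bzz$ from the $H(\curll)$ conformity of both $\bE$ and $\bw$, whereas the paper writes $\jump{\bE}=j\bn$ and compares it directly with $\jump{\bw}$.
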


\begin{proof}
Since $\bE\in H(\curll;\Omega)$, its tangential trace is single valued, $P_\tau\jump{\bE}=0$, so $\jump{\bE}=j\bn$. On $\Omega_\pm$, $|d_\Gamma|N=\pm d_\Gamma N\in H^2$, so $\grad(|d_\Gamma|N)=N\grad|d_\Gamma|+|d_\Gamma|\grad N$ is in $H^1$ on both sides, with traces $\pm N\grad d_\Gamma$ on $\Gamma$; its jump is $2N\grad d_\Gamma=2|\grad d_\Gamma|N\bn=j\bn$. Hence $\bzz:=\bE-\grad(|d_\Gamma|N)$ is in $H^1$ on both sides and has no jump across $\Gamma$, so $\bzz\in H^1(\Omega)^n$ by the same gluing argument. For $n=2$, \eqref{hcurl_representation} applied to $\bzz$ gives $\bzz=\grad\phi_0+\mathcal Sr+\bh_{\curll}$ with $r=\curll\bzz=\curll\bE$, and $\bh_{\curll}$ has the circulations of $\bzz$, which are those of $\bE$, since the gradient of an $H^1$ function has zero circulations. For $n=3$, $\curll\bzz=\curll\bE\in H^1(\Omega)^3$, and Remark~\ref{rem:hcurl_regular} with $r=1$ gives $\bzz=\grad\phi_0+\bzz'$ with $\phi_0\in H^2(\Omega)$ and $\bzz'\in H^2(\Omega)^3$.
\end{proof}

The scalar kink extracts the entire admissible jump of an $H(\curll)$ field, and the remainder
is an ordinary $H^1$ field; this is the counterpart of the extraction of the
tangential jump of an $H(\divvr)$ field by the kink of
the skew potential. As there, the existence of $N\in H^2$ is a
hypothesis on the jump: it holds if $N_\Gamma:=j/(2|\grad d_\Gamma|)$
on $\Gamma$ has an $H^2$ extension supported in $U_\Gamma$.

\begin{cor}[fixed-interface best approximation]\label{cor:enriched_hcurl}
Let $d_\Gamma\in W^{1,\infty}(\Omega)$, let $\{\cP_m'\}$ be a second family of scalar
classes, and let $C_d$ be as in Corollary~\ref{cor:enriched_hdiv}.
\begin{enumerate}
\item[(a)] For $n=2$, $\bE=\grad(\phi_0+|d_\Gamma|N)+\mathcal Sr+\bh_{\curll}$ with $\phi_0,N\in H^1(\Omega)$, $r\in L^2(\Omega)$, $\bh_{\curll}\in J\cH$, and the class of
the fields $\grad(\phi_{0,m}+|d_\Gamma|N_m)+\mathcal Sr_m+\bh_{\curll}$ with $\phi_{0,m}\in\cP_m$, $N_m\in\cP_m'$, $r_m\in\cQ_m$,
\[
 \inf_{\bE_m}\|\bE-\bE_m\|_{H(\curll)}\le\inf_{\phi_{0,m}\in\cP_m}|\phi_0-\phi_{0,m}|_1+C_d\inf_{N_m\in\cP_m'}\|N-N_m\|_1+\sqrt{1+C_{x_0}^2}\,\inf_{r_m\in\cQ_m}\|r-r_m\|_0 .
\]
\item[(b)] For $n=3$ and $\Gamma_D=\emptyset$, $\bE=\grad(\phi_0+|d_\Gamma|N)+\bzz$ with $\phi_0,N\in H^1(\Omega)$, $\bzz\in H^1(\Omega)^3$, and the class of the fields
$\grad(\phi_{0,m}+|d_\Gamma|N_m)+\bzz_m$ with $\phi_{0,m}\in\cP_m$, $N_m\in\cP_m'$, $\bzz_m\in\cZ_m$,
\[
 \inf_{\bE_m}\|\bE-\bE_m\|_{H(\curll)}\le\inf_{\phi_{0,m}\in\cP_m}|\phi_0-\phi_{0,m}|_1+C_d\inf_{N_m\in\cP_m'}\|N-N_m\|_1+\sqrt2\inf_{\bzz_m\in\cZ_m}\|\bzz-\bzz_m\|_1 .
\]
\end{enumerate}
\end{cor}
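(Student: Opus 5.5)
The plan mirrors the proof of Corollary~\ref{cor:enriched_hdiv}. The upper bounds in Corollary~\ref{cor:hcurl_density} (for $n=2$) and Proposition~\ref{prop:hcurl3d_density} (for $n=3$) hold for every choice of potential approximant, not only for members of $\cP_m$. So we may insert the composite approximant $\phi_m:=\phi_{0,m}+|d_\Gamma|N_m$.

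For (a), take $\bE_m=\grad(\phi_{0,m}+|d_\Gamma|N_m)+\mathcal Sr_m+\bh_{\curll}$ with the same cohomology part $\bh_{\curll}$ as $\bE$. Then
\[
\bE-\bE_m=\grad\bigl((\phi_0-\phi_{0,m})+|d_\Gamma|(N-N_m)\bigr)+\mathcal S(r-r_m).
\]
The gradient term is curl free, so its $H(\curll)$ norm is $|\,(\phi_0-\phi_{0,m})+|d_\Gamma|(N-N_m)\,|_1$. The triangle inequality bounds this by $|\phi_0-\phi_{0,m}|_1+\bigl||d_\Gamma|(N-N_m)\bigr|_1$. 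For the term $\mathcal S(r-r_m)$, the computation in the proof of Corollary~\ref{cor:hcurl_density} gives $\|\mathcal S(r-r_m)\|_{H(\curll)}\le\sqrt{1+C_{x_0}^2}\,\|r-r_m\|_0$.

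For (b), take $\bE_m=\grad(\phi_{0,m}+|d_\Gamma|N_m)+\bzz_m$. The gradient part is handled the same way as in (a). The $H^1$ part is bounded by $\|\bzz-\bzz_m\|_{H(\curll)}\le\sqrt2\|\bzz-\bzz_m\|_1$, using the pointwise inequality $|\curll\bv|^2\le2|\grad\bv|^2$ from the proof of Proposition~\ref{prop:hcurl3d_density}. Since $\Gamma_D=\emptyset$, there are no boundary constraints to check on $\bzz_m$, $\phi_{0,m}$ or $N_m$. Conformity of every realization follows from Proposition~\ref{prop:hcurl_jump}, because $|d_\Gamma|N_m\in H^1$.

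The only step that is not a direct citation is the product estimate $\bigl||d_\Gamma|M\bigr|_1\le C_d\|M\|_1$ for scalar $M\in H^1(\Omega)$. It follows from the almost-everywhere chain rule $\grad(|d_\Gamma|M)=\operatorname{sign}(d_\Gamma)M\grad d_\Gamma+|d_\Gamma|\grad M$, valid because $|d_\Gamma|\in W^{1,\infty}$. This gives
\[
\|\grad(|d_\Gamma|M)\|_0\le\|\grad d_\Gamma\|_{L^\infty}\|M\|_0+\|d_\Gamma\|_{L^\infty}|M|_1\le C_d\|M\|_1,
\]
exactly as in Corollary~\ref{cor:enriched_hdiv}; the scalar case has no $\sqrt{n-1}$ factor. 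Applying it to $M=N-N_m$, summing the three bounds, and minimizing independently over $\phi_{0,m}$, $N_m$ and $r_m$ (respectively $\bzz_m$) gives both estimates. I expect no real obstacle. The only care needed is to state the chain rule for the product of a $W^{1,\infty}$ function and an $H^1$ function, and to note that the right-hand sides do not depend on how $\bE$ was split, since the split is fixed in the hypothesis.
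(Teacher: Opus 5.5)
Your proposal is correct and matches the paper's own proof. The paper writes $\bE-\bE_m$ as the curl-free gradient $\grad(\phi_0-\phi_{0,m})+\grad(|d_\Gamma|(N-N_m))$ plus $\mathcal S(r-r_m)$ (respectively $\bzz-\bzz_m$), bounds the gradient term by the product estimate $|\,|d_\Gamma|M|_1\le C_d\|M\|_1$ from Corollary~\ref{cor:enriched_hdiv}, and reuses the bounds from the proofs of Corollary~\ref{cor:hcurl_density} and Proposition~\ref{prop:hcurl3d_density}, exactly as you do.
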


\begin{proof}
In both cases $\bE-\bE_m$ is $\grad(\phi_0-\phi_{0,m})+\grad(|d_\Gamma|(N-N_m))$ plus $\mathcal S(r-r_m)$ for $n=2$ and $\bzz-\bzz_m$ for $n=3$. The gradients are curl free, and $|\,|d_\Gamma|M|_1\le C_d\|M\|_1$ as in Corollary~\ref{cor:enriched_hdiv}; $\|\mathcal S(r-r_m)\|_{H(\curll)}\le(1+C_{x_0}^2)^{1/2}\|r-r_m\|_0$ as in the proof of Corollary~\ref{cor:hcurl_density}, and $\|\bv\|_{H(\curll)}\le\sqrt2\|\bv\|_1$ as in the proof of Proposition~\ref{prop:hcurl3d_density}.
\end{proof}

With $\Gamma_D=\partial\Omega$ the module is placed inside the boundary-conforming parameterization of the scalar potential, as described at the end of this subsection. For $n=3$, if $\curll\bE\in H^1$ and $N\in H^2$, Proposition~\ref{prop:hcurl_recovery} gives
$\phi_0$, $N$ and $\bzz'$ in $H^2$, and the nonsmooth graph-space field is reduced to approximation problems for smooth functions.

\begin{rem}[the kink is intrinsic]\label{rem:kink_intrinsic}
The regular decomposition is not unique, but its singular part
is. If $\bE=\grad\phi_1+\bzz_1=\grad\phi_2+\bzz_2$ with $\bzz_1,\bzz_2\in H^1(\Omega)^n$, then $\grad(\phi_1-\phi_2)=\bzz_2-\bzz_1\in H^1(\Omega)^n$, so $\phi_1-\phi_2\in
H^2(\Omega)$; where one-sided traces exist, $\jump{\p_{\bn}\phi_1}=\jump{\p_{\bn}\phi_2}=\jump{\bE\cdot\bn}$. The kink is
determined by the field, not by the decomposition operator.
\end{rem}

The three implementation levels of Section~\ref{subsubsec:hdiv_levels} apply to the
scalar potential: $\phi_\theta=\phi_{0,\theta}+|d_\Gamma|N_\theta$ with the interface known, $\phi_\theta=\phi_{0,\theta}+\operatorname{ReLU}(g_\eta)N_\theta$
with the interface learned, or a generic ReLU
network $\phi_\theta$, whose normal jumps lie on its
kink sets (Lemmas~\ref{lem:hcurl_cpwl} and~\ref{lem:hcurl_intrinsic}). With the essential condition
on $\Gamma_D=\partial\Omega$ the module is placed inside the
boundary-conforming parameterization of the scalar potential, for instance $\phi_\theta=B(\phi_{0,\theta}+|d_\Gamma|N_\theta)$ with
a boundary factor $B\in W^{1,\infty}(\Omega)$ that vanishes on
$\Gamma_D$ and is positive in $\Omega$ (Section~\ref{subsec:curl_numerics}), so
that $\phi_\theta\in H^1_{\Gamma_D}(\Omega)$ for every parameter value; the
kink is then $2B|\grad d_\Gamma|N_\theta$, and likewise with $\operatorname{ReLU}(g_\eta)$.

\subsection{Relation to finite elements and componentwise networks}\label{subsec:componentwise}

\begin{rem}[relation to N\'ed\'elec elements]\label{rem:nedelec}
In two dimensions $K^{-1}=J$ maps the Raviart--Thomas spaces
onto the N\'ed\'elec spaces of the first kind and curls
to gradients; Remark~\ref{rem:RT0} therefore gives, on a triangle
$T$ with a local center $x_T$ and $\mathcal S_T:=J\Rop_{2,T}$,
\[
 N_k(T)=J\,RT_k(T)=\grad\mathbb P_{k+1}\oplus\mathcal S_T\mathbb P_k ,
\]
so $\grad\phi+\mathcal S_Tr$ with $\phi\in\mathbb P_{k+1}$ and $r\in\mathbb P_k$ is exactly $N_k(T)$, and the class $\cE_m^{\rm dR}$ of \eqref{X_hcurl_2d} replaces the polynomials $\phi$ and $r$ by networks. On a conforming triangulation $\mathcal T_h$ of a simply connected
$\Omega$, the curl-free fields of the lowest-order N\'ed\'elec space are $\grad S_1(\mathcal T_h)$, and every function in $S_1(\mathcal T_h)$ is a ReLU network \cite{HLXZ:20}; this is the rotation of Remark~\ref{rem:RT0_2d}. In three dimensions
$N_k(T)=\mathbb P_k^3\oplus(x-x_T)\times\widetilde{\mathbb P}_k^3=\grad\mathbb P_{k+1}\oplus(x-x_T)\times\mathbb P_k^3$ \cite{AFW:2006}, where the second part is the Poincar\'e operator applied to the curl. The class \eqref{structured3d} replaces it by the $H^1$ field $\bzz$, since the curl cannot be prescribed freely (Section~\ref{subsec:hcurl_derham}), and no correspondence with N\'ed\'elec spaces is claimed.
\end{rem}

Since $H^1(\Omega)^n\subset H(\curll;\Omega)$, a componentwise $H^1$ network is automatically $H(\curll)$-conforming. It imposes more continuity than
$H(\curll)$ requires, however: an $H^1$ field has single-valued traces of all components
across interior interfaces (and a finite network with a continuous activation is componentwise
continuous), whereas the graph space requires only a single-valued tangential trace; so
no realization has a normal jump across an interior interface, and a
normal jump of the target can only be approximated by a continuous transition
layer. The structured class $\grad\phi_\theta+\bzz_\theta$ does not impose this continuity: with a
ReLU potential, $\grad\phi_\theta$ has a normal jump already at finite width while keeping the correct tangential
trace. Componentwise continuous networks are a continuous-space analogue of vector Lagrange elements $(C^0)^n$, and the structured
classes one of N\'ed\'elec elements, exactly so on a triangle in two dimensions (Remark~\ref{rem:nedelec}).

\section{The FOSLS-deRhaNN method}\label{sec:framework}

Given a stable FOSLS formulation in the natural spaces of the
weak problem, the deRhaNN classes above provide conforming
neural trial classes. We recall the resulting standard best-approximation estimate.

\subsection{FOSLS-deRhaNN approximation}\label{subsec:fosls_approx}

Let $X=X_1\times\cdots\times X_J$ be a product of the natural spaces of a problem
($H^1$, $H(\divvr)$, $H(\curll)$, $L^2$, with homogeneous essential boundary conditions
where required), $Y$ a product of $L^2$ spaces
(and of a trace space such as $H^{1/2}(\partial\Omega)$
when a boundary condition is imposed through the
functional, Section~\ref{subsec:weak_dirichlet}), $L:X\to Y$ bounded and linear, and $F\in Y$.
The first-order system is $Lz=F$, and its least-squares functional is
\[
 \cJ(z;F):=\|Lz-F\|_Y^2 .
\]

\begin{assum}[FOSLS stability]\label{assum:fosls}
There are $0<c_L\le C_L$ with $c_L\|z\|_X^2\le\|Lz\|_Y^2\le C_L\|z\|_X^2$ for
all $z\in X$, and $Lz^*=F$ has a solution $z^*\in X$.
\end{assum}

Then $z^*$ is the unique minimizer of $\cJ(\cdot;F)$ over $X$, $\cJ(z^*;F)=0$, and $\cJ(z;F)=\|L(z-z^*)\|_Y^2$ for every $z\in X$. For the second-order elliptic equation, Assumption~\ref{assum:fosls} holds under unique solvability alone, for right-hand sides in $L^2$ and in $H^{-1}$ \cite{Zhang:23}.

\begin{defn}[FOSLS-deRhaNN method]\label{def:nativefosls}
For $j=1,\dots,J$ let $\{\cX_{j,m}\}_m$ be nested classes for $X_j$: ordinary networks for scalar unknowns in $H^1$ or $L^2$, and the native classes of Sections~\ref{sec:hdiv}--\ref{sec:hcurl} for vector unknowns (for $H(\divvr)$ either the class of Section~\ref{sec:hdiv} or that of Section~\ref{sec:hdiv_reg}, for $H(\curll)$ in two dimensions either class of Section~\ref{subsec:hcurl2d}). Every realization lies in $X_j$ for every parameter value. Let $\cX_m:=\cX_{1,m}\times\cdots\times\cX_{J,m}$. A FOSLS-deRhaNN approximation is any $z_m\in\cX_m$ with $\cJ(z_m;F)\le\inf_{\cX_m}\cJ(\cdot;F)+\varepsilon_m^2$, where $\varepsilon_m\ge0$ is
a suboptimality defect (from nonlinear optimization, stopping, regularized linear algebra or another numerical effect, which are not distinguished here). We call the resulting method \emph{FOSLS-deRhaNN}, first-order system least squares with de Rham neural networks, in which the graph-space unknowns are represented by the deRhaNN classes developed above and the loss is the FOSLS functional posed in the natural spaces of the weak formulation.
\end{defn}

\begin{prop}[best-approximation estimate]\label{prop:best_approx}
Under Assumption~\ref{assum:fosls}, $\cX_m\subset X$ for every value of the trainable parameters, and
\[
 c_L\|z^*-z_m\|_X^2\le\cJ(z_m;F)\le C_L\inf_{x\in\cX_m}\|z^*-x\|_X^2+\varepsilon_m^2 .
\]
\end{prop}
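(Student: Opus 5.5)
The proof is the standard C\'ea-type argument for least squares, and I would carry it out in three steps.

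\emph{Step 1: conformity.} The inclusion $\cX_m\subset X$ follows componentwise from the conformity statements already proved for each class, for every parameter value. Scalar networks with Lipschitz activations lie in $W^{1,\infty}(\Omega)\subset H^1(\Omega)\subset L^2(\Omega)$. The $H(\divvr)$ classes are covered by Theorems~\ref{thm:hdivd} and~\ref{thm:hdivd_reg}, the two-dimensional $H(\curll)$ classes by Corollaries~\ref{cor:hcurl_density} and~\ref{cor:hcurl2d_reg}, and the three-dimensional class by Proposition~\ref{prop:hcurl3d_density}. Where essential boundary conditions are part of $X_j$, the classes are taken inside the corresponding subspaces, for instance through boundary factors, as in Section~\ref{subsec:hdiv_reg_classes} and Section~\ref{subsec:hcurl3d}. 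Consequently $\cJ(z_m;F)$ is defined, and Assumption~\ref{assum:fosls} applies to $z_m-z^*\in X$.

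\emph{Step 2: lower bound.} Since $Lz^*=F$ and $L$ is linear, $\cJ(z_m;F)=\|L(z_m-z^*)\|_Y^2$. The coercivity half of Assumption~\ref{assum:fosls} then gives $c_L\|z^*-z_m\|_X^2\le\cJ(z_m;F)$.

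\emph{Step 3: upper bound.} By Definition~\ref{def:nativefosls}, $\cJ(z_m;F)\le\inf_{x\in\cX_m}\cJ(x;F)+\varepsilon_m^2$. For each $x\in\cX_m\subset X$ we have $\cJ(x;F)=\|L(x-z^*)\|_Y^2\le C_L\|x-z^*\|_X^2$ by the continuity half of Assumption~\ref{assum:fosls}. Taking the infimum over $x$ gives the stated bound.

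Two points need care. No linearity or closedness of $\cX_m$ is used anywhere: the argument only needs $\cX_m\subset X$ and the quasi-minimality defining $z_m$, so the nonlinear parameterizations, including learned interfaces, cause no difficulty. The only step with real content is Step~1, that is, conformity for all parameter values. It is not a new obstacle here, since it was established class by class earlier. What remains is to check that the boundary-conforming parameterizations keep the classes in the constrained subspaces, and the cited constructions already provide this.
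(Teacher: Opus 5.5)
Your proof is correct and is the paper's argument: the upper bound comes from quasi-minimality combined with $\cJ(x;F)=\|L(x-z^*)\|_Y^2\le C_L\|x-z^*\|_X^2$, and the lower bound from the error identity $\cJ(z_m;F)=\|L(z_m-z^*)\|_Y^2$ together with coercivity. The paper does not recheck conformity class by class, because Definition~\ref{def:nativefosls} already requires every realization to lie in $X_j$; your Step~1 simply makes that requirement explicit.
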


\begin{proof}
For every $x\in\cX_m$, $\cJ(z_m;F)\le\cJ(x;F)+\varepsilon_m^2=\|L(x-z^*)\|_Y^2+\varepsilon_m^2\le C_L\|x-z^*\|_X^2+\varepsilon_m^2$, and $\cJ(z_m;F)=\|L(z_m-z^*)\|_Y^2\ge c_L\|z_m-z^*\|_X^2$.
\end{proof}

Hence the best-approximation results for the native $H(\divvr)$ and $H(\curll)$ classes
apply directly to the corresponding variables of any stable FOSLS formulation; in particular
$z_m\to z^*$ if $\varepsilon_m\to0$ and the classes have dense unions,
which holds when the scalar network classes for the representing functions are dense in $H^1$
(with their boundary conditions) and in $L^2$ (Corollary~\ref{cor:hdiv_density} and Section~\ref{subsec:hcurl3d}).

\subsection{Remarks on the trial classes}\label{subsec:fosls_remarks}

\begin{rem}[shared hidden layers]\label{rem:shared}
The approximation estimates of Sections~\ref{sec:hdiv}--\ref{sec:hcurl} minimize over each representing function independently, that is, over a product of scalar classes. In the computations the representing functions are outputs of
one network with shared hidden layers, which at a fixed width is in general a proper subset of that product. Conversely, $s$ networks of the
same depth with widths $m_1,\ldots,m_s$ form one network with $s$ outputs and width $m_1+\cdots+m_s$: the first hidden layers are stacked, the deeper ones are block diagonal, and the output layer selects
the blocks. The estimates over the product therefore hold for the shared architecture with the widths added, not at the same width.
\end{rem}

\begin{rem}[variants]\label{rem:variants}
\emph{Boundary conditions.} Inhomogeneous essential conditions are imposed exactly in the classes (affine sets) or weakly through
the functional (Section~\ref{subsec:weak_dirichlet}). \emph{Built-in equations.} If $L=(L_1,L_2)$ and the trial set is restricted to $\{L_2z=F_2\}$, which $z^*$ satisfies,
then $L_2(z-z^*)=0$, so $\cJ=\|L_1z-F_1\|^2$ on this set and the stability of Assumption~\ref{assum:fosls} restricts to it; Proposition~\ref{prop:best_approx}
holds unchanged. Any $z_{2,p}$ with $L_2z_{2,p}=F_2$ gives the affine representation $z_{2,p}+\ker L_2$, with native classes
for $\ker L_2$; a bounded right inverse $R_2$ of $L_2$ is a
convenient stable construction of such a lift, $z_{2,p}=R_2F_2$, and is not needed for the stability. \emph{Density.}
Conformity alone does not suffice: if the closure of the union of the classes is
a proper subspace of $X$, the best approximation error stays bounded away from zero. With Maxwell-type
boundary conditions on nonsmooth domains, an $H^1$-conforming single-field class may fail to be
dense in the corresponding intersection graph space \cite{Cos:91}; this is avoided by the first-order formulations used here.
\end{rem}

\begin{rem}[one variable per continuity space]\label{rem:uniform}
As in least-squares finite element methods, the first-order system is chosen so that
every variable lies in one continuity space and each continuity space has its own
variable: $u\in H^1$ and $\bsigma\in H(\divvr)$ for second-order equations, $\bE\in H(\curll)$ and $\bD\in
H(\divvr)$ for div--curl systems. The native classes are taken for each variable separately.
What remains problem-specific is the equivalence of Assumption~\ref{assum:fosls} for the chosen
system, as for least-squares finite elements. An exterior-derivative
equation $d\omega=g$ can be built into the trial class with
$g$ an unconstrained trainable function only in top degree: in intermediate degree right
inverses exist, but $g$ must satisfy $dg=0$, which an arbitrary network does not satisfy automatically.
\end{rem}

\begin{rem}[a divergence equation in the system: two levels]\label{rem:div_levels}
Suppose the first-order system contains an equation $\divv\bsigma+B(z')=f$ for the divergence of its $H(\divvr)$ unknown, with $B$ acting on
the other unknowns $z'$; in Section~\ref{sec:method} $B(v)=\bb\cdot\grad v+cv$, in Section~\ref{sec:transport} $B(v)=\gamma v$. \emph{(i) Simplification.} In the
first class $\btau=\operatorname{Div}A+\Rop_nq+\bh$ the divergence is one of the representing functions, $\divv\btau=q$ exactly, so the divergence
residual is $q+B(z')-f$: it contains $q$ by its values and no derivative of the potential, which
enters only the other residuals. In the second class the divergence residual is $\divv\bzz+B(z')-f$: it contains the first derivatives of
$\bzz$ and, again, no derivative of the potential. This is used in the formulations of Sections~\ref{sec:method},
\ref{sec:transport} and~\ref{sec:conslaw}, which contain a divergence equation, and in their numerical examples, except the generic level of Section~\ref{subsec:tr_numerics} with
the first class. The curl--curl system of Section~\ref{sec:curlcurl} has no divergence equation. \emph{(ii) Elimination.} With the first
class (Section~\ref{sec:hdiv}), for some problems the divergence $q$ can be eliminated by $q:=f-B(z')$, which builds the divergence equation into the
class with the lift $\Rop_n(f-B(z'))$ (the built-in equations of Remark~\ref{rem:variants}); the divergence residual then vanishes identically. This removes one network and one
residual, at the price of tying $q$ to $z'$ (for the elliptic equation through $\grad v$, Section~\ref{subsec:balanced}). It is
an option, not the main formulation; it is used for the generic level of transport with the first class (Section~\ref{subsec:tr_numerics}).
\end{rem}

Sections~\ref{sec:method} and~\ref{sec:curlcurl} are instances of Proposition~\ref{prop:best_approx}; Section~\ref{sec:transport} uses an error identity in a norm that is not equivalent to that of $X$.

\subsection{Computation}\label{subsec:computation}

The following describes the implementation used in the numerical examples. All theoretical statements
refer to the exactly integrated least-squares functional; in the computations the integrals
are evaluated numerically, and no convergence analysis of the quadrature or optimization procedures is
attempted in this paper. The computed approximation does depend on how the functional is integrated: for free-knot linear splines,
exact integration of the $L^2$ fit leads to Gibbs oscillations at discontinuities \cite{Zhang:22gibbs};
adaptive quadrature for neural network functionals is used, for instance, in \cite{LCR:23}.

\emph{Output coefficients and nonlinear parameters.} For the linear problems considered below, once the hidden and interface parameters are fixed, the output coefficients enter the residuals
linearly and are computed by a linear least-squares (Galerkin) solve; the small regularization added there is a numerical stabilization of the linear algebra and
not part of the functional. The representation maps have kernels that the functional does not see: $A\mapsto A+K$ with $K\in\ker\operatorname{Div}$ (for instance a
constant skew matrix; in two dimensions $\psi\mapsto\psi+C$), $\phi\mapsto\phi+C$ in the regular decomposition when constants are admissible, and more generally the nonuniqueness of the regular decomposition. The
analysis concerns the fields, that is, the quotient by these kernels, whereas the linear least-squares problem for the output coefficients is singular in the corresponding directions when the outputs
have biases; the regularization selects a solution there, and its effect on the functional is part of the suboptimality defect $\varepsilon_m$. The remaining nonlinear parameters, including
those of the level-set network, are trained on the resulting reduced functional by Levenberg--Marquardt, restarted when it stalls, or by L-BFGS where noted; for the nonlinear conservation law the treatment is stated in Section~\ref{subsec:cl_numerics}.

\emph{Integration.} We use two implementations for derivative residuals: direct integration on a partition cut along explicitly tractable kink sets (a given or learned level set, or the zero sets
of the units of a network), and a cellwise moment treatment when resolving the activation partition explicitly is inconvenient. The choice used in each numerical experiment is stated below. In the moment treatment, every residual $R=D+V$
that contains a derivative of the network, with $D$ its derivative part and $V$ its value part, enters training through its moments against $\mathbb P_1$
on the cells of a training partition, together with the part of $V$ orthogonal to $\mathbb P_1$, integrated pointwise; residuals
that contain only values are integrated pointwise. The moments of $D$ are computed from values of the network by integration by parts on each cell, as in the discrete divergence operator of \cite{CCL:23_divergence},
whenever this can be done without differentiating a nonsmooth coefficient, the cells being split along coefficient interfaces where necessary. On a cell $K$ the training functional $\cJ_h$ thus contains $\|\Pi_KR\|_K^2+\|(I-\Pi_K)V\|_K^2$ instead of
$\|R\|_K^2$; the first term is at most $\|R\|_K^2$, but the difference $\|(I-\Pi_K)D\|_K^2+2((I-\Pi_K)D,(I-\Pi_K)V)_K$ has no sign in general, so $\cJ_h$ is not a lower bound for $\cJ$. The computed iterate
is checked by $\cJ_{\rm check}$, the functional integrated on a finer rule, cut along the physical interface and, where feasible, piecewise linearly on subcells along the zero sets of
the ReLU units of the network; the rule used in each experiment is stated below. It is an independent finer evaluation of the functional, not a bound. Training cells on
which $\cJ_{\rm check}-\cJ_h$ is large are refined, and units are added to the ReLU layer where $\cJ_{\rm check}$ is large, by the adaptive network enhancement of \cite{LCC:22ane1,LC:22ane2,CL:24adaptiveNN}. The separation of output coefficients and hidden parameters follows \cite{CDLLX:25sggn,Cai_shallowRitz1D:24}.

\emph{Evaluation of the $H(\divvr)$ representations.} The fields of both classes are evaluated
term by term. In the potential part, common to both, the entry $a_{ij}$ contributes $(\p_ja_{ij})\be_i-(\p_ia_{ij})\be_j$ to $\operatorname{Div}A$, so the cost grows with the
dimension through the $n(n-1)/2$ entries, each of which requires only the two partial derivatives $\p_ia_{ij}$ and $\p_ja_{ij}$. In the
first class the radial part $\Rop_nq_m$ is evaluated by its one-dimensional integral. Since $q$ is extended by
zero in \eqref{Rd}, the integral runs over the parts of $I_x:=\{t\in(0,1):x_0+t(x-x_0)\in\Omega\}$ (the segment is clipped to $\Omega$;
$I_x=(0,1)$ if $\Omega$ is starlike with respect to $x_0$). For the computational geometries considered below these intersections have finitely many connected components,
which are enumerated explicitly (for a general Lipschitz domain a line may
meet $\Omega$ in infinitely many components, which does not affect the definition of $\Rop_n$), and on each
part $(t_a,t_b)$ an $N_t$-point Gauss rule with nodes $t_\ell$ and weights $\omega_\ell$ on $(t_a,t_b)$ gives the contribution
\beq\label{Rd_quadrature}
 (x-x_0)\textstyle\sum_{\ell=1}^{N_t}\omega_\ell\,t_\ell^{n-1}q_m(x_0+t_\ell(x-x_0)),
\eeq
so no derivative of $q_m$ is needed. The
accuracy is that of the Gauss rule for
$t\mapsto t^{n-1}q_m(x_0+t(x-x_0))$ on each part: it is high
for a smooth network, while kinks of $q_m$
along the segment (ReLU layers) or a piecewise
$q_m$ across a known interface limit it unless
the rule is split there as well. If the
network $q_m$ is evaluated outside $\Omega$ instead of
being extended by zero, the right-inverse identity still
holds in $\Omega$, while the bound \eqref{Rd_bound}, and
with it \eqref{hdiv_d_approx}, involves $\|\widetilde q-q_m\|_{L^2(\Omega_{x_0}^*)}$, with $\widetilde
q$ the zero extension of $q$ and $\Omega_{x_0}^*:=\{x_0+t(x-x_0):x\in\Omega,\
0\le t\le1\}$ the union of the segments from
$x_0$ to $\Omega$, instead of $\|q-q_m\|_{0,\Omega}$. The quadrature
is an evaluation of the exact operator $\Rop_n$: conformity
and $\divv\Rop_nq_m=q_m$ are properties of $\Rop_n$, and the
computed field differs from $\Rop_nq_m$ by the quadrature
error; a Gauss sum regarded as an operator
in its own right is not an exact
right inverse of the divergence. In the second
class there is no radial part: $\bzz_m$ is
evaluated pointwise, and $\divv\bzz_m$ from the first derivatives
of its components, like the entries of $A_m$;
conformity and $\divv\btau_m=\divv\bzz_m$ hold for the computed field exactly.

\section{Second-order elliptic equations}\label{sec:method}

\subsection{Problem and first-order system}\label{sec:problem}

\begin{assum}\label{assum:main}
\begin{itemize}
\item[(i)] $\Omega\subset\R^2$ is a bounded Lipschitz polygon.
$\Omega$ is partitioned into polygonal subdomains $\{\Omega_i\}_{i=1}^m$ (the coefficient subdomains).
\item[(ii)] $\mathbf A\in L^\infty(\Omega)^{2\times2}$ is symmetric with $\lambda_0|\xi|^2\le\xi^T\mathbf A\xi\le\lambda_1|\xi|^2$; $\bb,\bbet\in L^\infty(\Omega)^2$; $c\in L^\infty(\Omega)$.
No derivatives of the coefficients are used.
\item[(iii)] The data are those of the weak formulation: $f_1\in L^2(\Omega)$, $\bff_2\in L^2(\Omega)^2$,
and $g_D\in H^{1/2}(\partial\Omega)$.
\item[(iv)] The bilinear form
$a(v,w):=(\mathbf A\grad v-\bbet v,\grad w)+(\bb\cdot\grad v+cv,w)$ induces an isomorphism $H^1_0(\Omega)\to H^{-1}(\Omega)$. Equivalently, the homogeneous
problem has only the trivial solution: $a$ satisfies a G\aa rding inequality, and the lower-order terms are compact from $H^1_0(\Omega)$
to $H^{-1}(\Omega)$ ($v\mapsto\bb\cdot\grad v+cv$ is bounded into $L^2(\Omega)$, which embeds compactly into $H^{-1}(\Omega)$, and
$v\mapsto\bbet v$ is compact into $L^2(\Omega)^2$ by Rellich's theorem), so the operator is Fredholm of index zero.
\end{itemize}
\end{assum}

The problem is posed at the level of the weak formulation: find $u\in H^1(\Omega)$ with $u=g_D$ on $\partial\Omega$ and
\beq\label{pde}
 a(u,w)=(f_1,w)-(\bff_2,\grad w)\qquad\forall\,w\in H^1_0(\Omega),
\eeq
formally $-\divv(\mathbf A\grad u)+\bb\cdot\grad u+\divv(\bbet u)+cu=f_1+\divv\bff_2$ in $\Omega$. Every $F\in
H^{-1}(\Omega)$ is of the form $w\mapsto(f_1,w)-(\bff_2,\grad w)$: take $z\in H^1_0(\Omega)$
with $(z,w)+(\grad z,\grad w)=F(w)$ for all $w$, and $f_1=z$,
$\bff_2=-\grad z$. Hence Assumption~\ref{assum:main}(iii) is the general data
class for which the weak problem is well posed, and
$u$ has only the regularity of the weak solution.
Examples for which no strong form exists: $\bff_2$ discontinuous,
for instance the gradient of a continuous piecewise
linear function \cite{Zhang:23}; and sources on curves, for instance,
for the line $\{x_1=0\}$, $F(w)=\int_{\{x_1=0\}}g\,w\,ds$ is of this
form with $f_1=0$ and $\bff_2=\chi_{\{x_1>0\}}\,g(x_2)\,\be_1$. Point sources are
not in $H^{-1}$ for $n\ge2$ and are not covered.

With the flux $\bsigma:=-\mathbf A\grad u+\bbet u-\bff_2$, testing \eqref{pde} with $w\in C_c^\infty(\Omega)$
gives $\divv\bsigma+\bb\cdot\grad u+cu=f_1\in L^2(\Omega)$, so $\bsigma\in H(\divvr;\Omega)$. The first-order system is
\beq\label{fos}
\mathbf A^{-1/2}(\bsigma-\bbet u+\bff_2)+\mathbf A^{1/2}\grad u=0,\qquad \divv\bsigma+\bb\cdot\grad u+cu=f_1 .
\eeq
The $H(\divvr)$ unknown is $\bsigma$; the physical flux $-\mathbf
A\grad u+\bbet u=\bsigma+\bff_2$ is in general not in $H(\divvr;\Omega)$
when $\bff_2$ jumps \cite{Zhang:23}. For $F:=(f_1,\bff_2)$ and $(v,\btau)\in H^1(\Omega)\times\Hdiv$ define
\beq\label{LS}
\LS(v,\btau;F):=\|\mathbf A^{-1/2}(\btau-\bbet v+\bff_2)+\mathbf A^{1/2}\grad v\|_0^2+\|\divv\btau+\bb\cdot\grad v+cv-f_1\|_0^2,\qquad \tri(v,\btau)\tri^2:=\LS(v,\btau;0).
\eeq
Let $H^1_g(\Omega):=\{v\in H^1(\Omega):v=g_D\mbox{ on }\partial\Omega\}$. Since \eqref{fos} is linear, $\LS(v,\btau;F)=\tri(u-v,\bsigma-\btau)\tri^2$ for all $(v,\btau)$.

\begin{lem}[norm equivalence \cite{Zhang:23}; cf.\ \cite{CLMM:94,BG:09}]\label{lem:equiv}
Under Assumption~\ref{assum:main} there are constants $0<C_\ell\le C_u$ such that $$ C_\ell\big(\|v\|_1^2+\hdn{\btau}^2\big)\le\tri(v,\btau)\tri^2\le C_u\big(\|v\|_1^2+\hdn{\btau}^2\big)\qquad\forall\,(v,\btau)\in H^1_0(\Omega)\times H(\divvr;\Omega). $$
\end{lem}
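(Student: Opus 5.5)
The upper bound is elementary. The lower bound carries the content; I would obtain it from the well-posedness of the weak problem in Assumption~\ref{assum:main}(iv).

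\emph{Upper bound.} Expand $\tri(v,\btau)\tri^2$ and use the boundedness of $\mathbf A^{\pm1/2}$ (from $\lambda_0,\lambda_1$) and of $\bbet,\bb,c$ in $L^\infty$. The first residual is then bounded by $C(\|\btau\|_0+\|v\|_1)$ and the second by $\|\divv\btau\|_0+C\|v\|_1$. Squaring these gives $C_u$.

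\emph{Lower bound.} Let $(v,\btau)\in H^1_0\times H(\divvr)$ and set
\[
\mathbf r_1:=\mathbf A^{-1/2}(\btau-\bbet v)+\mathbf A^{1/2}\grad v,\qquad r_2:=\divv\btau+\bb\cdot\grad v+cv,
\]
so that $\tri(v,\btau)\tri^2=\|\mathbf r_1\|_0^2+\|r_2\|_0^2$.

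First I bound $v$. For $w\in H^1_0$ we have $\btau=\mathbf A^{1/2}\mathbf r_1-\mathbf A\grad v+\bbet v$. Inserting this into $(\divv\btau,w)=-(\btau,\grad w)$ gives
\[
a(v,w)=(\mathbf A\grad v-\bbet v,\grad w)+(\bb\cdot\grad v+cv,w)=(\mathbf A^{1/2}\mathbf r_1,\grad w)+(r_2,w).
\]
So $v\in H^1_0$ solves the weak problem with right-hand side in $H^{-1}$ of norm at most $C(\|\mathbf r_1\|_0+\|r_2\|_0)$. Assumption~\ref{assum:main}(iv) says $a$ induces an isomorphism $H^1_0\to H^{-1}$, so the bounded inverse gives
\[
\|v\|_1\le C(\|\mathbf r_1\|_0+\|r_2\|_0).
\]

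Next I bound $\btau$. From the same identity,
\[
\|\btau\|_0\le C(\|\mathbf r_1\|_0+\|v\|_1),\qquad \|\divv\btau\|_0\le\|r_2\|_0+C\|v\|_1.
\]
Combining these with the bound on $v$ yields $C_\ell$.

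\emph{Main obstacle.} The essential step is the $H^1$ bound for $v$. No coercivity is available, because the problem may be nonsymmetric or indefinite. The bound must therefore come from the bounded inverse in (iv), which is available by the Fredholm alternative, and not from a G\aa rding argument alone.

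The boundary condition $v\in H^1_0$ is what makes the test against $w\in H^1_0$ legitimate and lets (iv) apply. No condition on $\btau\cdot\bn$ is needed: the integration by parts only involves $w$, which vanishes on $\partial\Omega$.
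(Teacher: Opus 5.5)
Your proof is correct and follows the route the paper points to: the paper gives no proof of its own and cites \cite{Zhang:23}, stressing that the equivalence rests only on the unique solvability of the weak problem; your argument does exactly that, by reading off $a(v,w)=(\mathbf A^{1/2}\mathbf r_1,\grad w)+(r_2,w)$, inverting with Assumption~\ref{assum:main}(iv), and then recovering $\|\btau\|_0$ and $\|\divv\btau\|_0$ from the two residuals. One small point of emphasis: testing against $w\in H^1_0(\Omega)$ is legitimate for any $v\in H^1(\Omega)$ and $\btau\in H(\divvr;\Omega)$, so the condition $v\in H^1_0(\Omega)$ is needed only to invoke the isomorphism in (iv).
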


\begin{rem}[data at the level of the weak formulation]\label{rem:Hminus1}
Lemma~\ref{lem:equiv} concerns the homogeneous operator only, so the error identity and the norm equivalence hold for all data in Assumption~\ref{assum:main}(iii), with no regularity beyond
that of the weak formulation \cite{Zhang:23}. In the native classes of Section~\ref{sec:hdiv} only $f_1$ enters the right inverse (through $q=f_1-\bb\cdot\grad
v-cv$ in the balanced formulation below), in those of Section~\ref{sec:hdiv_reg} it enters only the divergence residual, and in both $\bff_2$ enters only the constitutive residual, pointwise
and never differentiated. The representation $F=f_1+\divv\bff_2$ of a datum in $H^{-1}(\Omega)$ is not unique: the weak solution $u$ does not depend on it, but the flux $\bsigma=-\mathbf A\grad u+\bbet
u-\bff_2$ does, and with it the regularity, the interface structure and the best approximation of the flux. The representation
is therefore part of the first-order formulation; when the data come with a natural one, such as a source on an interface
or a curve, or a given physical flux, that one is used. A strong-form residual $-\divv(\mathbf A\grad v)+\cdots-f_1-\divv\bff_2$ is not defined
for such data. The Dirichlet data are imposed either in the trial set, $v\in H^1_g(\Omega)$ (Theorem~\ref{thm:qo}), or weakly through the functional (Section~\ref{subsec:weak_dirichlet}).
\end{rem}

The method is Proposition~\ref{prop:best_approx} with $X=H^1_0(\Omega)\times H(\divvr;\Omega)$ (shifted by a lifting of $g_D$), or with $X=H^1(\Omega)\times H(\divvr;\Omega)$ when the Dirichlet condition is imposed weakly (Section~\ref{subsec:weak_dirichlet}), $L$ given by \eqref{fos}, $c_L=C_\ell$ and $C_L=C_u$ from Lemma~\ref{lem:equiv}, and the native classes of Sections~\ref{subsec:hdiv23} and~\ref{subsec:hdiv_reg_classes}. Assumption~\ref{assum:main}(i) is stated in two dimensions; Lemma~\ref{lem:equiv} holds, with the same proof \cite{Zhang:23}, for a bounded Lipschitz domain $\Omega\subset\R^n$, and the native classes are then those of Sections~\ref{subsec:hdiv_classes} and~\ref{subsec:hdiv_reg_classes}.

\subsection{Structured-flux formulation}\label{subsec:unconstrained}

Let $\cU\subset H^1(\Omega)$ be a scalar neural network class, possibly augmented
by the interface modules of Sections~\ref{subsec:interface} and~\ref{subsec:learned_interface}. Let
\[
 \cQ:=\left\{\sum_{i=1}^m\one_{\Omega_i}q_i:\ q_i\in\cQ_i\right\}\subset L^2(\Omega),
\]
where each $\cQ_i$ is a scalar network class on the coefficient subdomain $\Omega_i$; no continuity is imposed across coefficient interfaces. Define
\beq\label{Sigma}
\Sigma_\theta:=\{\curl\psi+\Rop_2 q+\bh:\ \psi\in\cU,\ q\in\cQ,\ \bh\in\cH\}\quad\text{or}\quad\Sigma_\theta:=\{\curl\psi+\bzz:\ \psi\in\cU,\ \bzz\in\cZ\},
\eeq
with $\cZ\subset H^1(\Omega)^2$ a class of vector networks, with
the modules of Section~\ref{subsec:div_jumps} if the divergence jumps
across a coefficient interface. By Corollary~\ref{cor:hdiv23} and Theorem~\ref{thm:hdivd_reg}, every element
of $\Sigma_\theta$ is $H(\divvr)$-conforming for every parameter value, with $\divv(\curl\psi+\Rop_2
q+\bh)=q$, respectively $\divv(\curl\psi+\bzz)=\divv\bzz$. Hence, for the first class,
\beq\label{loss}
\begin{split}
\LS(v,\curl\psi+\Rop_2 q+\bh;F)
={}&\|\mathbf A^{-1/2}(\curl\psi+\Rop_2 q+\bh-\bbet v+\bff_2)+\mathbf A^{1/2}\grad v\|_0^2\\
&+\|q+\bb\cdot\grad v+cv-f_1\|_0^2 ,
\end{split}
\eeq
and, for the second, the same with $\bzz$ in place of $\Rop_2q+\bh$ and $\divv\bzz$ in
place of $q$. Only $\grad v$, $\grad\psi$ and, in the second class, the
first derivatives of $\bzz$ are differentiated; neither $q$ nor the discontinuous coefficients are differentiated.

Given an admissible set $\cU_g\subset\cU$, an $\varepsilon$-minimizer satisfies
\beq\label{method}
\LS(u_\theta,\bsigma_\theta;F)\le
\inf_{(v,\btau)\in\cU_g\times\Sigma_\theta}
\LS(v,\btau;F)+\varepsilon^2 .
\eeq
The infimum need not be attained; this is the
correct statement for a nonclosed neural approximation set.

\begin{thm}[exact boundary conditions]\label{thm:qo}
Suppose $\cU_g\subset H^1_g(\Omega)$. Then
\[
\LS(u_\theta,\bsigma_\theta;F)
=\tri(u-u_\theta,\bsigma-\bsigma_\theta)\tri^2
\le
\inf_{(v,\btau)\in\cU_g\times\Sigma_\theta}
\tri(u-v,\bsigma-\btau)\tri^2+\varepsilon^2,
\]
and consequently
\[
\|u-u_\theta\|_1^2+\hdn{\bsigma-\bsigma_\theta}^2
\le\frac{C_u}{C_\ell}
\inf_{(v,\btau)\in\cU_g\times\Sigma_\theta}
\big(\|u-v\|_1^2+\hdn{\bsigma-\btau}^2\big)
+\frac{\varepsilon^2}{C_\ell}.
\]
\end{thm}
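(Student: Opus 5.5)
The argument is the standard least-squares one, specialized to the constrained trial set $\cU_g\times\Sigma_\theta$; it follows the proof of Proposition~\ref{prop:best_approx}.

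First, conformity. Every $\btau\in\Sigma_\theta$ lies in $H(\divvr;\Omega)$ for every parameter value, by Corollary~\ref{cor:hdiv23} for the first class and by Theorem~\ref{thm:hdivd_reg} for the second. Since $\cU_g\subset H^1_g(\Omega)$, every trial pair satisfies $(v,\btau)\in H^1_g(\Omega)\times H(\divvr;\Omega)$, and the exact solution $u$ also satisfies $u=g_D$ on $\partial\Omega$. Hence $u-v\in H^1_0(\Omega)$ for every admissible $v$. This is what allows Lemma~\ref{lem:equiv} to be used later; that lemma is stated only on $H^1_0(\Omega)\times H(\divvr;\Omega)$.

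Second, the error identity. The exact pair $(u,\bsigma)$ satisfies \eqref{fos}, with $\divv\bsigma+\bb\cdot\grad u+cu=f_1$ holding in $L^2$, as shown after \eqref{pde}. Both residuals in \eqref{LS} are affine in $(v,\btau)$. Subtracting the exact residuals, which are zero, gives $\LS(v,\btau;F)=\tri(u-v,\bsigma-\btau)\tri^2$ for every pair; the text already notes this. Applying it to $(u_\theta,\bsigma_\theta)$ and to an arbitrary $(v,\btau)\in\cU_g\times\Sigma_\theta$ inside \eqref{method} yields the first displayed line, with the infimum of $\tri\cdot\tri^2$ over the trial set. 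The infimum need not be attained, and the argument never requires a minimizer.

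Third, norm equivalence. Apply the lower bound of Lemma~\ref{lem:equiv} to $(u-u_\theta,\bsigma-\bsigma_\theta)\in H^1_0(\Omega)\times H(\divvr;\Omega)$, which gives $C_\ell(\|u-u_\theta\|_1^2+\hdn{\bsigma-\bsigma_\theta}^2)\le\LS(u_\theta,\bsigma_\theta;F)$. Then bound each term $\tri(u-v,\bsigma-\btau)\tri^2$ inside the infimum by $C_u(\|u-v\|_1^2+\hdn{\bsigma-\btau}^2)$, using the upper bound of the same lemma, which is again admissible because $u-v\in H^1_0$. Dividing by $C_\ell$ gives the second estimate with the constants $C_u/C_\ell$ and $1/C_\ell$.

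There is no genuine obstacle. The only point needing care is the boundary condition: the equivalence holds on $H^1_0$, not on $H^1$. This is exactly why the theorem assumes $\cU_g\subset H^1_g(\Omega)$, so that every difference $u-v$ has zero trace. The weak-imposition case is deferred to Section~\ref{subsec:weak_dirichlet}.
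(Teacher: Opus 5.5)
Your proposal is correct and follows the paper's argument. The paper proves the theorem in one line, by Proposition~\ref{prop:best_approx} with $c_L=C_\ell$ and $C_L=C_u$ on the space shifted by the boundary data; you have unfolded that proposition's proof, correctly noting that $u-v\in H^1_0(\Omega)$ is what makes Lemma~\ref{lem:equiv} applicable.
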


\begin{proof}
Proposition~\ref{prop:best_approx} with $c_L=C_\ell$, $C_L=C_u$.
\end{proof}

\begin{cor}[reduction to scalar approximation]\label{cor:red}
With $(\psi^*,q^*,\bh^*)$ from Corollary~\ref{cor:hdiv23},
\[
\hdn{\bsigma-(\curl\psi+\Rop_2 q+\bh)}
\le |\psi^*-\psi|_1+\sqrt{1+C_{x_0}^2}\,\|q^*-q\|_0+\|\bh^*-\bh\|_0 .
\]
Since the full harmonic space $\cH$ is in the class, $\bh=\bh^*$ is admissible and the last term contributes no approximation error,
only finitely many coefficients. Since $q\in\cQ$ is piecewise, jumps of $q^*$ across coefficient
interfaces carry no approximation penalty beyond the separate subdomain $L^2$ errors. With $(\psi^*,\bzz^*)$ from Proposition~\ref{prop:hdiv_regular} with $n=2$,
\[
\hdn{\bsigma-(\curl\psi+\bzz)}\le|\psi^*-\psi|_1+\sqrt2\,\|\bzz^*-\bzz\|_1 ,
\]
and jumps of $q^*$ are carried by kinks of $\bzz$ (Lemma~\ref{lem:div_kink}).
\end{cor}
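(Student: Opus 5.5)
The plan is to prove both bounds by linearity of the representation together with the triangle inequality in the $H(\divvr)$ norm, splitting the error into its potential, radial and harmonic parts.

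For the first bound, Corollary~\ref{cor:hdiv23} gives $\bsigma=\curl\psi^*+\Rop_2q^*+\bh^*$ with $q^*=\divv\bsigma$ and $\bh^*=\Pi_{\cH}\bsigma$. Subtracting the trial flux gives
\[
 \bsigma-(\curl\psi+\Rop_2q+\bh)=\curl(\psi^*-\psi)+\Rop_2(q^*-q)+(\bh^*-\bh).
\]
I bound the three terms separately.
\begin{itemize}
\item The first term is divergence free by \eqref{divDiv_zero}, so its graph norm is $\|\curl(\psi^*-\psi)\|_0=|\psi^*-\psi|_1$, using the two-dimensional equality $|\curl\psi|=|\grad\psi|$.
\item The second term is bounded by $\sqrt{1+C_{x_0}^2}\,\|q^*-q\|_0$, which is \eqref{Rd_bound} with $n=2$, where $4C_{x_0}^2/n^2=C_{x_0}^2$.
\item The third term is divergence free, since each $\bh_i$ is, so its graph norm is just $\|\bh^*-\bh\|_0$.
\end{itemize}

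The remarks following the first bound are immediate. The harmonic part contributes no approximation error because $\cH$ lies in the class entirely, so the choice $\bh=\bh^*$ is admissible. For the piecewise class $\cQ$, the error splits as $\|q^*-q\|_0^2=\sum_i\|q^*-q_i\|_{0,\Omega_i}^2$. Each $q_i$ only needs to approximate $q^*|_{\Omega_i}$ on its own subdomain, so a jump of $q^*$ across a coefficient interface incurs no extra penalty.

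For the second bound, Proposition~\ref{prop:hdiv_regular} with $n=2$ gives $\bsigma=\curl\psi^*+\bzz^*$. The error is then $\curl(\psi^*-\psi)+(\bzz^*-\bzz)$. The first term is handled exactly as before. For the second, $\|\bw\|_{H(\divvr)}\le\sqrt2\|\bw\|_1$ for $\bw\in H^1(\Omega)^2$, as in the proof of Theorem~\ref{thm:hdivd_reg}. The final sentence, that jumps of $q^*$ are carried by kinks of $\bzz$, is a pointer to Lemma~\ref{lem:div_kink} and needs no further proof. It does use that $\divv\bzz^*=q^*$, which follows from $\divv\curl=0$.

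There is no real obstacle here. The only step needing care is tracking the constants: the sharp equality $\|\curl\psi\|_0=|\psi|_1$ in two dimensions, and the value of $(1+4C_{x_0}^2/n^2)^{1/2}$ at $n=2$.
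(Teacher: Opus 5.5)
Your proof is correct and follows the paper's intended route. The paper states this corollary without a separate proof; it is the triangle-inequality splitting from the proofs of Theorem~\ref{thm:hdivd} (here keeping the extra harmonic term $\|\bh^*-\bh\|_0$) and Theorem~\ref{thm:hdivd_reg} with $n=2$, using $\|\curl\psi\|_0=|\psi|_1$ and $(1+4C_{x_0}^2/n^2)^{1/2}=\sqrt{1+C_{x_0}^2}$ at $n=2$, exactly as you do.
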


\begin{rem}[why $q$ is kept]\label{rem:keepq}
In the first class the divergence $q$ is an
independent $L^2$ function in the representation of the flux and is
approximated directly: its target $q^*=f_1-\bb\cdot\grad u-cu$ may jump across
coefficient interfaces, and the piecewise classes $\cQ$ impose no continuity
there. The two residuals in \eqref{loss} stay separate, so
the equilibrium residual $\|q+\bb\cdot\grad v+cv-f_1\|_0$ is available as its
own indicator, and no cancellation between $\Rop_2 f_1$
and $\Rop_2(\bb\cdot\grad v+cv)$ occurs. Eliminating $q$, as in
the balanced formulation, removes one network and one residual, at
the price of that cancellation and of tying the flux
error to $\|u-v\|_1$. In the second class the equilibrium
residual $\|\divv\bzz+\bb\cdot\grad v+cv-f_1\|_0$ stays separate in the same way.
\end{rem}

\subsection{Weakly imposed Dirichlet condition}\label{subsec:weak_dirichlet}

A network class does not satisfy $v=g_D$ on $\partial\Omega$ in general, and building the condition into the class requires a
lifting of $g_D$ and a factor vanishing on $\partial\Omega$. Instead, the Dirichlet residual is added to the functional: for $w_D>0$ and $(v,\btau)\in H^1(\Omega)\times H(\divvr;\Omega)$,
\beq\label{LS_D}
 \LS_D(v,\btau;F,g_D):=\LS(v,\btau;F)+w_D\|v-g_D\|_{1/2,\partial\Omega}^2,\qquad
 \|e\|_{1/2,\partial\Omega}^2:=\|e\|_{0,\partial\Omega}^2+\int_{\partial\Omega}\!\int_{\partial\Omega}\frac{|e(s)-e(t)|^2}{|s-t|^{n}}\,ds\,dt .
\eeq
The $H^{1/2}$ boundary residual is the natural form covered by the continuous stability theory:
with a bounded right inverse $E:H^{1/2}(\partial\Omega)\to H^1(\Omega)$ of the trace $\gamma$, write $v=v_0+E(\gamma v)$ with $v_0\in H^1_0(\Omega)$;
Lemma~\ref{lem:equiv} applied to $(v_0,\btau)$, together with $\tri(E(\gamma v),0)\tri\le C_u^{1/2}\|E(\gamma v)\|_1\le C_u^{1/2}C_E\|\gamma v\|_{1/2,\partial\Omega}$, gives $\|v\|_1^2+\hdn{\btau}^2\lesssim\tri(v,\btau)\tri^2+\|\gamma v\|_{1/2,\partial\Omega}^2$. Hence Proposition~\ref{prop:best_approx} applies with
$X=H^1(\Omega)\times H(\divvr;\Omega)$ and classes without boundary conditions. The Dirichlet residual may also
be measured in $L^2(\partial\Omega)$ with a large weight, $w_b\|v-g_D\|_{0,\partial\Omega}^2$; for least-squares functionals with weakly imposed boundary conditions, Dirichlet
and Neumann, see \cite{Zhang:weakBC}. The double integral in \eqref{LS_D} is computed by a product quadrature rule on $\partial\Omega$.

\subsection{Built-in variant: the PDE-balanced formulation}\label{subsec:balanced}

This is the second level of Remark~\ref{rem:div_levels}, with the first class: the
independent $q$-network can be eliminated. For any $v\in H^1(\Omega)$ set
\beq\label{qv}
 q[v]:=f_1-\bb\cdot\grad v-cv\in L^2(\Omega)
\eeq
and define
\beq\label{tau_bal}
 \btau[v,\psi,\bh]:=\curl\psi+\Rop_2 q[v]+\bh .
\eeq
Then, for every parameter value,
\beq\label{balance_identity}
 \divv\btau[v,\psi,\bh]+\bb\cdot\grad v+cv=f_1
\eeq
holds identically. Thus one of the two first-order equations is built
into the architecture and the least-squares loss reduces to the constitutive residual
\beq\label{LS_bal}
\LS_{\rm bal}(v,\psi,\bh;F)
:=\|\mathbf A^{-1/2}(\btau[v,\psi,\bh]-\bbet v+\bff_2)+\mathbf A^{1/2}\grad v\|_0^2.
\eeq
In other words, the balanced method is the full least-squares problem restricted to the exact equilibrium manifold $\{\divv\btau+\bb\cdot\grad v+cv=f_1\}$: one residual vanishes identically, and the remaining constitutive residual is still equivalent to the complete natural error. Only two scalar neural fields, $v$ and $\psi$, are required (plus the finitely many coefficients of $\bh$).

\begin{rem}[equations built into the trial class]\label{rem:builtin}
The construction is an instance of one rule. Write an equation of the system as $Dz_2+Bz_1=F$, where $D$ has a bounded right inverse $R_D$ ($DR_D=I$) and $B$ acts on the other unknowns, and parameterize $z_2=z_{2,0}+R_D(F-Bz_1)$ with $z_{2,0}$ in a conforming class for $\ker D$. The equation then holds for every parameter value, the functional is the full least-squares functional restricted to $\{Dz_2+Bz_1=F\}$, and its equivalence with the error is inherited (Remark~\ref{rem:variants}). Here $D=\divv$, $R_D=\Rop_2$, $z_2=\btau$, $Bv=\bb\cdot\grad v+cv$, $F=f_1$ and $\ker\divv=\curl H^1\oplus\cH$ (Corollary~\ref{cor:hdiv23}), which gives $\btau=\btau_0+\Rop_2(f_1-\bb\cdot\grad v-cv)$.
\end{rem}

\begin{thm}[error equivalence for the PDE-balanced formulation]\label{thm:balanced_equiv}
Let $(v,\psi,\bh)$ satisfy the exact boundary conditions $v=g_D$ on $\partial\Omega$. Then
\beq\label{balanced_equiv}
 C_\ell\big(\|u-v\|_1^2+\hdn{\bsigma-\btau[v,\psi,\bh]}^2\big)
 \le \LS_{\rm bal}(v,\psi,\bh;F)
 \le C_u\big(\|u-v\|_1^2+\hdn{\bsigma-\btau[v,\psi,\bh]}^2\big).
\eeq
\end{thm}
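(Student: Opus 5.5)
The plan is to show that $\LS_{\rm bal}$ coincides with the full functional $\LS$ evaluated at the pair $(v,\btau[v,\psi,\bh])$. The bounds then follow from the error identity below \eqref{LS} and from Lemma~\ref{lem:equiv}, as in Remark~\ref{rem:variants} on built-in equations.

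\emph{Step 1: membership.} By Lemma~\ref{lem:Rd} and Corollary~\ref{cor:hdiv23}, $\btau[v,\psi,\bh]=\curl\psi+\Rop_2q[v]+\bh$ lies in $H(\divvr;\Omega)$. This holds because $q[v]=f_1-\bb\cdot\grad v-cv\in L^2(\Omega)$ for $v\in H^1(\Omega)$ with bounded $\bb,c$, together with $\psi\in H^1$ and $\bh$ smooth and divergence free.

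\emph{Step 2: the divergence residual vanishes.} We have $\divv\curl\psi=0$ and $\divv\bh=0$, and $\divv\Rop_2q[v]=q[v]$. This gives the identity \eqref{balance_identity}, so the second term of $\LS(v,\btau[v,\psi,\bh];F)$ is zero. The first term is exactly $\LS_{\rm bal}(v,\psi,\bh;F)$. Hence
\[
\LS_{\rm bal}(v,\psi,\bh;F)=\LS(v,\btau[v,\psi,\bh];F).
\]

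\emph{Step 3: error identity and equivalence.} Since \eqref{fos} is linear and $(u,\bsigma)$ solves it, $\LS(v,\btau;F)=\tri(u-v,\bsigma-\btau)\tri^2$ for every $(v,\btau)$. With $v=g_D=u$ on $\partial\Omega$ we get $u-v\in H^1_0(\Omega)$, and $\bsigma-\btau[v,\psi,\bh]\in H(\divvr;\Omega)$. Lemma~\ref{lem:equiv} applied to this pair then gives both inequalities in \eqref{balanced_equiv}.

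The argument is a direct combination of earlier results, and the only point needing care is the lower bound. One might worry that restricting to the equilibrium manifold loses control of the $H(\divvr)$ part of the error. It does not: the divergence residual is identically zero, so the full $\tri\cdot\tri$ norm reduces to the constitutive residual alone. Lemma~\ref{lem:equiv} therefore still controls $\|\divv(\bsigma-\btau)\|_0$, which here equals $\|\bb\cdot\grad(u-v)+c(u-v)\|_0$ and is also bounded by $\|u-v\|_1$. No separate argument is needed.
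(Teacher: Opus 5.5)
Your proposal is correct and follows the paper's proof. The divergence residual vanishes identically by \eqref{balance_identity}, so $\LS_{\rm bal}$ equals the full functional, which by linearity is $\tri(u-v,\bsigma-\btau[v,\psi,\bh])\tri^2$ with $u-v\in H^1_0(\Omega)$, and Lemma~\ref{lem:equiv} then gives both bounds.
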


\begin{proof}
For the error pair $(u-v,\bsigma-\btau[v,\psi,\bh])$ the second residual in \eqref{LS} vanishes by
\eqref{balance_identity}. Hence its full least-squares norm is exactly \eqref{LS_bal}; apply Lemma~\ref{lem:equiv}.
\end{proof}

\begin{cor}[two scalar approximation problems]\label{cor:balanced_approx}
Let $q^*=f_1-\bb\cdot\grad u-cu$ and $\bsigma=\curl\psi^*+\Rop_2 q^*+\bh^*$. Then
\beq\label{balanced_approx}
\hdn{\bsigma-\btau[v,\psi,\bh]}
\le |\psi^*-\psi|_1
 +\sqrt{1+C_{x_0}^2}\,(\|\bb\|_\infty+\|c\|_\infty)\|u-v\|_1
 +\|\bh^*-\bh\|_0 .
\eeq
Thus, apart from the finite-dimensional harmonic component, approximation of the $H(\divvr)$ flux is reduced to $H^1$ approximation of the two scalar functions $u$ and $\psi^*$.
\end{cor}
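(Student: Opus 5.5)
The plan is to write the flux error as a sum of three pieces and bound each separately. By linearity of $\Rop_2$,
\[
 \bsigma-\btau[v,\psi,\bh]=\curl(\psi^*-\psi)+\Rop_2\bigl(q^*-q[v]\bigr)+(\bh^*-\bh),
\]
and subtracting \eqref{qv} from the definition of $q^*$ gives
\[
 q^*-q[v]=-\bb\cdot\grad(u-v)-c\,(u-v)\in L^2(\Omega).
\]
The datum $f_1$ cancels in this difference, so no regularity of $f_1$ enters. Applying the triangle inequality in the $H(\divvr)$ norm then reduces \eqref{balanced_approx} to three bounds, one for each term.

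\emph{The curl term.} $\curl(\psi^*-\psi)$ is divergence free by \eqref{divDiv_zero}, so its $H(\divvr)$ norm equals its $L^2$ norm. That norm is $|\psi^*-\psi|_1$, since $|\curl\phi|=|\grad\phi|$ pointwise in two dimensions.

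\emph{The harmonic term.} Each $\bh_i$ is divergence free on $\overline\Omega$, so $\hdn{\bh^*-\bh}=\|\bh^*-\bh\|_0$.

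\emph{The radial term.} Lemma~\ref{lem:Rd} with $n=2$ gives $\hdn{\Rop_2 g}\le(1+C_{x_0}^2)^{1/2}\|g\|_0$. Apply it to $g=q^*-q[v]$ and use the pointwise estimate $|\bb\cdot\grad w+cw|\le\|\bb\|_\infty|\grad w|+\|c\|_\infty|w|$. This yields
\[
 \|q^*-q[v]\|_0\le\|\bb\|_\infty|u-v|_1+\|c\|_\infty\|u-v\|_0\le(\|\bb\|_\infty+\|c\|_\infty)\|u-v\|_1 ,
\]
which is the middle term of \eqref{balanced_approx}.

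The only point needing care is the representation of $\bsigma$ itself, which is given by Corollary~\ref{cor:hdiv23}. Its divergence part is $\divv\bsigma=q^*$, which follows from the second equation of \eqref{fos}. That corollary makes $\bh^*$ unique and $\psi^*$ unique up to a constant, so $|\psi^*-\psi|_1$ is well defined. Beyond this the argument is a direct adaptation of the proof of Theorem~\ref{thm:hdivd}, and no step should present an obstacle. The concluding sentence of the corollary follows because $\bh$ ranges over the finite-dimensional space $\cH$, so one may take $\bh=\bh^*$ and the last term vanishes.
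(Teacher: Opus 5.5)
Your proof is correct and takes the same route as the paper. The paper invokes Corollary~\ref{cor:red}, which packages exactly your three-term triangle inequality for the curl, radial and harmonic parts, with $q=q[v]$. It then bounds $\|q^*-q[v]\|_0\le\|\bb\|_\infty\|\grad(u-v)\|_0+\|c\|_\infty\|u-v\|_0$, just as you do.
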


\begin{proof}
Use Corollary~\ref{cor:red} with $q=q[v]$ and
\[
 \|q^*-q[v]\|_0
 \le \|\bb\|_\infty\|\grad(u-v)\|_0+\|c\|_\infty\|u-v\|_0.
\]
\end{proof}
\begin{rem}[what is and is not built into the network]\label{rem:no_emulation}
The fixed interface factors $|d_\Gamma|$, the Poincar\'e operator $\Rop_2$, and the harmonic basis $\cH$ are deterministic ingredients given in closed form. They are not themselves emulated by
specially constructed neural networks. The trainable objects are ordinary scalar networks approximating the unknown amplitudes. Thus exact $H(\divvr)$ conformity and the prescribed interface locations do not have to be learned by optimization.
\end{rem}

\subsection{Numerical examples}\label{subsec:ell_numerics}
The formulation is that of Section~\ref{subsec:unconstrained}, with $v$ and the representing functions of the flux as outputs of one network,
and both $H(\divvr)$ classes are used; the divergence equation is treated at the first
level of Remark~\ref{rem:div_levels}, with $q_\theta$ a network output in the first class and $\divv\bzz_\theta$
in the second. The Dirichlet condition is imposed weakly, through the boundary term of
Section~\ref{subsec:weak_dirichlet}. We report $\cJ_{\rm check}$ and the relative errors $|u-u_\theta|_1/|u|_1$ and $\|\bsigma-\bsigma_\theta\|_0/\|\bsigma\|_0$, and $\|\divv(\bsigma_\theta-\bsigma)\|_0/\|\bsigma\|_0$, the
divergence error normalized by $\|\bsigma\|_0$ so that it is on the scale of the flux error ($\hdn{\bsigma}^2=\|\bsigma\|_0^2+\|\divv\bsigma\|_0^2$); it is not the standard relative divergence error.

\noindent{\bf Example 1 (curved interface, nonsymmetric problem with a line source).}
$\Omega=(-1,1)^2$, $\Gamma=\{|x-C|=R\}\cap\Omega$ with $C=(-2,0)$ and $R=2.1$, $\mathbf A=\alpha I$ with $\alpha=1$ for $|x-C|<R$ and
$\alpha=3$ outside, $\bb=\frac12(-x_2,x_1+2)$, divergence free and tangent to the circles about $C$, $\bbet=0$ and $c=1$;
$r$ and $\vartheta$ are the distance to $C$ and the polar angle about $C$,
$\be_r$ the radial unit vector and $\chi_+$ the indicator function of $r>R$. With $U=\sin(\pi x_1/2+0.3)\cos(\pi x_2/2)$,
\[
 u=U+\chi_+(r-R)V(\vartheta),\qquad \bff_2=\chi_+\,g(\vartheta)\,\frac Rr\,\be_r,\qquad \bsigma=-\alpha\grad u-\bff_2,
\]
$V(\vartheta)=(\pi^2R/3)\,U(C+R\be_r(\vartheta))$, $g=-(\alpha_+-\alpha_-)\p_rU-\alpha_+V$ on $\Gamma$, $f_1=\divv\bsigma+\bb\cdot\grad u+cu$ and $g_D=u$. Since $g(\vartheta)R/r\,\be_r$ is divergence free for
$r>R$, $\divv\bff_2=g\,\delta_\Gamma$: the $H^{-1}$ part of the data is a line source
on $\Gamma$, and $\bff_2\in L^2$ is not in $H(\divvr)$. Then $\jump{u}=0$, $\jump{\p_ru}=V$, $\jump{\bsigma\cdot\bn}=0$
and $\jump{\bsigma\cdot\be_\vartheta}=-(\alpha_+-\alpha_-)\p_\vartheta U/R$ on $\Gamma$: $u$ has a kink and the flux a
tangential jump. $V$ is chosen so that $\jump{\alpha\Delta u}=0$, which with $\bb\cdot\bn=0$ on
$\Gamma$ makes $f_1$ and $q^*=\divv\bsigma$ continuous across $\Gamma$, so that the divergence, a continuous network output
$q_\theta$ or the divergence of a continuous field $\bzz_\theta$, has no
jump to represent, and no module is needed in either. The classes are generic, $\bsigma_\theta=\curl\psi_\theta+\Rop_2q_\theta$ with
$x_0=(0.2,0.1)$, or $\bsigma_\theta=\curl\psi_\theta+\bzz_\theta$, and $\psi_\theta$, $q_\theta$ or the two components of $\bzz_\theta$, and $v_\theta$ are outputs of
one network ($q_\theta$ a single continuous output, not the piecewise class $\cQ$ of Corollary~\ref{cor:red}) with two $\tanh$ layers
of width $30$ followed by three ReLU layers of width $20$; at the start the first
ReLU layer is fitted to lines in $\Omega$ and
the deeper ones are the identity (active in all of $\Omega$), so that
the initial kinks are those of the first layer. The boundary term is the $H^{1/2}(\partial\Omega)$ norm,
$\|e\|_{0,\partial\Omega}^2+\int_{\partial\Omega}\int_{\partial\Omega}|e(s)-e(t)|^2|s-t|^{-2}\,ds\,dt$, by the product midpoint rule with $256$ points per side,
in which the pairs with equal midpoints are omitted (for a Lipschitz $e$ the
diagonal cell pairs contribute $O(h)$ in total). The training cells are squares of side
$2/64$, refined twice within distance $0.1$ of $\Gamma$ and cut along $\Gamma$,
and the derivative residuals enter by the cellwise moment treatment of Section~\ref{subsec:computation}; training
runs in six phases of fifteen minutes, with refinement and four new units after
each phase. The check $\cJ_{\rm check}$ uses squares of side $2/128$, refined
near $\Gamma$ and cut along $\Gamma$ and all kink sets of the network.
\begin{center}
\small
\begin{tabular}{cccccccccc}
\toprule
class & width & cells & steps & time & $\cJ_h$ & $\cJ_{\rm check}$ & $|u-u_\theta|_1$ & $\|\bsigma-\bsigma_\theta\|_0$ & $\|\divv(\bsigma_\theta-\bsigma)\|_0$ \\
\midrule
$\Rop_2q$ & 40 & 24229 & 422 & 90 min & $6.19\cdot10^{-4}$ & $8.01\cdot10^{-4}$ & $0.28\%$ & $0.26\%$ & $0.11\%$ \\
$\bzz$ & 40 & 26719 & 274 & 90 min & $1.19\cdot10^{-3}$ & $1.37\cdot10^{-3}$ & $0.30\%$ & $0.27\%$ & $0.27\%$ \\
\bottomrule
\end{tabular}
\end{center}
At $90$ minutes the two classes give errors of $0.28\%$ and $0.30\%$ in $u$
and of $0.26\%$ and $0.27\%$ in $\bsigma$; the second run made
$274$ steps, the first $422$. With the first, $\cJ_{\rm check}$ decreased in every phase,
from $4.1\cdot10^{-3}$ after the first to $8.0\cdot10^{-4}$ after the last ($0.60\%$
and $0.26\%$ for $\bsigma$), and was still decreasing at the time limit. Figure~\ref{fig:ell_generic} shows
the first class. The error of $\bsigma$ is largest in a
narrow band along $\Gamma$ (Figure~\ref{fig:ell_generic}(a)). No single kink set of the network
lies on $\Gamma$; the tangential jump of $\bsigma$ is carried by
several kink sets of the first layer close to it (Figure~\ref{fig:ell_generic}(b)), and measured across
$\pm0.005$ it follows the exact jump with a relative error of $7.8\%$ (Figure~\ref{fig:ell_generic}(c)).
\begin{figure}[t]
\centering
\includegraphics[width=0.92\textwidth]{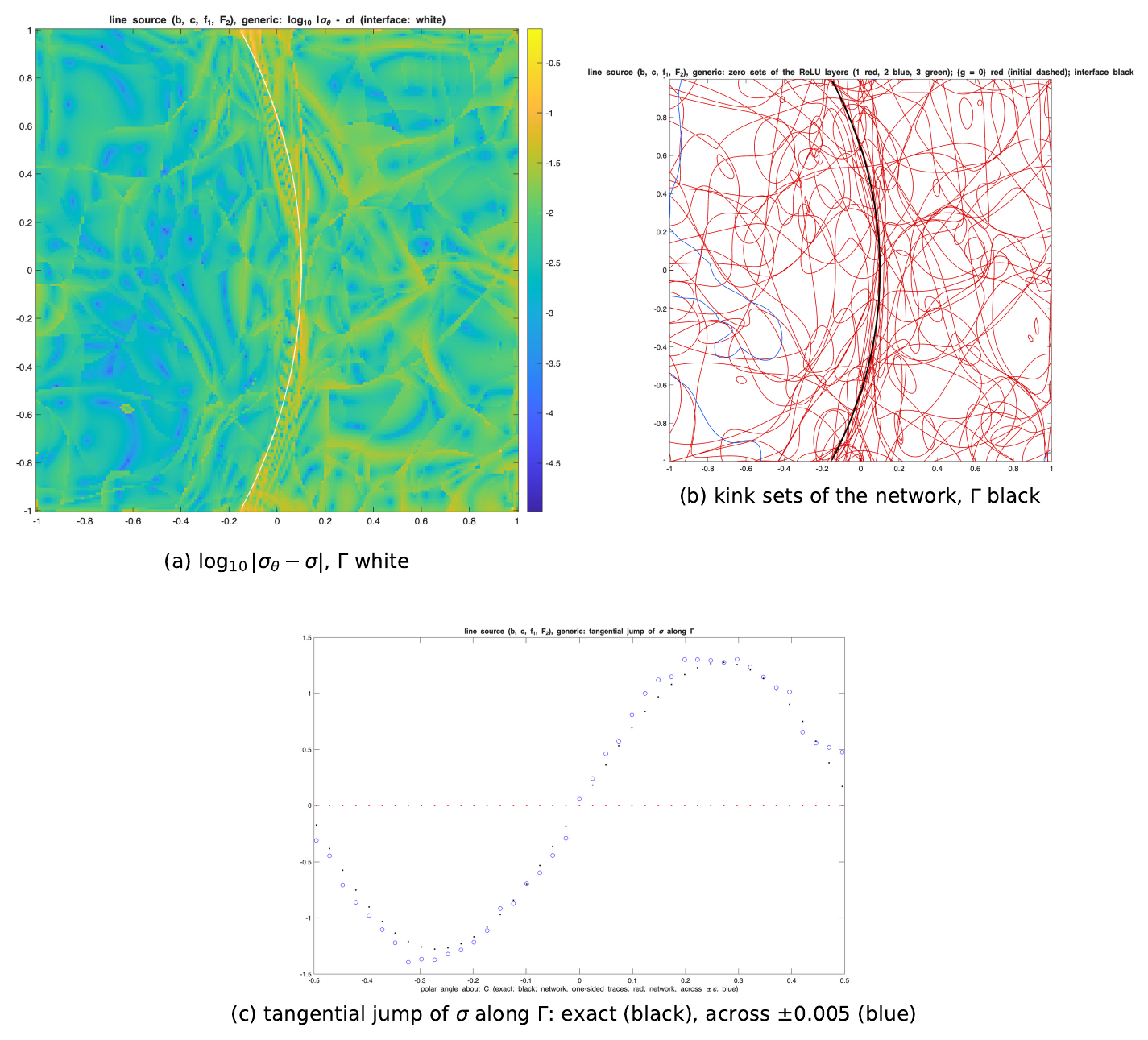}
\caption{Example~1, generic level, first class: (a) $\log_{10}|\bsigma_\theta-\bsigma|$; (b) the kink sets of the three ReLU layers and $\Gamma$; (c) the tangential jump of
$\bsigma$ along $\Gamma$ against the polar angle about $C$: exact, and of $\bsigma_\theta$ across $\pm0.005$.}\label{fig:ell_generic}
\end{figure}

\noindent{\bf Example 2 (hyperplane interface in $\R^n$, Levels I and II).}
$\Omega$ is the unit ball of $\R^n$, $n=4,8$, $\Gamma=\{x_1=s\}\cap\Omega$ with $s=0.1$, $\mathbf A=\alpha I$ with $\alpha=1$ for $x_1<s$ and $\alpha=3$ for $x_1>s$, $\bb=n^{-1/2}(1,\ldots,1)$, $\bbet=0$, $c=1$, so the problem is nonsymmetric. The solution and the flux are chosen independently:
\[
 u=U\ (x_1<s),\quad u=U+(x_1-s)V\ (x_1>s);\qquad \bsigma=\mathbf S\ (x_1<s),\quad \bsigma=\mathbf S+M\be_2\ (x_1>s),
\]
$U=n^{-1/2}\sum_r\cos(\pi x_r/2)$, $V=1+\frac12\sin\pi x_2$, $S_r=n^{-1/2}\sin(\pi x_{r+1}/2)+x_1\delta_{r1}$ (indices cyclic), $M=1+2\sin\pi x_3$,
and $\bff_2=-\alpha\grad u-\bsigma$, $f_1=\divv\bsigma+\bb\cdot\grad u+cu$, $g_D=u$. Then $\jump{u}=0$, $\jump{\p_1u}=V$,
$\jump{\bsigma\cdot\be_1}=0$ and $\jump{\bsigma\cdot\be_2}=M$: the kink of $u$ and the
tangential jump of $\bsigma$ are independent, and $\bff_2\in L^2$
is not in $H(\divvr)$, since $\jump{\bff_2\cdot\be_1}=-\jump{\alpha\p_1u}\ne0$. The classes are
$\bsigma_\theta=\operatorname{Div}A_\theta+\Rop_nq_\theta$ with center $0$, or $\bsigma_\theta=\operatorname{Div}A_\theta+\bzz_\theta$ with $\bzz_\theta$ an output of
$n$ components without module, since $\divv\bsigma=1$ on both sides; $A_\theta=A_{0,\theta}+mN_\theta$ skew,
$v_\theta=v_{0,\theta}+mv_{1,\theta}$, all outputs of one network with two
$\tanh$ layers of width $32$; $m=|x_1-s|$ (Level~I) or
$m=\operatorname{ReLU}(g_\eta)$ (Level~II) with $g_\eta(x)=\mathbf a\cdot x-c_0+\mathbf w^T\tanh(Wx+\mathbf b_0)+c_2$, $W$
with $10$ rows, initially the plane $x_1+0.15x_2+0.1x_3=s+0.15$. The integrals
are taken on rays from $0$ along $N$ scrambled
Sobol directions, split at $\Gamma$ (Level~I) or at the zeros
of $g_\eta$ (Level~II), with Gauss rules for the weight
$\rho^{n-1}$, so the functional is integrated directly; the boundary
term is the $L^2$ form of Section~\ref{subsec:weak_dirichlet}, $w_b\|v-g_D\|_{0,\partial\Omega}^2$
with $w_b=100$. Level~I is trained by Levenberg--Marquardt; Level~II
in two stages, L-BFGS on all parameters (the interface fixed
for the first $100$ iterations) until the zeros of $g_\eta$
settle, then Levenberg--Marquardt on the network with the
interface fixed. The check $\cJ_{\rm check}$ uses $4N$ directions.
\begin{center}
\small\setlength{\tabcolsep}{4pt}
\begin{tabular}{cccccccccc}
\toprule
$n$ & level & class & $N$ & iterations & $\cJ$ & $\cJ_{\rm check}$ & $|u-u_\theta|_1$ & $\|\bsigma-\bsigma_\theta\|_0$ & distance \\
\midrule
4 & I & $\Rop_nq$ & 1024 & 463 & $8.9\cdot10^{-9}$ & $1.5\cdot10^{-8}$ & $1.7\cdot10^{-5}$ & $4.0\cdot10^{-5}$ & -- \\
4 & I & $\bzz$ & 1024 & 543 & $4.5\cdot10^{-9}$ & $8.6\cdot10^{-9}$ & $1.3\cdot10^{-5}$ & $3.6\cdot10^{-5}$ & -- \\
8 & I & $\Rop_nq$ & 2048 & 99 & $8.4\cdot10^{-6}$ & $1.5\cdot10^{-5}$ & $5.9\cdot10^{-4}$ & $1.0\cdot10^{-3}$ & -- \\
8 & I & $\bzz$ & 2048 & 97 & $2.1\cdot10^{-6}$ & $4.8\cdot10^{-6}$ & $4.0\cdot10^{-4}$ & $3.7\cdot10^{-4}$ & -- \\
4 & II & $\Rop_nq$ & 1024 & 586 & $1.5\cdot10^{-3}$ & $2.7\cdot10^{-3}$ & $1.2\%$ & $1.5\%$ & $3.6\cdot10^{-4}$ \\
4 & II & $\bzz$ & 1024 & 875 & $9.7\cdot10^{-4}$ & $1.5\cdot10^{-3}$ & $1.0\%$ & $1.0\%$ & $6.3\cdot10^{-5}$ \\
8 & II & $\Rop_nq$ & 4096 & 548 & $5.9\cdot10^{-4}$ & $7.1\cdot10^{-4}$ & $0.82\%$ & $0.93\%$ & $1.8\cdot10^{-5}$ \\
8 & II & $\bzz$ & 4096 & 398 & $1.4\cdot10^{-3}$ & $1.7\cdot10^{-3}$ & $1.2\%$ & $1.5\%$ & $6.0\cdot10^{-5}$ \\
\bottomrule
\end{tabular}
\end{center}
``Distance'' is the mean of $|x_1^*-s|$ over the section $x_4=\cdots=x_n=0$
of $\Gamma$, where $g_\eta(x_1^*,x_2,x_3,0,\ldots,0)=0$ (for the first class the maximum
is $1.4\cdot10^{-3}$ for $n=4$ and $1.4\cdot10^{-4}$ for $n=8$; Figure~\ref{fig:highd_interface}).
With the known interface the module carries the kink exactly
on $\Gamma$; for the first class the errors are $10^{-5}$
for $n=4$ and about $10^{-3}$ for $n=8$, both runs
ended at their time limits of $15$ and $30$ minutes,
and for $n=8$ an iteration takes $18$ seconds.
For the first class with the learned interface, the first
stage moved the zero set of $g_\eta$ from the
initial tilted plane toward $\Gamma$; the interface was then
fixed, and Levenberg--Marquardt was applied to the remaining
network parameters: for $n=4$, $500$ L-BFGS iterations and $86$ Levenberg--Marquardt steps; for
$n=8$, $486$ L-BFGS iterations, at the limit of $84$
minutes for the first stage, and $62$ Levenberg--Marquardt steps, in
$120$ minutes in all. For the first class
at $n=8$, the learned interface lies within $1.4\cdot10^{-4}$
of $\Gamma$ on the section, and the errors are
below $1\%$. With the same network and time, the second class
gives, with the known interface, errors of $u$ and $\bsigma$
smaller by factors $1.3$ and $1.1$ for $n=4$
and $1.5$ and $2.7$ for $n=8$. With the learned
interface it gives smaller errors and a smaller mean
distance for $n=4$ ($1.0\%$ against $1.2\%$ and $1.5\%$, $6.3\cdot10^{-5}$ against $3.6\cdot10^{-4}$, maximum
$3.8\cdot10^{-4}$), and larger ones for $n=8$ ($1.2\%$ and $1.5\%$ against
$0.82\%$ and $0.93\%$, $6.0\cdot10^{-5}$ against $1.8\cdot10^{-5}$, maximum $5.6\cdot10^{-4}$), where it made
$398$ iterations against $548$. Figure~\ref{fig:highd_interface} shows the first class.
\begin{figure}[t]
\centering
\includegraphics[width=\textwidth]{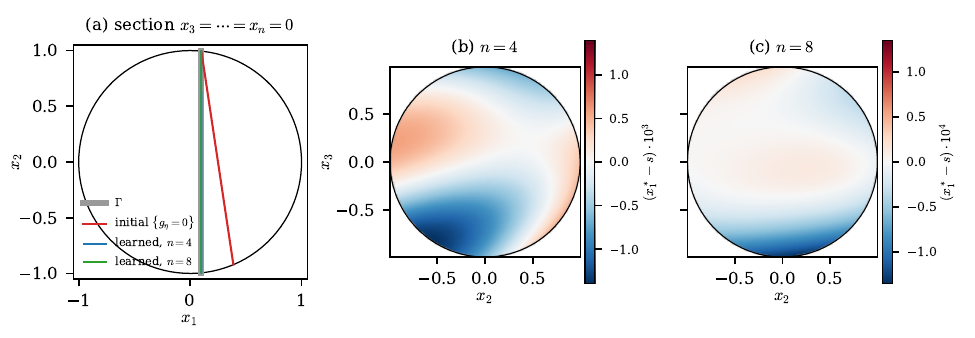}
\caption{Example~2, Level~II.  (a) Section $x_3=\cdots=x_n=0$ of the unit ball: $\Gamma=\{x_1=s\}$ (gray), the initial zero set of $g_\eta$ (red)
and the learned zero sets for $n=4$ (blue) and $n=8$ (green). (b), (c) Position of the learned zero set on the section $x_4=\cdots=x_n=0$: $x_1^*-s$ with $g_\eta(x_1^*,x_2,x_3,0,\ldots,0)=0$, for $n=4$ (scale $10^{-3}$) and $n=8$ (scale $10^{-4}$).}
\label{fig:highd_interface}
\end{figure}

\section{Curl--curl problems}\label{sec:curlcurl}

Let $\Omega\subset\R^3$ be a bounded Lipschitz domain, $\mu,\kappa\in L^\infty(\Omega)$ with $\mu\ge\mu_0>0$ and $\kappa\ge\kappa_0>0$, and $\bff\in L^2(\Omega)^3$. Seek $\bE\in H_0(\curll;\Omega)$ with
\beq\label{curlcurl3d}
 \curll\big(\mu^{-1}\curll\bE\big)+\kappa\bE=\bff\quad\mbox{in }\Omega,\qquad \bE\times\bn=0\quad\mbox{on }\partial\Omega .
\eeq
With $\bH:=\mu^{-1}\curll\bE\in H(\curll;\Omega)$ the first-order system is $\mu^{1/2}\bH-\mu^{-1/2}\curll\bE=0$, $\curll\bH+\kappa\bE=\bff$, and
\beq\label{LS_curl3d}
 \cL_3(\bF,\bW;\bff):=\|\mu^{1/2}\bW-\mu^{-1/2}\curll\bF\|_0^2+\|\kappa^{-1/2}(\curll\bW+\kappa\bF-\bff)\|_0^2 .
\eeq

\begin{lem}[exact norm identity]\label{lem:curl_identity}
For $\bF\in H_0(\curll;\Omega)$ and $\bW\in H(\curll;\Omega)$,
\[
 \cL_3(\bF,\bW;0)=\|\mu^{-1/2}\curll\bF\|_0^2+\|\kappa^{1/2}\bF\|_0^2+\|\mu^{1/2}\bW\|_0^2+\|\kappa^{-1/2}\curll\bW\|_0^2 .
\]
\end{lem}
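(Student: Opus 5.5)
We expand both squares and show that the two cross terms cancel by integration by parts. This is the standard argument for curl--curl least squares with $\mu$- and $\kappa$-weighted residuals.

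The first residual, with $\bff=0$, gives
\[
 \|\mu^{1/2}\bW-\mu^{-1/2}\curll\bF\|_0^2=\|\mu^{1/2}\bW\|_0^2+\|\mu^{-1/2}\curll\bF\|_0^2-2(\bW,\curll\bF),
\]
since the weights cancel in the mixed product: $(\mu^{1/2}\bW,\mu^{-1/2}\curll\bF)=(\bW,\curll\bF)$.

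The second residual gives
\[
 \|\kappa^{-1/2}(\curll\bW+\kappa\bF)\|_0^2=\|\kappa^{-1/2}\curll\bW\|_0^2+\|\kappa^{1/2}\bF\|_0^2+2(\curll\bW,\bF),
\]
again with the weights cancelling, $(\kappa^{-1/2}\curll\bW,\kappa^{1/2}\bF)=(\curll\bW,\bF)$. These are pointwise algebraic identities. They need only that $\mu^{\pm1/2},\kappa^{\pm1/2}\in L^\infty$, which follows from the bounds $\mu\ge\mu_0$ and $\kappa\ge\kappa_0$, so every term is finite.

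Adding the two expansions, the claim reduces to the Green formula
\[
 (\curll\bW,\bF)=(\bW,\curll\bF)\qquad\text{for }\bF\in H_0(\curll;\Omega),\ \bW\in H(\curll;\Omega).
\]
With this identity, the cross terms $-2(\bW,\curll\bF)+2(\curll\bW,\bF)$ sum to zero.

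The Green formula is the only nontrivial step. For $\bF\in C_c^\infty(\Omega)^3$ it is the definition of the distributional curl of $\bW$. For general $\bF\in H_0(\curll;\Omega)$ we use that $C_c^\infty(\Omega)^3$ is dense in $H_0(\curll;\Omega)$ on a bounded Lipschitz domain. Here $H_0(\curll;\Omega)$ is understood as the space of fields with vanishing tangential trace in the weak sense, which agrees with this closure. Both sides of the formula are continuous in $\bF$ in the $H(\curll)$ norm, so the identity passes to the limit. No condition is placed on $\bW$ at the boundary, and none is needed because $\bF$ carries the homogeneous condition.
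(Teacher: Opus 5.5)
Your proof is correct and is the paper's argument: expand both squares, note that the weights cancel in the mixed products, and cancel the cross terms $-2(\bW,\curll\bF)+2(\curll\bW,\bF)$ by the Green formula $(\curll\bW,\bF)=(\bW,\curll\bF)$ for $\bF\in H_0(\curll;\Omega)$, which the paper quotes without proof and you justify by density of $C_c^\infty(\Omega)^3$ in $H_0(\curll;\Omega)$. One small slip: boundedness of $\mu^{1/2}$ and $\kappa^{1/2}$ comes from $\mu,\kappa\in L^\infty(\Omega)$, whereas the lower bounds $\mu\ge\mu_0$, $\kappa\ge\kappa_0$ give boundedness of $\mu^{-1/2}$ and $\kappa^{-1/2}$.
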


\begin{proof}
Expanding, the cross terms are $-2(\bW,\curll\bF)+2(\curll\bW,\bF)$, and $(\curll\bW,\bF)=(\bW,\curll\bF)$ for $\bF\in H_0(\curll;\Omega)$.
\end{proof}

Since the system is linear, $\cL_3(\bF,\bW;\bff)$ is the
right-hand side of the lemma for $(\bE-\bF,\bH-\bW)$: the
functional is exactly the error in the energy norms,
and no separate stability argument for the least-squares formulation
is needed (the existence of the solution comes from the
well-posedness of \eqref{curlcurl3d}). This is Proposition~\ref{prop:best_approx} with $X=H_0(\curll;\Omega)\times H(\curll;\Omega)$ and $c_L=C_L=1$ in
these norms. We use the classes of Section~\ref{subsec:hcurl3d},
\[
 \bE_\theta=\grad\phi_\theta+\bzz_\theta,\qquad \bH_\theta=\grad\chi_\theta+\bw_\theta ,
\]
with $\phi_\theta\in H^1_0(\Omega)$ and $\bzz_\theta\in H^1_0(\Omega)^3$ (networks multiplied by a
function that vanishes on $\partial\Omega$), so that $\bE_\theta\times\bn=0$
on $\partial\Omega$ for every parameter value, and $\chi_\theta\in
H^1(\Omega)$, $\bw_\theta\in H^1(\Omega)^3$; by Proposition~\ref{prop:hcurl3d_density} the error is bounded by the
$H^1$ errors of $\phi_\theta$, $\bzz_\theta$, $\chi_\theta$ and $\bw_\theta$.

\noindent{\bf Interfaces.}
Across an interface of $\kappa$ or $\mu$, $\bE$ and $\bH$ have
continuous tangential traces, while $\bE\cdot\bn$ may jump, and so may $\bH\cdot\bn$ where
$\mu$ jumps, since $\mu\bH=\curll\bE\in H(\divvr;\Omega)$ has a continuous normal component.
Both normal jumps are kinks of the scalar potentials, $\jump{\bE\cdot\bn}=\jump{\p_{\bn}\phi}$
and $\jump{\bH\cdot\bn}=\jump{\p_{\bn}\chi}$ (Proposition~\ref{prop:hcurl_jump}), and the scalar interface modules of that
proposition are used for $\phi_\theta$ (inside the boundary factor) and $\chi_\theta$;
the regular parts $\bzz_\theta$ and $\bw_\theta$ have no jumps.

\begin{rem}[no equation built in]
Neither equation is built in. Building in $\curll\bE_\theta=\mu\bH_\theta$ would
require $\divv(\mu\bH_\theta)=0$, which a network $\bH_\theta\in H(\curll)$ does not
satisfy automatically; parameterizing $\mu\bH=\curll\bPsi$ instead loses the tangential continuity
of $\bH$ across jumps of $\mu$. Only the conformity is structural.
\end{rem}

\subsection{Numerical example}\label{subsec:curl_numerics}
$\Omega=(0,1)^3$, $\mu=1$, $\kappa=1$ for $x_1<\frac12$ and $\kappa=10$ for $x_1>\frac12$, and
\[
 \bE=\bE_1+\grad\phi^*,\qquad \bE_1=(\sin\pi x_2\sin\pi x_3,\ \sin\pi x_3\sin\pi x_1,\ \sin\pi x_1\sin\pi x_2),
\]
\[
 \phi^*=A\bigl(\tfrac12-|x_1-\tfrac12|\bigr)\sin\pi x_2\sin\pi x_3 ,
\]
with $A=9/11$, $\bH=\curll\bE_1$ and $\bff=2\pi^2\bE_1+\kappa\bE$. Then $\bE\times\bn=0$ on $\partial\Omega$, $\bE\times\be_1$ is continuous across $x_1=\frac12$, $\jump{\bE\cdot\be_1}=-2A\sin\pi x_2\sin\pi x_3$ and $\jump{\kappa\bE\cdot\be_1}=0$. The classes are $\bE_\theta=\grad(B\phi_\theta)+B\bzz_\theta$
and $\bH_\theta=\grad\chi_\theta+\bw_\theta$ with $1/B=1/(x_1(1-x_1))+1/(x_2(1-x_2))+1/(x_3(1-x_3))$, so that $B$ is Lipschitz in $\overline\Omega$, smooth in $\Omega$ and $B=0$ on $\partial\Omega$, and $\phi_\theta$, $\bzz_\theta$,
$\chi_\theta$, $\bw_\theta$ are outputs of one network with two $\tanh$ layers of width $30$. At Level~I, $\phi_\theta=\phi_{0,\theta}+|x_1-\frac12|\,N_\theta$; at Level~II, $\phi_\theta=\phi_{0,\theta}+\operatorname{ReLU}(g_\eta)N_\theta$ with $g_\eta$ as in Example~2, initially a
tilted plane; at Level~III the network has three further ReLU layers of width $20$ and no module. The training functional is that
of Section~\ref{sec:framework}, with the moments of $\grad(B\phi)$, $\curll(B\bzz)$, $\grad\chi$ and $\curll\bw$ computed from values on boxes; the boxes
have side $1/16$, graded three times towards $x_1=\frac12$ and once towards $\partial\Omega$, and are refined after every phase of ten minutes, up to $16000$ boxes. The check $\cJ_{\rm check}$
uses $28224$ boxes with $3^3$ Gauss points; for Level~III it is not cut along the kink
sets of the network. We report the relative $L^2$ errors and the relative error in the energy norm of Lemma~\ref{lem:curl_identity}.
\begin{center}
\small\setlength{\tabcolsep}{4pt}
\begin{tabular}{cccccccccc}
\toprule
level & seed & steps & time & $\cJ_{\rm check}$ & $\bE$ & $\curll\bE$ & $\bH$ & $\curll\bH$ & energy \\
\midrule
I & 1 & 96 & 13 min & $3.4\cdot10^{-3}$ & $0.49\%$ & $1.49\%$ & $0.14\%$ & $0.10\%$ & $0.42\%$ \\
II & 1 & 75 & 13 min & $7.9\cdot10^{-3}$ & $1.15\%$ & $2.19\%$ & $0.28\%$ & $0.22\%$ & $0.64\%$ \\
II & 2 & 72 & 13 min & $1.0\cdot10^{-2}$ & $1.73\%$ & $2.42\%$ & $0.39\%$ & $0.27\%$ & $0.72\%$ \\
II & 3 & 56 & 10 min & $2.6\cdot10^{-2}$ & $2.23\%$ & $3.58\%$ & $1.00\%$ & $0.78\%$ & $1.14\%$ \\
III & 1 & 183 & 60 min & $3.6\cdot10^{-2}$ & $4.47\%$ & $3.49\%$ & $1.23\%$ & $1.17\%$ & $1.35\%$ \\
\bottomrule
\end{tabular}
\end{center}
The runs at Levels~I and~II ended before their time limit
of $30$ minutes. At Level~II all three initial networks moved the zero
set of $g_\eta$ from the initial tilted plane to the
interface (Figure~\ref{fig:curl3d_interface}): with $g_\eta(x_1^*,x_2,x_3)=0$, the mean of $|x_1^*-\frac12|$ over
$(x_2,x_3)\in(0,1)^2$ is $7.5\cdot10^{-4}$, $1.0\cdot10^{-3}$ and $7.9\cdot10^{-5}$, and its
maximum $1.3\cdot10^{-3}$, $1.2\cdot10^{-3}$ and $2.1\cdot10^{-3}$. The energy-norm error is below $1.5\%$
at all levels and about $0.5\%$ with the interface known.
\begin{figure}[t]
\centering
\includegraphics[width=\textwidth]{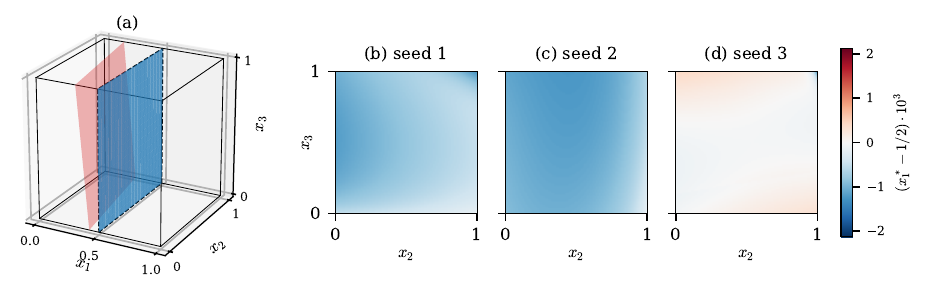}
\caption{Section~\ref{subsec:curl_numerics}, Level~II.  (a) The interface $x_1=\frac12$ (dashed), the initial zero set of $g_\eta$ (red) and the
learned zero set for the first initial network
(blue). (b)--(d) Position of the learned zero set for
the three initial networks: $x_1^*-\frac12$ with $g_\eta(x_1^*,x_2,x_3)=0$, over $(x_2,x_3)\in(0,1)^2$.}
\label{fig:curl3d_interface}
\end{figure}

\section{Linear transport}\label{sec:transport}

We use the flux formulation of \cite{LZ:18}, in which the flux is the $H(\divvr)$ unknown.

Let $\Omega\subset\R^2$ be a bounded, simply connected Lipschitz
polygon, $\bbet\in W^{1,\infty}(\Omega)^2$, $\gamma\in L^\infty(\Omega)$, $f\in L^2(\Omega)$, and
$\Gamma_-:=\{x\in\partial\Omega:\bbet\cdot\bn<0\}$ the inflow boundary. We seek $u$ with
\beq\label{transport}
 \divv(\bbet u)+\gamma u=f\quad\mbox{in }\Omega,\qquad u=g\quad\mbox{on }\Gamma_- ,
\eeq
where $\bbet$, $\gamma$ satisfy \cite[Assumption~2.1]{LZ:18} and $g\in L^2(|\bbet\cdot\bn|;\Gamma_-)$; then \eqref{transport} has a unique solution \cite[Theorem~2.2]{LZ:18}. The flux $\bsigma:=\bbet u$ satisfies $\bsigma-\bbet u=0$ and $\divv\bsigma+\gamma u=f$, so $\bsigma\in H(\divvr;\Omega)$, and $\bsigma\cdot\bn=(\bbet\cdot\bn)g$ on $\Gamma_-$. In the language of differential forms, $\bsigma$ is the contraction $\iota_{\bbet}$ of the density $u\,\mathrm{vol}$, and $\divv\bsigma$ is its Lie derivative along $\bbet$, by Cartan's formula $\mathcal L_{\bbet}=\dd\iota_{\bbet}+\iota_{\bbet}\dd$ for forms of top degree \cite[Section~5.1]{Hu:26review}; this is why the flux is the $H(\divvr)$ unknown. The least-squares functional is
\beq\label{LS_tr}
 \cJ_{\rm tr}(\btau,v):=\|\btau-\bbet v\|_0^2+\|\divv\btau+\gamma v-f\|_0^2+w_s\|\btau\cdot\bn-(\bbet\cdot\bn)g\|_{0,\Gamma_-}^2+w_u\|v-g\|_{0,\Gamma_-}^2 .
\eeq
Both inflow conditions hold for the solution and are imposed here through the boundary terms, $w_s,w_u>0$; for the last term we assume in addition $g\in L^2(\Gamma_-)$ (the weighted term $w_u\||\bbet\cdot\bn|^{1/2}(v-g)\|_{0,\Gamma_-}^2$ would need no additional assumption). With
the flux condition imposed in the trial space and $w_s=w_u=0$, \eqref{LS_tr} is the functional of \cite[Section~3]{LZ:18}; the native classes of Sections~\ref{sec:hdiv} and~\ref{sec:hdiv_reg}, however, do not preserve a prescribed normal trace on $\Gamma_-$. We consider the functional on admissible pairs $(\btau,v)\in H(\divvr;\Omega)\times L^2(\Omega)$ for
which the two boundary traces appearing in $\cJ_{\rm tr}$, $\btau\cdot\bn$ and $v$ on $\Gamma_-$, are defined and square integrable; this includes the exact solution and the neural trial classes used below.

\begin{lem}[error identity]\label{lem:tr_norm}
For admissible pairs $(\btau,v)$ let $|||(\btau,v)|||^2:=\|\btau-\bbet v\|_0^2+\|\divv\btau+\gamma v\|_0^2+w_s\|\btau\cdot\bn\|_{0,\Gamma_-}^2+w_u\|v\|_{0,\Gamma_-}^2$. Then
\beq\label{tr_identity}
 \cJ_{\rm tr}(\btau,v)=|||(\btau-\bsigma,v-u)|||^2 ,
\eeq
$|||(\btau,v)|||=0$ only for $(\btau,v)=0$, and $|||(\btau,v)|||\le C\big(\|\btau\|_{H(\divvr)}+\|v\|_0+\|\btau\cdot\bn\|_{0,\Gamma_-}+\|v\|_{0,\Gamma_-}\big)$.
\end{lem}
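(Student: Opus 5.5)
The lemma has three parts, and I would prove them in the order stated: the error identity, definiteness of $|||\cdot|||$, and the upper bound.

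\emph{Error identity.} I would use linearity together with the equations satisfied by the exact pair $(\bsigma,u)$. By \eqref{transport} and the definition $\bsigma=\bbet u$, we have $\bsigma-\bbet u=0$ and $\divv\bsigma+\gamma u=f$. On $\Gamma_-$ we have $\bsigma\cdot\bn=(\bbet\cdot\bn)g$ and $u=g$. Subtracting these from each residual in \eqref{LS_tr} gives
\[
\btau-\bbet v=(\btau-\bsigma)-\bbet(v-u),\qquad
\divv\btau+\gamma v-f=\divv(\btau-\bsigma)+\gamma(v-u).
\]
On $\Gamma_-$ the boundary residuals become $(\btau-\bsigma)\cdot\bn$ and $v-u$. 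Squaring each term and summing gives \eqref{tr_identity}. One point needs care: the difference $(\btau-\bsigma,v-u)$ must again be admissible, so that its traces are defined and square integrable. This holds because both $(\btau,v)$ and the exact pair are admissible, and the traces are linear.

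\emph{Definiteness.} Suppose $|||(\btau,v)|||=0$. Then $\btau=\bbet v$, $\divv(\bbet v)+\gamma v=0$ in $\Omega$, and $v=0$ on $\Gamma_-$ (using $w_u>0$). So $v$ solves the homogeneous problem \eqref{transport} with $f=0$ and $g=0$. Here $v\in L^2$ with $\divv(\bbet v)\in L^2$, which is exactly the solution class of \cite{LZ:18}. The uniqueness part of \cite[Theorem~2.2]{LZ:18} then gives $v=0$, and hence $\btau=\bbet v=0$.

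This is the step I expect to be the main obstacle. Uniqueness in \cite{LZ:18} is stated for its solution space, with the inflow trace understood in the weighted $L^2(|\bbet\cdot\bn|;\Gamma_-)$ sense. I would need to check that $v\in L^2(\Omega)$ with $\bbet v=\btau\in H(\divvr)$ and $v|_{\Gamma_-}=0$ places $v$ in that space with zero inflow data. The condition on $\btau\cdot\bn$, which comes with $w_s>0$, is consistent with this: it gives $(\bbet\cdot\bn)v=0$ on $\Gamma_-$ in the normal-trace sense. If the trace of $v$ itself is delicate, I would use this flux condition instead, since it is exactly the inflow condition for $\bbet v$.

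\emph{Upper bound.} This is a term-by-term estimate. First,
\[
\|\btau-\bbet v\|_0\le\|\btau\|_0+\|\bbet\|_{L^\infty}\|v\|_0 .
\]
Next,
\[
\|\divv\btau+\gamma v\|_0\le\|\divv\btau\|_0+\|\gamma\|_{L^\infty}\|v\|_0 .
\]
The two boundary terms are bounded directly by $\sqrt{w_s}$ and $\sqrt{w_u}$ times the corresponding trace norms. Summing these gives the stated inequality, with $C$ depending on $\|\bbet\|_{L^\infty}$, $\|\gamma\|_{L^\infty}$, $w_s$ and $w_u$.
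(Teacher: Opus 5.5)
Your proposal is correct and follows the paper's proof: the identity comes from linearity, since $(\bsigma,u)$ annihilates every residual; definiteness reduces to the homogeneous transport problem and the uniqueness result of \cite{LZ:18}; and the upper bound is the triangle inequality. The only minor difference is how $v=0$ on $\Gamma_-$ is obtained: the paper uses the flux term, $(\bbet\cdot\bn)v=\btau\cdot\bn=0$ with $\bbet\cdot\bn<0$ there (the alternative you mention as a fallback), rather than the $w_u$ term.
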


\begin{proof}
The problem is linear and $(\bsigma,u)$ makes every term vanish, which gives \eqref{tr_identity}. If $|||(\btau,v)|||=0$, then $\btau=\bbet v$, $\divv(\bbet v)+\gamma
v=0$ and $(\bbet\cdot\bn)v=\btau\cdot\bn=0$ on $\Gamma_-$, so $v=0$ there and $v=0$, $\btau=0$ by uniqueness \cite[Lemma~3.1]{LZ:18}. The upper bound is the triangle inequality.
\end{proof}

No lower bound by the norm of $H(\divvr;\Omega)\times L^2(\Omega)$ is claimed: the functional is the error in $|||\cdot|||$, and the statements below are in this norm; in particular nothing
is claimed for the $L^2$ error of $v$. For the flux classes of Sections~\ref{subsec:hdiv23} and~\ref{subsec:hdiv_reg_classes}, $\btau_\theta=\curl\psi_\theta+\Rop_2q_\theta$ or $\btau_\theta=\curl\psi_\theta+\bzz_\theta$, and a scalar network $v_\theta$, \eqref{tr_identity} bounds the error
of an approximate minimizer by the best approximation error in $|||\cdot|||$, and hence by the errors of the representing functions of $\bsigma=\curl\psi^*+\Rop_2q^*$, $q^*=f-\gamma u$ (Corollary~\ref{cor:hdiv23}), or
of $\bsigma=\curl\psi^*+\bzz^*$ (Theorem~\ref{thm:hdivd_reg}), of $u$ in $L^2(\Omega)$, and of $\bsigma\cdot\bn$ and $u$ in $L^2(\Gamma_-)$. The divergence equation is treated
at the first level of Remark~\ref{rem:div_levels}: the second residual is $q_\theta+\gamma v_\theta-f$, or $\divv\bzz_\theta+\gamma v_\theta-f$; eliminating $q$ by $q=f-\gamma v$, which only the first class allows, would be the second level.

Assume in addition that $f$ and $\gamma$ are
piecewise continuous near a characteristic $\Gamma$ through a
jump of $g$, with $\gamma$ continuous across $\Gamma$
and $f$ without a jump there. Across $\Gamma$, $u$ jumps and $\bbet\cdot\bn_\Gamma=0$,
so $\jump{\bsigma\cdot\bn}=0$ and $\jump{\bsigma}=\jump{u}\bbet$ is tangential. By Corollary~\ref{cor:jump2d}(b) the
jump is carried by the kink of the potential,
$\jump{\p_{\bn}\psi^*}=-\jump{\bsigma\cdot\bt}$, while $q^*$ jumps by $-\gamma\jump{u}$: in the first
class $\Rop_2q^*$ is continuous under the condition on the segments
of Lemma~\ref{lem:Rcont}, and in the second the jump
of $q^*=\divv\bzz^*$ is carried by a kink of
$\bzz^*\in H^1(\Omega)^2$, which has no interface jump (Lemma~\ref{lem:div_kink}). The
interface modules of Sections~\ref{subsec:interface} and~\ref{subsec:learned_interface} apply with the
characteristic as the interface. The value $v$ enters only
through $\btau-\bbet v$, $\gamma v$ and the inflow term,
and $v$ and $q$ are required only in $L^2$; with
an interface variable they may therefore jump across
it, $v=V_0+\chi V_1$ and $q=Q_0+\chi Q_1$ with $\chi$ the
indicator function of one side, or, without an interface variable,
by steps of the units of a generic network (Section~\ref{subsec:tr_numerics}), while
a continuous network $v_\theta$ approximates the jump of $u$ by a transition.

\subsection{Numerical example}\label{subsec:tr_numerics}
We take the example of \cite[Section~7.10]{LZ:18} in its discontinuous limit:
$\Omega=(0,1)^2$, $\bbet=(x_2+1,-x_1)/r$ with $r=|(x_1,x_2+1)|$, the unit clockwise field about $(0,-1)$, which
is divergence free, $\gamma=0.1$, $f=0$, the inflow boundary $\Gamma_-=\{x_1=0\}\cup\{x_2=1\}$, and
\[
 u=\tfrac{\pi}{8}\,e^{\gamma r\vartheta}\operatorname{sgn}(r-1.5),\qquad \vartheta=\operatorname{atan2}(x_2+1,x_1).
\]
Since $\bbet$ is tangent to the circles $r=\mathrm{const}$, $\bbet\cdot\grad u+\gamma u=0$ holds in
the sense of distributions, and $u$ jumps across the characteristic $\Gamma=\{r=1.5\}$, which runs from $(0,0.5)$ to $(1,0.118)$,
by $(\pi/4)e^{0.15\vartheta}\in[0.89,0.99]$. (The computations use the profile $\frac14e^{\gamma r\vartheta}\arctan((r-1.5)/\varepsilon)$ of \cite{LZ:18} with $\varepsilon=10^{-10}$, which differs from
$u$ only in a layer of width $O(\varepsilon)$ about $\Gamma$; all numerical errors reported below are measured against
this profile.) The interface is learned (Level~II of Section~\ref{subsubsec:hdiv_levels}): $\bsigma_\theta=\curl\psi_\theta+\Rop_2q_\theta$ with $x_0=(0.3,0.2)$ and
\[
 \psi_\theta=P_0+\operatorname{ReLU}(g_\eta)P_1,\qquad q_\theta=Q_0+\chi Q_1,\qquad v_\theta=V_0+\chi V_1,\qquad \chi=\one_{\{g_\eta>0\}},
\]
where $P_0,P_1$ are the outputs of a network with two $\tanh$ layers of width $30$, $Q_0,Q_1,V_0,V_1$ those of a network with two $\tanh$ layers of width
$20$, and $g_\eta=\tilde g-\tilde g(x_d)$ with $\tilde g(x)=\mathbf a\cdot x+c+\mathbf w^T\tanh(Ux+\mathbf b)$, $U$ with $10$ rows, so that the curve $\{g_\eta=0\}$ passes through the jump point $x_d=(0,0.5)$
of the inflow data for every parameter value. In the second class $\bsigma_\theta=\curl\psi_\theta+\bzz_\theta$ with the same $\psi_\theta$ and $v_\theta$ and
\[
 \bzz_\theta=\mathbf Z_0+\operatorname{ReLU}(g_\eta)\mathbf Z_1 ,
\]
$\mathbf Z_0,\mathbf Z_1$ four further outputs of the
flux network and $Q_0,Q_1$ dropped: the divergence $-\gamma
u$ of the exact flux jumps with $u$,
and by Lemma~\ref{lem:div_kink} the kink of $\bzz_\theta$ on
$\{g_\eta=0\}$ carries this jump while $\bzz_\theta$, a network with
continuous activations, stays continuous. Initially $\{g_\eta=0\}$ is the
line $x_2=0.5$, the tangent of the characteristic at
$x_d$, which is determined by the data. The
functional is \eqref{LS_tr} with $w_s=10$ and $w_u=100$. It
is integrated on squares of side $1/N_q$; a square crossed
by $\{g_\eta=0\}$ is cut along the chord of
$g_\eta$ linearized at its center, with the centroid rule
on the two pieces and $\chi$ given by
the side of the chord, and the inflow edges,
with $2N_q$ segments each, are cut likewise; in the
first class $\Rop_2q_\theta$ is evaluated on rays split at
the zeros of $g_\eta$. The derivative with respect to $\eta$
contains the motion of the cut: a term
on the chords, point terms where $\{g_\eta=0\}$ meets the
inflow edges, and, in the first class, the
moving zeros on the rays. The output coefficients
are computed by the linear least-squares solve and
the other parameters by L-BFGS. Two starts of
$8$ minutes with $N_q=128$ were made, and the
better was continued for $60$ minutes with $N_q=256$; $\cJ_{\rm check}$
is the functional on the rule with squares and
segments halved. By Lemma~\ref{lem:tr_norm} the functional is the error
in $|||\cdot|||$; the $L^2$ errors are reported for information.
\begin{center}
\footnotesize\setlength{\tabcolsep}{3pt}
\begin{tabular}{lcccccccc}
\toprule
 & $\cJ_{\rm check}$ & $\frac{\|u-v_\theta\|_0}{\|u\|_0}$ & $\frac{\|u-v_\theta\|_{L^1}}{\|u\|_{L^1}}$ & $\frac{\|\bsigma-\bsigma_\theta\|_0}{\|\bsigma\|_0}$ & $\frac{\|\divv(\bsigma-\bsigma_\theta)\|_0}{\|\divv\bsigma\|_0}$ & jump & \multicolumn{2}{c}{distance to $\Gamma$} \\
 & & & & & & & mean & max \\
\midrule
$\Rop_2q$, start, $N_q=128$ & $8.1\cdot10^{-10}$ & $5.3\cdot10^{-4}$ & $1.4\cdot10^{-4}$ & $5.3\cdot10^{-4}$ & $5.3\cdot10^{-4}$ & $3.5\cdot10^{-4}$ & $2.3\cdot10^{-5}$ & $4.0\cdot10^{-5}$ \\
$\Rop_2q$, main, $N_q=256$ & $1.7\cdot10^{-10}$ & $1.8\cdot10^{-4}$ & $6.4\cdot10^{-5}$ & $1.8\cdot10^{-4}$ & $1.8\cdot10^{-4}$ & $2.9\cdot10^{-4}$ & $7.2\cdot10^{-6}$ & $1.5\cdot10^{-5}$ \\
$\bzz$, start, $N_q=128$ & $3.9\cdot10^{-11}$ & $9.5\cdot10^{-5}$ & $2.1\cdot10^{-5}$ & $9.5\cdot10^{-5}$ & $1.1\cdot10^{-4}$ & $1.1\cdot10^{-4}$ & $2.1\cdot10^{-6}$ & $6.8\cdot10^{-6}$ \\
$\bzz$, main, $N_q=256$ & $1.2\cdot10^{-11}$ & $4.7\cdot10^{-5}$ & $1.9\cdot10^{-5}$ & $4.7\cdot10^{-5}$ & $5.5\cdot10^{-5}$ & $3.1\cdot10^{-5}$ & $2.6\cdot10^{-6}$ & $7.1\cdot10^{-6}$ \\
\bottomrule
\end{tabular}
\end{center}
The jump column is the relative $\ell^2$ error of
the jump of $v_\theta$ across $\Gamma$, from the traces at distance
$0.005$ along the normal, and the distance of $\{g_\eta=0\}$
from $\Gamma$ is taken on the chords of the
training rule. The other start reached $\cJ_{\rm check}=1.1\cdot10^{-9}$ with the
first class and $4.1\cdot10^{-11}$ with the second. The second
class gives smaller values in all reported quantities, by factors
from about $2$ (the maximal distance) to $14$ ($\cJ_{\rm
check}$). It needs no ray evaluation: in the main
run of $60$ minutes it made $7586$ iterations, against
$1560$ for the first. With the first class (Figure~\ref{fig:transport_learned}), starting
from a straight line, the learned curve moves onto
the characteristic, to within $1.5\cdot10^{-5}$ (Figure~\ref{fig:transport_learned}(a)), and the solution is resolved
as a jump, not as a transition layer (Figure~\ref{fig:transport_learned}(b)--(d)):
the jump of $v_\theta$ agrees with that of $u$ along
$\Gamma$, and the error is largest in a band downstream along $\Gamma$, at about
$3\cdot10^{-4}$, apart from points between the learned curve and $\Gamma$. The
functional was still decreasing when the time limit was reached.
\begin{figure}[t]
\centering
\includegraphics[width=0.92\textwidth]{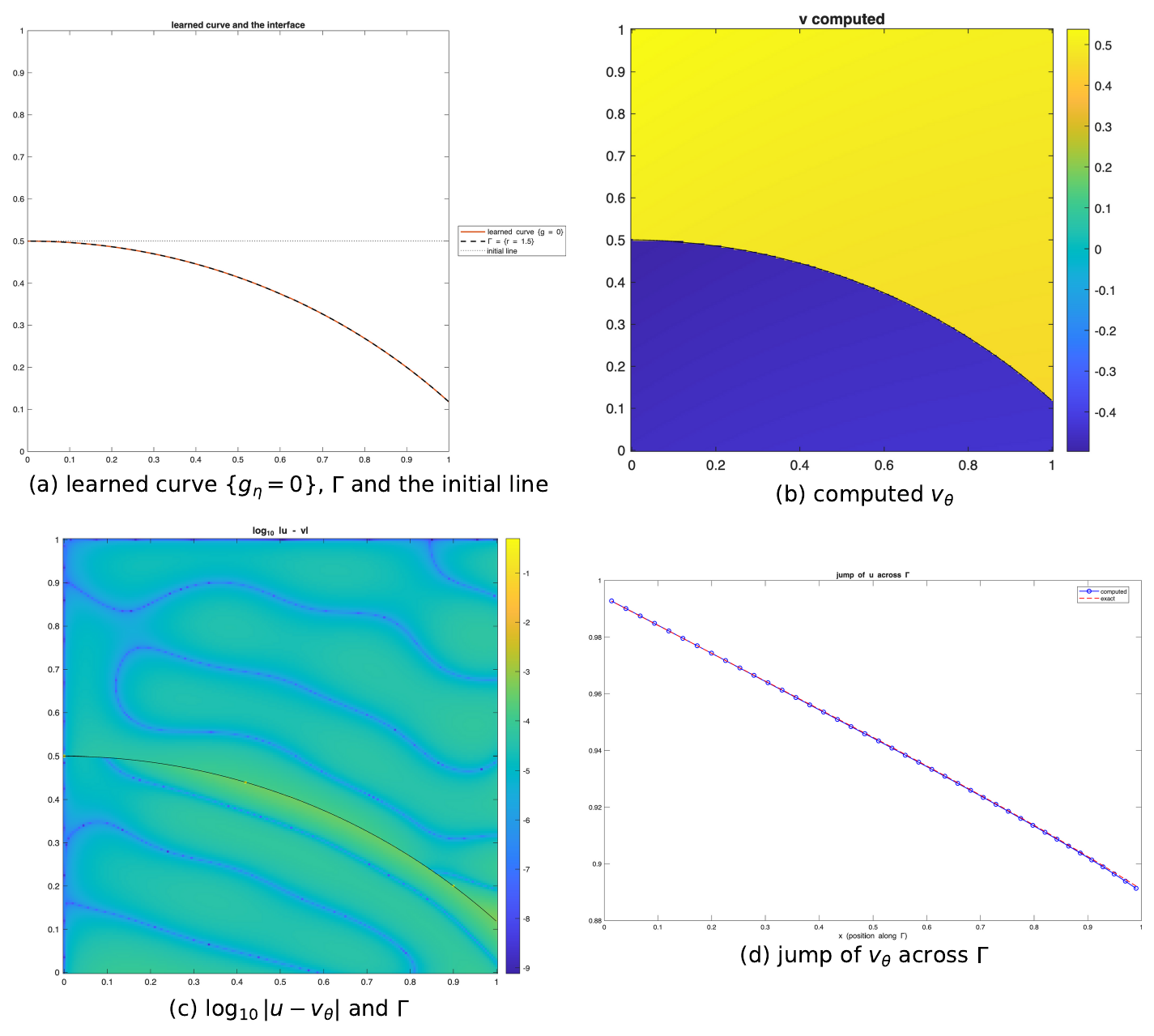}
\caption{Linear transport with a learned interface (Level~II), first class, after the main run: (a) the learned curve $\{g_\eta=0\}$ (solid), $\Gamma$ (dashed) and the
initial line $x_2=0.5$ (dotted); (b) $v_\theta$; (c) $\log_{10}|u-v_\theta|$,
with $\Gamma$; (d) the jump of $v_\theta$ across
$\Gamma$ (circles) and that of $u$ (dashed), against $x_1$.}\label{fig:transport_learned}
\end{figure}

\emph{Generic level.} Without an interface variable, the class is one network with two $\tanh$ layers $h$ of width $30$ and a last layer $a_k=W_3h+b_{3}$ of $20$ units, and
\[
 \psi_\theta=\mathbf c_\psi\cdot h+\sum_kc_k\operatorname{ReLU}(a_k),\qquad v_\theta=\mathbf c_v\cdot(h,1)+\sum_kb_k\operatorname{ReLU}(a_k)+\sum_k(d_k+\mathbf e_k\cdot x)\,H(a_k),
\]
$H$ the unit step. $v$ is only required in $L^2$, so its class may jump, and its jump sets are the zero sets of the same units that carry the kinks of $\psi_\theta$: across $\{a_k=0\}$, $\curl\psi_\theta$ jumps by $c_k\curl a_k$ and $v_\theta$ by $d_k+\mathbf e_k\cdot x$, which may vary along the zero set. When the zero set of a unit meets the inflow edge within $0.02$ of the jump point $x_d=(0,0.5)$ of the inflow data, the unit is tied to it, $b_{3,k}=-W_{3,k}h(x_d)$, so that $a_k(x_d)=0$
for all parameters; $x_d$ is a point of the data, and the tie removes the corner of $\cJ$ in the position of a step at the jump of the data. With the first class the divergence equation is built in at the second level of Remark~\ref{rem:div_levels}, $q=f-\gamma v$ and $\bsigma_\theta=\curl\psi_\theta+\Rop_2(f-\gamma v_\theta)$, so that $\Rop_2$ carries the divergence $-\gamma v_\theta$, which jumps with $v_\theta$. The rule is that of the learned level, cut along the zero sets of all units with the motion of every cut in the derivative, and $\Rop_2v_\theta$ is evaluated on rays split at
the zeros of all units. In the second class the divergence equation cannot be built in, so the first level is used: $\bsigma_\theta=\curl\psi_\theta+\bzz_\theta$ with $\bzz_\theta=\mathbf C_{\bzz}^T(h,1)+\sum_k\mathbf c_{\bzz,k}\operatorname{ReLU}(a_k)$ on the same last layer, so that $\bzz_\theta$ kinks where $v_\theta$ steps (Lemma~\ref{lem:div_kink}), and the residual $\divv\bzz_\theta+\gamma v_\theta-f$; no rays are needed. Two starts of $8$ minutes with $N_q=128$ reached $\cJ_{\rm check}=8.4\cdot10^{-5}$ and $3.7\cdot10^{-4}$ with the first class and $1.6\cdot10^{-3}$ and $6.7\cdot10^{-6}$ with the second; the better was continued for at most $60$ minutes with $N_q=256$:
\begin{center}
\footnotesize\setlength{\tabcolsep}{3pt}
\begin{tabular}{lccccccc}
\toprule
class & $\cJ_{\rm check}$ & $\frac{\|u-v_\theta\|_0}{\|u\|_0}$ & $\frac{\|u-v_\theta\|_{L^1}}{\|u\|_{L^1}}$ & $\frac{\|\bsigma-\bsigma_\theta\|_0}{\|\bsigma\|_0}$ & jump & \multicolumn{2}{c}{distance to $\Gamma$} \\
 & & & & & & mean & max \\
\midrule
$\Rop_2(f-\gamma v)$ & $3.4\cdot10^{-6}$ & $3.4\%$ & $0.51\%$ & $3.4\%$ & $1.5\%$ & $4.1\cdot10^{-4}$ & $8.4\cdot10^{-4}$ \\
$\bzz$ & $4.4\cdot10^{-6}$ & $4.9\%$ & $0.57\%$ & $4.9\%$ & $2.4\%$ & $7.0\cdot10^{-4}$ & $1.2\cdot10^{-3}$ \\
\bottomrule
\end{tabular}
\end{center}
The jump column is the relative $\ell^2$ error of the jump of $v_\theta$ across $\Gamma$ (traces at distance $0.005$), the distance is that of the zero set of the tied unit. Its zero set moved from
a line onto the characteristic, and it is the only unit with a step of $v_\theta$ along its zero set (Figure~\ref{fig:transport_generic}(a)); its step, measured along its own zero set, follows the exact jump along $\Gamma$, which decreases from $0.99$ to
$0.89$, with a relative error of $1.6\%$. The computed $v_\theta$ jumps, without a transition layer (Figure~\ref{fig:transport_generic}(b)--(d)). The $L^2$ error of $v_\theta$ is reported separately; Lemma~\ref{lem:tr_norm} does not give an $L^2$ estimate
for $u-v_\theta$. With the second class the zero set of the tied unit lies on the characteristic (Figure~\ref{fig:transport_generic_reg}(a)), at a mean distance of $7.0\cdot10^{-4}$ from $\Gamma$, its step has a relative error of $2.6\%$, and $\|\divv(\bsigma-\bsigma_\theta)\|_0/\|\bsigma\|_0=0.52\%$
(here $\|\divv\bsigma\|_0=\gamma\|\bsigma\|_0$). Among the four starts the smallest $\cJ_{\rm check}$, $6.7\cdot10^{-6}$, was obtained with the second class; the main run from it stopped after $25$ iterations, when L-BFGS found no descent step. Figures~\ref{fig:transport_generic} and~\ref{fig:transport_generic_reg} show the first and the second class.
\begin{figure}[t]
\centering
\includegraphics[width=0.92\textwidth]{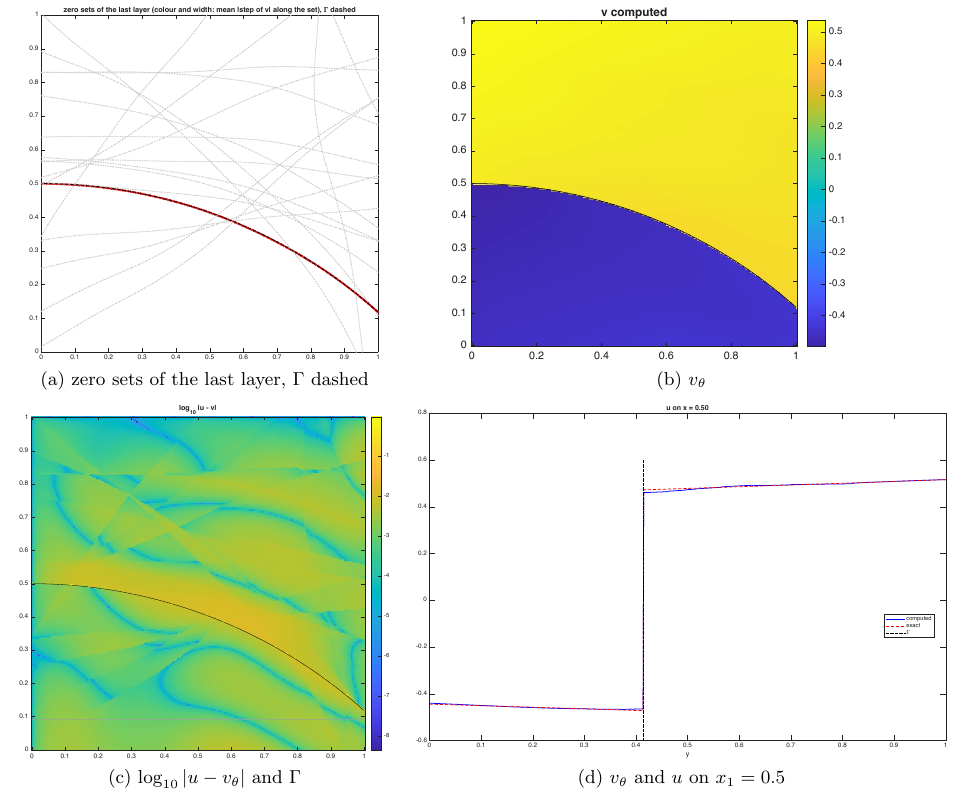}
\caption{Linear transport, generic level, first class: (a) the zero sets of the last layer, colored by the mean step of $v_\theta$ along them, $\Gamma$
dashed; (b) $v_\theta$; (c) $\log_{10}|u-v_\theta|$, with $\Gamma$; (d) $v_\theta$ and $u$ on $x_1=0.5$.}\label{fig:transport_generic}
\end{figure}
\begin{figure}[t]
\centering
\includegraphics[width=0.92\textwidth]{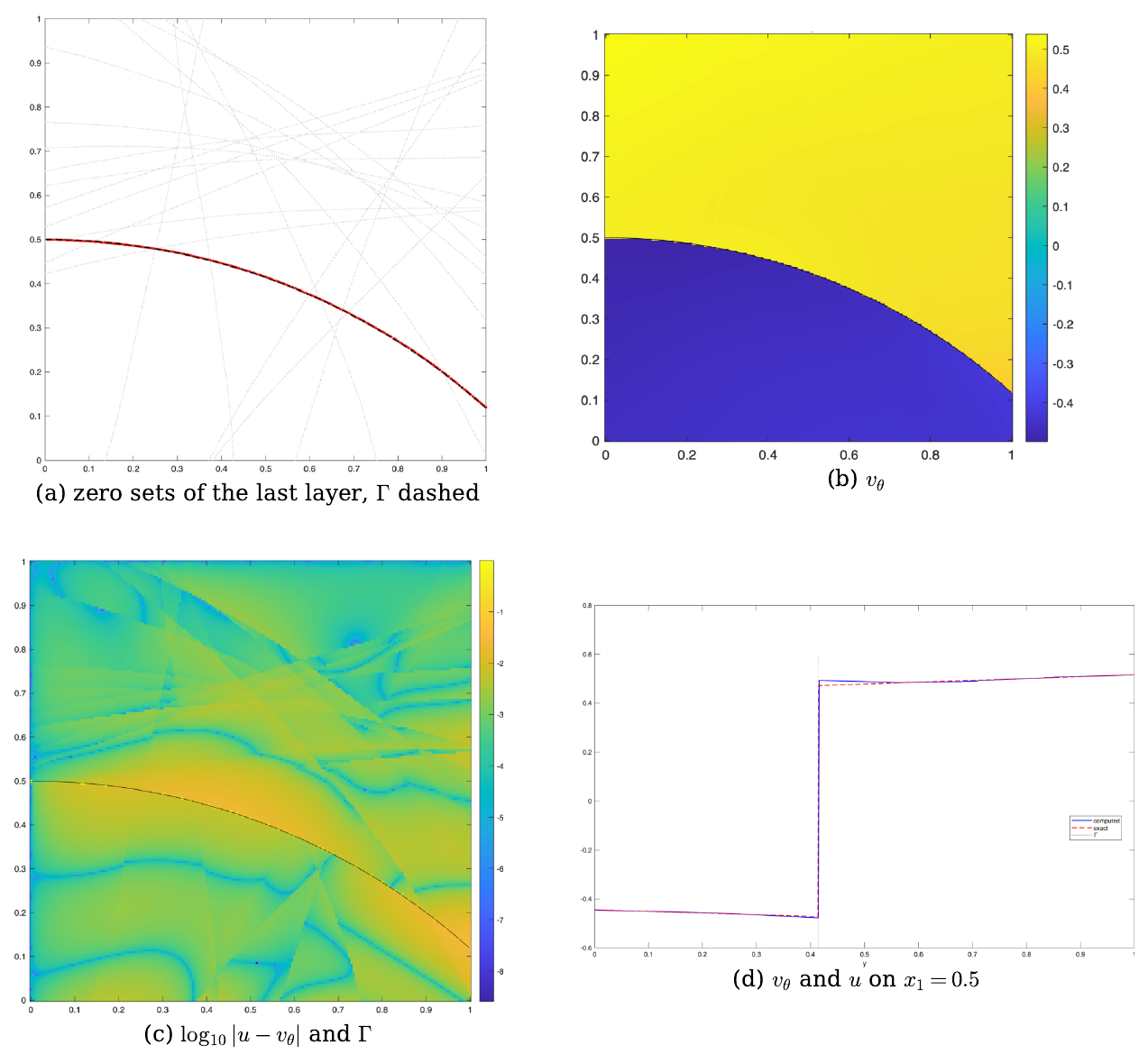}
\caption{Linear transport, generic level, second class ($\bsigma_\theta=\curl\psi_\theta+\bzz_\theta$): (a) the zero sets of the last layer, colored by the mean
step of $v_\theta$ along them, $\Gamma$ dashed; (b) $v_\theta$; (c) $\log_{10}|u-v_\theta|$, with $\Gamma$; (d) $v_\theta$ and $u$ on $x_1=0.5$.}\label{fig:transport_generic_reg}
\end{figure}

\section{Scalar conservation laws: an unknown moving interface}\label{sec:conslaw}

In Sections~\ref{sec:method} and~\ref{sec:curlcurl} the interfaces are those of the coefficients,
whose location is part of the data even
where the class does not use it (Levels~II
and~III), and in Section~\ref{sec:transport} they are characteristics determined by the
data. For a scalar conservation law the discontinuities are
shocks whose location is part of the solution. We use
the flux least-squares formulation of \cite{DMMO:05} as it stands; the only change
is that the $H(\divvr)$-conforming finite element flux is replaced by a
native neural class. No shock location or interface module is
supplied to the class; the generic deRhaNN flux class is used.

\noindent{\bf Formulation.}  Let $\Omega\subset\R^n$ be a bounded Lipschitz domain, $Q:=(0,T)\times\Omega$, and
\beq\label{conslaw}
 \partial_tu+\divv F(u)=0\ \ \mbox{in }Q,\qquad u(0)=u_0,
\eeq
with inflow data on the part of $(0,T)\times\partial\Omega$ where $F'(u)\cdot\bn<0$. With the space-time flux $\bff(u):=(u,F(u))\in\R^{n+1}$ the equation
reads $\divv_{t,x}\bff(u)=0$, and for a weak solution the space-time flux lies in $H(\divvr_{t,x};Q)$ with zero
divergence. Across a shock surface $\Gamma\subset Q$ with space-time normal $\bn=(n_t,\bn_x)$, continuity of the normal component,
\[
 [\bff(u)\cdot\bn]_\Gamma=[u]_\Gamma\,n_t+[F(u)]_\Gamma\cdot\bn_x=0,
\]
is the Rankine--Hugoniot condition, while the tangential components of $\bff(u)$ jump. A shock is therefore an
interface of exactly the kind that $H(\divvr)$ fields carry, as the interfaces of Section~\ref{sec:method},
except that its location is unknown. The flux least-squares functional of \cite{DMMO:05} takes the flux $\btau\approx\bff(u)$ as an independent variable,
\beq\label{LS_cl}
 \cJ_{\rm cl}(\btau,v):=\|\divv_{t,x}\btau\|_{0,Q}^2+\|\btau-\bff(v)\|_{0,Q}^2
 +\|\btau\cdot\bn-g\|_{0,\Gamma_{\rm in}}^2+\|v-u_{\rm in}\|_{0,\Gamma_{\rm in}}^2,
\eeq
where $\Gamma_{\rm in}$ is the inflow part of $\partial Q$, $u_{\rm in}$ the inflow data ($u_0$ on $\{t=0\}$) and $g=\bff(u_{\rm in})\cdot\bn$ (so $g=-u_0$ on $\{t=0\}$, where $\bn=(-1,\mathbf 0)$). We consider
$v$ with $\bff(v)\in L^2(Q)$, as is the case for the bounded neural trial functions used below. The inflow
condition can be imposed on $\btau\cdot\bn$, on $v$, or on both, since both hold for the solution; we use both. The functional is not quadratic,
and Proposition~\ref{prop:best_approx} does not apply. The LSNN method for conservation laws \cite{CCL:22nonlinear,CCL:23_divergence} uses an equivalent least-squares formulation on
a solution set that admits discontinuous solutions, with ReLU networks and a physics-preserving numerical differentiation, the discrete
divergence operator \cite{CCL:23_divergence}, and without artificial viscosity or entropy penalization \cite{LC:26lsnnchapter}; here the flux is an independent variable in a native $H(\divvr)$ class.

\noindent{\bf Native space-time flux classes.}  The flux is a field in $H(\divvr)$ on the $(n+1)$-dimensional
domain $Q$, and Theorem~\ref{thm:hdivd} in dimension $n+1$ gives the generic native class
\[
 \btau_\Theta=\operatorname{Div}_{t,x}A_\theta+\Rop_{n+1}q_\eta+\bh,\qquad A_\theta^T=-A_\theta,
\]
with a skew-symmetric matrix of scalar networks $A_\theta$, a scalar network $q_\eta$,
and $\bh=0$ when $\Omega$ has no cavities, since then $\R^{n+1}\setminus\overline Q$
has no bounded component. Then $\divv_{t,x}\btau_\Theta=q_\eta$, and the first term
of \eqref{LS_cl} is $\|q_\eta\|_{0,Q}^2$. Since the conservation law is homogeneous, it
can also be built in: with $q_\eta=0$, $\btau_\Theta=\operatorname{Div}_{t,x}A_\theta$ is divergence free for every parameter value,
the class is native to the divergence-free subspace of $H(\divvr_{t,x};Q)$ (Theorem~\ref{thm:hdivd}
with $q=0$), and the divergence residual vanishes, so that \eqref{LS_cl} reduces
to its last three terms. Space-time divergence-free networks of this
form are the construction of \cite{RichterPowellLipmanChen2022}, where they satisfy the continuity equation
exactly. The unknown $v$ is a separate scalar network, as in \cite{DMMO:05}.

For $n=1$ the skew matrix has one entry, and
the built-in class is the space-time curl of
one scalar network, $\btau_\Theta=(\partial_x\psi_\theta,-\partial_t\psi_\theta)$. Then $\psi_\theta$ plays the
role of the cumulative variable $\int^xu\,dx'$: $\partial_x\psi=u$ and
$\partial_t\psi=-F(u)$, and a shock of $u$ is a kink of
$\psi$. A ReLU network $\psi_\theta$ has kinks on hyperplanes whose positions are
trained parameters, so a shock can be represented by
a trained kink location. The represented flux has continuous
normal component across its kinks by construction; where
$\btau_\Theta=\bff(v_\theta)$ holds, this is the Rankine--Hugoniot relation. With a
smooth activation the shock is a layer, which has to
be resolved by the quadrature, whose refinement cannot be
prescribed in advance when the shock location is unknown.

\begin{prop}[consistency]\label{prop:cl_consistency}
Let $F$ be Lipschitz, or, for a locally Lipschitz flux such as Burgers' flux $F(u)=u^2/2$, let $\sup_m\|v_m\|_{L^\infty(Q)}+\|v\|_{L^\infty(Q)}<\infty$, and let $(\btau_m,v_m)$ with
$\btau_m\in H(\divvr_{t,x};Q)$, $v_m\in L^2(Q)$ satisfy $\cJ_{\rm cl}(\btau_m,v_m)\to0$, $\btau_m\to\btau$ and $v_m\to v$ in $L^2(Q)$. Then $\btau=\bff(v)$ and $\divv_{t,x}\btau=0$ in $Q$, and $v$ is a weak solution
of \eqref{conslaw} with the inflow data: $(\bff(v),\grad_{t,x}\phi)_Q=\la g,\phi\ra_{\Gamma_{\rm in}}$ for every smooth $\phi$ with $\phi=0$ near $\partial Q\setminus\Gamma_{\rm in}$.
\end{prop}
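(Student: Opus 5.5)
The proof goes term by term through the four nonnegative pieces of $\cJ_{\rm cl}$: each tends to zero, and we pass to the limit in each using only $L^2$ convergence. The one nonlinear ingredient is continuity of the Nemytskii map $v\mapsto\bff(v)$ in $L^2(Q)$, which we record first.

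\emph{Step 1: convergence of the flux.} If $F$ is Lipschitz with constant $L$, then $|\bff(v_m)-\bff(v)|\le(1+L)|v_m-v|$ pointwise, so $\bff(v_m)\to\bff(v)$ in $L^2(Q)$. In the locally Lipschitz case, let $M$ bound $\sup_m\|v_m\|_{L^\infty}+\|v\|_{L^\infty}$. Then $v_m$ and $v$ take values in $[-M,M]$, where $F$ is Lipschitz with some constant $L_M$, and the same pointwise bound holds with $L_M$.

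\emph{Step 2: the constitutive equation.} $\|\btau_m-\bff(v_m)\|_{0,Q}^2\le\cJ_{\rm cl}(\btau_m,v_m)\to0$. Together with $\btau_m\to\btau$ and Step 1, the triangle inequality gives $\btau=\bff(v)$ a.e.

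\emph{Step 3: the divergence.} $\|\divv_{t,x}\btau_m\|_{0,Q}\to0$. For $\phi\in C_c^\infty(Q)$,
\[
-(\btau,\grad_{t,x}\phi)_Q=\lim_m\bigl(-(\btau_m,\grad_{t,x}\phi)_Q\bigr)=\lim_m(\divv_{t,x}\btau_m,\phi)_Q=0 .
\]
Hence $\divv_{t,x}\btau=0$ in $Q$, so $\btau\in H(\divvr_{t,x};Q)$, and $\btau_m\to\btau$ in the graph norm of $H(\divvr_{t,x};Q)$.

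\emph{Step 4: the boundary condition.} This is the step that needs care, because only $\btau_m\to\btau$ in $H(\divvr)$ is available, and that yields convergence of the normal traces only in $H^{-1/2}(\partial Q)$. Take a smooth $\phi$ with $\phi=0$ near $\partial Q\setminus\Gamma_{\rm in}$. Green's formula in $H(\divvr_{t,x};Q)$ gives
\[
(\btau_m,\grad_{t,x}\phi)_Q+(\divv_{t,x}\btau_m,\phi)_Q=\la\btau_m\cdot\bn,\phi\ra_{\partial Q},
\]
and the right-hand side equals $\int_{\Gamma_{\rm in}}(\btau_m\cdot\bn)\phi$. Here we use that for the trial pairs the trace on $\Gamma_{\rm in}$ is square integrable (otherwise $\cJ_{\rm cl}$ would be infinite), and that $\phi$ vanishes on the rest of $\partial Q$. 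The third term of $\cJ_{\rm cl}$ gives $\|\btau_m\cdot\bn-g\|_{0,\Gamma_{\rm in}}\to0$, so the right-hand side tends to $\la g,\phi\ra_{\Gamma_{\rm in}}$. The second term on the left tends to $0$ by Step 3, and the first tends to $(\bff(v),\grad_{t,x}\phi)_Q$ by Steps 1 and 2. This is the weak formulation with inflow data.

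We do not use the fourth term of $\cJ_{\rm cl}$ for the identity. It only says that the traces of $v_m$, which are defined for network trial functions, converge to $u_{\rm in}$; no trace of the limit $v$ is claimed. The delicate point is the trace pairing in Step 4: the test functions must be chosen smooth up to the boundary and vanishing near $\partial Q\setminus\Gamma_{\rm in}$, so that the $H^{-1/2}$–$H^{1/2}$ pairing coincides with the $L^2(\Gamma_{\rm in})$ integral. With that choice no uniform bound on $\btau_m\cdot\bn$ beyond the functional is needed.
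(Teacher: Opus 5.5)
Your proof is correct and follows the paper's argument. The Lipschitz bound for $\bff$ on the common range of the $v_m$ and $v$ gives $\btau=\bff(v)$. Green's formula in $H(\divvr_{t,x};Q)$, together with the vanishing divergence residual and $\|\btau_m\cdot\bn-g\|_{0,\Gamma_{\rm in}}\to0$, then gives the weak formulation with inflow data. Your Step~3 derives $\divv_{t,x}\btau=0$ explicitly with $C_c^\infty(Q)$ test functions; this is a point the paper's proof leaves implicit.
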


\begin{proof}
$\|\btau_m-\bff(v_m)\|_{0,Q}\to0$ and $\|\bff(v_m)-\bff(v)\|_{0,Q}\le(1+\mathrm{Lip}(F))\|v_m-v\|_{0,Q}\to0$, with $\mathrm{Lip}(F)$ the Lipschitz constant of $F$ on an interval containing the ranges of the $v_m$
and of $v$, give $\btau=\bff(v)$. For $\phi$ as stated, the Green formula in $H(\divvr_{t,x};Q)$
gives $(\btau_m,\grad_{t,x}\phi)_Q=-(\divv_{t,x}\btau_m,\phi)_Q+\la\btau_m\cdot\bn,\phi\ra_{\Gamma_{\rm in}}$; the first term on the right tends to $0$, and the second to $\la g,\phi\ra_{\Gamma_{\rm in}}$ because $\|\btau_m\cdot\bn-g\|_{0,\Gamma_{\rm in}}\to0$.
\end{proof}

This is the neural counterpart of the conservation property proved
for $H(\divvr)$-conforming least-squares finite elements in \cite{DMMO:05}, and its proof
uses only the conformity $\btau_m\in H(\divvr_{t,x};Q)$ for every parameter value.

\subsection{Numerical example}\label{subsec:cl_numerics}
Burgers' equation, $F(u)=u^2/2$ and $n=1$, on $Q=(0,1)\times(-1,1)$ with
the variables $(t,x)$, with $u_0=1$ for $x<0$, $u_0=0$ for
$x>0$ and $u=1$ on $\{x=-1\}$; on $\{x=1\}$, where $u=0$,
the boundary is characteristic. The entropy solution is the
shock $x=t/2$ between the states $1$ and $0$. Across
it $\bff(u)=(u,u^2/2)$ has the purely tangential jump $(1,\frac12)$, of
length $1.118$, and its position is not given
to the class. The class is the generic
first class, $\btau_\Theta=\curl\psi_\theta+\Rop_2q_\theta$ with $\curl\psi=(\p_x\psi,-\p_t\psi)$ and $x_0=(0.5,0)$; $\psi_\theta$ and
$q_\theta$ are outputs of a network with two $\tanh$
layers of width $30$ followed by three ReLU layers
of width $20$, the first fitted to lines in
$Q$ at the start and the deeper ones initialized as
the identity, and $v_\theta$ is the output of a
separate network with two $\tanh$ layers of width
$20$ and three ReLU layers of width $20$.
The inflow terms of \eqref{LS_cl}, on $\Gamma_{\rm in}=\{t=0\}\cup\{x=-1\}$, have the
weights $10$ on $\btau\cdot\bn-g$ and $100$ on $v-u_{\rm in}$. Since $\bff(v)$
is nonlinear in $v$, the coefficients of $v_\theta$,
its output layer included, are trained by Levenberg--Marquardt together with the
hidden parameters of the flux network; the output
coefficients of $\psi_\theta$ and $q_\theta$, which enter linearly
for fixed $v_\theta$, are the least-squares solution. The
derivative residual $\curl\psi_\theta$ enters by the cellwise moment
treatment of Section~\ref{subsec:computation}. The training cells are squares of side
$1/32$. Two starts of six minutes from different
initial networks are run, and the better one
is continued for at most $48$ minutes in phases of
ten minutes, with refinement and up to four
new units after each phase. The check $\cJ_{\rm check}$
uses the training squares halved, cut along all kink
sets of both networks. We report $\cJ_{\rm check}$, the
$L^1$ error of $v_\theta$ relative to $\|u\|_{L^1(Q)}=1.25$, the
relative $L^2$ error of $\btau_\Theta$, $\|q_\theta\|_0$ (the divergence of
$\btau_\Theta$, zero for the solution), and the largest error of
the shock position at $t=0.2,0.4,0.6,0.9,1$, located as the
level $\frac12$ of $v_\theta$ and of the first component $(\btau_\Theta)_t$.
\begin{center}
\small\setlength{\tabcolsep}{4pt}
\begin{tabular}{cccccccccc}
\toprule
width & cells & steps & time & $\cJ_h$ & $\cJ_{\rm check}$ & $\|u-v_\theta\|_{L^1}$ & $\|\bff(u)-\btau_\Theta\|_0$ & $\|q_\theta\|_0$ & shock ($v_\theta$, $\tau_t$) \\
\midrule
35 & 2606 & 156 & 33 min & $4.05\cdot10^{-4}$ & $1.22\cdot10^{-3}$ & $1.97\%$ & $7.65\%$ & $1.07\cdot10^{-3}$ & $0.014$, $0.013$ \\
\bottomrule
\end{tabular}
\end{center}
$\cJ_{\rm check}$ decreased in every phase, from $5.7\cdot10^{-2}$
to $1.2\cdot10^{-3}$, and the $L^1$ error with it, from
$4.2\%$ to $2.0\%$; the run stopped when Levenberg--Marquardt made
no further progress after the last phase. The shock
is resolved within one to two training cells
(Figure~\ref{fig:burgers_riemann}), without oscillations. It lies ahead of $x=t/2$ by
at most $0.014$ for $t\le0.6$ and by $0.003$ at
$t=1$; at $t=1$, $v_\theta$ exceeds $1$ by $4\%$ and $(\btau_\Theta)_t$
by $8\%$ just behind the shock. The active
kink sets of $\psi_\theta$ cluster along the shock;
Figure~\ref{fig:burgers_riemann}(b) shows those across which $\btau_\Theta$ has a
tangential jump of at least $2\%$ of the exact
one. The normal continuity $[\btau_\Theta\cdot\bn]=0$ holds by construction.
\begin{figure}[t]
\centering
\includegraphics[width=\textwidth]{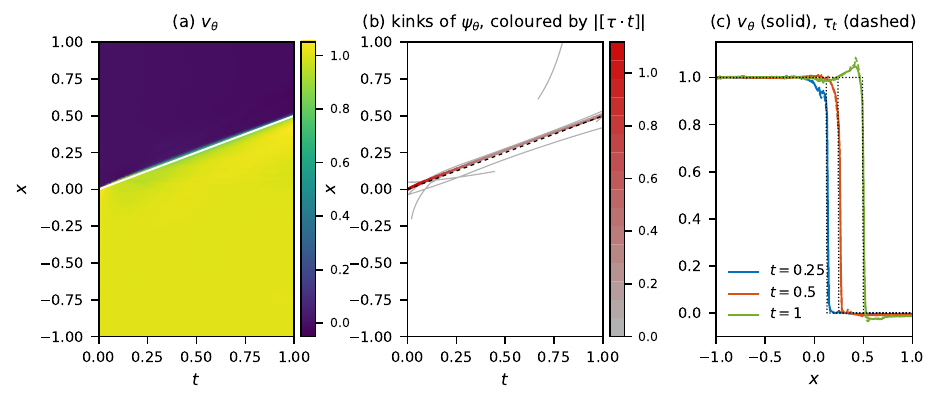}
\caption{Burgers' equation, Riemann data, generic level.  (a) $v_\theta$ on $Q$, the exact shock $x=t/2$ white.  (b) The kinks of $\psi_\theta$
across which $\btau_\Theta$ has a tangential jump of at least $2\%$ of the exact one,
colored by the jump; the exact shock dashed. (c) $v_\theta$ (solid) and $(\btau_\Theta)_t$ (dashed) at $t=0.25$, $0.5$, $1$; the exact solution dotted.}
\label{fig:burgers_riemann}
\end{figure}

\section{Concluding remarks}\label{sec:conclusion}

We constructed two deRhaNN classes for $H(\divvr)$ and native deRhaNN classes for $H(\curll)$. In each case the graph-space field
is assembled from ordinary neural representing functions by fixed operators of the de Rham complex. The classes are conforming for every
parameter value, and the admissible interface jumps are carried at finite width by kinks of the potential.

Combined with first-order system least squares, these classes give the FOSLS-deRhaNN method. The elliptic and curl--curl examples retain the natural graph-space
stability, while the transport and conservation-law examples use the same native flux constructions for discontinuous solutions and moving interfaces.

\end{document}